\let\orgdescriptionlabel\descriptionlabel
\renewcommand*{\descriptionlabel}[1]{
	\let\orglabel\label
	\let\label\@gobble
	\phantomsection
	\edef\@currentlabel{#1} 	
	\let\label\orglabel
	\orgdescriptionlabel{#1}
}
\def\th@plain{
	\thm@notefont{}
	\itshape
}
\def\th@definition{
	\thm@notefont{}
	\normalfont
}
\g@addto@macro\th@remark{\thm@headpunct{}}
\g@addto@macro\th@definition{\thm@headpunct{}}
\g@addto@macro\th@plain{\thm@headpunct{}}
\definecolor{myblue}{rgb}{.8, .8, 1}
  \newcommand*\mybluebox[1]{
    \colorbox{myblue}{\hspace{1em}#1\hspace{1em}}}
\crefname{equation}{}{}
\crefname{chapter}{Chapter}{Chapters}
\crefname{item}{item}{items}
\crefname{figure}{Figure}{Figures}
\crefname{theorem}{Theorem}{Theorems}
\crefname{lemma}{Lemma}{Lemmas}
\crefname{proposition}{Proposition}{Propositions}
\crefname{corollary}{Corollary}{Corollarys}
\crefname{definition}{Definition}{Definitions}
\crefname{fact}{Fact}{Facts}
\crefname{example}{Example}{Examples}
\crefname{algorithm}{Algorithm}{Algorithms}
\crefname{remark}{Remark}{Remarks}
\crefname{note}{Note}{Notes}
\crefname{notation}{Notation}{Notations}
\crefname{case}{Case}{Cases}
\crefname{exercise}{Exercise}{Exercises}
\crefname{question}{Question}{Questions}
\crefname{claim}{Claim}{Claims}
\crefname{enumi}{}{}
\numberwithin{equation}{section}
\theoremstyle{plain}
\newtheorem{theorem}{Theorem}[section]
\newtheorem{corollary}[theorem]{Corollary}
\newtheorem{fact}[theorem]{Fact}
\newtheorem{lemma}[theorem]{Lemma}
\newtheorem{proposition}[theorem]{Proposition}
\theoremstyle{definition}
\newtheorem{definition}[theorem]{Definition}
\newtheorem{example}[theorem]{Example}
\newtheorem{remark}[theorem]{Remark}
\newcommand{\conv}{\ensuremath{\operatorname{conv}}}
\newcommand{\aff}{\ensuremath{\operatorname{aff} \,}}
\newcommand{\Fix}{\ensuremath{\operatorname{Fix}}}
\newcommand{\Id}{\ensuremath{\operatorname{Id}}}
\newcommand{\Pro}{\ensuremath{\operatorname{P}}}
\newcommand{\R}{\ensuremath{\operatorname{R}}}
\newcommand{\I}{\ensuremath{\operatorname{I}}}
\newcommand{\J}{\ensuremath{\operatorname{J}}}
\newcommand{\Range}{\ensuremath{\operatorname{ran}}}
\newcommand{\CCO}[1]{CC{#1}}
\newcommand{\CC}[1]{CC_{#1}}
\providecommand{\norm}[1]{\lVert#1\rVert}
\providecommand{\Norm}[1]{{\Big\lVert}#1{\Big\rVert}}
\providecommand{\bignorm}[1]{{\big\lVert}#1{\big\rVert}}
\providecommand{\innp}[1]{\langle#1\rangle}
\providecommand{\Innp}[1]{\Big\langle#1\Big\rangle}
\newcommand\scalemath[2]{\scalebox{#1}{\mbox{\ensuremath{\displaystyle #2}}}}
\begin{document}

\title{ \sffamily  On the linear convergence of circumcentered isometry methods}

\author{
         Heinz H.\ Bauschke\thanks{
                 Mathematics, University of British Columbia, Kelowna, B.C.\ V1V~1V7, Canada.
                 E-mail: \href{mailto:heinz.bauschke@ubc.ca}{\texttt{heinz.bauschke@ubc.ca}}.},~
         Hui\ Ouyang\thanks{
                 Mathematics, University of British Columbia, Kelowna, B.C.\ V1V~1V7, Canada.
                 E-mail: \href{mailto:hui.ouyang@alumni.ubc.ca}{\texttt{hui.ouyang@alumni.ubc.ca}}.},~
         and Xianfu\ Wang\thanks{
                 Mathematics, University of British Columbia, Kelowna, B.C.\ V1V~1V7, Canada.
                 E-mail: \href{mailto:shawn.wang@ubc.ca}{\texttt{shawn.wang@ubc.ca}}.}
                 }

\date{April 12, 2020}

\maketitle

\begin{abstract}
\noindent
The circumcentered Douglas--Rachford method (C--DRM), introduced by Behling, Bello Cruz and Santos, iterates by taking the circumcenter of associated successive reflections. It is an acceleration of the well-known Douglas-Rachford method (DRM) for finding the best approximation  onto the intersection of finitely many affine subspaces.  Inspired by the C--DRM, we introduced the more flexible circumcentered reflection method (CRM) and circumcentered isometry method (CIM). The CIM essentially chooses the closest point to the solution among all of the points in an associated affine hull as its iterate and is a generalization of the CRM.
The circumcentered--reflection method introduced by Behling, Bello Cruz and Santos to generalize the C--DRM is a special class of our CRM.

We consider the CIM induced by a set of finitely many isometries for finding the best approximation onto the intersection of fixed point sets of the isometries which turns out to be an intersection of finitely many affine subspaces.
We extend our previous linear convergence results on CRMs in finite-dimensional spaces from reflections to isometries.
In order to better accelerate the symmetric method of alternating projections (MAP), the accelerated symmetric MAP first applies another operator to the initial point. (Similarly, to accelerate the DRM, the C--DRM first applies another operator to the initial point as well.) Motivated by these facts, we show results on the linear convergence of CIMs in Hilbert spaces with first applying another operator to the initial point. In particular, under some restrictions, our results imply that some CRMs attain the known linear convergence rate of the accelerated symmetric MAP in Hilbert spaces. 
We also exhibit a class of CRMs converging to the best approximation in Hilbert spaces with a convergence rate no worse than the sharp convergence rate of MAP.
The fact that some CRMs attain the linear convergence rate of MAP or accelerated symmetric MAP is entirely new. 
\end{abstract}

{\small
\noindent
{\bfseries 2020 Mathematics Subject Classification:}
{Primary 41A50, 47H30, 65B99;
Secondary 46B04, 90C25.}

\noindent{\bfseries Keywords:}
Isometry, projector, reflector, Friedrichs angle, best approximation problem, linear convergence, circumcentered isometry method, circumcentered reflection method,  method of alternating projections, accelerated symmetric method of alternating projections.
}

\section{Introduction} \label{sec:Introduction}

Throughout this paper, we assume that
\begin{empheq}[box = \mybluebox]{equation*}
\text{$\mathcal{H}$ is a real Hilbert space},
\end{empheq}
with inner product $\innp{\cdot,\cdot}$ and induced norm $\|\cdot\|$.
Denote by $\mathcal{P}(\mathcal{H})$ the set of nonempty subsets of $\mathcal{H}$ containing finitely many elements.  The circumcenter operator  $\CCO{} \colon \mathcal{P}(\mathcal{H}) \to \mathcal{H} \cup \{ \varnothing \}$
maps every $K \in \mathcal{P}(\mathcal{H})$ to the  \emph{circumcenter $\CCO{(K)}$ of $K$}, where
$\CCO{(K)}$  is either the empty set or the unique point $\CCO{(K)}$ such that $\CCO{(K)} \in \aff (K)$ and $\CCO{(K)}$ is equidistant  from all points  in $K$ (see \cite[Proposition~3.3]{BOyW2018}).

Throughout the paper, $\mathbb{N}=\{0,1,2,\ldots\}$, $m \in \mathbb{N} \smallsetminus \{0\}$ and
\begin{empheq}[box=\mybluebox]{equation*}
\big(\forall i \in \{1, \ldots, m\} \big) \quad T_{i} : \mathcal{H} \rightarrow \mathcal{H} ~\text{is affine isometry} \quad \text{with} \quad \bigcap^{m}_{j=1} \Fix T_{j} \neq \varnothing.
\end{empheq}
Unless stated otherwise, we set
\begin{empheq}[box = \mybluebox]{equation*}
\mathcal{S} :=\{ T_{1}, \ldots, T_{m-1}, T_{m} \}. 
\end{empheq}
The associated set-valued operator $\mathcal{S}: \mathcal{H} \rightarrow \mathcal{P}(\mathcal{H})$ is defined by
\begin{empheq}[box = \mybluebox]{equation*}
(\forall x \in \mathcal{H}) \quad \mathcal{S}(x) :=\{ T_{1}x, \ldots, T_{m-1}x, T_{m}x\}.
\end{empheq}
The \emph{circumcenter mapping $\CC{\mathcal{S}}$ induced by $\mathcal{S}$} is defined by the composition of  $\CCO{}$ and $\mathcal{S}$, that is $(\forall x \in \mathcal{H})$ $\CC{\mathcal{S}}(x) :=\CCO{} \left( \mathcal{S}(x)  \right)$. 
Inspired by the circumcentered Douglas-Rachford method (C--DRM)  introduced by Behling, Bello Cruz and Santos \cite{BCS2017}, we proved in  \cite[Theorem~3.3]{BOyW2019Isometry} that the $\CC{\mathcal{S}}$ is \emph{proper}, i.e., $ (\forall x \in \mathcal{H})$, $\CC{\mathcal{S}}(x) \in \mathcal{H}$. Hence,  we are able to define the \emph{circumcenter method induced by  $\mathcal{S}$} as
\begin{align*} 
x_{k} :=\CC{\mathcal{S}}(x_{k-1})=\CC{\mathcal{S}}^{k}(x_{0}), ~\mbox{where}~x_{0} \in \mathcal{H} ~\text{and}~ k=1,2,\ldots.
\end{align*}

Since every element of  $\mathcal{S}$ is isometry, we say that the circumcenter method induced by the $\mathcal{S}$ is the \emph{circumcentered isometry method} (CIM). Since reflectors associated with affine subspaces are isometries, we call  the circumcenter method induced by a set of reflectors the \emph{circumcentered reflection method} (CRM). 

\emph{Our goal in this paper is to study the linear convergence of CIMs in Hilbert spaces for
finding the best approximation $\Pro_{\cap_{i=1}^{m} \Fix T_{i} }x$  onto the intersection of finitely many affine subspaces,
where $x \in \mathcal{H}$ is an arbitrary but fixed point. 
In particular, given affine subspaces $U_{1}, U_{2}, \ldots, U_{m}$ with $\cap^{m}_{i=1} U_{i} \neq \varnothing$,  finding the best approximation $\Pro_{\cap^{m}_{i=1} U_{i}}x$  
is covered by our work.}

The main results in this paper are the following.
\begin{itemize}
	\item[\textbf{R1:}]  \Cref {theo:CCS:LineaConve:F1Fn,theo:CCS:LineaConve:F1t} extend the \cite[Propositions~5.15 and 5.10]{BOyW2019Isometry} respectively from reflections to isometries and establish the linear convergence of CIMs for finding the best approximation onto the intersection of the fixed point sets of finitely many isometries in finite-dimensional spaces.  Moreover, \cite[Theorem~3.3]{BCS2018} is a special instance of \cref{theo:CCS:LineaConve:F1t}. 
	\item[\textbf{R2:}] \cref{theo:LineaConvIneq:TF} provides two sufficient conditions for the linear convergence of CIMs in Hilbert spaces with first applying another operator on the initial point. The applications of \cref{theo:LineaConvIneq:TF} can be found in \cref{theo:CCS:Accel:AT:Tx},  \cite[Proposition~5.19]{BOyW2019Isometry} and \cite[Theorem~1]{BCS2017}.
	\item[\textbf{R3:}]  \Cref{thm:MAP:LC,theo:CCS:Accel:AT:x,theo:CCS:Accel:AT:Tx} present sufficient conditions for the linear convergence of CRMs  for finding the best approximation onto the intersection of finitely many closed linear subspaces in Hilbert spaces, by using the linear convergence of MAP and accelerated symmetric MAP. 
\end{itemize}

In fact, we generalize all of results on  the linear convergence of CRMs shown in \cite{BCS2017}, \cite{BCS2018} and  \cite{BOyW2019Isometry} from reflections to isometries. We prove in \cref{theo:PARALLIsometryLinear}  that the linear convergence of any general CIM is equivalent to the linear convergence of the CIM induced by a corresponding set of linear isometries. Hence, to study the linear convergence of CIM, we are free in our proofs to assume that all of the related isometries are linear.  We also prove in \cref{theo:TselfAdjoinIsome:ProFixT}\cref{theo:TselfAdjoinIsome:ProFixT:RT} that given a linear isometry $T$, $T$ is reflector if and only if $T$ is self-adjoint.  In fact, the linear isometries on $\mathbb{R}^{n}$ are precisely orthogonal matrices. But orthogonal matrices are in general not symmetric. Hence, our generalizations are indeed less restrictive. 

In \cite{BDHP2003}, Bauschke, Deutsch, Hundal and Park studied the acceleration scheme for  linear nonexpansive operators which was considered by  Gubin, Polyak, and Raik \cite{GPR1967} and by  Gearhart and Koshy \cite{GK1987}. It was  proved that the  acceleration scheme  for (symmetric) MAP is indeed faster than the (symmetric)  MAP. 
Note that \cref{exam: lc:cw2}, which is a corollary of \cref{thm:MAP:LC}, states that the convergence rate of some CRMs is no worse than the sharp convergence rate of MAP in Hilbert spaces. Moreover, \Cref {theo:CCS:Accel:AT:x,theo:CCS:Accel:AT:Tx} illustrate that some CRMs attain the known linear convergence rate of the accelerated symmetric MAP in Hilbert spaces.  In fact, in \cite[Section~6]{BOyW2019Isometry} we showed numerically the  outstanding performance of some instances of those CRMs without analytical proof by comparing four CRMs with MAP and   DRM. Now, \Cref{thm:MAP:LC,theo:CCS:Accel:AT:x} provide theoretical support for the results presented by the numerical experiments in \cite[Section~6]{BOyW2019Isometry}. 

For the readers who are interested in  CRMs for general convex or nonconvex feasibility problems, we recommend  \cite{BCS2020ConvexFeasibility}, \cite{DHL2019} and \cite{SBLL2020}.

The paper is organized as follows.  In \Cref{sec:AuxiliaryResults,sec:Isometries}, we collect  various auxiliary results to facilitate the proofs in the sequel.  Some results are interesting on their own (see \cref{prop:operatornorm:largestgeigenvector}, and \Cref{theo:TselfAdjoinIsome:ProFixT,theo:LinearSelfAdIsometry:Fix}).
Some properties of CRMs shown in \cite{BOyW2019Isometry} are generalized to CIMs in \cref{sec:CircumcenteredIsometryMethods}. 
\cref{section:LinearConverg}  focuses on the linear convergence of CIMs for finding the best approximation onto intersections of fixed point sets of finitely many affine isometries. More precisely, in \cref{section:LinearConverg},
the linear convergence of CIMs in $\mathbb{R}^{n}$ is presented, and  two sufficient conditions for the linear convergence of CIMs in Hilbert spaces with first applying another operator to the initial point are provided.
In \cref{sec:LinearConverCRMHilert}, we use the linear convergence of MAP to deduce sufficient conditions for the  linear convergence of CRMs in Hilbert spaces. We  also provide examples of CRMs with convergence rate no worse than the sharp convergence rate of MAP.  In addition, we  prove that some CRMs attain the known convergence rate of the accelerated symmetric MAP.

We now turn to the notation used in this paper. Let $C$ be a nonempty subset
of $\mathcal{H}$.  
$C$ is an \emph{affine subspace} of
$\mathcal{H}$ if $C \neq \varnothing$ and $(\forall \rho\in\mathbb{R})$ $\rho
C + (1-\rho)C = C$. The smallest affine subspace of $\mathcal{H}$ containing $C$ is
the denoted by $\aff C$ and called the \emph{affine hull} of $C$. The \emph{orthogonal
complement of $C$} is the set $ C^{\perp} :=\{x \in \mathcal{H}~|~ \innp{x,y}=0
~\text{for all}~y \in C\}$.
The \emph{best approximation operator} (or \emph{projector}) onto $C$ is denoted by
$\Pro_{C}$. $\R_{C} :=2 \Pro_{C} -\Id$ is the \emph{reflector associated with $C$}.

Let
$T: \mathcal{H} \rightarrow \mathcal{H}$ be an operator. Let $\ker T := \{x \in
\mathcal{H} ~|~ Tx =0 \}$ be the \emph{kernel of $T$}. The \emph{set of fixed
points of the operator $T$} is denoted by $\Fix T$, i.e., $\Fix T := \{x \in
\mathcal{H} ~|~ Tx=x\}$. The \emph{range of $T$} is defined as $\Range T :=\{Tx ~:~ x \in \mathcal{H} \}$;  moreover, $\overline{\Range} \, T$ is the closure of $\Range T$. Denote by $\mathcal{B} (\mathcal{H}) := \{ T: \mathcal{H} \rightarrow \mathcal{H} ~:~ T ~\text{is bounded and linear} \}$. For every $T \in \mathcal{B} (\mathcal{H})$, the \emph{operator norm $\norm{T}$ of $T$} is defined by $\norm{T} := \sup_{\norm{x} \leq  1} \norm{Tx}$. 
Let $m, n$ be in $\mathbb{N} \smallsetminus \{0\}$ and let $A \in \mathbb{R}^{n \times m}$.   
The \emph{matrix $2$-norm induced by the Euclidean vector norm} is $\norm{A}_{2} := \max_{\norm{x}_{2} \leq 1} \norm{Ax}_{2}$. 
For other notation not explicitly defined here, we refer the reader to \cite{BC2017}.

\section{Auxiliary results} \label{sec:AuxiliaryResults}

To facilitate the proofs in our main results in the sequel, we collect and prove some useful results in this section. 
\subsection*{Projections and Friedrichs angles}

\begin{fact} {\rm \cite[Proposition~3.19]{BC2017}} \label{fac:SetChangeProje}
	Let $C$ be a nonempty closed  convex subset of $\mathcal{H}$ and let $x \in \mathcal{H}$. Set $D :=z+C$, where $z \in \mathcal{H}$. Then $\Pro_{D}x=z+\Pro_{C}(x-z)$.
\end{fact}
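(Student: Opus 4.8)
The plan is to reduce the claim to the translation-invariance of the distance function together with the standard variational characterization of projections onto nonempty closed convex sets. First I would observe that $D = z + C$ is again nonempty, closed, and convex — each property being preserved under the translation $c \mapsto z + c$ — so that $\Pro_{D}x$ is well defined as the projector of a point onto a nonempty closed convex subset of the Hilbert space $\mathcal{H}$.

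Next, I would set $p := z + \Pro_{C}(x - z)$ and verify that $p$ satisfies the inequality characterizing $\Pro_{D}x$, namely that $p \in D$ and $\innp{x - p, d - p} \le 0$ for every $d \in D$. Membership $p \in D$ is immediate, since $\Pro_{C}(x-z) \in C$. For the inequality, I would write an arbitrary $d \in D$ as $d = z + c$ with $c \in C$; then $x - p = (x - z) - \Pro_{C}(x - z)$ and $d - p = c - \Pro_{C}(x - z)$, so
\begin{equation*}
\innp{x - p, d - p} = \innp{(x-z) - \Pro_{C}(x-z),\, c - \Pro_{C}(x-z)} \le 0,
\end{equation*}
the last inequality being exactly the variational characterization of $\Pro_{C}(x-z)$ applied to the point $c \in C$. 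By uniqueness of the projection onto a nonempty closed convex set, $p = \Pro_{D}x$, which is the asserted identity.

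Alternatively, one can argue by direct minimization: for $d = z + c \in D$ one has $\norm{x - d}^{2} = \norm{(x - z) - c}^{2}$, so minimizing $\norm{x - d}$ over $d \in D$ is literally minimizing $\norm{(x-z) - c}$ over $c \in C$; the unique minimizer $c = \Pro_{C}(x - z)$ then yields the unique minimizer $d = z + \Pro_{C}(x-z)$. I do not anticipate any genuine obstacle here — the statement is a routine translation-invariance fact — and the only point deserving a word of care is the preliminary observation that $\Pro_{D}$ exists, which, as noted, follows because $D$ inherits nonemptiness, closedness and convexity from $C$.
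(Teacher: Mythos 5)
Your argument is correct: the paper states this as a Fact and cites \cite[Proposition~3.19]{BC2017} without reproducing a proof, and both of your routes (verifying the variational characterization $\innp{x-p,d-p}\le 0$ for $p:=z+\Pro_{C}(x-z)$, or directly translating the minimization problem) are the standard ones, essentially identical to the argument in the cited reference. No gaps; the preliminary remark that $D=z+C$ is nonempty, closed and convex, so that $\Pro_{D}$ is well defined, is exactly the right point of care.
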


\begin{fact} {\rm \cite[Theorem~4.9]{D2012}} \label{fact:CharaProjectionLineaSpace}
	Let $M$ be a linear subspace in $\mathcal{H}$, $x \in \mathcal{H}$, and $p \in M$. Then $p = \Pro_{M}x$ if and only if $x - p \in M^{\perp}$; that is,
	$(\forall y \in M)$	 $\innp{x-p, y} = 0$. 
\end{fact}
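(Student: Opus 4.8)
The plan is to establish this as the standard variational characterization of the metric projection onto a (closed) linear subspace, proving the two implications separately with the Pythagorean identity as the main tool. Throughout I would use that, since $\Pro_{M}x$ is understood to exist, the statement $p=\Pro_{M}x$ means precisely that $p\in M$ and $\|x-p\|\leq\|x-q\|$ for all $q\in M$.

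For the implication ``$p=\Pro_{M}x\ \Rightarrow\ x-p\in M^{\perp}$'', I would fix an arbitrary $y\in M$ and exploit that $M$ is a linear subspace, so $p+ty\in M$ for every $t\in\mathbb{R}$. Minimality of $p$ then gives $\|x-p\|^{2}\leq\|x-p-ty\|^{2}=\|x-p\|^{2}-2t\innp{x-p,y}+t^{2}\|y\|^{2}$, i.e.\ $2t\innp{x-p,y}\leq t^{2}\|y\|^{2}$ for all real $t$. Dividing by $t>0$ and letting $t\downarrow 0$ yields $\innp{x-p,y}\leq 0$, while dividing by $t<0$ and letting $t\uparrow 0$ yields $\innp{x-p,y}\geq 0$; hence $\innp{x-p,y}=0$, and since $y$ was arbitrary, $x-p\in M^{\perp}$.

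For the converse, I would assume $p\in M$ and $x-p\in M^{\perp}$, and for arbitrary $q\in M$ note that $p-q\in M$, hence $\innp{x-p,p-q}=0$ and $\|x-q\|^{2}=\|(x-p)+(p-q)\|^{2}=\|x-p\|^{2}+\|p-q\|^{2}\geq\|x-p\|^{2}$, with equality if and only if $q=p$. Thus $p$ is the unique nearest point of $M$ to $x$, that is, $p=\Pro_{M}x$.

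The argument is entirely routine; the only mild subtleties are that existence of $\Pro_{M}x$ requires $M$ to be closed (which is implicit in the hypothesis) and that in the forward direction the orthogonality must be extracted from a one-sided inequality via the two-sided limiting argument above, rather than by a single substitution. Since the statement is quoted from \cite{D2012}, I would in practice simply cite it; the sketch above records the proof for completeness.
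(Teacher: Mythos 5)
Your proof is correct; it is the standard textbook argument (one-sided perturbation along $p+ty\in M$ plus the Pythagorean identity for the converse), which is essentially how \cite[Theorem~4.9]{D2012} itself is proved. The paper states this as a quoted Fact and offers no proof of its own, so there is nothing to reconcile; citing \cite{D2012}, as you note, is exactly what the paper does.
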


\begin{fact} {\rm \cite[Theorems~3.5 and 5.5]{D2012} }  \label{fact:ProjectorInnerPRod}
	Let $C$ be a nonempty closed  convex set of $\mathcal{H}$. Then the following assertions hold:
	\begin{enumerate}
		\item  \label{fact:ProjectorInnerPRod:Idempotent} $\Pro_{C}$ is idempotent: $\Pro_{C}^{2} =\Pro_{C}$.
		\item 	\label{fact:ProjectorInnerPRod:FirmlyNonexp}  $\Pro_{C}$ is firmly nonexpansive:  $(\forall x \in \mathcal{H})$ $(\forall y \in \mathcal{H})$ $\innp{x-y, \Pro_{C}x - \Pro_{C}y} \geq \norm{\Pro_{C}x -\Pro_{C}y}^{2}$.
		\item  \label{fact:ProjectorInnerPRod:Nonnegative} $\Pro_{C}$ is  monotone: $(\forall x \in \mathcal{H})$ $(\forall y \in \mathcal{H})$ $\innp{x-y, \Pro_{C}x - \Pro_{C}y} \geq 0$.
	\end{enumerate}	

\end{fact}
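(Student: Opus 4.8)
The plan is to reduce all three assertions to the classical obtuse-angle characterization of the metric projection onto a nonempty closed convex set: for $x \in \HH$ and $p \in C$, one has $p = \Pro_{C}x$ if and only if $\innp{x - p, c - p} \le 0$ for every $c \in C$. First I would recall (or quickly re-derive) this characterization. Existence and uniqueness of $\Pro_{C}x$ is the projection theorem for closed convex sets in a Hilbert space. For the inequality, if $p = \Pro_{C}x$, then for any $c \in C$ and $t \in (0,1]$ convexity gives $p + t(c - p) \in C$, so $\norm{x - p}^{2} \le \norm{x - p - t(c - p)}^{2} = \norm{x - p}^{2} - 2t\innp{x - p, c - p} + t^{2}\norm{c - p}^{2}$; dividing by $t$ and letting $t \downarrow 0$ yields $\innp{x - p, c - p} \le 0$. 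Conversely, that inequality forces $\norm{x - c}^{2} = \norm{x - p}^{2} + 2\innp{x - p, p - c} + \norm{p - c}^{2} \ge \norm{x - p}^{2}$ for all $c \in C$, so $p = \Pro_{C}x$.

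For idempotence (i), given $x \in \HH$ put $p := \Pro_{C}x \in C$; since $p \in C$ the nearest point of $C$ to $p$ is $p$ itself, so $\Pro_{C}p = p$, whence $\Pro_{C}^{2}x = \Pro_{C}p = p = \Pro_{C}x$.

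For firm nonexpansiveness (ii), fix $x, y \in \HH$ and set $p := \Pro_{C}x$, $q := \Pro_{C}y$. Apply the characterization twice: using the point $q \in C$ gives $\innp{x - p, q - p} \le 0$, and using the point $p \in C$ gives $\innp{y - q, p - q} \le 0$. Adding these and using $(x - p) - (y - q) = (x - y) - (p - q)$ together with $\innp{p - q, q - p} = -\norm{p - q}^{2}$ yields $\innp{(x - y) - (p - q), q - p} \le 0$, i.e.\ $\innp{x - y, q - p} + \norm{p - q}^{2} \le 0$, which rearranges to $\innp{x - y, \Pro_{C}x - \Pro_{C}y} \ge \norm{\Pro_{C}x - \Pro_{C}y}^{2}$. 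Finally, monotonicity (iii) is immediate from (ii), since the right-hand side $\norm{\Pro_{C}x - \Pro_{C}y}^{2}$ is nonnegative.

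I do not anticipate a genuine obstacle here; the only real content is the obtuse-angle characterization, and once it is in hand each of (i)--(iii) follows in a couple of lines. If one instead treats that characterization as known (it is standard, e.g.\ in Deutsch's book), the whole argument collapses to the two additions of inner-product inequalities above.
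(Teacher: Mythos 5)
Your argument is correct: the obtuse-angle (variational) characterization of $\Pro_{C}$ is established properly, and each of (i)--(iii) follows from it exactly as you write, with the two added inequalities rearranging to the firm nonexpansiveness inequality and (iii) being a trivial consequence. Note, however, that the paper offers no proof of this statement at all --- it is quoted as a known fact from \cite{D2012} --- so your proposal simply supplies the standard textbook argument underlying that citation rather than an alternative to anything in the paper.
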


\begin{fact} {\rm \cite[Theorems~5.8 and 5.13]{D2012}} \label{MetrProSubs8}
	Let $M$ be a closed linear subspace of $\mathcal{H}$. Then the following statements hold:
	\begin{enumerate}
		\item \label{MetrProSubs8:i}$M^{\perp}$ is a closed linear subspace.
		\item \label{MetrProSubs8:ii} $\Id =\Pro_{M}+\Pro_{M^{\perp}}$. 
		\item  \label{fact:ProjectorInnerPRod:BoundedLine} $\Pro_{M}$ is a bounded linear operator and $\norm{\Pro_{M}}=1$ $($unless $M=\{0\}$, in which case $\norm{\Pro_{M}}=0$$)$.
		\item   \label{fact:ProjectorInnerPRod:Selfadjoint} $\Pro_{M}$ is self-adjoint: $\innp{\Pro_{M}x, y }=\innp{x,\Pro_{M}y}$ for all $x,y$ in $\mathcal{H}$.
	\end{enumerate}
\end{fact}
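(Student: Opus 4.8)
The plan is to derive all four items from the variational characterization of the projection onto a linear subspace recorded in \cref{fact:CharaProjectionLineaSpace}: for a linear subspace $M$ and $x\in\HH$, the point $p=\Pro_{M}x$ is the \emph{unique} element of $M$ with $x-p\in M^{\perp}$. (Existence of $\Pro_{M}x$ for a closed linear subspace $M$ is the projection theorem, subsumed in \cref{fact:ProjectorInnerPRod}.) I would prove \cref{MetrProSubs8:i} first, since it is needed to apply \cref{fact:CharaProjectionLineaSpace} to $M^{\perp}$ later: write $M^{\perp}=\bigcap_{y\in M}\{x\in\HH\mid\innp{x,y}=0\}$; each set in the intersection is the kernel of the continuous linear functional $\innp{\cdot,y}$, hence a closed linear subspace, and an arbitrary intersection of closed linear subspaces is again a closed linear subspace.

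Next I would establish \cref{fact:ProjectorInnerPRod:BoundedLine}. For linearity, fix $x,y\in\HH$ and scalars $\alpha,\beta$; then $\alpha\Pro_{M}x+\beta\Pro_{M}y\in M$ because $M$ is a linear subspace, while $(\alpha x+\beta y)-(\alpha\Pro_{M}x+\beta\Pro_{M}y)=\alpha(x-\Pro_{M}x)+\beta(y-\Pro_{M}y)\in M^{\perp}$ because $M^{\perp}$ is a linear subspace by \cref{MetrProSubs8:i} and $x-\Pro_{M}x,\,y-\Pro_{M}y\in M^{\perp}$ by \cref{fact:CharaProjectionLineaSpace}; by the uniqueness in \cref{fact:CharaProjectionLineaSpace} this forces $\Pro_{M}(\alpha x+\beta y)=\alpha\Pro_{M}x+\beta\Pro_{M}y$. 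For the norm, decompose $x=\Pro_{M}x+(x-\Pro_{M}x)$ into orthogonal summands and apply the Pythagorean identity to get $\norm{\Pro_{M}x}^{2}\le\norm{x}^{2}$, so $\norm{\Pro_{M}}\le 1$; if $M\neq\{0\}$, any nonzero $m\in M$ satisfies $\Pro_{M}m=m$ (since $m\in M$ and $m-m=0\in M^{\perp}$), hence $\norm{\Pro_{M}}=1$, while if $M=\{0\}$ then $\Pro_{M}\equiv 0$.

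Finally, \cref{MetrProSubs8:ii} follows by checking that $x-\Pro_{M}x$ is exactly $\Pro_{M^{\perp}}x$: indeed $x-\Pro_{M}x\in M^{\perp}$, and $x-(x-\Pro_{M}x)=\Pro_{M}x\in M\subseteq (M^{\perp})^{\perp}$, so \cref{fact:CharaProjectionLineaSpace} applied to the closed linear subspace $M^{\perp}$ (closed by \cref{MetrProSubs8:i}) gives $\Pro_{M^{\perp}}x=x-\Pro_{M}x$, i.e.\ $\Id=\Pro_{M}+\Pro_{M^{\perp}}$. For \cref{fact:ProjectorInnerPRod:Selfadjoint}, expand $\innp{\Pro_{M}x,y}=\innp{\Pro_{M}x,\Pro_{M}y}+\innp{\Pro_{M}x,y-\Pro_{M}y}=\innp{\Pro_{M}x,\Pro_{M}y}$, the last equality because $\Pro_{M}x\in M$ and $y-\Pro_{M}y\in M^{\perp}$; the right-hand side is symmetric in $x$ and $y$, and running the same computation with the roles reversed shows it equals $\innp{x,\Pro_{M}y}$. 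There is no genuine obstacle here — this is a classical fact from \cite{D2012} — but the one point requiring care is to invoke \cref{fact:CharaProjectionLineaSpace} only for subspaces whose projections are known to exist (which is why \cref{MetrProSubs8:i} must precede the argument for \cref{MetrProSubs8:ii}) and to dispatch the degenerate case $M=\{0\}$ separately in \cref{fact:ProjectorInnerPRod:BoundedLine}.
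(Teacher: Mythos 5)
This item is imported verbatim from the cited reference (Deutsch's book) and the paper supplies no proof of it, so there is nothing internal to compare against; the relevant question is only whether your blind derivation is sound, and it is. You correctly reduce everything to the variational characterization in \cref{fact:CharaProjectionLineaSpace}, and you handle the two points where such a derivation can silently go wrong: you establish \cref{MetrProSubs8}\cref{MetrProSubs8:i} first, so that when you apply the characterization to $M^{\perp}$ in the proof of \cref{MetrProSubs8}\cref{MetrProSubs8:ii} the projector $\Pro_{M^{\perp}}$ actually exists, and you treat the degenerate case $M=\{0\}$ separately in \cref{MetrProSubs8}\cref{fact:ProjectorInnerPRod:BoundedLine}. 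The linearity argument via uniqueness, the norm bound via the Pythagorean identity applied to the orthogonal splitting $x=\Pro_{M}x+(x-\Pro_{M}x)$, the identification $\Pro_{M^{\perp}}x=x-\Pro_{M}x$ using $\Pro_{M}x\in M\subseteq (M^{\perp})^{\perp}$, and the symmetry computation $\innp{\Pro_{M}x,y}=\innp{\Pro_{M}x,\Pro_{M}y}=\innp{x,\Pro_{M}y}$ are all correct; this is essentially the standard textbook proof that the cited theorems themselves rest on, so no gap remains.
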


\begin{fact} {\rm \cite[Theorem~6.24]{D2012} } \label{fact:StrongSeparationSubspace}
	Let $M$ be a closed linear subspace of $\mathcal{H}$ and $x \in \mathcal{H} \smallsetminus M$. Then there exists a point $z \in M^{\perp}$ with $\norm{z}=1$ and $\innp{z,x} >0$.
\end{fact}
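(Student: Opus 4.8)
The plan is to produce $z$ explicitly by normalizing the component of $x$ orthogonal to $M$. Since $M$ is a closed linear subspace, \cref{MetrProSubs8}\cref{MetrProSubs8:i} guarantees that $M^{\perp}$ is also a closed linear subspace, and \cref{MetrProSubs8}\cref{MetrProSubs8:ii} gives the decomposition $\Id = \Pro_{M} + \Pro_{M^{\perp}}$, so that $x = \Pro_{M}x + \Pro_{M^{\perp}}x$ with $\Pro_{M}x \in M$ and $\Pro_{M^{\perp}}x \in M^{\perp}$.

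First I would observe that $\Pro_{M^{\perp}}x \neq 0$: if $\Pro_{M^{\perp}}x = 0$, then $x = \Pro_{M}x \in M$, contradicting $x \in \mathcal{H} \smallsetminus M$. Hence it is legitimate to set
\begin{equation*}
z := \frac{\Pro_{M^{\perp}}x}{\norm{\Pro_{M^{\perp}}x}},
\end{equation*}
which lies in $M^{\perp}$ (a linear subspace) and satisfies $\norm{z} = 1$ by construction.

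Next I would compute $\innp{z,x}$. Using $x = \Pro_{M}x + \Pro_{M^{\perp}}x$ and the fact that $\Pro_{M^{\perp}}x \in M^{\perp}$ is orthogonal to $\Pro_{M}x \in M$, one gets $\innp{\Pro_{M^{\perp}}x, x} = \innp{\Pro_{M^{\perp}}x, \Pro_{M}x} + \norm{\Pro_{M^{\perp}}x}^{2} = \norm{\Pro_{M^{\perp}}x}^{2}$. Dividing by $\norm{\Pro_{M^{\perp}}x}$ yields $\innp{z,x} = \norm{\Pro_{M^{\perp}}x} > 0$, which completes the argument.

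There is no real obstacle here; the statement is an elementary consequence of the orthogonal decomposition theorem. The only points requiring a word of care are verifying $\Pro_{M^{\perp}}x \neq 0$ (so the normalization makes sense) and invoking the orthogonality $M \perp M^{\perp}$ to collapse the inner product — both of which are immediate from the facts already recorded above.
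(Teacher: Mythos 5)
Your argument is correct: $\Pro_{M^{\perp}}x \neq 0$ because $x \notin M$, the normalization $z := \Pro_{M^{\perp}}x/\norm{\Pro_{M^{\perp}}x}$ lies in $M^{\perp}$ with $\norm{z}=1$, and the orthogonal decomposition $\Id = \Pro_{M} + \Pro_{M^{\perp}}$ together with $M \perp M^{\perp}$ gives $\innp{z,x} = \norm{\Pro_{M^{\perp}}x} > 0$. Note that the paper itself offers no proof of this statement --- it is recorded as a Fact with a citation to \cite[Theorem~6.24]{D2012} --- so there is nothing internal to compare against; your self-contained derivation from \cref{MetrProSubs8} is exactly the standard argument underlying the cited result and is a perfectly good replacement for the external reference.
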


\begin{fact} {\rm \cite[Lemma~9.2]{D2012}} \label{fac:proj:commu}
	Let $M$ and $N$ be closed linear subspaces of $\mathcal{H}$. Assume  $M \subseteq N$ or $N \subseteq M$. Then $\Pro_{M}\Pro_{N}=\Pro_{N}\Pro_{M} =\Pro_{M \cap N} $.
\end{fact}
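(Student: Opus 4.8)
Here the statement is quoted from \cite{D2012}, but a short self-contained argument is available from the facts already assembled, and the plan is as follows. First I would reduce to the case $M \subseteq N$: the hypothesis is symmetric in $M$ and $N$, and in either case $M \cap N$ equals the smaller subspace, so it suffices to prove
\[
\Pro_{M}\Pro_{N} = \Pro_{N}\Pro_{M} = \Pro_{M} \qquad \text{whenever } M \subseteq N .
\]

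The identity $\Pro_{N}\Pro_{M} = \Pro_{M}$ is immediate: for any $x \in \mathcal{H}$ the point $\Pro_{M}x$ lies in $M \subseteq N$, and $\Pro_{N}$ fixes every point of $N$ (either by idempotence, \cref{fact:ProjectorInnerPRod}, or directly from the variational characterization \cref{fact:CharaProjectionLineaSpace}), so $\Pro_{N}(\Pro_{M}x) = \Pro_{M}x$. For the remaining identity $\Pro_{M}\Pro_{N} = \Pro_{M}$ I would offer the quick route via self-adjointness: by \cref{MetrProSubs8} both $\Pro_{M}$ and $\Pro_{N}$ are self-adjoint, so taking adjoints in the identity just proved gives $\Pro_{M}\Pro_{N} = (\Pro_{N}\Pro_{M})^{*} = \Pro_{M}^{*} = \Pro_{M}$. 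Alternatively, one can argue directly from \cref{fact:CharaProjectionLineaSpace}: setting $p := \Pro_{M}x \in M$ and writing $\Pro_{N}x - p = (\Pro_{N}x - x) + (x - p)$, the first summand equals $-\Pro_{N^{\perp}}x \in N^{\perp}$ by \cref{MetrProSubs8} (the decomposition $\Id = \Pro_{N} + \Pro_{N^{\perp}}$), the second lies in $M^{\perp}$ by definition of $p$, and since $M \subseteq N$ forces $N^{\perp} \subseteq M^{\perp}$, the sum lies in $M^{\perp}$; hence $p = \Pro_{M}(\Pro_{N}x)$. Chaining the two identities finishes the proof.

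Since the claim is this elementary, there is no real obstacle; the only point that requires an idea rather than a definition-unwinding is the second identity $\Pro_{M}\Pro_{N} = \Pro_{M}$, which genuinely uses either the inclusion $N^{\perp} \subseteq M^{\perp}$ or, equivalently, the self-adjointness of the projectors. I would present the self-adjointness argument as the main line and mention the direct computation as a remark.
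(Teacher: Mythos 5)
Your proof is correct. The paper itself gives no argument here: the statement is quoted as a Fact with a citation to Deutsch's book, so there is nothing in-paper to compare against. Both of your routes go through — the reduction to $M\subseteq N$, the identity $\Pro_{N}\Pro_{M}=\Pro_{M}$ from $\Pro_{M}x\in M\subseteq N$, and then either taking adjoints using the self-adjointness of projectors (\cref{MetrProSubs8}) or verifying $\Pro_{M}x=\Pro_{M}(\Pro_{N}x)$ directly from the variational characterization (\cref{fact:CharaProjectionLineaSpace}) together with $N^{\perp}\subseteq M^{\perp}$ — and this is essentially the standard textbook proof of the cited lemma.
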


\begin{definition} {\rm \cite[Definition~9.4]{D2012}} \label{defn:FredrichAngleClassical}
	The  \emph{Friedrichs angle} between two linear subspaces $U$ and $V$ is the angle $\alpha(U,V)$ between $0$ and $\frac{\pi}{2}$ whose cosine, $c(U,V) :=\cos \alpha(U,V)$, is defined by the expression
	\begin{align*}
	c(U,V) :=  \sup \{ |\innp{u,v}| ~|~ u \in U \cap (U \cap V)^{\perp}, v \in V \cap (U \cap V)^{\perp}, \norm{u} \leq 1, \norm{v} \leq 1 \}.
	\end{align*}
\end{definition}

\begin{fact}{\rm \cite[Lemma~9.5]{D2012}} \label{fac:PVPUPUcapVperp}
	Let $U$ and $V$ be closed linear subspaces of $\mathcal{H}$. Then $c(U,V)=\norm{\Pro_{V}\Pro_{U}-\Pro_{U \cap V}} = \norm{\Pro_{V}\Pro_{U}\Pro_{(U\cap V)^{\perp}}}$.
\end{fact}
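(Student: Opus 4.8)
The plan is to reduce everything to bookkeeping with orthogonal projectors. Write $M := U \cap V$, a closed linear subspace contained in both $U$ and $V$. The first step is to record the algebraic facts forced by these inclusions: by \cref{fac:proj:commu}, $\Pro_{U}\Pro_{M} = \Pro_{M}\Pro_{U} = \Pro_{M}$ and similarly $\Pro_{V}\Pro_{M} = \Pro_{M}\Pro_{V} = \Pro_{M}$, while $\Pro_{M^{\perp}} = \Id - \Pro_{M}$ by \cref{MetrProSubs8}\cref{MetrProSubs8:ii}. These combine to give $\Pro_{U}\Pro_{M^{\perp}} = \Pro_{U} - \Pro_{M} = \Pro_{M^{\perp}}\Pro_{U}$ (so $\Pro_{U}$ and $\Pro_{M^{\perp}}$ commute), and $\Pro_{U} - \Pro_{M}$ is self-adjoint by \cref{MetrProSubs8}\cref{fact:ProjectorInnerPRod:Selfadjoint} and idempotent, with range $U \cap M^{\perp}$; hence $\Pro_{U}\Pro_{M^{\perp}} = \Pro_{U\cap M^{\perp}}$, and symmetrically $\Pro_{V}\Pro_{M^{\perp}} = \Pro_{M^{\perp}}\Pro_{V} = \Pro_{V\cap M^{\perp}}$.

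The second equality in the statement then comes for free: $\Pro_{V}\Pro_{U}\Pro_{M^{\perp}} = \Pro_{V}\Pro_{U}(\Id - \Pro_{M}) = \Pro_{V}\Pro_{U} - \Pro_{V}\Pro_{M} = \Pro_{V}\Pro_{U} - \Pro_{M}$, so the two operators whose norms appear are literally equal. It remains to prove $c(U,V) = \norm{\Pro_{V}\Pro_{U}\Pro_{M^{\perp}}}$.

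For this, I would first rewrite $\Pro_{V}\Pro_{U}\Pro_{M^{\perp}}$ as a product of the two projectors occurring in \cref{defn:FredrichAngleClassical}: since $\Pro_{M}\Pro_{V}\Pro_{U}\Pro_{M^{\perp}} = \Pro_{M}\Pro_{U}\Pro_{M^{\perp}} = \Pro_{M}\Pro_{M^{\perp}}\Pro_{U} = 0$, one has $\Pro_{V}\Pro_{U}\Pro_{M^{\perp}} = \Pro_{M^{\perp}}\Pro_{V}\Pro_{U}\Pro_{M^{\perp}} = (\Pro_{M^{\perp}}\Pro_{V})(\Pro_{U}\Pro_{M^{\perp}}) = \Pro_{V\cap M^{\perp}}\Pro_{U\cap M^{\perp}}$. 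Next I would use the elementary identity that for closed linear subspaces $A, B$, $\norm{\Pro_{A}\Pro_{B}} = \sup\{\abs{\innp{a,b}} \mid a \in A,\, b \in B,\, \norm{a}\le 1,\, \norm{b}\le 1\}$: expanding the operator norm as $\sup_{\norm{x}\le 1,\,\norm{y}\le 1}\abs{\innp{\Pro_{A}\Pro_{B}x,y}} = \sup\abs{\innp{\Pro_{B}x,\Pro_{A}y}}$ via self-adjointness, and noting that $\Pro_{B}x$ and $\Pro_{A}y$ range over the closed unit balls of $B$ and $A$ (since each unit vector of a subspace is fixed by its own projector). Taking $A = V\cap M^{\perp}$ and $B = U\cap M^{\perp}$ turns the right-hand side into exactly $c(U,V)$, finishing the proof.

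The only step demanding genuine attention is the claim in the first paragraph that $\Pro_{U} - \Pro_{M}$ is the projector onto $U\cap M^{\perp}$, i.e.\ the orthogonal decomposition $U = M \oplus (U \cap M^{\perp})$; the rest is formal. Should one prefer to bypass that identity, the equality $c(U,V) = \norm{\Pro_{V}\Pro_{U}\Pro_{M^{\perp}}}$ can instead be obtained by two one-sided estimates: for unit-ball vectors $u \in U\cap M^{\perp}$ and $v \in V\cap M^{\perp}$, the identity $\innp{u,v} = \innp{\Pro_{V}\Pro_{U}\Pro_{M^{\perp}}u, v}$ yields ``$\le$'', while for arbitrary $x, y$ the vectors $\Pro_{U}\Pro_{M^{\perp}}x \in U\cap M^{\perp}$ and $\Pro_{M^{\perp}}\Pro_{V}y \in V\cap M^{\perp}$ witness ``$\ge$''.
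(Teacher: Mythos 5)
Your argument is correct. Note, however, that the paper does not prove this statement at all: it is imported verbatim as \cref{fac:PVPUPUcapVperp} from Deutsch's book {\rm \cite[Lemma~9.5]{D2012}}, so there is no internal proof to compare against; what you have written is a self-contained verification, and it follows essentially the standard textbook route. Your key reductions are all sound: with $M=U\cap V$, the commutation relations from \cref{fac:proj:commu} give $\Pro_{V}\Pro_{U}\Pro_{M^{\perp}}=\Pro_{V}\Pro_{U}-\Pro_{M}$ (which settles the second equality immediately), the identity $\Pro_{U}\Pro_{M^{\perp}}=\Pro_{U\cap M^{\perp}}$ that you flag as the one delicate point is indeed the only structural ingredient (and, as you indicate, it follows from self-adjointness plus idempotence plus the range computation, or more directly from the characterization in \cref{fact:CharaProjectionLineaSpace}, since for $y\in U\cap M^{\perp}$ one has $\innp{x-(\Pro_{U}-\Pro_{M})x,\,y}=0$), and the final step correctly converts $\norm{\Pro_{V\cap M^{\perp}}\Pro_{U\cap M^{\perp}}}$ into the supremum defining $c(U,V)$ via $\norm{T}=\sup_{\norm{x}\leq 1,\norm{y}\leq 1}\abs{\innp{Tx,y}}$ and the fact that $\Pro_{B}$ maps the unit ball of $\mathcal{H}$ onto the unit ball of $B$. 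Your alternative two-sided estimate at the end is also valid and gives a way to avoid invoking $\Pro_{U}\Pro_{M^{\perp}}=\Pro_{U\cap M^{\perp}}$ as a named identity, although it secretly uses the same inclusions ($\Pro_{U}\Pro_{M^{\perp}}x\in U\cap M^{\perp}$, $\Pro_{M^{\perp}}\Pro_{V}y\in V\cap M^{\perp}$). In short: no gap, and nothing in the paper's treatment (a citation) that your proof conflicts with.
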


\begin{fact} {\rm \cite[Theorem~9.35]{D2012}} \label{fac:cFLess1}
	Let $U$ and $V$ be closed linear subspaces of $\mathcal{H}$. Then $c(U,V) <1$ if and only if $U +V$ is closed.
\end{fact}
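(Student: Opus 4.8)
The plan is to prove the equivalence after reducing to the case $U \cap V = \{0\}$. Set $M := U \cap V$, $U' := U \cap M^{\perp}$ and $V' := V \cap M^{\perp}$. Since $M$ is a closed subspace contained in both $U$ and $V$, we have $U = M \oplus U'$, $V = M \oplus V'$, hence $U + V = M + (U' + V')$ with $M \perp (U' + V')$; moreover $U' \cap V' \subseteq M \cap M^{\perp} = \{0\}$, so $(U'\cap V')^{\perp} = \mathcal{H}$, and comparing \cref{defn:FredrichAngleClassical} for $(U,V)$ and $(U',V')$ --- using $U \cap (U\cap V)^{\perp} = U'$ and $V \cap (U\cap V)^{\perp} = V'$ --- one gets $c(U,V) = c(U',V')$. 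I would then record the elementary fact that for a closed subspace $A$ and a subspace $B$ with $A \perp B$, the sum $A + B$ is closed if and only if $B$ is closed: if $a_n + b_n \to x$ with $a_n \in A$, $b_n \in B$, then $a_n = \Pro_A(a_n+b_n) \to \Pro_A x \in A$, so $b_n = (a_n+b_n)-a_n \to x - \Pro_A x$, whence $x \in A + B$ once $B$ is closed; the reverse uses $A \cap \overline{B} = \{0\}$. Applied with $A = M$ and $B = U'+V'$ this gives that $U+V$ is closed iff $U'+V'$ is closed. So from now on assume $U \cap V = \{0\}$; then $(U\cap V)^{\perp} = \mathcal{H}$, so \cref{defn:FredrichAngleClassical} yields $\abs{\innp{u,v}} \le c(U,V)\norm{u}\norm{v}$ for all $u\in U$, $v\in V$, while \cref{fac:PVPUPUcapVperp} (with $\Pro_{U\cap V}=0$) gives $c(U,V) = \norm{\Pro_V\Pro_U} = \sup_{u\in U,\, \norm{u}=1}\norm{\Pro_V u}$.

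Suppose first that $c := c(U,V) < 1$. For $u\in U$, $v\in V$,
\[
\norm{u+v}^{2} = \norm{u}^{2} + 2\innp{u,v} + \norm{v}^{2} \ge \big(\norm{u}-c\norm{v}\big)^{2} + (1-c^{2})\norm{v}^{2} \ge (1-c^{2})\norm{v}^{2},
\]
and symmetrically $\norm{u+v}^{2} \ge (1-c^{2})\norm{u}^{2}$. Hence whenever $(u_n+v_n)_{n\in\mathbb{N}}$ converges in $\mathcal{H}$ with $u_n\in U$, $v_n\in V$, it is Cauchy, and applying the two estimates to $u_n-u_m\in U$, $v_n-v_m\in V$ shows that $(u_n)_{n\in\mathbb{N}}$ and $(v_n)_{n\in\mathbb{N}}$ are Cauchy; as $U$, $V$ are closed, $u_n\to u\in U$ and $v_n\to v\in V$, so $\lim_n(u_n+v_n) = u+v\in U+V$. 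Thus $U+V$ is closed.

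Conversely, suppose $U+V$ is closed. Then $\Phi\colon U\times V \to U+V$, $(u,v)\mapsto u+v$, is a bounded linear bijection between Hilbert spaces (the target being complete, as a closed subspace of $\mathcal{H}$), so the bounded inverse theorem yields $C>0$ with $\norm{u}+\norm{v}\le C\norm{u+v}$ for all $u\in U$, $v\in V$. If $c(U,V)=1$, pick $u_n\in U$ with $\norm{u_n}=1$ and $\norm{\Pro_V u_n}\to1$ (possible since $c(U,V) = \sup_{u\in U,\, \norm{u}=1}\norm{\Pro_V u}$), and set $v_n := \Pro_V u_n\in V$; then $\innp{u_n,v_n}=\norm{v_n}^{2}$, so $\norm{u_n-v_n}^{2} = 1 - \norm{v_n}^{2}\to 0$, while $1 = \norm{u_n}\le \norm{u_n}+\norm{-v_n}\le C\norm{u_n-v_n}\to 0$ --- a contradiction. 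Hence $c(U,V)<1$.

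The main obstacle is the reduction to $U\cap V=\{0\}$: it requires checking both that the Friedrichs cosine is unchanged on passing from $(U,V)$ to $(U\cap M^{\perp},V\cap M^{\perp})$ and that closedness of the sum transfers, the latter through the orthogonal-summand lemma above. After that the forward implication is only a completion-of-squares estimate converting convergence of $(u_n+v_n)_n$ into convergence of each component sequence, and the converse is a short application of the bounded inverse theorem against the near-antiparallel pair $u_n,\Pro_V u_n$ produced by $c(U,V)=1$.
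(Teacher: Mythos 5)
Your proof is correct. Note that the paper does not prove this statement at all: it is quoted as \cref{fac:cFLess1} with a citation to \cite[Theorem~9.35]{D2012}, so there is no in-paper argument to compare against; what you have written is a valid self-contained proof along the standard textbook lines. The reduction to $U\cap V=\{0\}$ is sound, since $U\cap(U\cap V)^{\perp}=U'$ and $V\cap(U\cap V)^{\perp}=V'$ make $c(U,V)=c(U',V')$ immediate from \cref{defn:FredrichAngleClassical}, and your orthogonal-summand lemma (with $A=M$, $B=U'+V'$) correctly transfers closedness of the sum in both directions --- the reverse direction, which you only sketch via ``$A\cap\overline{B}=\{0\}$'', is easily completed by noting $\overline{B}\subseteq A^{\perp}$ and $\overline{B}\subseteq A+B$, so any $y\in\overline{B}$ written as $y=a+b$ forces $a\in A\cap A^{\perp}=\{0\}$. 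After the reduction, the forward implication via the coercivity estimate $\norm{u+v}^{2}\geq(1-c^{2})\max\{\norm{u}^{2},\norm{v}^{2}\}$ and the converse via the bounded inverse theorem applied to $(u,v)\mapsto u+v$ on $U\times V$ (using \cref{fac:PVPUPUcapVperp} with $\Pro_{U\cap V}=0$, and the self-adjointness and idempotency of $\Pro_{V}$ from \cref{MetrProSubs8} to get $\norm{u_{n}-\Pro_{V}u_{n}}^{2}=1-\norm{\Pro_{V}u_{n}}^{2}$) are both standard and correctly executed; the degenerate case $U'=\{0\}$ or $V'=\{0\}$ is harmless since then $c=0$ and the sum is trivially closed.
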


\begin{definition} {\rm \cite[Definition~3.7.5]{BBL1997}} \label{defn:FredrichAngle:N}
	Let $L_{1},\ldots,L_{m}$ be closed linear subspaces of $\mathcal{H}$. Define the \emph{angle $\beta := \beta(L_{1},\ldots, L_{m}) \in [0,\frac{\pi}{2}]$ of the $m$-tuple $(L_{1},\ldots, L_{m})$} by
	\begin{align*}
	\cos \beta := \norm{\Pro_{L_{m}} \cdots \Pro_{L_{1}}\Pro_{(\cap^{m}_{i=1}L_{i})^{\perp}}}.
	\end{align*}
\end{definition}

\begin{fact} {\rm \cite[Proposition~3.7.7]{BBL1997}} \label{fac:AnglN-tupleLess1}
	Let $L_{1},\ldots,L_{m}$ be closed linear subspaces of $\mathcal{H}$. The angle of the $m$-tuple $(L_{1}, \ldots, L_{m})$ is positive if and only if the sum $L^{\perp}_{1} + \cdots + L^{\perp}_{m}$ is closed. 
\end{fact}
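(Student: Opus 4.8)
The plan is to rewrite both sides in terms of the cyclic product of projections, reduce to the case $\bigcap_{i=1}^{m}L_{i}=\{0\}$, and then verify each implication by elementary Hilbert-space arguments together with a single appeal to the open mapping theorem.

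First I would reduce. Put $M:=\bigcap_{i=1}^{m}L_{i}$. Since $M\subseteq L_{i}$, \cref{fac:proj:commu} gives $\Pro_{M}\Pro_{L_{i}}=\Pro_{L_{i}}\Pro_{M}=\Pro_{M}$; hence $\Pro_{L_{i}}$ commutes with $\Pro_{M^{\perp}}=\Id-\Pro_{M}$, and $\Pro_{L_{i}}\Pro_{M^{\perp}}$ is the orthogonal projection onto $N_{i}:=L_{i}\cap M^{\perp}$. Iterating, $\Pro_{L_{m}}\cdots\Pro_{L_{1}}\Pro_{M^{\perp}}=\Pro_{N_{m}}\cdots\Pro_{N_{1}}$, an operator that vanishes on $M$ and leaves $M^{\perp}$ invariant, so $\cos\beta(L_{1},\dots,L_{m})=\norm{\Pro_{N_{m}}\cdots\Pro_{N_{1}}}$, the operator norm taken over $\mathcal{K}:=M^{\perp}$. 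Moreover $L_{i}^{\perp}\subseteq M^{\perp}$, the orthogonal complement of $N_{i}$ inside $\mathcal{K}$ is exactly $L_{i}^{\perp}$, and $\bigcap_{i}N_{i}=\{0\}$; since $\overline{\sum_{i}L_{i}^{\perp}}=M^{\perp}$, the sum $\sum_{i}L_{i}^{\perp}$ is closed in $\mathcal{H}$ if and only if $\sum_{i}L_{i}^{\perp}=\mathcal{K}$. Thus it suffices to prove: for closed subspaces $N_{1},\dots,N_{m}$ of a Hilbert space $\mathcal{K}$ with $\bigcap_{i}N_{i}=\{0\}$, one has $\norm{\Pro_{N_{m}}\cdots\Pro_{N_{1}}}<1$ iff $\sum_{i}N_{i}^{\perp}=\mathcal{K}$.

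Write $T:=\Pro_{N_{m}}\cdots\Pro_{N_{1}}$ and $P^{(j)}:=\Pro_{N_{j}}\cdots\Pro_{N_{1}}$ (so $P^{(0)}=\Id$, $P^{(m)}=T$). For ``$\Leftarrow$'' I would argue by contradiction: if $\norm{T}=1$ (one always has $\norm{T}\le 1$), choose $x_{n}$ with $\norm{x_{n}}=1$ and $\norm{Tx_{n}}\to 1$; nonexpansiveness of each $\Pro_{N_{i}}$ forces $\norm{P^{(j)}x_{n}}\to 1$ for every $j$, and the Pythagorean identity $\norm{P^{(j-1)}x_{n}}^{2}=\norm{P^{(j)}x_{n}}^{2}+\norm{(\Id-\Pro_{N_{j}})P^{(j-1)}x_{n}}^{2}$ then yields $P^{(j-1)}x_{n}-P^{(j)}x_{n}\to 0$, hence $x_{n}-P^{(j)}x_{n}\to 0$, and therefore $\Pro_{N_{j}^{\perp}}x_{n}\to 0$ for every $j$. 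On the other hand, if $\sum_{i}N_{i}^{\perp}=\mathcal{K}$, the open mapping theorem applied to the bounded linear surjection $(w_{1},\dots,w_{m})\mapsto\sum_{i}w_{i}$ from $N_{1}^{\perp}\times\cdots\times N_{m}^{\perp}$ onto $\mathcal{K}$ produces a constant $C$ such that every $x\in\mathcal{K}$ admits a representation $x=\sum_{i}w_{i}$ with $w_{i}\in N_{i}^{\perp}$ and $\sum_{i}\norm{w_{i}}\le C\norm{x}$. Applying this to $x_{n}$ and using $\innp{x_{n},w_{i}}=\innp{\Pro_{N_{i}^{\perp}}x_{n},w_{i}}$, one gets $1=\norm{x_{n}}^{2}=\sum_{i}\innp{\Pro_{N_{i}^{\perp}}x_{n},w_{i}}\le C\max_{i}\norm{\Pro_{N_{i}^{\perp}}x_{n}}\to 0$, a contradiction. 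For ``$\Rightarrow$'', assume $\norm{T}<1$; the telescoping identity $x-Tx=\sum_{j=1}^{m}\big(P^{(j-1)}x-P^{(j)}x\big)=\sum_{j=1}^{m}\Pro_{N_{j}^{\perp}}\big(P^{(j-1)}x\big)$ shows $\Range(\Id-T)\subseteq\sum_{i}N_{i}^{\perp}$, and since $\norm{T}<1$ the operator $\Id-T$ is invertible (Neumann series), so $\Range(\Id-T)=\mathcal{K}$ and hence $\sum_{i}N_{i}^{\perp}=\mathcal{K}$, in particular closed.

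The main obstacle I expect is the bookkeeping of the reduction step — confirming that inserting $\Pro_{M^{\perp}}$ collapses the cyclic product to the cyclic product of the $N_{i}$, and that ``$\sum_{i}L_{i}^{\perp}$ closed'' really is equivalent to ``$\sum_{i}N_{i}^{\perp}=\mathcal{K}$'' — together with the single soft-analysis input, namely the open mapping theorem supplying the uniform bound $\sum_{i}\norm{w_{i}}\le C\norm{x}$; everything else reduces to routine Hilbert-space estimates.
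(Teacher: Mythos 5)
Your proof is correct; note that the paper does not prove this statement at all but simply quotes it as a Fact from the reference \cite{BBL1997}, so what you have written is a self-contained substitute for the citation rather than a variant of an argument in the text. Your reduction is sound: since $M:=\cap_{i=1}^{m}L_{i}\subseteq L_{i}$, \cref{fac:proj:commu} gives $\Pro_{M}\Pro_{L_{i}}=\Pro_{L_{i}}\Pro_{M}=\Pro_{M}$, hence $\Pro_{L_{i}}$ commutes with $\Pro_{M^{\perp}}$; the only cosmetic caveat is that the identification of $\Pro_{L_{i}}\Pro_{M^{\perp}}$ with $\Pro_{N_{i}}$, $N_{i}=L_{i}\cap M^{\perp}$, is not literally \cref{fac:proj:commu} (which assumes nested subspaces) but the standard fact that a product of two commuting orthogonal projections is self-adjoint, idempotent, and has range equal to the intersection of the ranges. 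The remaining bookkeeping also checks out: $N_{i}^{\perp}\cap\mathcal{K}=L_{i}^{\perp}$ for $\mathcal{K}=M^{\perp}$, $\cap_{i}N_{i}=\{0\}$, and $\overline{\sum_{i}L_{i}^{\perp}}=(\cap_{i}L_{i})^{\perp}=\mathcal{K}$, so closedness of $\sum_{i}L_{i}^{\perp}$ is indeed equivalent to $\sum_{i}L_{i}^{\perp}=\mathcal{K}$. In the maximizing-sequence direction, the passage from $x_{n}-P^{(j)}x_{n}\to 0$ to $\Pro_{N_{j}^{\perp}}x_{n}\to 0$ deserves the one-line justification $\Pro_{N_{j}^{\perp}}P^{(j)}x_{n}=0$, whence $\norm{\Pro_{N_{j}^{\perp}}x_{n}}\le\norm{x_{n}-P^{(j)}x_{n}}$, and the open mapping theorem does supply the uniform constant $C$ with $\sum_{i}\norm{w_{i}}\le C\norm{x}$ after the usual rescaling of the open-ball statement; the converse via the telescoping identity and the Neumann-series invertibility of $\Id-T$ is clean. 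Conceptually, your two halves are the standard links in the literature — closedness of $\sum_{i}L_{i}^{\perp}$ yielding a bounded-regularity-type decomposition (your open-mapping step), and the Pythagoras/maximizing-sequence argument tying that to the norm of the cyclic product of projections in \cref{defn:FredrichAngle:N} — so your argument is in the same spirit as the regularity framework developed in the cited reference, but compressed into a direct, self-contained proof.
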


\begin{corollary} \label{cor:AnglN-tupleLess1:P}
	Let $L_{1},\ldots,L_{m}$ be closed linear subspaces of $\mathcal{H}$. Then $L^{\perp}_{1} + \cdots + L^{\perp}_{m}$ is closed  if and only if $\norm{\Pro_{L_{m}} \cdots \Pro_{L_{1}}\Pro_{(\cap^{m}_{i=1}L_{i})^{\perp}}} <1$.
\end{corollary}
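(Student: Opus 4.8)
The plan is to combine Fact \ref{fac:AnglN-tupleLess1} with Definition \ref{defn:FredrichAngle:N} directly, so there is essentially nothing left to prove beyond translating between the two formulations. First I would recall that, by Definition \ref{defn:FredrichAngle:N}, the angle $\beta := \beta(L_1,\ldots,L_m)$ of the $m$-tuple $(L_1,\ldots,L_m)$ is characterized by $\cos\beta = \norm{\Pro_{L_m}\cdots\Pro_{L_1}\Pro_{(\cap_{i=1}^m L_i)^\perp}}$. Since $\beta\in[0,\tfrac{\pi}{2}]$, the condition $\beta>0$ is equivalent to $\cos\beta<1$, which by the displayed formula is equivalent to $\norm{\Pro_{L_m}\cdots\Pro_{L_1}\Pro_{(\cap_{i=1}^m L_i)^\perp}}<1$.

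Next I would invoke Fact \ref{fac:AnglN-tupleLess1}, which states that $\beta(L_1,\ldots,L_m)>0$ if and only if $L_1^\perp+\cdots+L_m^\perp$ is closed. Chaining the two equivalences yields that $L_1^\perp+\cdots+L_m^\perp$ is closed if and only if $\norm{\Pro_{L_m}\cdots\Pro_{L_1}\Pro_{(\cap_{i=1}^m L_i)^\perp}}<1$, which is exactly the asserted statement.

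There is really no substantive obstacle here: the only point requiring a word of care is the elementary observation that for an angle $\beta$ ranging in the closed interval $[0,\tfrac{\pi}{2}]$ one has $\beta>0 \iff \cos\beta<1$ (the cosine is strictly decreasing on this interval and attains the value $1$ only at $\beta=0$), together with the fact that the norm $\norm{\Pro_{L_m}\cdots\Pro_{L_1}\Pro_{(\cap_{i=1}^m L_i)^\perp}}$ is automatically at most $1$ (each projector is nonexpansive by Fact \ref{MetrProSubs8}\ref{fact:ProjectorInnerPRod:BoundedLine}), so that ``$<1$'' is the genuine negation of ``$=1$''. Once this is noted, the corollary is immediate, and I would present it in two or three lines.
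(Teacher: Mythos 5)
Your proposal is correct and follows exactly the paper's own route: the published proof is literally ``Combine \cref{defn:FredrichAngle:N} and \cref{fac:AnglN-tupleLess1}'', and your translation between $\beta>0$ and $\cos\beta<1$ on $[0,\tfrac{\pi}{2}]$ is the only (elementary) detail it leaves implicit. Nothing further is needed.
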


\begin{proof}
	Combine \cref{defn:FredrichAngle:N} and \cref{fac:AnglN-tupleLess1}.
\end{proof}


\subsection*{Averaged nonexpansive operators}

\begin{definition} \label{defn:Nonexpansive} {\rm \cite[Definition~4.1]{BC2017}}
	Let $D$ be a nonempty subset of $\mathcal{H}$ and let $T:D \rightarrow \mathcal{H}$. Then $T$ is	\begin{enumerate}
		\item \label{FirmNonex} \emph{firmly nonexpansive} if
		\begin{align} \label{EQ:FirmNonex} 
		(\forall x  \in D) (\forall y \in D) \quad \norm{Tx -Ty}^{2} + \norm{(\Id -T)x -(\Id -T)y}^{2} \leq \norm{x -y}^{2};
		\end{align}
		\item \label{Nonex} \emph{nonexpansive} if it is Lipschitz continuous with constant 1, i.e.,
		\begin{align} \label{EQ:Nonex} 
		(\forall x  \in D) 	(\forall y \in D) \quad \norm{Tx -Ty} \leq \norm{x-y};
		\end{align}
		\item \label{FirmlyQuasiNonex} \emph{firmly quasinonexpansive} if
		\begin{align} \label{EQ:FirmlyQuasiNonex} 
		(\forall x \in D)  (\forall y \in \Fix T) \quad \norm{Tx-y}^{2} + \norm{Tx-x}^{2} \leq \norm{x-y}^{2};
		\end{align}
		\item \label{QuasiNonex} \emph{quasinonexpansive} if
		\begin{align} \label{EQ:QuasiNonex} 
		(\forall x \in D)  (\forall y \in \Fix T) \quad \norm{Tx-y} \leq \norm{x-y};
		\end{align}
		\item \label{StrickQuasiNonex} and \emph{strictly quasinonexpansive} if
		\begin{align} \label{EQ:StrickQuasiNonex}
		(\forall x \in D \smallsetminus \Fix T) (\forall y \in \Fix T) \quad \norm{Tx-y} < \norm{x-y}.
		\end{align}
	\end{enumerate}
\end{definition}

\begin{remark}{\rm \cite[page~70]{BC2017}} \label{rem:NonexpImplication}
	Concerning \cref{defn:Nonexpansive}, by definitions we have the implications:
	\begin{align*}
	\text{\cref{EQ:FirmNonex}} \Rightarrow  \text{\cref{EQ:Nonex}} \Rightarrow \text{\cref{EQ:QuasiNonex} } \quad \text{and} \quad
	\text{\cref{EQ:FirmNonex}} \Rightarrow \text{\cref{EQ:FirmlyQuasiNonex}} \Rightarrow  \text{\cref{EQ:StrickQuasiNonex}} \Rightarrow \text{\cref{EQ:QuasiNonex} } .
	\end{align*}
\end{remark}

\begin{definition} {\rm \cite[Definition~4.33]{BC2017}} \label{defn:AlphaAverage}
	Let $D$ be a nonempty subset of $\mathcal{H}$, let $T: D \rightarrow \mathcal{H}$ be nonexpansive, and let $\alpha \in  \left]0,1\right[\,$. Then $T$ is \emph{averaged with constant $\alpha$}, or \emph{$\alpha$-averaged} for short, if there exists a nonexpansive operator $F: D \rightarrow \mathcal{H}$ such that $T=(1- \alpha) \Id +\alpha F$.
\end{definition}

\begin{fact}  {\rm \cite[Remark~4.34(i)$\&$(iii)]{BC2017}} \label{fact:AverFirmNone}
	Let $D$ be a nonempty subset of $\mathcal{H}$, let $T: D \rightarrow \mathcal{H}$.
	\begin{enumerate}
		\item \label{fact:AverFirmNone:nonexp} If $T$ is averaged, then it is nonexpansive.
		\item \label{fact:AverFirmNone:firm} $T$ is firmly nonexpansive if and only if it is $\frac{1}{2}$-averaged.
	\end{enumerate}
\end{fact}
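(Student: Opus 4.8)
The statement is classical (it is quoted here from \cite[Remark~4.34]{BC2017}), so the plan is just to indicate how the two parts go. I would prove \cref{fact:AverFirmNone:nonexp} directly from the triangle inequality, and \cref{fact:AverFirmNone:firm} from the standard inner-product reformulation of firm nonexpansiveness together with a single norm expansion. No part of the argument needs $D$ to be convex or linear, since every inequality in sight involves only a fixed pair of points of $D$.

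For \cref{fact:AverFirmNone:nonexp}, I would write $T = (1-\alpha)\Id + \alpha F$ with $F\colon D \to \mathcal{H}$ nonexpansive and $\alpha \in \left]0,1\right[$. Then, for all $x,y \in D$, using $Tx - Ty = (1-\alpha)(x-y) + \alpha(Fx-Fy)$, the triangle inequality, and \cref{EQ:Nonex} for $F$ gives $\norm{Tx-Ty} \le (1-\alpha)\norm{x-y} + \alpha\norm{Fx-Fy} \le \norm{x-y}$, which is \cref{EQ:Nonex} for $T$.

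For \cref{fact:AverFirmNone:firm}, the first step is to record the equivalence that $T$ is firmly nonexpansive if and only if $\norm{Tx-Ty}^{2} \le \innp{x-y,\, Tx-Ty}$ for all $x,y \in D$; this follows by substituting $\norm{(\Id-T)x-(\Id-T)y}^{2} = \norm{x-y}^{2} - 2\innp{x-y,\,Tx-Ty} + \norm{Tx-Ty}^{2}$ into \cref{EQ:FirmNonex}, whereupon the $\norm{x-y}^{2}$ terms cancel and one is left with $2\norm{Tx-Ty}^{2} \le 2\innp{x-y,\,Tx-Ty}$. Next I would set $F := 2T - \Id$, so that $T = \tfrac12\Id + \tfrac12 F$ holds by construction; hence $T$ is $\tfrac12$-averaged exactly when this particular $F$ is nonexpansive. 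The only remaining computation is the identity
\begin{equation*}
	\norm{Fx-Fy}^{2} = \norm{2(Tx-Ty)-(x-y)}^{2} = 4\norm{Tx-Ty}^{2} - 4\innp{x-y,\,Tx-Ty} + \norm{x-y}^{2},
\end{equation*}
from which $\norm{Fx-Fy}^{2} \le \norm{x-y}^{2}$ is equivalent to $\norm{Tx-Ty}^{2} \le \innp{x-y,\,Tx-Ty}$. Chaining this with the reformulation above gives both implications of the equivalence simultaneously.

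There is no genuine obstacle here: the whole argument is a pair of short inner-product expansions. The only points worth a moment's care are that \cref{fact:AverFirmNone:firm} is a true biconditional precisely because both directions flow from the one displayed identity, and that the representation $F = 2T - \Id$ is forced, since any $\tfrac12$-averaged decomposition $T = \tfrac12\Id + \tfrac12 F$ must have this $F$ as its nonexpansive part.
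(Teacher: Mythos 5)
Your proposal is correct: part (i) is the expected triangle-inequality argument, and part (ii) correctly reduces firm nonexpansiveness to nonexpansiveness of the forced operator $F=2T-\Id$ via the two inner-product expansions, which also supplies the nonexpansiveness of $T$ required by \cref{defn:AlphaAverage}. The paper itself gives no proof — it quotes this as a fact from \cite[Remark~4.34]{BC2017} — and your argument is the standard one underlying that reference, so nothing further is needed.
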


\begin{fact} {\rm \cite[Proposition~4.35]{BC2017}}  \label{fact:averaged:character}
	Let $D$ be a nonempty subset of $\mathcal{H}$, let $T: D \rightarrow \mathcal{H}$ be nonexpansive, and let $\alpha \in \left]0,1\right[\,$. Then the following are equivalent:
	\begin{enumerate}
		\item \label{fact:averaged:character:i} $T$ is $\alpha$-averaged.
		\item \label{fact:averaged:character:ii} $(\forall x \in D)$ $(\forall y \in D)$ $ \norm{Tx -Ty}^{2} +\frac{1-\alpha}{\alpha} \norm{(\Id -T)x -(\Id -T)y}^{2} \leq \norm{x -y}^{2}$.
	\end{enumerate}	
\end{fact}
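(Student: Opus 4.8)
The plan is to reduce both conditions to one and the same algebraic inequality by working with the operator $F$ that would witness $\alpha$-averagedness. First I would note that the defining equation $T = (1-\alpha)\Id + \alpha F$ of \cref{defn:AlphaAverage} determines $F$ uniquely: solving for $F$ forces $F = \alpha^{-1}T - \alpha^{-1}(1-\alpha)\Id = \Id + \alpha^{-1}(T - \Id)$, which is a genuine operator from $D$ to $\mathcal{H}$ because $T$ is. Consequently, \cref{fact:averaged:character:i} is equivalent to the single statement that this specific $F$ is nonexpansive.

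Second I would fix $x, y \in D$ and abbreviate $u := x - y$ and $v := (\Id - T)x - (\Id - T)y$, so that $Tx - Ty = u - v$ and $Fx - Fy = u - \alpha^{-1}v$. Expanding $\norm{Fx - Fy}^2 \leq \norm{u}^2$ by the polarization identity, cancelling $\norm{u}^2$, and multiplying through by $\alpha^2 > 0$ collapses the nonexpansiveness of $F$ at the pair $(x,y)$ to the inequality $\norm{v}^2 - 2\alpha \innp{u, v} \leq 0$.

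Third I would expand \cref{fact:averaged:character:ii} in the same notation: since $\norm{Tx-Ty}^2 = \norm{u - v}^2 = \norm{u}^2 - 2\innp{u,v} + \norm{v}^2$, condition \cref{fact:averaged:character:ii} at $(x,y)$ becomes $\norm{u}^2 - 2\innp{u,v} + \norm{v}^2 + \frac{1-\alpha}{\alpha}\norm{v}^2 \leq \norm{u}^2$; cancelling $\norm{u}^2$ and multiplying by $\alpha > 0$ yields precisely $\norm{v}^2 - 2\alpha\innp{u,v} \leq 0$ again. Since each of the two conditions has been shown to be equivalent, pointwise in $(x,y)$, to this very same inequality, the equivalence \cref{fact:averaged:character:i} $\Leftrightarrow$ \cref{fact:averaged:character:ii} follows at once. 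I do not expect a genuine obstacle here; the only subtlety worth stating explicitly is the uniqueness of $F$, which is what lets one replace the existential \cref{fact:averaged:character:i} by a property of one concrete operator, after which everything is routine bookkeeping with the parallelogram and polarization identities.
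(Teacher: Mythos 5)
Your argument is correct: the paper itself gives no proof of this statement (it is quoted as a fact from \cite[Proposition~4.35]{BC2017}), and your reduction --- replacing the existential condition in \cref{fact:averaged:character:i} by nonexpansiveness of the uniquely determined $F=\Id+\alpha^{-1}(T-\Id)$ and expanding both conditions at a fixed pair $(x,y)$ to the common inequality $\norm{v}^{2}-2\alpha\innp{u,v}\leq 0$ --- is essentially the standard argument in that reference. The only cosmetic remark is that no polarization or parallelogram identity is really needed; expanding $\norm{u-v}^{2}$ and $\norm{u-\alpha^{-1}v}^{2}$ via the inner product suffices, exactly as your computation in fact does.
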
	

\begin{fact} {\rm  \cite[Proposition~4.42]{BC2017} } \label{fact:T:Avera:Sum:FixT}
	Let $D$ be a nonempty subset of $\mathcal{H}$, let $(T_{i})_{i\in \I}$ be a finite family of nonexpansive operators from $D$ to $\mathcal{H}$, let $(\omega_{i})_{i \in \I}$ be real numbers in $\left]0,1\right]$ such that $\sum_{i \in \I} \omega_{i} =1$, and let $(\alpha_{i})_{i \in \I}$ be real numbers in $\left]0,1\right[$ such that, for every $i \in \I$, $T_{i}$ is $\alpha_{i}$-averaged, and set $\alpha := \sum_{i \in \I} \omega_{i} \alpha_{i}$. Then $\sum_{i \in \I} \omega_{i} T_{i}$ is $\alpha$-averaged.
\end{fact}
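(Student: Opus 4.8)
The plan is to unfold the averagedness of each $T_i$, assemble the corresponding convex combination of the underlying nonexpansive operators, and recognize the outcome as an averaged decomposition of $\sum_{i\in\I}\omega_i T_i$.

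First I would check that the statement is well posed, namely that $\alpha\in\left]0,1\right[$: since $\sum_{i\in\I}\omega_i=1$ with each $\omega_i>0$ and each $\alpha_i\in\left]0,1\right[$, we get $\alpha=\sum_{i\in\I}\omega_i\alpha_i>0$ and also $\alpha<\sum_{i\in\I}\omega_i=1$. Next, invoking \cref{defn:AlphaAverage}, for each $i\in\I$ I would fix a nonexpansive operator $F_i\colon D\to\mathcal{H}$ with $T_i=(1-\alpha_i)\Id+\alpha_i F_i$. Substituting these into $\sum_{i\in\I}\omega_i T_i$ and using $\sum_{i\in\I}\omega_i(1-\alpha_i)=1-\alpha$ together with $\sum_{i\in\I}\omega_i\alpha_i=\alpha$ yields
\[
\sum_{i\in\I}\omega_i T_i=\Big(\textstyle\sum_{i\in\I}\omega_i(1-\alpha_i)\Big)\Id+\sum_{i\in\I}\omega_i\alpha_i F_i=(1-\alpha)\Id+\alpha F,\qquad\text{where}\quad F:=\sum_{i\in\I}\frac{\omega_i\alpha_i}{\alpha}\,F_i .
\]

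It then remains to verify that $F$ is nonexpansive. The coefficients $\omega_i\alpha_i/\alpha$ are nonnegative and sum to $1$, so for all $x,y\in D$ the triangle inequality gives $\norm{Fx-Fy}\le\sum_{i\in\I}\tfrac{\omega_i\alpha_i}{\alpha}\norm{F_ix-F_iy}\le\sum_{i\in\I}\tfrac{\omega_i\alpha_i}{\alpha}\norm{x-y}=\norm{x-y}$. In view of the displayed identity and \cref{defn:AlphaAverage}, this shows that $\sum_{i\in\I}\omega_i T_i$ is $\alpha$-averaged. There is no real obstacle in this argument; the only points that warrant attention are confirming $\alpha\in\left]0,1\right[$ and noticing that the weights $\omega_i\alpha_i/\alpha$ are precisely the ones that render the inner operator a convex combination. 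One could instead argue through the inequality characterization in \cref{fact:averaged:character} together with convexity of $\norm{\cdot}^2$, but that is more computational and I would avoid it.
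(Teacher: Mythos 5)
Your proof is correct. The paper itself offers no proof of this statement --- it is quoted as \cref{fact:T:Avera:Sum:FixT} directly from \cite[Proposition~4.42]{BC2017} --- and your argument (unfolding $T_i=(1-\alpha_i)\Id+\alpha_i F_i$ and recombining with the reweighted convex coefficients $\omega_i\alpha_i/\alpha$) is the standard proof of that result; the only point worth adding is the one-line remark that the displayed identity also shows $\sum_{i\in\I}\omega_i T_i$ is nonexpansive, which \cref{defn:AlphaAverage} formally presupposes.
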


\begin{fact} {\rm \cite[Proposition~4.47]{BC2017}} \label{fact:FixSumInters}
	Let $D$ be a nonempty subset of $\mathcal{H}$, let $(T_{i})_{i \in \I}$ be a finite family of quasinonexpansive operators from $D$ to $\mathcal{H}$ such that $\cap_{i \in \I} \Fix T_{i} \neq \varnothing$, and let $(\omega_{i})_{i \in \I}$ be strictly positive real numbers such that $\sum_{i \in \I} \omega_{i}=1$. Then $\Fix \sum_{i \in \I} \omega_{i} T_{i}= \cap_{i \in \I} \Fix T_{i}$.
\end{fact}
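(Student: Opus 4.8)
The plan is to establish the two inclusions defining the set equality separately. The inclusion $\bigcap_{i\in\I}\Fix T_i \subseteq \Fix\sum_{i\in\I}\omega_i T_i$ is immediate: if $x \in \Fix T_i$ for every $i$, then $\left(\sum_{i\in\I}\omega_i T_i\right)x = \sum_{i\in\I}\omega_i T_i x = \left(\sum_{i\in\I}\omega_i\right)x = x$ because $\sum_{i\in\I}\omega_i = 1$. The real content lies in the reverse inclusion, and the idea is standard: a fixed point of a convex combination must already be fixed by each operator, since quasinonexpansiveness pins each $T_i x$ to the ball of radius $\norm{x-y}$ about a common fixed point $y$, and strict convexity of the Hilbert norm then forbids the convex combination from reaching the boundary sphere unless all the $T_i x$ coincide.

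Carrying this out, write $T := \sum_{i\in\I}\omega_i T_i$, use the hypothesis to fix some $y \in \bigcap_{i\in\I}\Fix T_i$, and let $x \in \Fix T$. Since each $T_i$ is quasinonexpansive (cf.\ \cref{defn:Nonexpansive}) and $y \in \Fix T_i$, we have $\norm{T_i x - y} \leq \norm{x - y}$ for every $i \in \I$. Combining this with $Tx = x$, with $\sum_{i\in\I}\omega_i = 1$, and with the triangle inequality yields
\begin{align*}
\norm{x - y} = \norm{Tx - y} = \Norm{\sum_{i\in\I}\omega_i(T_i x - y)} \leq \sum_{i\in\I}\omega_i\norm{T_i x - y} \leq \sum_{i\in\I}\omega_i\norm{x - y} = \norm{x - y},
\end{align*}
so every inequality in this chain is an equality. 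In particular, since the weights $\omega_i$ are strictly positive, $\norm{T_i x - y} = \norm{x - y}$ for all $i \in \I$.

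The one genuinely non-routine step is to upgrade these norm equalities to the equalities $T_i x = x$, and this is where I would invoke strict convexity of $\mathcal{H}$. If $r := \norm{x - y} = 0$ then $x = y$ already lies in $\bigcap_{i\in\I}\Fix T_i$, so assume $r > 0$ and set $u_i := (T_i x - y)/r$, a family of unit vectors with $\sum_{i\in\I}\omega_i u_i = (Tx - y)/r = (x - y)/r$ of norm $1$. Expanding,
\begin{align*}
1 = \Norm{\sum_{i\in\I}\omega_i u_i}^{2} = \sum_{i\in\I}\sum_{j\in\I}\omega_i\omega_j\innp{u_i,u_j} \leq \sum_{i\in\I}\sum_{j\in\I}\omega_i\omega_j = 1,
\end{align*}
which forces $\innp{u_i,u_j} = 1 = \norm{u_i}\,\norm{u_j}$ whenever $\omega_i\omega_j > 0$, i.e.\ for all $i,j \in \I$; the equality case of Cauchy--Schwarz then gives $u_i = u_j$ for all $i,j$. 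Since $\sum_{i\in\I}\omega_i u_i = (x-y)/r$ and the weights sum to $1$, each $u_i$ equals $(x-y)/r$, that is, $T_i x = x$ for every $i \in \I$. Hence $x \in \bigcap_{i\in\I}\Fix T_i$, which completes the proof of the reverse inclusion and of the claimed identity. The only place where more than bookkeeping is needed is the last paragraph, and even there the obstacle is mild, amounting to the strict convexity of Hilbert space applied to the equality case of the triangle inequality.
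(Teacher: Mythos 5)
Your proof is correct. Note, however, that the paper does not prove this statement at all: it is quoted as a Fact from \cite[Proposition~4.47]{BC2017}, so there is no internal proof to compare against. Your argument is the standard one — quasinonexpansiveness plus the triangle inequality forces $\norm{T_ix-y}=\norm{x-y}$ for every $i$, and strict convexity of the Hilbert norm (via the equality case of Cauchy--Schwarz applied to the normalized vectors $u_i$) then collapses all $T_ix$ onto $x$. The textbook proof reaches the same conclusion slightly more directly: applying the identity $\bignorm{\sum_{i}\omega_i x_i}^{2}=\sum_{i}\omega_i\norm{x_i}^{2}-\tfrac{1}{2}\sum_{i,j}\omega_i\omega_j\norm{x_i-x_j}^{2}$ to the vectors $T_ix-y$ gives $\norm{x-y}^{2}\leq\norm{x-y}^{2}-\tfrac{1}{2}\sum_{i,j}\omega_i\omega_j\norm{T_ix-T_jx}^{2}$, whence $T_ix=T_jx$ for all $i,j$ and therefore $T_ix=Tx=x$; this avoids the normalization and the case distinction $r=0$ versus $r>0$, but it is the same strict-convexity idea, so the two routes are essentially equivalent.
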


\begin{fact} {\rm \cite[Proposition~4.49]{BC2017}} \label{fact:FixProdInters}
	Let $D$ be a nonempty subset of $\mathcal{H}$, and let $T_{1}$ and $T_{2}$ be quasinonexpansive operators from $D$ to $D$. Suppose that $T_{1}$ or $T_{2}$ is strictly quasinonexpansive, and that $\Fix T_{1} \cap \Fix T_{2} \neq \varnothing$. Then the following hold:
	\begin{enumerate}
		\item $\Fix T_{1}T_{2} = \Fix T_{1} \cap \Fix T_{2}$. 
		\item Suppose that $T_{1}$ and $T_{2}$ are strictly quasinonexpansive. Then $T_{1}T_{2}$ is strictly quasinonexpansive. 
	\end{enumerate} 
\end{fact}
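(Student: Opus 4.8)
Both parts reduce to applying the quasinonexpansiveness inequalities against a \emph{common} fixed point, so the first move is to fix, once and for all, a point $y \in \Fix T_{1} \cap \Fix T_{2}$, which exists by hypothesis. For part~(i), the inclusion $\Fix T_{1} \cap \Fix T_{2} \subseteq \Fix T_{1}T_{2}$ is immediate: any $x$ fixed by both satisfies $T_{1}T_{2}x = T_{1}x = x$. For the reverse inclusion, take an arbitrary $x \in \Fix T_{1}T_{2}$ and telescope through $y$: using quasinonexpansiveness of $T_{2}$ and then of $T_{1}$,
\[
\norm{x - y} = \norm{T_{1}T_{2}x - y} \leq \norm{T_{2}x - y} \leq \norm{x - y},
\]
so both inequalities are equalities. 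Now split on which factor is strictly quasinonexpansive. If $T_{2}$ is, then $\norm{T_{2}x - y} = \norm{x - y}$ forces $x \in \Fix T_{2}$ (otherwise \cref{EQ:StrickQuasiNonex} would give a strict inequality), and then $x = T_{1}T_{2}x = T_{1}x$ puts $x$ in $\Fix T_{1}$ as well. If instead $T_{1}$ is strictly quasinonexpansive, then $\norm{T_{1}(T_{2}x) - y} = \norm{T_{2}x - y}$ forces $T_{2}x \in \Fix T_{1}$, i.e.\ $T_{1}T_{2}x = T_{2}x$; combining with $T_{1}T_{2}x = x$ gives $T_{2}x = x$, hence $x \in \Fix T_{2}$ and $x = T_{2}x \in \Fix T_{1}$. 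Either way $x \in \Fix T_{1} \cap \Fix T_{2}$.

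\textbf{Part (ii).} Assume now both $T_{1}$ and $T_{2}$ are strictly quasinonexpansive. Take $x \in D \smallsetminus \Fix T_{1}T_{2}$ and $y \in \Fix T_{1}T_{2}$; by part~(i) we have $y \in \Fix T_{1} \cap \Fix T_{2}$, and $x$ must fail to lie in at least one of $\Fix T_{1}$, $\Fix T_{2}$. If $x \notin \Fix T_{2}$, then $\norm{T_{2}x - y} < \norm{x - y}$ by strict quasinonexpansiveness of $T_{2}$, while $\norm{T_{1}T_{2}x - y} \leq \norm{T_{2}x - y}$ by quasinonexpansiveness of $T_{1}$, so $\norm{T_{1}T_{2}x - y} < \norm{x - y}$. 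Otherwise $x \in \Fix T_{2}$ but $x \notin \Fix T_{1}$, so $T_{2}x = x$ and $\norm{T_{1}T_{2}x - y} = \norm{T_{1}x - y} < \norm{x - y}$ by strict quasinonexpansiveness of $T_{1}$. In both cases $\norm{T_{1}T_{2}x - y} < \norm{x - y}$, which is exactly \cref{EQ:StrickQuasiNonex} for $T_{1}T_{2}$, using $\Fix T_{1}T_{2} = \Fix T_{1}\cap \Fix T_{2}$ from part~(i).

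\textbf{Main obstacle.} There is no deep difficulty here; the one place to be careful is the case analysis in part~(i), where one must match each equality extracted from the telescoped chain with the operator that is assumed strictly quasinonexpansive (the argument point for $T_{1}$ is $T_{2}x$, not $x$), and must check that $y$ is a common fixed point so that the starting inclusion is nonvacuous and all the quasinonexpansiveness estimates are legitimate. Everything else is a direct unwinding of \cref{defn:Nonexpansive}\,\cref{QuasiNonex} and \cref{StrickQuasiNonex}.
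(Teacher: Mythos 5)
Your argument is correct: fixing a common fixed point $y$, telescoping $\norm{T_{1}T_{2}x-y}\leq\norm{T_{2}x-y}\leq\norm{x-y}$, and invoking strictness at the appropriate argument ($T_{2}x$ for $T_{1}$, $x$ for $T_{2}$) is exactly the standard proof. Note that the paper itself offers no proof of this statement—it is quoted as a Fact from \cite[Proposition~4.49]{BC2017}—and your reasoning coincides with the argument given in that reference, so there is nothing to amend.
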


\begin{lemma} \label{lem:T:alpha}
	Let $T:\mathcal{H} \to \mathcal{H}$ be $\alpha$-averaged with $\alpha \in \left]0,1\right[\,$. Assume that $0 \in \Fix T$. Then 
	\begin{align}  \label{eq:lem:T:alpha}
	(\forall x \in \mathcal{H} \smallsetminus \Fix T) \quad \norm{Tx} < \norm{x}.
	\end{align}	
\end{lemma}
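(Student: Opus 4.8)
The plan is to reduce everything to the quantitative characterization of averaged operators in \cref{fact:averaged:character}, specialized at the fixed point $0$.

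First I would invoke \cref{fact:averaged:character}\cref{fact:averaged:character:ii}: since $T$ is $\alpha$-averaged with $\alpha \in \left]0,1\right[\,$, for all $x, y \in \mathcal{H}$ we have
\begin{align*}
\norm{Tx - Ty}^{2} + \tfrac{1-\alpha}{\alpha}\norm{(\Id - T)x - (\Id - T)y}^{2} \leq \norm{x - y}^{2}.
\end{align*}
Next I would set $y = 0$ and use the hypothesis $0 \in \Fix T$, i.e.\ $T0 = 0$, which also gives $(\Id - T)0 = 0$. This collapses the inequality to
\begin{align*}
\norm{Tx}^{2} + \tfrac{1-\alpha}{\alpha}\norm{(\Id - T)x}^{2} \leq \norm{x}^{2}
\end{align*}
for every $x \in \mathcal{H}$.

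Finally, I would fix $x \in \mathcal{H} \smallsetminus \Fix T$. Then $Tx \neq x$, so $(\Id - T)x \neq 0$ and hence $\norm{(\Id - T)x}^{2} > 0$; since $\alpha \in \left]0,1\right[\,$ implies $\frac{1-\alpha}{\alpha} > 0$, the displayed inequality yields $\norm{Tx}^{2} \leq \norm{x}^{2} - \frac{1-\alpha}{\alpha}\norm{(\Id - T)x}^{2} < \norm{x}^{2}$, and taking square roots gives $\norm{Tx} < \norm{x}$, which is \cref{eq:lem:T:alpha}.

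There is essentially no obstacle here: the only thing to be careful about is ensuring that $\frac{1-\alpha}{\alpha}$ is strictly positive (which is exactly where $\alpha < 1$ is used) and that $x \notin \Fix T$ forces $(\Id - T)x \neq 0$ with a strictly positive norm. The lemma is really just the observation that the "extra term" in the averagedness inequality is strictly positive off the fixed point set; alternatively one could note that \cref{fact:averaged:character} combined with \cref{rem:NonexpImplication}-style reasoning shows an averaged operator with a fixed point is strictly quasinonexpansive, but specializing the inequality directly is cleaner.
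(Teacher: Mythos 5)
Your proposal is correct and follows essentially the same route as the paper: the paper also applies \cref{fact:averaged:character}\cref{fact:averaged:character:ii} with $y=0$ and $x \notin \Fix T$, and your write-up merely spells out the strict positivity of the term $\frac{1-\alpha}{\alpha}\norm{(\Id-T)x}^{2}$ that the paper leaves implicit.
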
	
\begin{proof}
	Since $T$ is $\alpha$-averaged, by \cref{fact:averaged:character},
	\begin{align} \label{eq:lem:T:alpha:Defi}
	(\forall x \in \mathcal{H})  (\forall y \in \mathcal{H}) \quad \norm{Tx -Ty}^{2} + \frac{1-\alpha}{\alpha} \norm{(\Id - T)x-(\Id -T)y}^{2} \leq \norm{x-y}^{2}, 
	\end{align}
	Applying \cref{eq:lem:T:alpha:Defi} with $x \notin \Fix T$ and $y=0$, we obtain \cref{eq:lem:T:alpha}.
\end{proof}	

The following result is motivated by \cite[Lemma~2.1(iv)]{BCS2018}. Moreover, \cref{prop:TFNorm}\cref{prop:TFNorm:TTperp} was shown in \cite[Proposition~2.10]{BOyW2019Isometry}
\begin{proposition} \label{prop:TFNorm}
Suppose that $\mathcal{H}=\mathbb{R}^{n}$. Let $T: \mathcal{H} \to \mathcal{H}$ be linear and  $\alpha$-averaged with $\alpha \in \left]0,1\right[\,$. Then the following assertions hold:
	\begin{enumerate}
		\item   \label{prop:TFNorm:TF} Let $F:  \mathcal{H} \to \mathcal{H}$ be nonexpansive and linear. If $\Fix (T) \cap \Range(F) = \{0\}$, then $\norm{TF} <1$.
		\item  \label{prop:TFNorm:TTperp} $\norm{T\Pro_{(\Fix T)^{\perp}}}<1$.
	\end{enumerate}
\end{proposition}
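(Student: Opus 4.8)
The plan is to establish \cref{prop:TFNorm:TF} by a compactness argument and then to deduce \cref{prop:TFNorm:TTperp} as the special case $F = \Pro_{(\Fix T)^{\perp}}$. For \cref{prop:TFNorm:TF}, since $\mathcal{H} = \mathbb{R}^{n}$ is finite-dimensional the closed unit ball is compact, so the continuous map $x \mapsto \norm{TFx}$ attains its maximum $\norm{TF}$ at some $x^{\ast}$ with $\norm{x^{\ast}} \leq 1$. First I would record that $F$, being linear and nonexpansive, satisfies $F0 = 0$ and hence $\norm{Fx^{\ast}} \leq \norm{x^{\ast}} \leq 1$, and that $T$, being linear, has $0 \in \Fix T$, so \cref{lem:T:alpha} applies and gives $\norm{Tx} < \norm{x}$ for every $x \in \mathcal{H} \smallsetminus \Fix T$. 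Then I would split on the maximizer: if $Fx^{\ast} = 0$ then $\norm{TF} = \norm{TFx^{\ast}} = 0 < 1$; if $Fx^{\ast} \neq 0$ then $Fx^{\ast} \in \Range F \smallsetminus \{0\}$, so the hypothesis $\Fix(T) \cap \Range(F) = \{0\}$ forces $Fx^{\ast} \notin \Fix T$, whence $\norm{TF} = \norm{TFx^{\ast}} < \norm{Fx^{\ast}} \leq 1$ by \cref{lem:T:alpha}. In either case $\norm{TF} < 1$.

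For \cref{prop:TFNorm:TTperp}, I would apply \cref{prop:TFNorm:TF} with $F := \Pro_{(\Fix T)^{\perp}}$. Since $T$ is bounded and linear, $\Fix T = \ker(\Id - T)$ is a closed linear subspace, so $(\Fix T)^{\perp}$ is a closed linear subspace by \cref{MetrProSubs8}\cref{MetrProSubs8:i} and $\Pro_{(\Fix T)^{\perp}}$ is linear and nonexpansive by \cref{MetrProSubs8}\cref{fact:ProjectorInnerPRod:BoundedLine}. Moreover $\Range \Pro_{(\Fix T)^{\perp}} = (\Fix T)^{\perp}$ and $(\Fix T) \cap (\Fix T)^{\perp} = \{0\}$, so the hypothesis of \cref{prop:TFNorm:TF} holds and we conclude $\norm{T\Pro_{(\Fix T)^{\perp}}} < 1$.

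The only delicate point is the passage from the pointwise strict contraction in \cref{lem:T:alpha} to a strict bound on the operator norm: this genuinely needs the maximizer $x^{\ast}$ to exist, which is precisely what the finite-dimensionality assumption $\mathcal{H} = \mathbb{R}^{n}$ provides and why it cannot be dropped here. Beyond that the argument is elementary and requires no norm estimates past $\norm{Fx^{\ast}} \leq \norm{x^{\ast}} \leq 1$.
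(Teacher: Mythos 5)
Your proof is correct and follows essentially the same route as the paper: both parts hinge on \cref{lem:T:alpha} (valid since $T$ linear gives $0\in\Fix T$) together with attainment of the operator norm on the compact unit ball of $\mathbb{R}^{n}$, and part \cref{prop:TFNorm:TTperp} is obtained exactly as in the paper by taking $F=\Pro_{(\Fix T)^{\perp}}$ and noting $\Fix T\cap(\Fix T)^{\perp}=\{0\}$. The only cosmetic difference is that you argue directly with a maximizer (splitting off the trivial case $Fx^{\ast}=0$) whereas the paper argues by contradiction from $\norm{TF}=1$.
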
	

\begin{proof}
	\cref{prop:TFNorm:TF}: \cref{lem:T:alpha} implies 
	\begin{align} \label{eq:T:ALPHA:0}
	(\forall x \in \mathbb{R}^{n} \smallsetminus \Fix T) \quad \norm{Tx} < \norm{x}.
	\end{align}
	
	Both $F$ and $T$ are nonexpansive and linear, so $\norm{TF}\leq \norm{T}\norm{F} \leq 1$. Assume to the contrary $\norm{TF} =1$, that is, $1= \norm{TF}  =\max_{\norm{x}=1} \norm{TFx}$. Then there exists $\bar{x} \in \mathcal{H}$ with $\norm{\bar{x}}=1$ and $1= \norm{TF}  =\norm{TF\bar{x}}$. Denote $\hat{x} :=F\bar{x}$. Then $\hat{x} \neq 0$ and $\hat{x} \in \Range{F}$.  By assumption, $\Fix (T) \cap \Range(F) = \{0\}$, so $\hat{x} \notin \Fix T$.
	Substitute $x=\hat{x}$ in \cref{eq:T:ALPHA:0} to obtain that
	\begin{align*}
	1 =\norm{TF\bar{x}} =\norm{T\hat{x}} < \norm{\hat{x}} =\norm{F\bar{x}} \leq \norm{\bar{x}}=1,
	\end{align*}
	which is absurd.
	
	\cref{prop:TFNorm:TTperp}:  By  \cref{MetrProSubs8}\cref{fact:ProjectorInnerPRod:BoundedLine}, $\Pro_{(\Fix T)^{\perp}}$ is nonexpansive and  linear. 
	Moreover, $\Fix T \cap \Range (\Pro_{(\Fix T)^{\perp}} ) =\Fix T \cap(\Fix T)^{\perp} =\{0\} $. Hence, the desired result  is clear by substituting $F=\Pro_{(\Fix T)^{\perp}}$ in \cref{prop:TFNorm:TF}.
\end{proof}	

\begin{fact} {\rm\cite[Page 111--113]{Kreyszig1989} }\label{fact:LinearOperatorMatrix}
 	Let $(X, \norm{\cdot}_{2})$ and $(Y, \norm{\cdot}_{2})$ be finite dimensional real vector
 spaces. Let $E$ and $B$ be bases of $X$ and $Y$ respectively, with the elements of $E$ and $B$ arranged in a definite order (which is arbitrary but fixed).  Let $T : X \to Y$ be a linear operator. Then there exists a matrix $T_{EB}$ uniquely  determined by the linear operator $T$.  We say that the matrix $T_{EB}$  represents the operator $T$ with respect to those bases.  Moreover, $\norm{T} = \norm{T_{EB}}_{2}$. 
\end{fact}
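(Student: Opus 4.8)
The plan is to reduce everything to the classical theory of matrix representations of linear maps, the only substantive point being that, when $E$ and $B$ are orthonormal bases of $X$ and $Y$, the associated coordinate maps are isometries onto $\mathbb{R}^{n}$ and $\mathbb{R}^{m}$. First I would fix $n := \dim X$, $m := \dim Y$, write $E = (e_{1}, \dots, e_{n})$ and $B = (b_{1}, \dots, b_{m})$, and introduce the coordinate isomorphisms $\Phi_{E} \colon X \to \mathbb{R}^{n}$ and $\Phi_{B} \colon Y \to \mathbb{R}^{m}$ sending each vector to the column of its coefficients in the respective basis; both are linear bijections. Existence and uniqueness of $T_{EB}$ is then the standard fact that the matrix whose $j$-th column is $\Phi_{B}(T e_{j})$ is the unique $A \in \mathbb{R}^{m \times n}$ satisfying $\Phi_{B}(Tx) = A\, \Phi_{E}(x)$ for all $x \in X$ — uniqueness because such an $A$ is pinned down by its action on the standard basis vectors $\Phi_{E}(e_{1}), \dots, \Phi_{E}(e_{n})$ — so one sets $T_{EB} := \Phi_{B} \circ T \circ \Phi_{E}^{-1}$.

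Next I would record the isometry property. Since $E$ is orthonormal, for $x = \sum_{i} \xi_{i} e_{i}$ we have $\norm{x}^{2} = \sum_{i} \xi_{i}^{2} = \norm{\Phi_{E}(x)}_{2}^{2}$, hence $\norm{x} = \norm{\Phi_{E}(x)}_{2}$ for every $x \in X$; likewise $\norm{y} = \norm{\Phi_{B}(y)}_{2}$ for every $y \in Y$. The claimed norm identity then follows by the change of variables $x = \Phi_{E}^{-1}(\xi)$, which lets $\xi$ range over all of $\mathbb{R}^{n}$ as $x$ ranges over $X$:
\begin{align*}
\norm{T} = \sup_{\norm{x} \leq 1} \norm{Tx}
&= \sup_{\norm{\xi}_{2} \leq 1} \norm{T \Phi_{E}^{-1}(\xi)}
= \sup_{\norm{\xi}_{2} \leq 1} \norm{\Phi_{B}\big( T \Phi_{E}^{-1}(\xi) \big)}_{2} \\
&= \sup_{\norm{\xi}_{2} \leq 1} \norm{T_{EB}\, \xi}_{2} = \norm{T_{EB}}_{2}.
\end{align*}

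I do not expect a genuine obstacle: the argument is bookkeeping with the coordinate isomorphisms, and in finite dimensions there are no convergence or attainment subtleties (the unit spheres are compact, so the suprema above are in fact maxima). The one point that must not be glossed over is that the norm identity $\norm{T} = \norm{T_{EB}}_{2}$ depends on $E$ and $B$ being orthonormal with respect to the inner products inducing $\norm{\cdot}$ on $X$ and $Y$; for non-orthonormal bases the coordinate maps distort lengths and the equality fails. In this sense the statement is essentially a restatement of the cited material in Kreyszig.
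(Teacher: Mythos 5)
Your argument is correct, and since the paper gives no proof of this statement at all (it is quoted as a Fact from Kreyszig, pp.~111--113), there is nothing to diverge from: the coordinate-isomorphism bookkeeping you carry out — defining $T_{EB}$ column-by-column via $\Phi_B(Te_j)$, getting uniqueness from the action on the standard basis, and transporting the supremum through the isometric coordinate maps — is exactly the standard textbook route behind the citation. Your closing caveat is also well taken and worth stating: for arbitrary (non-orthonormal) bases the identity $\norm{T}=\norm{T_{EB}}_2$ fails, so the statement must be read with $E$ and $B$ orthonormal for the inner products inducing the norms; this is consistent with the paper's notation $\norm{\cdot}_2$ and with the only place the Fact is invoked, namely \cref{prop:operatornorm:largestgeigenvector}, where $\mathcal{H}=\mathbb{R}^n$ carries the Euclidean norm and the matrix is taken with respect to the standard (orthonormal) basis.
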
	

\begin{fact} {\rm  \cite[Page~281]{MC2000}} \label{fact:2normALambda}
	Let $A \in \mathbb{R}^{n \times m}$.  The matrix $2$-norm induced by the Euclidean vector norm is 
	\begin{align*}
	\norm{A}_{2} = \max_{\norm{x}_{2} \leq 1} \norm{Ax}_{2} = \sqrt{\lambda_{\max}},
	\end{align*}
	where $\lambda_{\max}$ is the largest eigenvalue of $A^{\intercal}A$.
\end{fact}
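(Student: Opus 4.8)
The plan is to reduce the computation of $\norm{A}_{2}$ to the spectral analysis of the symmetric positive semidefinite matrix $A^{\intercal}A$. First I would record the elementary structural facts: $A^{\intercal}A \in \mathbb{R}^{m \times m}$ is symmetric, since $(A^{\intercal}A)^{\intercal} = A^{\intercal}A$, and it is positive semidefinite, since for every $x \in \mathbb{R}^{m}$ one has $\innp{A^{\intercal}A x, x} = \innp{Ax, Ax} = \norm{Ax}_{2}^{2} \geq 0$. Invoking the spectral theorem for real symmetric matrices, I obtain an orthonormal basis $v_{1}, \ldots, v_{m}$ of $\mathbb{R}^{m}$ consisting of eigenvectors of $A^{\intercal}A$, with corresponding eigenvalues $\lambda_{1} \geq \lambda_{2} \geq \cdots \geq \lambda_{m} \geq 0$; in particular $\lambda_{\max} = \lambda_{1}$.

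Next I would establish the upper bound. Given $x$ with $\norm{x}_{2} \leq 1$, expand $x = \sum_{i=1}^{m} c_{i} v_{i}$, so that by orthonormality $\sum_{i} c_{i}^{2} = \norm{x}_{2}^{2} \leq 1$. Using the identity $\norm{Ax}_{2}^{2} = \innp{Ax, Ax} = \innp{A^{\intercal}A x, x}$ together with $A^{\intercal}A v_{i} = \lambda_{i} v_{i}$ and orthonormality again, one computes $\norm{Ax}_{2}^{2} = \sum_{i} \lambda_{i} c_{i}^{2} \leq \lambda_{1} \sum_{i} c_{i}^{2} \leq \lambda_{1}$. Hence $\norm{Ax}_{2} \leq \sqrt{\lambda_{1}}$ for every $x$ in the closed Euclidean unit ball, which gives $\sup_{\norm{x}_{2} \leq 1} \norm{Ax}_{2} \leq \sqrt{\lambda_{\max}}$. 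For the matching lower bound I would simply test $x = v_{1}$: then $\norm{v_{1}}_{2} = 1$ and $\norm{A v_{1}}_{2}^{2} = \innp{A^{\intercal}A v_{1}, v_{1}} = \lambda_{1}\norm{v_{1}}_{2}^{2} = \lambda_{1}$, so the value $\sqrt{\lambda_{\max}}$ is attained. This simultaneously shows that the supremum is in fact a maximum (alternatively, compactness of the unit ball together with continuity of $x \mapsto \norm{Ax}_{2}$ yields this directly) and that it equals $\sqrt{\lambda_{\max}}$.

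There is no genuine obstacle here: the statement is a classical fact, and the only ingredient one must import rather than derive is the spectral theorem for symmetric matrices, which also furnishes the nonnegativity of the eigenvalues of $A^{\intercal}A$. Everything else is an elementary expansion in the eigenbasis of $A^{\intercal}A$, so the write-up is short once those structural facts are in place.
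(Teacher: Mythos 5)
Your argument is correct and complete: since the statement is imported in the paper as a cited fact from Meyer's book without proof, there is no internal proof to compare against, and your spectral-theorem argument (diagonalize the symmetric positive semidefinite matrix $A^{\intercal}A$, bound $\norm{Ax}_{2}^{2}=\innp{A^{\intercal}Ax,x}$ in the eigenbasis, and attain the bound at a top eigenvector) is exactly the standard derivation underlying the cited reference. Nothing is missing; the observation that the supremum is attained, via the explicit maximizer $v_{1}$ or compactness, properly justifies writing $\max$ rather than $\sup$.
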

\begin{proposition} \label{prop:operatornorm:largestgeigenvector}
	Suppose that $\mathcal{H}= \mathbb{R}^{n}$ with the Euclidean norm $\norm{\cdot}_{2}$. Let $T: \mathcal{H} \to \mathcal{H}$ be linear and  $\alpha$-averaged with $\alpha \in \left]0,1\right[\,$.  Assume  that $A$ is a matrix representing of the linear operator $T\Pro_{(\Fix T)^{\perp}}$. Denote the largest eigenvalue of the matrix $A^{\intercal}A$ as $\lambda_{\max}$. Then 
\begin{align*}
\lambda_{\max} = \norm{A}^{2}_{2}  =\norm{T\Pro_{(\Fix T)^{\perp}}}^{2} <1. 
\end{align*}
\end{proposition}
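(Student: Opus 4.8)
The statement is essentially a corollary obtained by chaining together three results that are already available, so the plan is short. First I would record the trivial preliminary observations that make everything well posed: since $T$ is linear, $T0=0$, hence $0\in\Fix T$ and $\Fix T=\ker(T-\Id)$ is a nonempty closed linear subspace of $\mathbb{R}^{n}$; therefore $\Pro_{(\Fix T)^{\perp}}$ is a well-defined bounded linear operator by \cref{MetrProSubs8}\cref{fact:ProjectorInnerPRod:BoundedLine}, and consequently $T\Pro_{(\Fix T)^{\perp}}$ is linear and does admit a matrix representation $A$, so that talking about $A$ and about $A^{\intercal}A$ and its largest eigenvalue $\lambda_{\max}$ is legitimate.

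Next I would carry out the three steps in the following order. Step one: invoke \cref{prop:TFNorm}\cref{prop:TFNorm:TTperp}, whose hypotheses ($\mathcal{H}=\mathbb{R}^{n}$, and $T$ linear and $\alpha$-averaged with $\alpha\in\left]0,1\right[$) are exactly those assumed in the proposition, to conclude $\norm{T\Pro_{(\Fix T)^{\perp}}}<1$. Step two: since $A$ represents the linear operator $T\Pro_{(\Fix T)^{\perp}}$ with respect to the canonical orthonormal basis of $\mathbb{R}^{n}$, apply \cref{fact:LinearOperatorMatrix} to get $\norm{T\Pro_{(\Fix T)^{\perp}}}=\norm{A}_{2}$. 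Step three: apply \cref{fact:2normALambda} to the matrix $A$ to get $\norm{A}_{2}=\sqrt{\lambda_{\max}}$, equivalently $\lambda_{\max}=\norm{A}_{2}^{2}$. Assembling these into the chain $\lambda_{\max}=\norm{A}_{2}^{2}=\norm{T\Pro_{(\Fix T)^{\perp}}}^{2}<1$ yields the assertion.

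I do not expect any substantial obstacle; the only point worth pinning down is the norm compatibility invoked in Step two. The identity $\norm{T}=\norm{T_{EB}}_{2}$ of \cref{fact:LinearOperatorMatrix} is being used with $\mathbb{R}^{n}$ carrying the Euclidean norm $\norm{\cdot}_{2}$ and with a basis for which the induced matrix $2$-norm coincides with the operator norm — in $\mathbb{R}^{n}$ this is the canonical orthonormal basis — which is precisely the standing hypothesis of the proposition; one should make sure the matrix representation $A$ in the statement is understood in this sense. Modulo this bookkeeping, the proof is immediate from \cref{prop:TFNorm}, \cref{fact:LinearOperatorMatrix}, and \cref{fact:2normALambda}.
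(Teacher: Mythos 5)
Your proposal is correct and follows exactly the paper's own argument: the paper's proof likewise notes that $A$ is well defined via \cref{fact:LinearOperatorMatrix} and then combines \cref{prop:TFNorm}\cref{prop:TFNorm:TTperp}, \cref{fact:LinearOperatorMatrix} and \cref{fact:2normALambda} to obtain the chain $\lambda_{\max}=\norm{A}_{2}^{2}=\norm{T\Pro_{(\Fix T)^{\perp}}}^{2}<1$. Your extra remark about the matrix representation being taken with respect to an orthonormal basis is a reasonable clarification of how \cref{fact:LinearOperatorMatrix} is being applied, not a departure from the paper's route.
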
	

\begin{proof}
	By \cref{fact:LinearOperatorMatrix}, the matrix $A$ above is well-defined.  Combining  \cref{prop:TFNorm}\cref{prop:TFNorm:TTperp}, \Cref{fact:LinearOperatorMatrix,fact:2normALambda}, we obtain the desired results.
\end{proof}

 \begin{definition} \cite[Definition~3.10-1]{Kreyszig1989}
 Let $T \in \mathcal{B} (\mathcal{H})$ with the \emph{adjoint} $T^{*}$. $T$ is said to be
 	\begin{enumerate}
 		\item  \emph{self-adjoint} if $T^{*} =T$,
 		\item  \emph{unitary} if $T$ is bijective and $T^{*} =T^{-1}$,
 		\item \emph{normal} if $TT^{*} = T^{*}T$.
 	\end{enumerate}
 \end{definition}

\begin{fact} {\rm \cite[Fact~2.25]{BC2017}} \label{fact:RangeKerMore}
	Let $T \in \mathcal{B} (\mathcal{H})$. Then the following statements hold:
	\begin{enumerate}
		\item  \label{fact:RangeKerMore:T} $T^{**} =T$. 
		\item  \label{fact:RangeKerMore:TT2}$\norm{T}  =\norm{T^{*}} = \sqrt{\norm{T^{*}T}}$.
	\item  \label{fact:RangeKerMore:PerpKerRange} $(\ker T)^{\perp} = \overline{\Range} T^{*} $.
		\item  \label{fact:RangeKerMore:KerRange}$(\Range{T} )^{\perp} = \ker T^{*}$.
	\end{enumerate}
\end{fact}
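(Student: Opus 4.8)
The plan is to prove the four assertions essentially from the defining identity $\innp{Tx,y}=\innp{x,T^{*}y}$ $(\forall x,y\in\mathcal{H})$, handling them in the order (i), (iv), (iii), (ii): item (iii) will be reduced to (iv) and (i), while (ii) is self-contained. First I would recall why $T^{*}\in\mathcal{B}(\mathcal{H})$ is well defined: for fixed $y$, the map $x\mapsto\innp{Tx,y}$ is a bounded linear functional, so by the Riesz representation theorem there is a unique $T^{*}y$ with $\innp{Tx,y}=\innp{x,T^{*}y}$ for all $x$, and one checks directly that $y\mapsto T^{*}y$ is linear and bounded. For (i), fix $x$; for every $y$, using this identity twice together with symmetry of the real inner product, $\innp{T^{**}x,y}=\innp{x,T^{*}y}=\innp{T^{*}y,x}=\innp{y,Tx}=\innp{Tx,y}$, and since $y$ is arbitrary, $T^{**}x=Tx$.

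For (iv): $y\in(\Range T)^{\perp}$ iff $\innp{Tx,y}=0$ for all $x$, iff $\innp{x,T^{*}y}=0$ for all $x$, iff $T^{*}y=0$; that is, $(\Range T)^{\perp}=\ker T^{*}$. For (iii): apply (iv) with $T^{*}$ in place of $T$ and use (i) to get $(\Range T^{*})^{\perp}=\ker T^{**}=\ker T$; then take orthogonal complements and invoke the standard identity $(M^{\perp})^{\perp}=\overline{M}$ for a linear subspace $M$ (note $\ker T$ is closed, $T$ being continuous) to obtain $(\ker T)^{\perp}=\big((\Range T^{*})^{\perp}\big)^{\perp}=\overline{\Range}\,T^{*}$.

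For (ii): to see $\norm{T^{*}}\leq\norm{T}$, take $\norm{y}\leq 1$ and estimate $\norm{T^{*}y}^{2}=\innp{T^{*}y,T^{*}y}=\innp{TT^{*}y,y}\leq\norm{TT^{*}y}\,\norm{y}\leq\norm{T}\,\norm{T^{*}y}$, so $\norm{T^{*}y}\leq\norm{T}$; taking the supremum over such $y$ gives the inequality. Replacing $T$ by $T^{*}$ and using (i) gives $\norm{T}=\norm{T^{**}}\leq\norm{T^{*}}$, hence $\norm{T}=\norm{T^{*}}$. Finally $\norm{T^{*}T}\leq\norm{T^{*}}\,\norm{T}=\norm{T}^{2}$, while for $\norm{x}\leq 1$ one has $\norm{Tx}^{2}=\innp{T^{*}Tx,x}\leq\norm{T^{*}Tx}\,\norm{x}\leq\norm{T^{*}T}$, so $\norm{T}^{2}\leq\norm{T^{*}T}$; thus $\norm{T^{*}T}=\norm{T}^{2}$, i.e.\ $\norm{T}=\sqrt{\norm{T^{*}T}}$.

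The only inputs beyond routine manipulation of the adjoint identity and the Cauchy--Schwarz inequality are the Riesz representation theorem (for existence of $T^{*}$) and the identity $(M^{\perp})^{\perp}=\overline{M}$ for linear subspaces $M$; neither is an obstacle, so I expect no genuine difficulty here. The result is a standard consequence of Hilbert space duality, which is why it is quoted as a fact rather than proved in detail.
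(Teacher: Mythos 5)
Your proof is correct, and there is nothing to compare it against in the paper itself: this statement is quoted as a fact from \cite[Fact~2.25]{BC2017} and never proved here. Your argument (Riesz representation for the adjoint, the identity $\innp{Tx,y}=\innp{x,T^{*}y}$, the relation $(M^{\perp})^{\perp}=\overline{M}$, and the Cauchy--Schwarz estimates giving $\norm{T}^{2}=\norm{T^{*}T}$) is the standard one used in that reference.
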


\begin{fact} \cite[Lemma~2.1]{BDHP2003} \label{fact:FixTFixT*}
	Let $T$ be a nonexpansive linear operator on $\mathcal{H}$. Then
	\begin{align*}
	\Fix T =\Fix T^{*}.
	\end{align*}
\end{fact}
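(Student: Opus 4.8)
The plan is to establish the two inclusions $\Fix T \subseteq \Fix T^{*}$ and $\Fix T^{*} \subseteq \Fix T$ and then conclude. First note that a nonexpansive operator is Lipschitz continuous, so $T \in \mathcal{B}(\mathcal{H})$ and its adjoint $T^{*}$ exists; since $T$ is linear we have $T0 = 0$, and nonexpansivity then gives $\norm{T} \le 1$. By \cref{fact:RangeKerMore}\cref{fact:RangeKerMore:TT2} it follows that $\norm{T^{*}} = \norm{T} \le 1$, so $T^{*}$ is a nonexpansive linear operator as well.

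For the inclusion $\Fix T \subseteq \Fix T^{*}$, I would fix $x \in \Fix T$, so $Tx = x$, and simply estimate $\norm{x - T^{*}x}$. In the real Hilbert space $\mathcal{H}$,
\begin{align*}
\norm{x - T^{*}x}^{2} = \norm{x}^{2} - 2\innp{x, T^{*}x} + \norm{T^{*}x}^{2},
\end{align*}
and the decisive point is that the cross term collapses, by the defining property of the adjoint together with $Tx = x$:
\begin{align*}
\innp{x, T^{*}x} = \innp{Tx, x} = \innp{x, x} = \norm{x}^{2}.
\end{align*}
Hence $\norm{x - T^{*}x}^{2} = \norm{T^{*}x}^{2} - \norm{x}^{2} \le 0$, using $\norm{T^{*}x} \le \norm{T^{*}}\,\norm{x} \le \norm{x}$, and therefore $T^{*}x = x$, i.e., $x \in \Fix T^{*}$.

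For the reverse inclusion, I would apply what has just been proved with $T$ replaced by the nonexpansive linear operator $T^{*}$: this gives $\Fix T^{*} \subseteq \Fix (T^{*})^{*}$, which equals $\Fix T$ by \cref{fact:RangeKerMore}\cref{fact:RangeKerMore:T}. Combining the two inclusions yields $\Fix T = \Fix T^{*}$.

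I do not foresee a genuine obstacle: the whole argument rests on the identity $\norm{x - T^{*}x}^{2} = \norm{T^{*}x}^{2} - \norm{x}^{2}$ valid for $x \in \Fix T$, and on the observation that nonexpansivity must be used in the form $\norm{T^{*}} = \norm{T} \le 1$ rather than through any a priori fixed-point behaviour of $T^{*}$ (which is precisely what is at stake). Alternatively, one can note that $\Id - T$ is a monotone bounded linear operator and that a monotone linear operator $A$ satisfies $\ker A = \ker A^{*}$; applying this to $A = \Id - T$ gives the claim at once, but the direct estimate above is shorter and self-contained.
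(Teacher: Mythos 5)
Your proof is correct: the cross-term computation $\innp{x,T^{*}x}=\innp{x,Tx}=\norm{x}^{2}$ for $x\in\Fix T$, combined with $\norm{T^{*}}=\norm{T}\le 1$, forces $\norm{x-T^{*}x}^{2}=\norm{T^{*}x}^{2}-\norm{x}^{2}\le 0$, and the reverse inclusion follows by symmetry via $T^{**}=T$. The paper does not prove this statement but merely cites it as \cite[Lemma~2.1]{BDHP2003}, and your argument is essentially the standard proof given in that reference, so there is nothing to criticize.
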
	

\begin{lemma} \label{lemma:FixRangeNonexpansive}
	Let $T: \mathcal{H} \to \mathcal{H}$ be linear, and nonexpansive. Then 
	\begin{align*}
	\Fix T =( \Range ( \Id - T) )^{\perp}, \quad \text{and} \quad \overline{\Range} \,(\Id -T ) = (\Fix T)^{\perp}.
	\end{align*}
\end{lemma}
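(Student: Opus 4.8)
The plan is to reduce everything to the standard kernel/range duality facts for bounded linear operators (\cref{fact:RangeKerMore}) together with the nonexpansiveness-specific identity $\Fix T = \Fix T^{*}$ (\cref{fact:FixTFixT*}). First I would observe that since $T$ is linear, $\Fix T = \ker(\Id - T)$, and that $\Id - T \in \mathcal{B}(\mathcal{H})$ with adjoint $(\Id - T)^{*} = \Id - T^{*}$ (using $T^{**} = T$ is not even needed here, just linearity and boundedness of $T$, which follows from nonexpansiveness).

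For the first identity, I would apply \cref{fact:RangeKerMore}\cref{fact:RangeKerMore:KerRange} to the operator $S := \Id - T$, giving
\begin{align*}
(\Range(\Id - T))^{\perp} = \ker(\Id - T)^{*} = \ker(\Id - T^{*}) = \Fix T^{*}.
\end{align*}
Then \cref{fact:FixTFixT*}, which applies because $T$ is linear and nonexpansive, yields $\Fix T^{*} = \Fix T$, so $(\Range(\Id - T))^{\perp} = \Fix T$, which is the first claimed equality.

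For the second identity, the cleanest route is to apply \cref{fact:RangeKerMore}\cref{fact:RangeKerMore:PerpKerRange} to $S := \Id - T^{*}$, whose adjoint is $\Id - T$; this gives $(\ker(\Id - T^{*}))^{\perp} = \overline{\Range}(\Id - T)$, and since $\ker(\Id - T^{*}) = \Fix T^{*} = \Fix T$ by \cref{fact:FixTFixT*} again, we conclude $(\Fix T)^{\perp} = \overline{\Range}(\Id - T)$. Alternatively, one can simply take orthogonal complements in the first identity and use that $(C^{\perp})^{\perp} = \overline{C}$ when $C$ is a linear subspace (here $C = \Range(\Id - T)$, which is a linear subspace because $\Id - T$ is linear).

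There is no real obstacle in this proof; it is a short assembly of cited facts. The only point requiring a moment's care is making sure the hypotheses of \cref{fact:FixTFixT*} are met — namely that $T$ is both \emph{linear} and \emph{nonexpansive}, which are exactly the standing assumptions of the lemma — and noting that boundedness of $T$ (needed to invoke \cref{fact:RangeKerMore}) is automatic from nonexpansiveness, so that $\Id - T \in \mathcal{B}(\mathcal{H})$.
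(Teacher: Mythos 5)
Your proposal is correct and follows essentially the same route as the paper: both reduce the claim to $\Fix T = \ker(\Id - T)$, invoke \cref{fact:FixTFixT*} for $\Fix T = \Fix T^{*}$, and then apply \cref{fact:RangeKerMore}\cref{fact:RangeKerMore:KerRange} and \cref{fact:RangeKerMore}\cref{fact:RangeKerMore:PerpKerRange}$\&$\cref{fact:RangeKerMore:T} to the operator $\Id - T$. The only cosmetic difference is that you phrase the second identity via the adjoint of $\Id - T^{*}$ (or by double orthogonal complements), which is the same computation the paper carries out directly.
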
	

\begin{proof}
	Let $x \in \mathcal{H}$. Clearly, for every operator $F : \mathcal{H} \to \mathcal{H}$, $x \in \Fix F \Leftrightarrow x = Fx \Leftrightarrow (\Id -F) x =0 \Leftrightarrow x \in \ker (\Id -F )$, which implies that 
	\begin{align}  \label{eq:lemma:FixRangeNonexpansive:F}
	\Fix F = \ker (\Id -F ).
	\end{align}
	Because $T$ is nonexpansive and linear and $\Id -T$ is bounded and linear, by \cref{fact:FixTFixT*} and \cref{fact:RangeKerMore}\cref{fact:RangeKerMore:KerRange}, we obtain that
	\begin{align*}
	\Fix T =\Fix T^{*} \stackrel{\cref{eq:lemma:FixRangeNonexpansive:F}}{=} \ker ( \Id - T^{*}  ) =\ker \left(  (\Id-T )^{*} \right) = ( \Range (\Id -T ))^{\perp}.
	\end{align*}
	Similarly, by \cref{fact:FixTFixT*} and \cref{fact:RangeKerMore}\cref{fact:RangeKerMore:PerpKerRange}$\&$\cref{fact:RangeKerMore:T}, we have that
	\begin{align*}
	(\Fix T)^{\perp} =  (\Fix T^{*}  )^{\perp} \stackrel{\cref{eq:lemma:FixRangeNonexpansive:F}}{=}   \left(  \ker ( \Id - T^{*}  ) \right)^{\perp} =  \left( \ker \left(  (\Id -T )^{*} \right) \right)^{\perp} =	\overline{\Range} \, (\Id -T ). 
	\end{align*}
	Therefore, the proof is complete.
\end{proof}

\begin{fact} {\rm \cite[Lemma~2.4]{BDHP2003}} \label{fac:PUmU1FixT}
	Let $U_{1}, \ldots, U_{m}$ be closed linear subspaces of $\mathcal{H}$, and let $T := \Pro_{U_{m}}\Pro_{U_{m-1}} \cdots \Pro_{U_{1}}$. Then $T$ is nonexpansive and
	\begin{align*}
	\Fix T = \Fix T^{*} = \Fix (TT^{*}) =\Fix (T^{*}T) =\cap^{m}_{i=1}U_{i}.
	\end{align*}
\end{fact}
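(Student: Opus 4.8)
The plan is to reduce everything to three elementary facts about a single projector: by \cref{MetrProSubs8}\cref{fact:ProjectorInnerPRod:BoundedLine} each $\Pro_{U_i}$ is bounded and linear with $\norm{\Pro_{U_i}}\leq 1$; by \cref{fact:ProjectorInnerPRod}\cref{fact:ProjectorInnerPRod:FirmlyNonexp} it is firmly nonexpansive, hence strictly quasinonexpansive by \cref{rem:NonexpImplication}; and $\Fix\Pro_{U_i}=U_i\ni 0$, so $\bigcap_{i=1}^m U_i$ is automatically nonempty. Since a composition of nonexpansive linear operators is again nonexpansive and linear, $T$ is nonexpansive, which settles the first assertion; it then remains to compute four fixed point sets, and the idea is to exhibit each of $T$, $T^*$, $TT^*$, $T^*T$ as a finite composition of projectors whose defining subspaces all belong to $\{U_1,\dots,U_m\}$ and together exhaust every one of them.

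The key step is the claim, proved by induction on $k$, that if $V_1,\dots,V_k\in\{U_1,\dots,U_m\}$ and $R:=\Pro_{V_k}\cdots\Pro_{V_1}$, then $\Fix R=\bigcap_{j=1}^k V_j$. The base case $k=1$ is immediate. For the step, write $R=\Pro_{V_k}\circ(\Pro_{V_{k-1}}\cdots\Pro_{V_1})$; the inner factor is nonexpansive, hence quasinonexpansive, with fixed point set $\bigcap_{j=1}^{k-1}V_j$ by the inductive hypothesis, while $\Pro_{V_k}$ is strictly quasinonexpansive with fixed point set $V_k$, and $V_k\cap\bigcap_{j=1}^{k-1}V_j\supseteq\bigcap_{i=1}^m U_i\neq\varnothing$. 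Therefore \cref{fact:FixProdInters} applies and yields $\Fix R=V_k\cap\bigcap_{j=1}^{k-1}V_j=\bigcap_{j=1}^k V_j$, completing the induction.

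Now I would simply instantiate the claim four times. For $T=\Pro_{U_m}\cdots\Pro_{U_1}$ the subspaces involved are $U_1,\dots,U_m$, so $\Fix T=\bigcap_{i=1}^m U_i$. Each $\Pro_{U_i}$ is self-adjoint by \cref{MetrProSubs8}\cref{fact:ProjectorInnerPRod:Selfadjoint}, so $T^*=\Pro_{U_1}\cdots\Pro_{U_m}$ is again such a product meeting every $U_i$, whence $\Fix T^*=\bigcap_{i=1}^m U_i$ (this also recovers \cref{fact:FixTFixT*} in the present situation). Using idempotence $\Pro_{U_1}^2=\Pro_{U_1}$ and $\Pro_{U_m}^2=\Pro_{U_m}$ from \cref{fact:ProjectorInnerPRod}\cref{fact:ProjectorInnerPRod:Idempotent}, we obtain the palindromic products $TT^*=\Pro_{U_m}\cdots\Pro_{U_2}\Pro_{U_1}\Pro_{U_2}\cdots\Pro_{U_m}$ and $T^*T=\Pro_{U_1}\cdots\Pro_{U_{m-1}}\Pro_{U_m}\Pro_{U_{m-1}}\cdots\Pro_{U_1}$, each a composition of $2m-1$ projectors onto subspaces drawn from $\{U_1,\dots,U_m\}$ and including all of them; the claim then gives $\Fix(TT^*)=\Fix(T^*T)=\bigcap_{i=1}^m U_i$, and the proof is complete.

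I do not anticipate a genuine obstacle. The only two points requiring a moment of attention are that $\bigcap_i U_i$ is nonempty for free because linear subspaces contain $0$ — which is exactly what keeps the nonemptiness hypothesis of \cref{fact:FixProdInters} satisfied at every stage of the induction — and the routine bookkeeping when collapsing $\Pro_{U_1}^2$ and $\Pro_{U_m}^2$, including the trivial case $m=1$ in which $TT^*=T^*T=\Pro_{U_1}$.
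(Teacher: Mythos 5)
Your argument is correct, but note that the paper does not prove this statement at all: it is imported verbatim as a Fact from \cite[Lemma~2.4]{BDHP2003}, so there is no internal proof to compare against — you have in effect supplied one. Your route is a clean alternative to the original: the inductive claim that any finite composition $\Pro_{V_k}\cdots\Pro_{V_1}$ with $V_j\in\{U_1,\ldots,U_m\}$ has fixed point set $\bigcap_j V_j$, driven by \cref{fact:FixProdInters} (each projector being firmly nonexpansive, hence strictly quasinonexpansive by \cref{rem:NonexpImplication}, and the intersection containing $0$), handles $T$, $T^{*}$, $TT^{*}$ and $T^{*}T$ uniformly once you rewrite the latter two as palindromic products via self-adjointness and idempotence of the $\Pro_{U_i}$. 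The argument in \cite{BDHP2003} is more elementary and hands-on: it gets $\Fix T\subseteq\bigcap_i U_i$ from the chain of norm equalities $\norm{x}=\norm{Tx}\leq\norm{\Pro_{U_i}\cdots\Pro_{U_1}x}\leq\norm{x}$ forcing $x\in U_i$ at each stage, and then transfers the conclusion to $T^{*}$, $TT^{*}$, $T^{*}T$ using $\Fix T=\Fix T^{*}$ (the paper's \cref{fact:FixTFixT*}); your approach trades that norm argument for the abstract composition result of \cite[Proposition~4.49]{BC2017}, at the cost of invoking heavier machinery but with the benefit of a single induction covering all four fixed point sets, and your bookkeeping (nonemptiness via $0$, the $m=1$ case) is in order.
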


\begin{fact} {\rm \cite[Lemmas~3.14 and 3.15]{BDHP2003}} \label{fac:LineNonexOperNorm}
	Let $T: \mathcal{H} \to \mathcal{H}$ be linear and nonexpansive. Then the following statements hold:
	\begin{enumerate}
		\item  \label{fac:LineNonexOperNorm:i} $(\forall k \in \mathbb{N})$ $\norm{T^{k} - \Pro_{\Fix T}} =\norm{(T\Pro_{(\Fix T)^{\perp}})^{k}} $. In particular,
		\begin{align} \label{eq:fac:LineNonexOperNorm:LineConv}
		(\forall k \in \mathbb{N})  (\forall x \in \mathcal{H}) \quad \norm{T^{k}x - \Pro_{\Fix T}x} \leq \norm{(T\Pro_{(\Fix T)^{\perp}})^{k}}  \norm{x - \Pro_{\Fix T}x}.
		\end{align}
		and $\norm{(T\Pro_{(\Fix T)^{\perp}})^{k}} $ is the smallest constant independent of $x$ for which \cref{eq:fac:LineNonexOperNorm:LineConv} is valid.
		\item \label{fac:LineNonexOperNorm:iv} $\norm{T^{*}T\Pro_{(\Fix T^{*} T)^{\perp}}} \leq \norm{T\Pro_{(\Fix T)^{\perp}}}^{2}$ and $\norm{T^{*}T\Pro_{(\Fix T)^{\perp}}} = \norm{T\Pro_{(\Fix T)^{\perp}}}^{2}$ if $\Fix (T^{*}T)= \Fix T$.
		\item \label{eq:fac:LineNonexOperNorm:normal} If T is normal, then $(\forall k \in \mathbb{N})$ $\norm{T^{k} - \Pro_{\Fix T}} = \norm{(T \Pro_{(\Fix T)^{\perp}} )^{k}}= \norm{ T \Pro_{(\Fix T)^{\perp}}}^{k}$.
		\item \label{cor:LinNoneInequaTstarT} Let $U_{1}, \ldots, U_{m}$ be closed linear subspaces of $\mathcal{H}$, and let $T := \Pro_{U_{m}}\Pro_{U_{m-1}} \cdots \Pro_{U_{1}}$. Then
		\begin{align*}
		(\forall x \in \mathcal{H})  (\forall k \in \mathbb{N}) \quad \norm{(T^{*}T)^{k}x - \Pro_{\cap^{m}_{i=1}U_{i}}x} \leq \norm{T\Pro_{(\cap^{m}_{i=1}U_{i})^{\perp}}}^{2k} \norm{x - \Pro_{\cap^{m}_{i=1}U_{i}}x}.
		\end{align*} 
	\end{enumerate}
\end{fact}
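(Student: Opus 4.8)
The plan is to reduce the whole statement to a single structural observation. Put $P:=\Pro_{\Fix T}$, $P^{\perp}:=\Pro_{(\Fix T)^{\perp}}$, and $N:=T P^{\perp}$. First I would show that $(\Fix T)^{\perp}$ is $T$-invariant: for $y\in(\Fix T)^{\perp}$ and $z\in\Fix T$ we have $\innp{Ty,z}=\innp{y,T^{*}z}=\innp{y,z}=0$, because $T^{*}z=z$ by \cref{fact:FixTFixT*}. Since also $T P=P$ (as $\Range P=\Fix T$) and $\Id=P+P^{\perp}$ by \cref{MetrProSubs8}\cref{MetrProSubs8:ii}, this yields $T=P+N$ with $\Range N\subseteq(\Fix T)^{\perp}$, hence $PN=0$, $NP=T P^{\perp}P=0$, $P^{2}=P$, and $N P^{\perp}=N$. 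A one-line induction then gives $T^{k}=P+N^{k}$ for every $k\ge1$, so $T^{k}-P=N^{k}=(T\Pro_{(\Fix T)^{\perp}})^{k}$, which is the operator-norm identity in the first claim (the case $k=0$ is handled directly). For \cref{eq:fac:LineNonexOperNorm:LineConv} and its sharpness, write $(T^{k}-P)x=N^{k}x=N^{k}P^{\perp}x=N^{k}(x-Px)$, so $\norm{T^{k}x-Px}\le\norm{N^{k}}\norm{x-Px}$; restricting $x$ to $(\Fix T)^{\perp}$ and using $N^{k}=N^{k}P^{\perp}$ shows $\norm{N^{k}}$ is the smallest admissible constant.

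For the normal case, I would check that $N$ inherits normality from $T$. When $T$ is normal, $\Fix T=\Fix T^{*}$, so $(\Fix T)^{\perp}$ is invariant under $T$ and $T^{*}$, whence $T$ and $T^{*}$ commute with $P$ and $P^{\perp}$; consequently $NN^{*}=TT^{*}P^{\perp}$ and $N^{*}N=T^{*}TP^{\perp}$ coincide. For a normal operator one has $\norm{N^{k}}=\norm{N}^{k}$, obtained by iterating $\norm{S}^{2}=\norm{S^{*}S}$ (\cref{fact:RangeKerMore}\cref{fact:RangeKerMore:TT2}) along powers of two, the reverse bound coming from submultiplicativity. Combining this with $T^{k}-P=N^{k}$ gives $\norm{T^{k}-\Pro_{\Fix T}}=\norm{(T\Pro_{(\Fix T)^{\perp}})^{k}}=\norm{T\Pro_{(\Fix T)^{\perp}}}^{k}$.

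For the statement about $T^{*}T$, set $Q^{\perp}:=\Pro_{(\Fix(T^{*}T))^{\perp}}$. From $Tx=x\Rightarrow T^{*}Tx=T^{*}x=x$ we get $\Fix T\subseteq\Fix(T^{*}T)$, hence $(\Fix(T^{*}T))^{\perp}\subseteq(\Fix T)^{\perp}$, so $P^{\perp}Q^{\perp}=Q^{\perp}$ by \cref{fac:proj:commu}. The key computation is that $T^{*}T$ collapses to $(T P^{\perp})^{*}(T P^{\perp})$ on $(\Fix(T^{*}T))^{\perp}$: for $x=P^{\perp}x$ and any $z=Pz+P^{\perp}z$, using $Tx\perp\Fix T$ (the computation from the first paragraph) we get $\innp{T^{*}Tx,z}=\innp{Tx,Tz}=\innp{Tx,T P^{\perp}z}=\innp{T P^{\perp}x,T P^{\perp}z}=\innp{(T P^{\perp})^{*}(T P^{\perp})x,z}$, so $T^{*}T Q^{\perp}=(T P^{\perp})^{*}(T P^{\perp}) Q^{\perp}$. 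Taking norms and invoking $\norm{(T P^{\perp})^{*}(T P^{\perp})}=\norm{T P^{\perp}}^{2}$ (again \cref{fact:RangeKerMore}\cref{fact:RangeKerMore:TT2}) yields $\norm{T^{*}T\Pro_{(\Fix(T^{*}T))^{\perp}}}\le\norm{T\Pro_{(\Fix T)^{\perp}}}^{2}$; if moreover $\Fix(T^{*}T)=\Fix T$ then $Q^{\perp}=P^{\perp}$ and the inequality becomes equality. Finally, for the corollary with $T:=\Pro_{U_{m}}\cdots\Pro_{U_{1}}$, \cref{fac:PUmU1FixT} gives $\Fix T=\Fix(T^{*}T)=\cap_{i=1}^{m}U_{i}$; applying the normal case to the self-adjoint operator $T^{*}T$ gives $\norm{(T^{*}T)^{k}-\Pro_{\cap_{i}U_{i}}}=\norm{T^{*}T\Pro_{(\cap_{i}U_{i})^{\perp}}}^{k}$, the equality just proved turns this into $\norm{T\Pro_{(\cap_{i}U_{i})^{\perp}}}^{2k}$, and the inequality from the first claim applied to $T^{*}T$ finishes it.

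The \textbf{main obstacle} is the $T^{*}T$ statement. One must notice that $\Fix(T^{*}T)$ may strictly contain $\Fix T$ — for instance a nontrivial rotation has $T^{*}T=\Id$ but $\Fix T=\{0\}$ — which is exactly why the equality requires the extra hypothesis, and one must combine $\Fix T\subseteq\Fix(T^{*}T)$ with the orthogonality $Tx\perp\Fix T$ for $x\in(\Fix T)^{\perp}$ to reduce $T^{*}T$ on $(\Fix(T^{*}T))^{\perp}$ to $(T\Pro_{(\Fix T)^{\perp}})^{*}(T\Pro_{(\Fix T)^{\perp}})$. Everything else is routine bookkeeping with the decomposition $\mathcal{H}=\Fix T\oplus(\Fix T)^{\perp}$ and the identity $\norm{S}^{2}=\norm{S^{*}S}$.
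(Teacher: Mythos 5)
Your proof is correct, and there is nothing internal to compare it against: the paper states this result as a fact imported from \cite[Lemmas~3.14 and 3.15]{BDHP2003} without proof, and your argument — the splitting $T=\Pro_{\Fix T}+T\Pro_{(\Fix T)^{\perp}}$ with $T^{k}-\Pro_{\Fix T}=(T\Pro_{(\Fix T)^{\perp}})^{k}$, the normality of $T\Pro_{(\Fix T)^{\perp}}$ inherited from $T$, and the reduction of $T^{*}T$ on $(\Fix T^{*}T)^{\perp}$ to $(T\Pro_{(\Fix T)^{\perp}})^{*}(T\Pro_{(\Fix T)^{\perp}})$ — is essentially the standard argument of that source. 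The only quibble is the degenerate case $k=0$ with $\Fix T=\mathcal{H}$ (i.e.\ $T=\Id$ on a nonzero space), where the operator-norm identity as literally stated reads $0=1$; this is an artifact of the statement including $k=0$, not a gap in your proof.
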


%

\begin{proposition} \label{prop:RnTalphaLinearConv}
	Suppose that $\mathcal{H} = \mathbb{R}^{n}$. Let $T: \mathcal{H} \to \mathcal{H}$ be linear and  $\alpha$-averaged with $\alpha \in \left]0,1\right[\,$. Then $\norm{T\Pro_{(\Fix T)^{\perp}}}<1$ and 
	\begin{align*}
	(\forall x \in \mathcal{H})  (\forall k \in \mathbb{N} ) \quad \norm{T^{k}x - \Pro_{\Fix T}x} \leq \norm{T \Pro_{(\Fix T)^{\perp}} }^{k} \norm{x-\Pro_{\Fix T} x }.
	\end{align*}
	Consequently, $(\forall x \in \mathcal{H})$ $(T^{k}x)_{k \in \mathbb{N}}$ converges to $\Pro_{\Fix T}x$ with a linear rate $\norm{T\Pro_{(\Fix T)^{\perp}}}<1$.
\end{proposition}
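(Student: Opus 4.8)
The plan is to derive everything from results already established in this section, with essentially no new computation. First I would note that, since $T$ is $\alpha$-averaged with $\alpha \in \left]0,1\right[\,$, \cref{fact:AverFirmNone}\cref{fact:AverFirmNone:nonexp} shows $T$ is nonexpansive; being linear, $T0 = 0$, so $0 \in \Fix T$ and in particular $\Fix T \neq \varnothing$. Moreover $\Fix T = \ker(\Id - T)$ is a linear subspace of $\mathbb{R}^{n}$, hence closed, so $\Pro_{\Fix T}$ and $\Pro_{(\Fix T)^{\perp}}$ are well defined. With these observations in place, the first assertion $\norm{T\Pro_{(\Fix T)^{\perp}}} < 1$ is precisely \cref{prop:TFNorm}\cref{prop:TFNorm:TTperp}, applied to the present $T$.

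For the main inequality I would invoke \cref{fac:LineNonexOperNorm}\cref{fac:LineNonexOperNorm:i}: because $T$ is linear and nonexpansive, for every $k \in \mathbb{N}$ and every $x \in \mathcal{H}$,
\begin{align*}
\norm{T^{k}x - \Pro_{\Fix T}x} \leq \norm{(T\Pro_{(\Fix T)^{\perp}})^{k}}\,\norm{x - \Pro_{\Fix T}x}.
\end{align*}
It then remains only to bound $\norm{(T\Pro_{(\Fix T)^{\perp}})^{k}} \leq \norm{T\Pro_{(\Fix T)^{\perp}}}^{k}$, which is immediate from submultiplicativity of the operator norm on $\mathcal{B}(\mathbb{R}^{n})$. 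Chaining the two bounds gives the displayed estimate.

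Finally, the "consequently" clause follows at once: since $\norm{T\Pro_{(\Fix T)^{\perp}}} < 1$, we have $\norm{T\Pro_{(\Fix T)^{\perp}}}^{k} \to 0$ as $k \to \infty$, so $T^{k}x \to \Pro_{\Fix T}x$ for every $x \in \mathcal{H}$, at the claimed linear rate $\norm{T\Pro_{(\Fix T)^{\perp}}} < 1$. I do not anticipate any real obstacle here; the substantive content lies in \cref{prop:TFNorm} and \cref{fac:LineNonexOperNorm}, both already available, and the only care needed is to verify that their hypotheses (linearity, nonexpansiveness, nonemptiness of the fixed point set) are met — which is the content of the first paragraph.
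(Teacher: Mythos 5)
Your proposal is correct and follows essentially the same route as the paper: nonexpansiveness of the averaged operator $T$, the bound from \cref{fac:LineNonexOperNorm}\cref{fac:LineNonexOperNorm:i} together with submultiplicativity of the operator norm, and the strict inequality $\norm{T\Pro_{(\Fix T)^{\perp}}}<1$ from \cref{prop:TFNorm}\cref{prop:TFNorm:TTperp}. The preliminary checks you carry out (that $0\in\Fix T$ and $\Fix T$ is a closed subspace) are sound and merely make explicit what the paper leaves implicit.
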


\begin{proof}
	$T$ is $\alpha$-averaged implies that $T$ is nonexpansive,  so the required result follows from \cref{fac:LineNonexOperNorm}\cref{fac:LineNonexOperNorm:i} and \cref{prop:TFNorm}\cref{prop:TFNorm:TTperp}.
\end{proof}	

\begin{proposition} \label{prop:selfadjoint:linearConverge}
	Let $F: \mathcal{H} \to \mathcal{H}$ be nonexpansive, linear and normal. Let $\alpha \in \left]0,1\right[\,$. Denote $T :=(1-\alpha)  \Id + \alpha F$. Then the following assertions hold:
	\begin{enumerate}
		\item  \label{prop:selfadjoint:linearConverge:Properties}$T$ is $\alpha$-averaged, linear, and normal. Moreover, we have that $\Fix T=\Fix F$, and that
		\begin{align}  \label{eq:cor:selfadjoint:linearConverge}
		(\forall x \in \mathcal{H})  (\forall k \in \mathbb{N} ) \quad \norm{T^{k}x - \Pro_{\Fix F}x} \leq \norm{T \Pro_{(\Fix F)^{\perp}} }^{k} \norm{x - \Pro_{\Fix F} x }.
		\end{align}
		\item \label{prop:selfadjoint:linearConverge:NormEqual} $(\forall k \in \mathbb{N})$ $\norm{(T \Pro_{(\Fix F)^{\perp}} )^{k}} =\norm{T \Pro_{(\Fix F)^{\perp}} }^{k}$.
		\item \label{prop:selfadjoint:linearConverge:Sharp} Assume that $\mathcal{H} = \mathbb{R}^{n}$. Then
		$(T^{k}x)_{k \in \mathbb{N}}$ converges to $\Pro_{\Fix F}x$ with a sharp linear rate $\norm{T\Pro_{(\Fix F)^{\perp}}}<1$.
	\end{enumerate}
\end{proposition}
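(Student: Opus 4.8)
The plan is to recognize $T$ as a linear, normal, nonexpansive operator and then read off all three assertions from the facts already collected for such operators.

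For assertion \cref{prop:selfadjoint:linearConverge:Properties}, I would first note that $T$ is $\alpha$-averaged directly from \cref{defn:AlphaAverage} (with $F$ as the nonexpansive operator), hence nonexpansive by \cref{fact:AverFirmNone}\cref{fact:AverFirmNone:nonexp}, and linear as a convex combination of the linear operators $\Id$ and $F$. Normality is a short computation: writing $T^{*} = (1-\alpha)\Id + \alpha F^{*}$ and expanding $TT^{*}$ and $T^{*}T$, the two expressions differ only in a term $\alpha^{2}FF^{*}$ versus $\alpha^{2}F^{*}F$, which coincide since $F$ is normal. For the fixed point identity, the equivalences $Tx = x \Longleftrightarrow \alpha Fx = \alpha x \Longleftrightarrow Fx = x$ give $\Fix T = \Fix F$ immediately (alternatively \cref{fact:FixSumInters} applied to $\{\Id,F\}$ with weights $\{1-\alpha,\alpha\}$ does the job, using that $0 \in \Fix F$ by linearity so this fixed point set is nonempty); being the fixed point set of a continuous linear operator, $\Fix F$ is moreover a closed linear subspace, so the projections appearing below are well defined.

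Once $T$ is known to be linear, nonexpansive, and normal, assertion \cref{prop:selfadjoint:linearConverge:NormEqual} and the estimate \cref{eq:cor:selfadjoint:linearConverge} of \cref{prop:selfadjoint:linearConverge:Properties} both come from \cref{fac:LineNonexOperNorm}\cref{eq:fac:LineNonexOperNorm:normal}, which gives $\norm{T^{k} - \Pro_{\Fix T}} = \norm{(T\Pro_{(\Fix T)^{\perp}})^{k}} = \norm{T\Pro_{(\Fix T)^{\perp}}}^{k}$ for all $k \in \mathbb{N}$; substituting $\Fix T = \Fix F$ yields \cref{prop:selfadjoint:linearConverge:NormEqual} verbatim, and bounding $\norm{T^{k}x - \Pro_{\Fix F}x}$ by $\norm{T^{k} - \Pro_{\Fix F}}\,\norm{x - \Pro_{\Fix F}x}$ gives \cref{eq:cor:selfadjoint:linearConverge}. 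For assertion \cref{prop:selfadjoint:linearConverge:Sharp}, in $\mathbb{R}^{n}$ the operator $T$ is linear and $\alpha$-averaged, so \cref{prop:RnTalphaLinearConv} forces $\norm{T\Pro_{(\Fix T)^{\perp}}} = \norm{T\Pro_{(\Fix F)^{\perp}}} < 1$, whence $T^{k}x \to \Pro_{\Fix F}x$ with linear rate $\norm{T\Pro_{(\Fix F)^{\perp}}}$; sharpness is then the last sentence of \cref{fac:LineNonexOperNorm}\cref{fac:LineNonexOperNorm:i}, since the smallest admissible step-$k$ constant $\norm{(T\Pro_{(\Fix T)^{\perp}})^{k}}$ equals $\norm{T\Pro_{(\Fix F)^{\perp}}}^{k}$ by \cref{prop:selfadjoint:linearConverge:NormEqual}.

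I do not expect a genuine obstacle; the proof is essentially a bookkeeping exercise. The only steps needing a modicum of care are the verification that $T$ is normal (so that \cref{fac:LineNonexOperNorm}\cref{eq:fac:LineNonexOperNorm:normal} is applicable) and the implicit use that $\Fix F$ is a nonempty closed linear subspace, which follows from the linearity and continuity of $F$.
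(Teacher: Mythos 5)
Your proposal is correct and follows essentially the same route as the paper: the same reduction to \cref{fac:LineNonexOperNorm}\cref{fac:LineNonexOperNorm:i}$\&$\cref{eq:fac:LineNonexOperNorm:normal} for the estimate, the norm equality and the sharpness, and to \cref{prop:TFNorm}\cref{prop:TFNorm:TTperp} (via \cref{prop:RnTalphaLinearConv}) for $\norm{T\Pro_{(\Fix F)^{\perp}}}<1$ in $\mathbb{R}^{n}$. Your explicit expansion of $TT^{*}$ and $T^{*}T$ to verify normality is a slightly more careful justification than the paper's brief remark, but the argument is otherwise the same.
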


\begin{proof}
	\cref{prop:selfadjoint:linearConverge:Properties}:	It is clear that $T$ is $\alpha$-averaged, linear, and  $\Fix T=\Fix F$. The inequality \cref{eq:cor:selfadjoint:linearConverge} follows from \cref{fac:LineNonexOperNorm}\cref{fac:LineNonexOperNorm:i}.  
	Because the normal operators form a vector space which contains $F$ and $\Id$, it is clear that $T$ is normal.

	\cref{prop:selfadjoint:linearConverge:NormEqual}: Combine \cref{prop:selfadjoint:linearConverge:Properties} with \cref{fac:LineNonexOperNorm}\cref{eq:fac:LineNonexOperNorm:normal}.
	
	\cref{prop:selfadjoint:linearConverge:Sharp}:  Combine \cref{prop:selfadjoint:linearConverge:Properties} with \cref{prop:TFNorm}\cref{prop:TFNorm:TTperp} to obtain that $\norm{T\Pro_{(\Fix F)^{\perp}}}<1$.
	Apply \cref{fac:LineNonexOperNorm}\cref{fac:LineNonexOperNorm:i} with \cref{prop:selfadjoint:linearConverge:NormEqual} above  to the linear and nonexpansive operator $T=(1-\alpha)  \Id + \alpha F$, we know that $(\forall k \in \mathbb{N})$ $\norm{T \Pro_{(\Fix F)^{\perp}} }^{k} = \norm{(T \Pro_{(\Fix F)^{\perp}} )^{k}} $ is the smallest constant independent of $x$ for which \cref{eq:cor:selfadjoint:linearConverge} is valid. Therefore, $(T^{k}x)_{k \in \mathbb{N}}$ converges to $\Pro_{\Fix F}x$ with a sharp linear rate $\norm{T\Pro_{(\Fix F)^{\perp}}}<1$.
\end{proof}

\begin{definition} {\rm \cite[Definition~3.1]{BDHP2003}} \label{defn:AccelrAT}
	Let $T: \mathcal{H} \to \mathcal{H}$  be  linear  and nonexpansive. The \emph{accelerated mapping $A_{T}$ of $T$} is defined on $\mathcal{H}$ by
	\begin{align*}
(\forall x \in \mathcal{H}) \quad 	A_{T}(x) :=t_{x} Tx +(1-t_{x}) x,
	\end{align*}
	where
	\begin{align*}
	t_{x} :=t_{x,T} :=  \begin{cases}
\scalebox{1.5}{ $ \frac{\innp{x,x-Tx}}{ \norm{x-Tx}^{2}} $}, \quad \mbox{if}~Tx \neq x;\\
 1, \quad \quad \quad \quad \mbox{if}~Tx = x.
	\end{cases} 
	\end{align*}
\end{definition}

\begin{fact} {\rm \cite[Lemmas~3.27 and 3.8(3)]{BDHP2003}} \label{fac:AT:Norm:MPerp}
	Let $T: \mathcal{H} \to \mathcal{H}$ be linear, nonexpansive, and self-adjoint. 
	Set
	\begin{align} \label{eq:fac:AT:c1:c2:c1}
	c_{1} := \inf \{ \innp{Tx, x} ~|~ x \in (\Fix T)^{\perp}, \norm{x} =1\},
	\end{align}
	and 
	\begin{align} \label{eq:fac:AT:c1:c2:c2}
	c_{2} := \sup \{ \innp{Tx, x} ~|~ x \in (\Fix T)^{\perp}, \norm{x} =1\},
	\end{align}
	where both $c_{1}$ and $c_{2}$ are defined to be $0$ if $(\Fix T)^{\perp} =\{0\}$, i.e., if $\Fix T=\mathcal{H}$. 
	Then
	\begin{align*}
	(\forall x \in \mathcal{H})  (\forall k \in \mathbb{N}) \quad \norm{A^{k}_{T}x - \Pro_{\Fix T}x} \leq \Big(\frac{c_{2}-c_{1}}{2 -c_{1} -c_{2}}\Big)^{k} \norm{x - \Pro_{\Fix T}x}.
	\end{align*}
\end{fact}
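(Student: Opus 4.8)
The plan is to reduce the $k$-step estimate to a one-step norm contraction on $(\Fix T)^{\perp}$, using self-adjointness to decouple the two ``directions''. Write $M := \Fix T$; it is a closed linear subspace since $\Id - T$ is bounded and linear. Because $T$ is self-adjoint and $M$ is $T$-invariant, $M^{\perp}$ is $T$-invariant as well, and $T|_{M} = \Id$. Decompose $x = p + q$ with $p := \Pro_{M}x$ and $q := \Pro_{M^{\perp}}x$. Then $x - Tx = (p - Tp) + (q - Tq) = q - Tq \in M^{\perp}$, and $\innp{p, q - Tq} = 0$, so the scalar $t_{x}$ of \cref{defn:AccelrAT} equals $t_{q} := \innp{q, q - Tq} / \norm{q - Tq}^{2}$ (consistently $t_{q} := 1$ when $q = 0$; and when $q \neq 0$ we automatically have $Tq \neq q$ because $M \cap M^{\perp} = \{0\}$). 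Hence $A_{T}(x) = p + \widetilde{A}(q)$ where $\widetilde{A} \colon M^{\perp} \to M^{\perp}$, $\widetilde{A}(q) := q - t_{q}(q - Tq)$. Iterating gives $A_{T}^{k}(x) = p + \widetilde{A}^{k}(q)$ and $\Pro_{M}(A_{T}^{k}x) = p = \Pro_{M}x$, so $A_{T}^{k}x - \Pro_{\Fix T}x = \widetilde{A}^{k}(q)$ and $\norm{x - \Pro_{\Fix T}x} = \norm{q}$. It therefore suffices to prove the one-step estimate $\norm{\widetilde{A}(q)} \leq \tfrac{c_{2} - c_{1}}{2 - c_{1} - c_{2}} \norm{q}$ for all $q \in M^{\perp}$, after which the claim follows by induction on $k$, since each $\widetilde{A}^{j}(q)$ again lies in $M^{\perp}$.

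For the one-step estimate, note that $A_{T}(x)$ is the point of the line $\{(1 - t)x + tTx : t \in \mathbb{R}\}$ nearest to the origin, so $\norm{\widetilde{A}(q)}^{2} = \norm{q}^{2} - \innp{q, q - Tq}^{2} / \norm{q - Tq}^{2}$. Normalize $\norm{q} = 1$ and set $a := \innp{Tq, q}$, $b := \norm{Tq}^{2}$; then $\innp{q, q - Tq} = 1 - a$ and $\norm{q - Tq}^{2} = 1 - 2a + b$, whence $\norm{\widetilde{A}(q)}^{2} = 1 - (1 - a)^{2}/(1 - 2a + b)$. The pair $(a,b)$ is constrained in two ways: $a \in [c_{1}, c_{2}]$ by the definitions of $c_{1}, c_{2}$ and homogeneity; and, since $T|_{M^{\perp}}$ is self-adjoint with numerical range contained in $[c_{1}, c_{2}]$, the commuting operators $T|_{M^{\perp}} - c_{1}\Id$ and $c_{2}\Id - T|_{M^{\perp}}$ are positive semidefinite, hence so is their product, which evaluated at $q$ yields $b = \innp{T^{2}q, q} \leq (c_{1} + c_{2})a - c_{1}c_{2}$. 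Since $1 - 2a + b = \norm{q - Tq}^{2} > 0$, enlarging the denominator gives
\[
\norm{\widetilde{A}(q)}^{2} \;\leq\; 1 - \frac{(1 - a)^{2}}{(1 - c_{1}c_{2}) - (2 - c_{1} - c_{2})a} \;=:\; 1 - g(a).
\]

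It remains to minimize $g$ over $a \in [c_{1}, c_{2}]$. The denominator of $g$ is affine in $a$ and equals $(1 - c_{1})^{2}$ at $a = c_{1}$ and $(1 - c_{2})^{2}$ at $a = c_{2}$, so $g(c_{1}) = g(c_{2}) = 1$; the unique interior critical point is $a^{*} = (c_{1} + c_{2} - 2c_{1}c_{2})/(2 - c_{1} - c_{2})$, and (the factor $2 - c_{1} - c_{2}$ is strictly positive, since $c_{1} < 1$ whenever $M^{\perp} \neq \{0\}$ and $c_{2} \leq 1$ always) a short computation gives $g(a^{*}) = 4(1 - c_{1})(1 - c_{2})/(2 - c_{1} - c_{2})^{2} \leq 1$ by the AM--GM inequality, so this is the global minimum. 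Therefore
\[
\norm{\widetilde{A}(q)}^{2} \;\leq\; 1 - \frac{4(1 - c_{1})(1 - c_{2})}{(2 - c_{1} - c_{2})^{2}} \;=\; \frac{(2 - c_{1} - c_{2})^{2} - 4(1 - c_{1})(1 - c_{2})}{(2 - c_{1} - c_{2})^{2}} \;=\; \frac{(c_{2} - c_{1})^{2}}{(2 - c_{1} - c_{2})^{2}},
\]
which is exactly the desired one-step contraction; iterating finishes the proof.

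The main obstacle is the second step: one must bound $\norm{Tq}^{2} = \innp{T^{2}q, q}$ in terms of $a$ and $c_{1}, c_{2}$, and the product-of-commuting-positive-operators inequality is precisely the tool that delivers $b \leq (c_{1} + c_{2})a - c_{1}c_{2}$; the rest is a one-variable optimization. One should also separately dispatch the boundary cases $M^{\perp} = \{0\}$ (where $A_{T} = \Id$, $c_{1} = c_{2} = 0$, and both sides of the assertion agree) and $c_{2} = 1$ (where $g$ degenerates to an affine function with minimum $0$ and the asserted rate equals $1$, consistent with the displayed formula and with the trivial bound $\norm{\widetilde A(q)} \leq \norm{q}$).
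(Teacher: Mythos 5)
Your proposal is correct, and since the paper states \cref{fac:AT:Norm:MPerp} only as an imported result, citing \cite{BDHP2003} without reproducing an argument, there is no in-paper proof to compare against; what you give is a legitimate self-contained derivation. Your reduction (invariance of $(\Fix T)^{\perp}$ under the self-adjoint $T$, the identity $A_{T}(p+q)=p+\widetilde{A}(q)$ so that $\Pro_{\Fix T}A_{T}^{k}x=\Pro_{\Fix T}x$, and the observation that $A_{T}x$ is the point on the line through $x$ and $Tx$ nearest the origin, giving $\norm{\widetilde{A}(q)}^{2}=\norm{q}^{2}-\innp{q,q-Tq}^{2}/\norm{q-Tq}^{2}$) is in the spirit of the cited source's treatment, but your handling of the one-step contraction is more elementary: rather than spectral-theoretic estimates, you use that $(T-c_{1}\Id)(c_{2}\Id-T)\succeq 0$ on $(\Fix T)^{\perp}$ (a product of commuting positive semidefinite self-adjoint operators), which yields $\norm{Tq}^{2}\leq (c_{1}+c_{2})\innp{Tq,q}-c_{1}c_{2}$, and then a one-variable minimization; all the computations there ($g(c_{1})=g(c_{2})=1$, the critical value $4(1-c_{1})(1-c_{2})/(2-c_{1}-c_{2})^{2}$, and the identity $(2-c_{1}-c_{2})^{2}-4(1-c_{1})(1-c_{2})=(c_{2}-c_{1})^{2}$) check out. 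Two small points you should make explicit to close the argument: (a) that $a^{*}=(c_{1}+c_{2}-2c_{1}c_{2})/(2-c_{1}-c_{2})$ indeed lies in $[c_{1},c_{2}]$, which follows from $a^{*}-c_{1}=(c_{2}-c_{1})(1-c_{1})/(2-c_{1}-c_{2})\geq 0$ and $c_{2}-a^{*}=(c_{2}-c_{1})(1-c_{2})/(2-c_{1}-c_{2})\geq 0$ (and if it did not, $g\geq 1$ on the interval and the bound holds anyway); and (b) the justification that $c_{1}<1$ whenever $(\Fix T)^{\perp}\neq\{0\}$: if $c_{1}=1$, every unit $q\in(\Fix T)^{\perp}$ satisfies $\innp{Tq,q}=1\geq\norm{Tq}\,\norm{q}$, so equality in Cauchy--Schwarz forces $Tq=q$, i.e., $q\in\Fix T\cap(\Fix T)^{\perp}=\{0\}$, a contradiction; this is exactly what guarantees $2-c_{1}-c_{2}>0$. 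With these remarks inserted, your proof, including the boundary cases $(\Fix T)^{\perp}=\{0\}$ and $c_{2}=1$, is complete.
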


\begin{lemma} \label{fac:AT:c1c2:cT}
	Let $T: \mathcal{H} \to \mathcal{H}$ be linear, nonexpansive, self-adjoint and monotone. Let $c_{1}$ and $c_{2}$ be defined as in 
	\cref{eq:fac:AT:c1:c2:c1} and \cref{eq:fac:AT:c1:c2:c2}. Set $c(T) := \norm{T\Pro_{(\Fix T)^{\perp}}}$. Then
	\begin{align*}
	\frac{c_{2}-c_{1}}{2 -c_{1} -c_{2}} = \frac{c(T)-c_{1}}{2 -c_{1} -c(T)} \leq \frac{c(T)}{2 - c(T)}.
	\end{align*}
\end{lemma}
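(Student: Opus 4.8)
The plan is to reduce the whole statement to the single identity $c_{2}=c(T)$; once that is established, the asserted equality is immediate and the asserted inequality is a short computation. First I would dispose of the trivial case $(\Fix T)^{\perp}=\{0\}$ (i.e.\ $\Fix T=\mathcal{H}$): then $c_{1}=c_{2}=0$ by the convention in \cref{fac:AT:Norm:MPerp}, and $c(T)=\norm{T\Pro_{\{0\}}}=0$, so both sides of both relations vanish. So assume $M:=(\Fix T)^{\perp}\neq\{0\}$. Next I would record the consequences of the hypotheses. Since $T$ is linear and monotone with $T0=0$, putting $y=0$ in the monotonicity inequality gives $\innp{Tx,x}\geq 0$ for all $x\in\mathcal{H}$, whence $c_{1}\geq 0$; and since $T$ is nonexpansive, $\norm{T}\leq 1$, hence $c(T)=\norm{T\Pro_{M}}\leq 1$. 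Moreover $\Fix T=\ker(\Id-T)$ is a closed linear subspace, trivially invariant under $T$, so, because $T=T^{*}$, its orthogonal complement $M$ is also $T$-invariant; equivalently $T\Pro_{M}x\in M$ for every $x$, so $\Pro_{M}T\Pro_{M}=T\Pro_{M}$, and taking adjoints (using that $\Pro_{M}$ is self-adjoint) $\Pro_{M}T=T\Pro_{M}$, so $S:=T\Pro_{M}$ is self-adjoint. Finally $S$ is positive semidefinite: for every $x$ one has $\innp{Sx,x}=\innp{T\Pro_{M}x,\Pro_{M}x}$, since the cross term $\innp{T\Pro_{M}x,\Pro_{\Fix T}x}$ vanishes ($T\Pro_{M}x\in M=(\Fix T)^{\perp}$), and $\innp{T\Pro_{M}x,\Pro_{M}x}\geq 0$ by monotonicity.

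Now the key step. A positive semidefinite self-adjoint bounded linear operator $S$ satisfies $\norm{S}=\sup_{\norm{x}\leq 1}\innp{Sx,x}$ (its norm equals its numerical radius; this follows from self-adjointness via the identity $4\innp{Sx,y}=\innp{S(x+y),x+y}-\innp{S(x-y),x-y}$ together with the parallelogram law, or one may invoke standard spectral theory as in \cite{Kreyszig1989}). Combining this with $\innp{Sx,x}=\innp{T\Pro_{M}x,\Pro_{M}x}$, the inequality $\norm{\Pro_{M}x}\leq\norm{x}$, and the fact that every $y\in M$ equals $\Pro_{M}y$, we get $c(T)=\norm{S}=\sup\{\innp{Ty,y}\mid y\in M,\ \norm{y}\leq 1\}$, and by $2$-homogeneity and non-negativity of $y\mapsto\innp{Ty,y}$ this supremum over the ball equals the supremum over the unit sphere, namely $c_{2}$. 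Substituting $c_{2}=c(T)$ gives the first equality of the lemma. For the inequality I would first note that $c_{1}<1$: if $c_{1}=1$, then every unit $y\in M$ satisfies $\innp{Ty,y}\geq 1$, which together with $\norm{Ty}\leq\norm{y}=1$ and Cauchy--Schwarz forces $Ty=y$, i.e.\ $y\in\Fix T\cap M=\{0\}$, a contradiction. Hence $2-c_{1}-c(T)>0$ (as $c_{1}<1$ and $c(T)\leq 1$) and $2-c(T)\geq 1>0$, so clearing these positive denominators and expanding reduces $\frac{c(T)-c_{1}}{2-c_{1}-c(T)}\leq\frac{c(T)}{2-c(T)}$ to $2c_{1}\bigl(1-c(T)\bigr)\geq 0$, which holds because $c_{1}\geq 0$ and $c(T)\leq 1$.

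The main obstacle is the operator-theoretic identification $c(T)=\norm{T\Pro_{(\Fix T)^{\perp}}}=c_{2}$: this is where one needs the $T$-invariance of $(\Fix T)^{\perp}$ (so that $T\Pro_{(\Fix T)^{\perp}}$ is self-adjoint and positive semidefinite) together with the fact that the operator norm of a positive semidefinite self-adjoint operator equals its numerical radius. Everything downstream---the first equality and the final inequality---is then elementary algebra, modulo the quick degenerate-case and $c_{1}<1$ checks.
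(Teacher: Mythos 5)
Your argument is correct, but it is worth knowing that the paper does not actually prove this lemma: its ``proof'' is a one-line pointer, stating that the assertion is contained inside the proof of Theorem~3.29 of the cited Bauschke--Deutsch--Hundal--Park paper \cite{BDHP2003}. What you have done is reconstruct, in a self-contained way, essentially the content that that reference establishes: the whole lemma reduces to $c_{2}=c(T)$, and you obtain this by noting that $(\Fix T)^{\perp}$ is $T$-invariant (via $T=T^{*}$), so that $S:=T\Pro_{(\Fix T)^{\perp}}$ is self-adjoint and, by monotonicity, positive semidefinite, whence $\norm{S}$ equals its numerical radius, which is exactly $c_{2}$; the remaining equality and inequality are then the elementary algebra you carry out, using $c_{1}\geq 0$, $c(T)\leq 1$, and your check that $c_{1}<1$ (which correctly rules out a vanishing denominator, since $c_{1}=c_{2}=1$ is the only degenerate possibility), together with the trivial case $\Fix T=\mathcal{H}$ handled by the stated convention $c_{1}=c_{2}=0$. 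So the mathematics is the same as in the source the authors lean on, but your version buys a proof that can be read without consulting \cite{BDHP2003}, at the cost of a couple of paragraphs where the paper spends one sentence; all the individual steps (the invariance argument, positivity of $S$, norm equals numerical radius, the homogeneity argument identifying the sup over the ball with $c_{2}$, and the final cross-multiplication reducing to $2c_{1}(1-c(T))\geq 0$) check out.
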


\begin{proof}
	This is inside the proof of  \cite[Theorem~3.29]{BDHP2003}.
\end{proof}

\section{Isometries} \label{sec:Isometries}
In this section, we show some important properties of isometries. Some of them will be used in our main linear convergence results later. 
\begin{definition} \label{defn:isometry} {\rm \cite[Definition~1.6-1]{Kreyszig1989}}
	A mapping $T: \mathcal{H} \rightarrow \mathcal{H}$ is said to be \emph{isometric} or an \emph{isometry} if
	\begin{align} \label{eq:T:normpreserving}
	(\forall x \in \mathcal{H}) (\forall y \in \mathcal{H}) \quad \norm{Tx -Ty} =\norm{x-y}.
	\end{align}
\end{definition}

Note that in some references, the definition of isometry is the linear operator satisfying \cref{eq:T:normpreserving}. In this paper, the definition of isometry follows  from \cite[Definition~1.6-1]{Kreyszig1989} where the linearity is not required. 

We show some common isometries in the following fact.

\begin{fact} \label{fact:examples:normpreserving} {\rm \cite[Lemmas~2.23 and 2.24]{BOyW2019Isometry} }
	\begin{enumerate}
		\item \label{fact:examples:normpreserving:R} Let $C$ be a closed affine subspace of $\mathcal{H}$. Then the reflector $\R_{C}=2\Pro_{C}-\Id$ is isometric with $\Fix \R_{C} =C$.
		\item \label{fact:examples:normpreserving:Trans} Let $a \in \mathcal{H}$. The translation operator $(\forall x \in \mathcal{H})$ $T_{a}x=x+a$ is isometric.
		\item \label{fact:examples:normpreserving:TTStar} Let $T \in \mathcal{B}(\mathcal{H},\mathcal{H})$ and let $T^{*}$ be the adjoint of $T$. Then $T$ is isometric  if and only if $T^{*}T =\Id$.
		\item \label{fact:examples:normpreserving:Id}The identity operator is isometric.
		\item \label{fact:examples:normpreserving:composition} The composition of finitely many isometries is an isometry.
	\end{enumerate}
\end{fact}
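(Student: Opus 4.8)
The plan is to verify each of the five claims by direct computation, reducing everything to the norm-preserving identity \cref{eq:T:normpreserving}. Three of them are essentially immediate. For \cref{fact:examples:normpreserving:Trans}, one computes $\norm{T_{a}x - T_{a}y} = \norm{(x+a)-(y+a)} = \norm{x-y}$. Item \cref{fact:examples:normpreserving:Id} is the special case $a = 0$ (equivalently, the reflector onto $\mathcal{H}$). For \cref{fact:examples:normpreserving:composition}, if $S$ and $T$ are isometries then $\norm{S(Tx) - S(Ty)} = \norm{Tx - Ty} = \norm{x-y}$, and a straightforward induction then covers the composition of finitely many isometries.

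For \cref{fact:examples:normpreserving:R}, I would first reduce to the linear case. Write $C = z + M$ with $z \in C$ and $M$ a closed linear subspace; then \cref{fac:SetChangeProje} gives $\Pro_{C}x = z + \Pro_{M}(x - z)$, so $\R_{C}x = z + \R_{M}(x - z)$, where $\R_{M} = 2\Pro_{M} - \Id$ is linear by \cref{MetrProSubs8}\cref{fact:ProjectorInnerPRod:BoundedLine}. For $w \in \mathcal{H}$, the orthogonal decomposition $w = \Pro_{M}w + \Pro_{M^{\perp}}w$ of \cref{MetrProSubs8}\cref{MetrProSubs8:ii} yields $\R_{M}w = \Pro_{M}w - \Pro_{M^{\perp}}w$, and since $\Pro_{M}w \perp \Pro_{M^{\perp}}w$ the Pythagorean identity gives $\norm{\R_{M}w}^{2} = \norm{\Pro_{M}w}^{2} + \norm{\Pro_{M^{\perp}}w}^{2} = \norm{w}^{2}$. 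Hence $\norm{\R_{C}x - \R_{C}y} = \norm{\R_{M}(x - y)} = \norm{x - y}$, so $\R_{C}$ is an isometry. For the fixed-point set, $\R_{C}x = x \Leftrightarrow \Pro_{C}x = x \Leftrightarrow x \in C$, the last step being the standard characterization of the projector onto a nonempty closed convex set.

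For \cref{fact:examples:normpreserving:TTStar}, I would use that, since $T$ is linear, \cref{eq:T:normpreserving} is equivalent to $\norm{Tx} = \norm{x}$ for all $x \in \mathcal{H}$ (take $y = 0$; conversely $Tx - Ty = T(x - y)$). Rewriting $\norm{Tx}^{2} = \innp{T^{*}Tx, x}$, this says $\innp{(T^{*}T - \Id)x, x} = 0$ for every $x$. If $T^{*}T = \Id$ this is clear. Conversely, $T^{*}T - \Id$ is self-adjoint, so it suffices to recall that a self-adjoint bounded linear operator $S$ on a real Hilbert space with $\innp{Sx, x} = 0$ for all $x$ is the zero operator; I would obtain this by polarization, expanding $\innp{S(x + y), x + y} = 0$ and invoking self-adjointness to get $2\innp{Sx, y} = 0$ for all $x, y$, whence $S = 0$.

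I do not expect a genuine obstacle in any of these steps. The only two places needing mild care are \cref{fact:examples:normpreserving:R}, where one must first pass from the affine subspace $C$ to its direction space $M$ before the orthogonal decomposition is available, and \cref{fact:examples:normpreserving:TTStar}, where -- because we work over a \emph{real} Hilbert space -- one genuinely needs the polarization argument (not merely $\innp{Sx, x} = 0$) to conclude that $T^{*}T - \Id = 0$.
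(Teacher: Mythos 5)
Your argument is correct in every item. Note, however, that the paper itself offers no proof of this statement: it is recorded as a Fact imported from \cite[Lemmas~2.23 and 2.24]{BOyW2019Isometry}, so there is no internal proof to compare against, and your write-up simply supplies the standard self-contained verification. The two delicate points are handled properly: in item \cref{fact:examples:normpreserving:R} you pass from the affine subspace $C=z+M$ to its direction space $M$ via \cref{fac:SetChangeProje} before invoking the orthogonal decomposition $\Id=\Pro_{M}+\Pro_{M^{\perp}}$ and the Pythagorean identity, and you correctly identify $\Fix \R_{C}=C$ from $\R_{C}x=x\Leftrightarrow \Pro_{C}x=x$; in item \cref{fact:examples:normpreserving:TTStar} you rightly observe that over a real Hilbert space the identity $\innp{(T^{*}T-\Id)x,x}=0$ alone does not force $T^{*}T-\Id=0$, and you supply the needed polarization argument together with the self-adjointness of $T^{*}T-\Id$ to conclude. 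No gaps.
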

Clearly, the reflector associated with an affine subspace is affine but not necessarily linear. The translation operator $T_{a}$ defined in \cref{fact:examples:normpreserving}\cref{fact:examples:normpreserving:Trans} is not linear and $\Fix T_{a} = \varnothing$ whenever $a \neq 0$.

\begin{fact} {\rm \cite[Lemma~1.7]{Zarantonello1971}}{\rm \cite[Proposition~4.8(iii)]{BC2017}}\label{fact:TtobeAffine}
	Let $D$ be a nonempty convex subset of $\mathcal{H}$, let $T: D \to \mathcal{H}$, let $(x_{i})_{i \in \I}$ be a finite family in $D$,  let $(\alpha_{i})_{i \in \I}$ be a finite family in $\mathbb{R}$ s.t. $\sum_{i \in \I} \alpha_{i} =1$, and set $y :=\sum_{i \in \I} \alpha_{i}x_{i}$. Then $\norm{Ty - \sum_{i \in \I} \alpha_{i} Tx_{i} }^{2} +\sum_{i \in \I} \alpha_{i} \left( \norm{y -x_{i}}^{2} - \norm{Ty - Tx_{i}}^{2} \right) = \frac{1}{2} \sum_{i \in \I} \sum_{j \in \I} \alpha_{i} \alpha_{j}  \left( \norm{x_{i}-x_{j}}^{2} - \norm{Tx_{i} - Tx_{j}}^{2} \right) $.
\end{fact}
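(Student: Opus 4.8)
The plan is to reduce the claimed identity to two applications of a single ``variance'' identity together with a direct expansion of squared norms; no convexity of $D$ and no property of $T$ whatsoever enter the computation. Throughout I abbreviate $w := \sum_{i \in \I} \alpha_{i} T x_{i}$, so that the first term on the left-hand side reads $\norm{Ty - w}^{2}$, and I repeatedly use only the constraint $\sum_{i \in \I} \alpha_{i} = 1$ and the bilinearity of $\innp{\cdot,\cdot}$.

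First I would record the elementary identity: for any finite family $(u_{i})_{i \in \I}$ in $\mathcal{H}$ and any scalars $(\alpha_{i})_{i \in \I}$ with $\sum_{i \in \I} \alpha_{i} = 1$, setting $u := \sum_{i \in \I} \alpha_{i} u_{i}$, one has
\begin{align*}
\sum_{i \in \I} \alpha_{i} \norm{u - u_{i}}^{2} = \sum_{i \in \I} \alpha_{i} \norm{u_{i}}^{2} - \norm{u}^{2} = \tfrac{1}{2} \sum_{i \in \I}\sum_{j \in \I} \alpha_{i}\alpha_{j} \norm{u_{i} - u_{j}}^{2}.
\end{align*}
Both equalities follow by expanding each $\norm{a-b}^{2} = \norm{a}^{2} - 2\innp{a,b} + \norm{b}^{2}$ and collapsing the sums via $\sum_{i} \alpha_{i} = 1$ and $\sum_{i} \alpha_{i} u_{i} = u$.

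Next I would apply this identity twice. Taking $u_{i} = x_{i}$ (hence $u = y$) gives $\tfrac{1}{2}\sum_{i}\sum_{j}\alpha_{i}\alpha_{j}\norm{x_{i} - x_{j}}^{2} = \sum_{i}\alpha_{i}\norm{y - x_{i}}^{2}$, while taking $u_{i} = T x_{i}$ (hence $u = w$, which in general differs from $Ty$) gives $\tfrac{1}{2}\sum_{i}\sum_{j}\alpha_{i}\alpha_{j}\norm{Tx_{i} - Tx_{j}}^{2} = \sum_{i}\alpha_{i}\norm{Tx_{i}}^{2} - \norm{w}^{2}$. Subtracting, the right-hand side of the asserted identity becomes $\sum_{i}\alpha_{i}\norm{y - x_{i}}^{2} - \sum_{i}\alpha_{i}\norm{Tx_{i}}^{2} + \norm{w}^{2}$; cancelling the common term $\sum_{i}\alpha_{i}\norm{y-x_{i}}^{2}$ against the corresponding summand on the left, the whole statement reduces to the $T$-only identity
\begin{align*}
\norm{Ty - w}^{2} - \sum_{i \in \I} \alpha_{i} \norm{Ty - Tx_{i}}^{2} = \norm{w}^{2} - \sum_{i \in \I} \alpha_{i} \norm{Tx_{i}}^{2}.
\end{align*}

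Finally I would verify this by straightforward expansion: $\norm{Ty - w}^{2} = \norm{Ty}^{2} - 2\innp{Ty, w} + \norm{w}^{2}$, whereas $\sum_{i}\alpha_{i}\norm{Ty - Tx_{i}}^{2} = \norm{Ty}^{2} - 2\innp{Ty, w} + \sum_{i}\alpha_{i}\norm{Tx_{i}}^{2}$ by $\sum_{i}\alpha_{i} = 1$ and $\sum_{i}\alpha_{i}Tx_{i} = w$; subtracting the two lines yields exactly the required equality. There is no genuine obstacle here — the argument is pure bookkeeping — and the only point demanding care is to never conflate $Ty$ with $w = \sum_{i}\alpha_{i}Tx_{i}$. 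These coincide precisely when $T$ is affine, which is the content of the cited \cite[Proposition~4.8(iii)]{BC2017}; keeping them distinct is exactly what makes the displayed identity valid for an arbitrary map $T$.
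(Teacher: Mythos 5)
Your computation is correct: the two applications of the variance identity $\sum_i \alpha_i \norm{u-u_i}^2 = \sum_i \alpha_i \norm{u_i}^2 - \norm{u}^2 = \tfrac{1}{2}\sum_i\sum_j \alpha_i\alpha_j\norm{u_i-u_j}^2$ (valid for arbitrary real weights summing to $1$), followed by the direct expansion that keeps $Ty$ and $w=\sum_i\alpha_i Tx_i$ distinct, do yield the stated identity. The paper offers no proof of its own here --- the statement is quoted as a Fact from Zarantonello and from \cite[Proposition~4.8(iii)]{BC2017} --- and your argument is the standard bookkeeping one would find there, so there is nothing substantive to compare. One small remark on your aside: convexity of $D$ is not entirely idle in the statement, since the hypothesis is what is meant to make $Ty$ well defined (i.e.\ $y\in D$, at least when the $\alpha_i$ are nonnegative); it plays no role in the algebra, as you say, but it is not merely decorative.
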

\begin{proposition} \label{prop:IsometryAffine}
	Let $T: \mathcal{H} \rightarrow \mathcal{H}$ be isometric. Then $T$ is affine.  
\end{proposition}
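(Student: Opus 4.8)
The plan is to deduce affineness of an isometry $T$ from the identity in \cref{fact:TtobeAffine}, which already isolates exactly the quantity we need. Given a finite family $(x_i)_{i\in\I}$ in $\mathcal{H}$ and scalars $(\alpha_i)_{i\in\I}$ with $\sum_{i\in\I}\alpha_i = 1$, set $y := \sum_{i\in\I}\alpha_i x_i$; our goal is to show $Ty = \sum_{i\in\I}\alpha_i Tx_i$, since affineness is precisely the statement that $T$ preserves all affine combinations (it suffices, in fact, to treat $\abs{\I}=2$, but the general identity costs nothing extra).

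The key observation is that, because $T$ is isometric, \cref{eq:T:normpreserving} gives $\norm{Tx_i - Tx_j} = \norm{x_i - x_j}$ for all $i,j\in\I$, and likewise $\norm{Ty - Tx_i} = \norm{y - x_i}$ for all $i\in\I$. Substituting these equalities into the identity of \cref{fact:TtobeAffine}, every term of the form $\norm{x_i - x_j}^2 - \norm{Tx_i - Tx_j}^2$ on the right-hand side vanishes, and every term $\norm{y - x_i}^2 - \norm{Ty - Tx_i}^2$ in the sum on the left-hand side vanishes as well. What remains is simply
\begin{align*}
\Norm{Ty - \sum_{i \in \I} \alpha_{i} Tx_{i}}^{2} = 0,
\end{align*}
whence $Ty = \sum_{i\in\I}\alpha_i Tx_i$, as desired. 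Since $\mathcal{H}$ is convex (indeed all of $\mathcal{H}$), the hypotheses of \cref{fact:TtobeAffine} are met with $D = \mathcal{H}$.

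There is essentially no obstacle here: the substantive content has been front-loaded into the polarization-type identity of \cref{fact:TtobeAffine}, and the proof of the proposition is just the remark that an isometry kills every correction term in that identity. The only point worth stating carefully is why preservation of all finite affine combinations is equivalent to $T$ being affine in the sense defined in the introduction ($\rho C + (1-\rho)C = C$ for the relevant notion of affine map); this is the standard fact that a map preserving two-point affine combinations preserves all of them, so one could even restrict attention to $\I = \{1,2\}$, $\alpha_1 = \rho$, $\alpha_2 = 1-\rho$ from the outset.
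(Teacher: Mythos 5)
Your proposal is correct and follows exactly the paper's argument: the paper also deduces the result directly from \cref{defn:isometry} and the Zarantonello identity in \cref{fact:TtobeAffine}, observing that the isometry hypothesis annihilates every correction term so that $\norm{Ty-\sum_{i\in \operatorname{I}}\alpha_i Tx_i}=0$. Your additional remark that two-point affine combinations suffice is harmless but not needed, since the identity already handles arbitrary finite affine combinations.
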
	

\begin{proof}
	The desired result is directly from \cref{defn:isometry} and \cref{fact:TtobeAffine}.
\end{proof}	

\begin{corollary}
	Let $T: \mathcal{H} \rightarrow \mathcal{H}$ be isometric. If $\Fix T$ is nonempty, then $\Fix T$ is an affine closed subspace. 
	
	Consequently, the intersection of  the fixed point sets of finitely many isometries is either empty or an affine closed subspace. 
\end{corollary}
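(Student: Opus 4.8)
The plan is to check, separately, that $\Fix T$ is closed and that it is an affine subspace in the sense used in this paper (namely $\rho\,\Fix T+(1-\rho)\Fix T=\Fix T$ for every $\rho\in\mathbb{R}$), and then to bootstrap this to the statement about intersections. First I would record that $T$ is affine, which is exactly \cref{prop:IsometryAffine}, and that $T$ is continuous, since by \cref{defn:isometry} it is $1$-Lipschitz. Closedness of $\Fix T$ then follows because it is the set on which the two continuous maps $T$ and $\Id$ agree; equivalently, $\Fix T=(T-\Id)^{-1}(\{0\})$ is the preimage of a closed set under a continuous map. (Concretely: if $(x_{n})_{n\in\mathbb{N}}$ lies in $\Fix T$ and $x_{n}\to x$, then $Tx=\lim_{n}Tx_{n}=\lim_{n}x_{n}=x$.)

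For the affine property, I would fix $\rho\in\mathbb{R}$ and $x,y\in\Fix T$. Since $T$ is affine, $T(\rho x+(1-\rho)y)=\rho Tx+(1-\rho)Ty=\rho x+(1-\rho)y$, so $\rho x+(1-\rho)y\in\Fix T$; this gives the inclusion $\rho\,\Fix T+(1-\rho)\Fix T\subseteq\Fix T$. The reverse inclusion is trivial because any $z\in\Fix T$ can be written $z=\rho z+(1-\rho)z$. Combined with the hypothesis $\Fix T\neq\varnothing$, this is precisely the definition of an affine subspace recalled in \cref{sec:Introduction}, so together with the first step $\Fix T$ is a closed affine subspace.

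For the consequence, let $T_{1},\ldots,T_{\ell}$ be isometries; if $\bigcap_{i=1}^{\ell}\Fix T_{i}=\varnothing$ there is nothing to prove, so assume it is nonempty. Then each $\Fix T_{i}$ is nonempty, hence a closed affine subspace by the first part. The intersection of closed sets is closed, and for $\rho\in\mathbb{R}$ and $x,y\in\bigcap_{i=1}^{\ell}\Fix T_{i}$ the point $\rho x+(1-\rho)y$ lies in each $\Fix T_{i}$ (each being affine) and hence in the intersection, while the reverse inclusion is again immediate; thus $\bigcap_{i=1}^{\ell}\Fix T_{i}$ is a closed affine subspace. There is no real obstacle here: the only point requiring a little care is purely expository, namely translating the elementary computations into the algebraic form $\rho C+(1-\rho)C=C$ of the definition of affine subspace rather than the more familiar ``line through any two points'' formulation.
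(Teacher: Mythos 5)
Your proposal is correct and follows the same route the paper intends: the paper's proof simply says the result follows from the definitions and \cref{prop:IsometryAffine}, and your argument (closedness from $1$-Lipschitz continuity, affineness of $\Fix T$ from affineness of $T$, and the elementary intersection step) is exactly the filling-in of those details.
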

\begin{proof}
	The desired result is easily from the related definitions and \cref{prop:IsometryAffine}.
\end{proof}

\begin{fact} {\rm \cite[Page~321]{MC2000}} \label{fact:isometry:orthogn}
	The linear isometries on $\mathbb{R}^{n}$ are precisely the orthogonal matrices.
\end{fact}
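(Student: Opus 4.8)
The plan is to reduce the statement to the characterization of isometric bounded linear operators already recorded in \cref{fact:examples:normpreserving}\cref{fact:examples:normpreserving:TTStar}, namely that a bounded linear $T$ is isometric if and only if $T^{*}T = \Id$. On $\mathbb{R}^{n}$ the adjoint $T^{*}$ is represented by the transpose $T^{\intercal}$, so this condition reads $T^{\intercal}T = \Id$, which is exactly the defining relation of an orthogonal matrix; hence it suffices to upgrade the one-sided identity $T^{\intercal}T = \Id$ to full orthogonality (i.e.\ $T$ invertible with $T^{-1} = T^{\intercal}$) and to verify the converse.

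For a self-contained argument I would proceed as follows. First, since $T$ is linear, $\norm{Tx - Ty} = \norm{T(x-y)}$, so $T$ is isometric if and only if it is norm-preserving, $\norm{Tx} = \norm{x}$ for all $x \in \mathbb{R}^{n}$. Next, by the polarization identity $\innp{u,v} = \frac{1}{4}\big(\norm{u+v}^{2} - \norm{u-v}^{2}\big)$, norm-preservation is equivalent to inner-product preservation, $\innp{Tx,Ty} = \innp{x,y}$ for all $x,y \in \mathbb{R}^{n}$. Rewriting the left-hand side as $\innp{T^{\intercal}Tx,\,y}$ and invoking nondegeneracy of the Euclidean inner product, this holds precisely when $T^{\intercal}T = \Id$.

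Finally I would observe that $T^{\intercal}T = \Id$ forces $T$ to be invertible: it exhibits $T^{\intercal}$ as a left inverse of the square matrix $T$, and a square matrix possessing a one-sided inverse is invertible with two-sided inverse equal to it, whence $T^{-1} = T^{\intercal}$ and also $TT^{\intercal} = \Id$; thus $T$ is orthogonal. Conversely, if $T$ is orthogonal then $T^{\intercal}T = \Id$, and running the chain of equivalences backwards shows $T$ is norm-preserving, hence isometric. No step here is genuinely difficult; the only point requiring a moment's care is the passage from the one-sided identity $T^{\intercal}T = \Id$ to genuine orthogonality, which is exactly where finite-dimensionality of $\mathbb{R}^{n}$ enters.
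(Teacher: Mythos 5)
This statement is quoted in the paper as a Fact with a citation to \cite[Page~321]{MC2000}, so there is no in-paper proof to compare against; your argument is correct and is the standard textbook one. Polarization upgrades norm preservation (equivalent to the isometry property for linear $T$) to inner-product preservation, yielding $T^{\intercal}T=\Id$ as in \cref{fact:examples:normpreserving}\cref{fact:examples:normpreserving:TTStar}, and you correctly identify the only delicate point: finite-dimensionality is what lets the one-sided identity $T^{\intercal}T=\Id$ be upgraded to invertibility with $T^{-1}=T^{\intercal}$, i.e.\ genuine orthogonality.
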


\begin{lemma} \label{lema:IsometryT}
	Let $T: \mathcal{H} \to \mathcal{H}$ be isometric and let $W$ be a nonempty closed   convex set such that $W \subseteq \Fix T$. Then 
	\begin{align*}
	T \Pro_{W} =\Pro_{W} = \Pro_{W} T.
	\end{align*} 
\end{lemma}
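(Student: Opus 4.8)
The plan is to prove the chain $T\Pro_{W} = \Pro_{W} = \Pro_{W}T$ by establishing the two outer equalities separately; the point $\Pro_{W}$ in the middle is of course itself, and throughout I use that $W$ is nonempty, closed and convex, so that $\Pro_{W}\colon\mathcal{H}\to\mathcal{H}$ is a well-defined single-valued operator. No deep machinery is needed: the whole argument rests on the single observation that, because $W\subseteq\Fix T$, the restriction of $T$ to $W$ is the identity map on $W$.

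For the first equality $T\Pro_{W}=\Pro_{W}$, I would argue pointwise: fix $x\in\mathcal{H}$; then $\Pro_{W}x\in W\subseteq\Fix T$, so $T(\Pro_{W}x)=\Pro_{W}x$. Since $x$ was arbitrary, $T\Pro_{W}=\Pro_{W}$.

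For the second equality $\Pro_{W}T=\Pro_{W}$, fix $x\in\mathcal{H}$ and let $w\in W$ be arbitrary. Since $w\in\Fix T$ we have $Tw=w$, and since $T$ is isometric, \cref{eq:T:normpreserving} gives $\norm{Tx-w}=\norm{Tx-Tw}=\norm{x-w}$. Hence the two functions $w\mapsto\norm{Tx-w}$ and $w\mapsto\norm{x-w}$ coincide on $W$, so they attain their minimum over $W$ at the same unique point; that is, $\Pro_{W}(Tx)=\Pro_{W}(x)$. As $x$ was arbitrary, $\Pro_{W}T=\Pro_{W}$, which completes the proof. I do not expect any genuine obstacle; the only points requiring (routine) care are that $\Pro_{W}$ is single-valued (nonempty closed convex set in a Hilbert space) and that equality of the two distance functions on $W$ forces equality of their minimizers. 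If desired one could also invoke \cref{prop:IsometryAffine} to note that $T$ is affine and $\Fix T$ a closed affine subspace, but this is not needed for the argument above.
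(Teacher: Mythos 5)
Your proposal is correct and takes essentially the same route as the paper: the first equality $T\Pro_{W}=\Pro_{W}$ is argued identically (pointwise, via $\Pro_{W}x\in W\subseteq\Fix T$), and the second rests on the same ingredients (isometry, $W\subseteq\Fix T$, uniqueness of the projection onto the nonempty closed convex set $W$). The only cosmetic difference is that the paper establishes $\norm{Tx-\Pro_{W}Tx}=\norm{Tx-\Pro_{W}x}$ through a sandwich of inequalities evaluated at the two candidate projections, whereas you observe directly that the distance functions $w\mapsto\norm{Tx-w}$ and $w\mapsto\norm{x-w}$ coincide on all of $W$ and hence have the same unique minimizer.
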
	

\begin{proof}
	Let $x \in \mathcal{H}$. Because $ \Pro_{W} x \in W \subseteq \Fix T$ and $ \Pro_{W} Tx \in W \subseteq \Fix T$, $T \Pro_{W}x = \Pro_{W}x$ and $ T\Pro_{W}Tx =\Pro_{W}Tx$. Hence, $T \Pro_{W} =\Pro_{W}$. Moreover, by definitions of projection and isometry, we have that $  \norm{ Tx - \Pro_{W}Tx } \leq \norm{ Tx - \Pro_{W} x} = \norm{ Tx -T \Pro_{W} x} = \norm{x -\Pro_{W}x} \leq \norm{x - \Pro_{W}Tx} = \norm{Tx - T\Pro_{W}Tx} = \norm{Tx - \Pro_{W}Tx}$, which implies that $ \norm{ Tx - \Pro_{W}Tx } = \norm{ Tx - \Pro_{W} x}  $. By the uniqueness of projection on the nonempty closed   convex set $W$, we obtain that $\Pro_{W}x =\Pro_{W}Tx $. Hence, $\Pro_{W} =\Pro_{W}T$.
\end{proof}

\begin{lemma} \label{lemma:TAffineFLinear}
	Let $z \in \mathcal{H}$ and let
	$T: \mathcal{H} \to \mathcal{H}$ and $F: \mathcal{H} \to \mathcal{H} $ such that $(\forall x \in \mathcal{H})$ $Fx = T(x +z)-z$. Then the following statements hold:
	\begin{enumerate}
		\item \label{lemma:TAffineFLinear:FT}  If $F$ is affine, then $T$ is affine.
		\item \label{lemma:TAffineFLinear:TF} 
		Suppose that $z \in \Fix T$. If $T$ is affine, then $F$ is linear. 
		\item \label{lemma:TAffineFLinear:Fix}$\Fix F =\Fix T -z$.
	    \item \label{lemma:TAffineFLinear:Isometric} $T$ is isometric if and only if $F$ is isometric.
	\end{enumerate}	
\end{lemma}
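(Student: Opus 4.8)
The four claims are elementary consequences of the substitution $F = (\,\cdot\, - z)^{-1} \circ T \circ (\,\cdot\, + z)$, i.e.\ the conjugation of $T$ by the translation that moves $z$ to the origin. The plan is to treat each item by unwinding this relation directly. For \cref{lemma:TAffineFLinear:FT}, suppose $F$ is affine and fix $x,y\in\mathcal H$ and $\rho\in\mathbb R$; write $\rho x+(1-\rho)y = (\rho(x-z)+(1-\rho)(y-z))+z$ and use $Tw = F(w-z)+z$ together with affineness of $F$ applied to the points $x-z,\,y-z$ to conclude $T(\rho x+(1-\rho)y)=\rho Tx+(1-\rho)Ty$. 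For \cref{lemma:TAffineFLinear:TF}, assuming $z\in\Fix T$ and $T$ affine, I would first note $F0 = T(0+z)-z = Tz-z = 0$ since $z\in\Fix T$; then an affine map fixing $0$ is linear, and affineness of $F$ follows from affineness of $T$ by the same substitution computation as in \cref{lemma:TAffineFLinear:FT} run in the other direction (every affine combination $\sum\alpha_i x_i$ with $\sum\alpha_i=1$ is preserved). Combining ``$F$ affine'' with ``$F0=0$'' gives linearity.

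For \cref{lemma:TAffineFLinear:Fix}, I would simply chase equivalences: for $x\in\mathcal H$, $x\in\Fix F \iff Fx = x \iff T(x+z)-z = x \iff T(x+z) = x+z \iff x+z\in\Fix T \iff x\in \Fix T - z$. For \cref{lemma:TAffineFLinear:Isometric}, fix $x,y\in\mathcal H$ and compute $\norm{Fx-Fy} = \norm{(T(x+z)-z)-(T(y+z)-z)} = \norm{T(x+z)-T(y+z)}$; since $(x+z)-(y+z)=x-y$, the map $w\mapsto w+z$ is a bijection of $\mathcal H$ onto itself preserving differences, so $T$ satisfies \cref{eq:T:normpreserving} if and only if $F$ does.

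None of the steps presents a genuine obstacle; this is a bookkeeping lemma. The only point that requires a moment's care is \cref{lemma:TAffineFLinear:TF}: one must use the hypothesis $z\in\Fix T$ in an essential way (it is exactly what forces $F0=0$, hence upgrades ``affine'' to ``linear''), and one should recall that an affine self-map of a vector space fixing the origin is automatically linear — this is immediate from the affine-combination characterization of affineness once $F0=0$ (take $\rho$-combinations of $x$ and $0$ for scaling, and of $x+y$, $x$, $y$ suitably for additivity, or invoke \cref{prop:IsometryAffine}-style reasoning in reverse). I would state that fact inline rather than citing it.
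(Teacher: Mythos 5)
Your proposal is correct and follows essentially the same route as the paper: each item is obtained by directly unwinding the relation $Fx=T(x+z)-z$, just as in the paper's proof, with items (iii) and (iv) argued verbatim the same way. The only cosmetic difference is in item (ii), where the paper verifies $F(\alpha x+\beta y)=\alpha Fx+\beta Fy$ in a single computation that uses $z\in\Fix T$ midstream, while you first observe $F0=Tz-z=0$ and that $F$ is affine, then invoke the standard fact that an affine self-map fixing the origin is linear; both arguments are equally valid.
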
	

\begin{proof} Let $x, y$ be in $\mathcal{H}$.
	
	\cref{lemma:TAffineFLinear:FT}:  Let $\lambda$ be in  $\mathbb{R}$. Because  $F$ is affine, we have 
	\begin{align*}
	T(\lambda x + (1-\lambda)y) &= z + F(\lambda x + (1-\lambda)y -z ) \\
	& = z + F(\lambda (x -z) + (1-\lambda)(y -z) ) \\
	& = \lambda ( z + F(x-z)) + (1-\lambda )  ( z + F(y-z))\\
	& = \lambda T x + (1-\lambda) Ty.
	\end{align*}
	
	\cref{lemma:TAffineFLinear:TF}: Let $\alpha, \beta$ be in $\mathbb{R}$. Because $T$ is affine, 
	\begin{align*}
	F(\alpha x + \beta y) &= T(\alpha x + \beta y +z ) -z\\
	&= T\big( \frac{1}{2} ( 2\alpha x +z)  + \frac{1}{2} ( 2 \beta y +z) \big) -z\\
	&= \frac{1}{2}  T(2 \alpha x +z)  + \frac{1}{2}  T(2 \beta y +z)  -z \\
	&= \frac{1}{2}  T(2 \alpha x +z)  + \frac{1}{2}  Tz + \frac{1}{2}  T(2 \beta y +z)  + \frac{1}{2}  Tz -2z \quad (\text{by $z \in \Fix T$})\\
	&= T( \alpha x +z) + T( \beta y +z) -2z \\
	&=   T \left( \alpha (x+z) +(1- \alpha)z \right) + T\left( \beta (y+z) +(1-\beta)z \right) -2z \\
	&= \alpha T(x+z) +(1- \alpha) Tz + \beta T (y+z) + (1-\beta) Tz -2z \\
	&=  \alpha  \left( T(x+z) -z\right)  + \beta ( T (y+z)  -z )\\
	&= \alpha F x + \beta Fy.
	\end{align*}
	Hence, $F$ is linear.
	
	\cref{lemma:TAffineFLinear:Fix}: Clearly, $x \in \Fix F \Leftrightarrow x =Fx = T(x+z) -z   \Leftrightarrow x+z = T(x+z)   \Leftrightarrow x+z \in \Fix T$.

	\cref{lemma:TAffineFLinear:Isometric}: This is clear from $\norm{Tx- Ty} =\norm{ z+F(x-z)- \left(z+F(y-z)  \right)} = \norm{F(x-z)-F(y-z)} $.
\end{proof}

\subsection*{Properties of surjective or self-adjoint linear isometries}

\begin{lemma}\label{lemma:TRnUnitaryNormal}
	Let $T: \mathbb{R}^{n} \to \mathbb{R}^{n}$ be a linear isometry. Then $T$ is unitary and normal.
\end{lemma}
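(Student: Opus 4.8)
The statement to prove is \cref{lemma:TRnUnitaryNormal}: a linear isometry $T\colon\mathbb{R}^n\to\mathbb{R}^n$ is unitary and normal. The natural route is to exploit \cref{fact:examples:normpreserving}\cref{fact:examples:normpreserving:TTStar}, which tells us that a bounded linear operator is isometric if and only if $T^{*}T=\Id$. Since $T$ is linear on the finite-dimensional space $\mathbb{R}^n$, it is automatically bounded, so this fact applies directly and gives $T^{*}T=\Id$ for free. From there the whole argument is short.

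First I would note that $T^{*}T=\Id$ forces $T$ to be injective (if $Tx=0$ then $x=T^{*}Tx=0$), and an injective linear operator on a finite-dimensional space is surjective, hence bijective. Thus $T^{-1}$ exists. Then, composing $T^{*}T=\Id$ on the right with $T^{-1}$ yields $T^{*}=T^{-1}$, which is exactly the defining condition for $T$ to be unitary (together with bijectivity, already established). Finally, unitarity immediately gives normality: $TT^{*}=TT^{-1}=\Id=T^{-1}T=T^{*}T$, so $TT^{*}=T^{*}T$, i.e. $T$ is normal.

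Alternatively, and perhaps even more cleanly for $\mathbb{R}^n$, one could invoke \cref{fact:isometry:orthogn}: a linear isometry on $\mathbb{R}^n$ is an orthogonal matrix, i.e. $T^{\intercal}T=\Id$ with $T$ square, whence $T^{\intercal}=T^{-1}$ and $TT^{\intercal}=T^{\intercal}T=\Id$; over $\mathbb{R}$ the adjoint is the transpose, so this is the same conclusion. I would probably present the argument via \cref{fact:examples:normpreserving}\cref{fact:examples:normpreserving:TTStar} since it is self-contained within the Hilbert-space framework and does not require specializing to matrices, but either works.

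**Main obstacle.** Honestly there is no serious obstacle here — the only point requiring the finite-dimensionality hypothesis $\mathcal{H}=\mathbb{R}^n$ is the step "injective $\Rightarrow$ surjective," which fails for a general Hilbert space (e.g. the unilateral shift is a linear isometry that is not unitary). So the one thing to be careful about is to flag explicitly that bijectivity is where $n<\infty$ enters; everything else ($T^{*}T=\Id$, the passage to $T^{*}=T^{-1}$, normality) is pure algebra valid in any Hilbert space.
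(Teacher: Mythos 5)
Your proposal is correct, and your primary argument takes a slightly different route from the paper's. The paper proves \cref{lemma:TRnUnitaryNormal} exactly via your ``alternative'': it invokes \cref{fact:isometry:orthogn} to identify $T$ with an orthogonal matrix and then cites the matrix-analysis fact that an orthogonal matrix has orthonormal columns \emph{and} orthonormal rows, so that $T^{\intercal}T=\Id=TT^{\intercal}$, which gives unitarity and normality in one line. Your main argument instead stays in the abstract Hilbert-space framework: \cref{fact:examples:normpreserving}\cref{fact:examples:normpreserving:TTStar} gives $T^{*}T=\Id$, injectivity follows, and finite-dimensionality upgrades this to bijectivity, whence $T^{*}=T^{-1}$ and $TT^{*}=T^{*}T=\Id$. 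The trade-off is minor: the paper's proof is shorter but outsources the key finite-dimensional step (that $T^{\intercal}T=\Id$ for a square matrix forces $TT^{\intercal}=\Id$) to a citation, whereas your argument makes that step explicit through the ``injective $\Rightarrow$ surjective'' reasoning and correctly flags it as the only place where $\mathcal{H}=\mathbb{R}^{n}$ is used — a point the counterexample of the right shift in \cref{exam:CounterExam:isometry} shows is essential. Either write-up would be acceptable.
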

\begin{proof}
	By \cref{fact:isometry:orthogn}, without loss of generality, we assume that $T \in \mathbb{R}^{n \times n}$ is an orthogonal matrix. Hence, by \cite[Page~321]{MC2000}, $T$ has orthonormal columns and orthonormal rows, which implies that $ T^{\intercal}T =\Id =TT^{\intercal}$. Therefore, $T$ is unitary and normal.
\end{proof}

The last result states that linear isometries on $\mathbb{R}^{n}$ must be normal; however, this fails in infinite-dimensional Hilbert space. 
\begin{example} \label{exam:CounterExam:isometry}
	Suppose that $\mathcal{H} =\ell^{2} = \{ (x_{i})_{i \in \mathbb{N}} ~:~ \sum_{i \in \mathbb{N}} x^{2}_{i}  <\infty ~\text{and}~ (\forall i \in \mathbb{N}) ~x_{i} \in \mathbb{R} \}$ with the inner product $\innp{x,y} = \sum_{i \in \mathbb{N}} x_{i}y_{i}$ for every $x= (x_{i})_{i \in \mathbb{N}}$ and $y=(y_{i})_{i \in \mathbb{N}}$ in $\ell^{2}$. Define the right shift operator $T_{R}$ and left shift operator $T_{L}$ by 
	\begin{align*}
	\big(\forall x= (x_{i})_{i \in \mathbb{N}} \in \ell^{2}\big) \quad T_{R}x := (0,x_{0},x_{1},x_{2}, \cdots),
	\end{align*}
	and
	\begin{align*}
	\big(\forall x= (x_{i})_{i \in \mathbb{N}} \in \ell^{2}\big) \quad T_{L}x := (x_{1},x_{2}, x_{3},x_{4}, \cdots).
	\end{align*}
	Then the following assertions hold:
	\begin{enumerate}
		\item $T_{L}$ and $T_{R}$ are linear.
		\item $T_{R}^{*}= T_{L} \neq T_{R}$.
		\item $T_{R}^{*}T_{R}=T_{L}T_{R} = \Id$, but $T_{L}^{*}T_{L}=T_{R}T_{L} \neq \Id$. Hence, $T_{R}^{*}T_{R} = \Id \neq T_{R}T_{R}^{*}$.
		\item $T_{R}$ is isometric, but $T_{L}=T_{R}^{*}$ is not isometric.
		\item $T_{R}$ is not normal.
		\item $T_{R}$ is not surjective. Hence, $T_{R}$ is not unitary.
	\end{enumerate}
\end{example}
\begin{remark}
Recall that in the Hilbert sequence space	$\ell^{2}$, we draw from \cref{exam:CounterExam:isometry} the following conclusions:
\begin{enumerate}
	\item A linear isometry need not be self-adjoint.
	\item A linear isometry need not be surjective; hence, a linear isometry need not be unitary. 
	\item Even if $T$ is linear and isometric, $T^{*}$ may fail to be isometric. 
	\item A linear isometry need not be normal.
\end{enumerate}

\begin{corollary} \label{cor:RnFMF2F1}
	Suppose that $\mathcal{H} = \mathbb{R}^{n}$.  Let $F_{1}, F_{2},  \ldots, F_{m}$ be linear  isometries on $ \mathbb{R}^{n}$. Set $T:=(1-\alpha)  \Id  +  \alpha F_{m}\cdots F_{2}F_{1}$  with $\alpha \in \left]0,1\right[\,$. Then $\Fix T = \Fix F_{m}\cdots F_{2}F_{1}$. Moreover, for every $x \in  \mathbb{R}^{n}$, $(T^{k}x)_{k \in \mathbb{N}}$ converges to $\Pro_{\Fix F_{m}\cdots F_{2}F_{1}}x$ with a sharp linear rate $\norm{T\Pro_{(\Fix F_{m}\cdots F_{2}F_{1})^{\perp}}}<1$.
\end{corollary}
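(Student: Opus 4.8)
The plan is to reduce this corollary to the already-established machinery for linear, normal, $\alpha$-averaged operators. First I would observe that, by \cref{fact:examples:normpreserving}\cref{fact:examples:normpreserving:composition}, the composition $F := F_{m}\cdots F_{2}F_{1}$ is an isometry, and being a composition of linear maps it is linear; hence by \cref{lemma:TRnUnitaryNormal} the operator $F$ is unitary and, in particular, normal and nonexpansive (an isometry is trivially nonexpansive). This places $F$ squarely in the hypotheses of \cref{prop:selfadjoint:linearConverge}.

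Next I would simply invoke \cref{prop:selfadjoint:linearConverge} with this $F$ and the given $\alpha \in \left]0,1\right[$, setting $T := (1-\alpha)\Id + \alpha F$. Part \cref{prop:selfadjoint:linearConverge:Properties} of that proposition immediately yields that $T$ is $\alpha$-averaged, linear, normal, and that $\Fix T = \Fix F = \Fix F_{m}\cdots F_{2}F_{1}$, which is the first claimed identity. Part \cref{prop:selfadjoint:linearConverge:Sharp}, applied in the present setting $\mathcal{H} = \mathbb{R}^{n}$, gives exactly that $(T^{k}x)_{k\in\mathbb{N}}$ converges to $\Pro_{\Fix F}x = \Pro_{\Fix F_{m}\cdots F_{2}F_{1}}x$ with the sharp linear rate $\norm{T\Pro_{(\Fix F)^{\perp}}} = \norm{T\Pro_{(\Fix F_{m}\cdots F_{2}F_{1})^{\perp}}} < 1$. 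That is precisely the conclusion to be proved.

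There is essentially no obstacle here: the corollary is a direct specialization. The only point deserving a sentence of care is the verification that $F$ is \emph{normal} — this is where finite-dimensionality is used (via \cref{fact:isometry:orthogn} and \cref{lemma:TRnUnitaryNormal}), and it is exactly the property that \cref{exam:CounterExam:isometry} shows can fail for a single linear isometry in infinite dimensions, so one should not be tempted to drop the hypothesis $\mathcal{H} = \mathbb{R}^{n}$. Once normality is in hand, \cref{prop:selfadjoint:linearConverge} does all the remaining work (the sharpness coming ultimately from \cref{fac:LineNonexOperNorm}\cref{eq:fac:LineNonexOperNorm:normal}), so the proof is a two-line citation chain.
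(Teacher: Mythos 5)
Your proposal is correct and follows essentially the same route as the paper: the paper's proof is exactly the one-line citation chain you describe, namely that $F_{m}\cdots F_{2}F_{1}$ is a linear isometry on $\mathbb{R}^{n}$, hence unitary and normal by \cref{lemma:TRnUnitaryNormal}, so \cref{prop:selfadjoint:linearConverge}\cref{prop:selfadjoint:linearConverge:Sharp} gives the fixed-point identity and the sharp linear rate. Your added remark on where finite-dimensionality enters (normality of the isometry, which can fail in $\ell^{2}$) is accurate but not part of the paper's argument.
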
	

\begin{proof}
	Because $F_{m}\cdots F_{2}F_{1}$  is a linear isometry on $ \mathbb{R}^{n}$, the result comes from \cref{lemma:TRnUnitaryNormal} and \cref{prop:selfadjoint:linearConverge}\cref{prop:selfadjoint:linearConverge:Sharp}.
\end{proof}	

\begin{example}
	Suppose that $\mathcal{H} = \mathbb{R}^{n}$. Let $U_{1}, U_{2}$ be linear subspaces. Denote by $T:=  \frac{1}{2}\Id  + \frac{1}{2}  \R_{U_{2}}\R_{U_{1}}$, the Douglas--Rachford operator. Then $(T^{k}x)_{k \in \mathbb{N}}$  converges linearly to $\Pro_{\Fix T}x$ with a sharp linear rate $\norm{T\Pro_{(\Fix T)^{\perp}}}<1$.
\end{example}

More relations among isometric, normal and unitary operators can be found in \cite[Section~3.10]{Kreyszig1989}.
\end{remark}
\begin{theorem} \label{theo:TselfAdjoinIsome:ProFixT}
	Let $T\in \mathcal{B}( \mathcal{H}, \mathcal{H})$. Then the following statements hold:
	\begin{enumerate}
		\item \label{theo:TselfAdjoinIsome:ProFixT:ProT} If $T$ is  isometric and self-adjoint, then $\Pro_{\Fix T}=\frac{1}{2}(\Id +T)$ and $ \Pro_{(\Fix T)^{\perp}}=\frac{1}{2}(\Id - T) $.
		\item  \label{theo:TselfAdjoinIsome:ProFixT:RT} $T$ is isometric and self-adjoint if and only if  $T=\R_{U}$, where $U$ is a closed linear subspace of $\mathcal{H}$.
	\end{enumerate} 
\end{theorem}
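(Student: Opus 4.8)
The plan is to derive everything from one algebraic observation: an operator $T \in \mathcal{B}(\mathcal{H},\mathcal{H})$ that is both isometric and self-adjoint satisfies $T^{2} = \Id$. Indeed, since $T$ is linear and isometric, \cref{fact:examples:normpreserving}\cref{fact:examples:normpreserving:TTStar} gives $T^{*}T = \Id$, and combining this with $T^{*} = T$ yields $T^{2} = \Id$. Note also that, since $T$ is bounded and linear, $\Fix T = \ker(\Id - T)$ is a closed linear subspace, so the projectors $\Pro_{\Fix T}$ and $\Pro_{(\Fix T)^{\perp}}$ appearing in the statement are well defined.

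For \cref{theo:TselfAdjoinIsome:ProFixT:ProT}, I would write $M := \Fix T$ and verify the formula $\Pro_{M}x = \tfrac{1}{2}(x + Tx)$ for every $x \in \mathcal{H}$ directly from the characterization of metric projections onto subspaces, \cref{fact:CharaProjectionLineaSpace}. The point $\tfrac{1}{2}(x + Tx)$ lies in $M$ because $T\big(\tfrac{1}{2}(x + Tx)\big) = \tfrac{1}{2}(Tx + T^{2}x) = \tfrac{1}{2}(Tx + x)$, where $T^{2} = \Id$ is used. Moreover $x - \tfrac{1}{2}(x + Tx) = \tfrac{1}{2}(x - Tx) \in M^{\perp}$: for every $y \in \Fix T$ we have $\innp{x - Tx, y} = \innp{x,y} - \innp{x, T^{*}y} = \innp{x,y} - \innp{x, Ty} = \innp{x,y} - \innp{x,y} = 0$, using $T^{*} = T$ and $Ty = y$. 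Hence $\Pro_{\Fix T} = \tfrac{1}{2}(\Id + T)$, and then $\Pro_{(\Fix T)^{\perp}} = \Id - \Pro_{\Fix T} = \tfrac{1}{2}(\Id - T)$ by \cref{MetrProSubs8}\cref{MetrProSubs8:ii}.

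For \cref{theo:TselfAdjoinIsome:ProFixT:RT}, the forward implication is immediate from \cref{theo:TselfAdjoinIsome:ProFixT:ProT}: $T = 2\Pro_{\Fix T} - \Id = \R_{\Fix T}$, and $U := \Fix T$ is a closed linear subspace. Conversely, if $T = \R_{U} = 2\Pro_{U} - \Id$ with $U$ a closed linear subspace, then $T$ is bounded and linear and isometric by \cref{MetrProSubs8}\cref{fact:ProjectorInnerPRod:BoundedLine} and \cref{fact:examples:normpreserving}\cref{fact:examples:normpreserving:R}, and self-adjoint since $\Pro_{U}^{*} = \Pro_{U}$ by \cref{MetrProSubs8}\cref{fact:ProjectorInnerPRod:Selfadjoint}, so that $T^{*} = 2\Pro_{U}^{*} - \Id = 2\Pro_{U} - \Id = T$. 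There is no genuinely hard step here; the only care needed is in invoking the right prerequisites — that $\Fix T$ is a closed linear subspace and the subspace-projection characterization \cref{fact:CharaProjectionLineaSpace} — so I would keep the write-up compact.
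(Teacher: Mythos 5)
Your proposal is correct and follows essentially the same route as the paper: derive $T^{2}=T^{*}T=\Id$ from \cref{fact:examples:normpreserving}\cref{fact:examples:normpreserving:TTStar}, verify $\tfrac{1}{2}(\Id+T)x\in\Fix T$ and $\tfrac{1}{2}(x-Tx)\perp\Fix T$ via self-adjointness, conclude with \cref{fact:CharaProjectionLineaSpace} and \cref{MetrProSubs8}\cref{MetrProSubs8:ii}, and obtain \cref{theo:TselfAdjoinIsome:ProFixT:RT} from \cref{theo:TselfAdjoinIsome:ProFixT:ProT} plus the standard properties of $\Pro_{U}$. The only cosmetic difference is in the converse of \cref{theo:TselfAdjoinIsome:ProFixT:RT}, where you cite \cref{fact:examples:normpreserving}\cref{fact:examples:normpreserving:R} for the isometry of $\R_{U}$ instead of checking $\R_{U}^{*}\R_{U}=\Id$ as the paper does; both are valid.
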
	

\begin{proof}
	\cref{theo:TselfAdjoinIsome:ProFixT:ProT}: Suppose that $T$ is isometric and self-adjoint. Then $\Fix T$ is a closed linear subspace of $\mathcal{H}$, 
	\begin{align}  \label{eq:lemma:TselfAdjoinIsome:ProFixT:T}
	T=T^{*}, \quad \text{and} \quad  T^{2} = T^{*}T=\Id
	\end{align}
	 by \cref{fact:examples:normpreserving}\cref{fact:examples:normpreserving:TTStar}. 
	 Let $x \in \mathcal{H}$. Then
	\begin{align*}
	T \left(\tfrac{1}{2}(\Id +T) x \right)=\tfrac{1}{2} (Tx +TTx)\stackrel{\cref{eq:lemma:TselfAdjoinIsome:ProFixT:T}}{=}  \tfrac{1}{2}(Tx +x),
	\end{align*}
	and so $\tfrac{1}{2}(\Id +T) x \in \Fix T$. Moreover, 
	\begin{align*}
	(\forall y \in \Fix T) ~ \Innp{x- \tfrac{1}{2}(\Id +T) x,y} = \tfrac{1}{2} \innp{x - Tx, y}=   \tfrac{1}{2} \left( \innp{x , y}  - \innp{Tx,y} \right) \stackrel{\cref{eq:lemma:TselfAdjoinIsome:ProFixT:T}}{=}   \tfrac{1}{2} \left( \innp{x , y}  - \innp{x, Ty} \right) =  0.
	\end{align*}
	Hence, by	\cref{fact:CharaProjectionLineaSpace}, we obtain that  $\Pro_{\Fix T}x=\frac{1}{2}(\Id +T)x$.
	
	Because $\Fix T$ is a closed linear subspace,  by \cref{MetrProSubs8}\cref{MetrProSubs8:ii},
	$ \Pro_{(\Fix T)^{\perp}}=\Id -  \Pro_{\Fix T}= \frac{1}{2}(\Id - T) $.
	
	\cref{theo:TselfAdjoinIsome:ProFixT:RT}:  \enquote{$\Rightarrow$} By \cref{theo:TselfAdjoinIsome:ProFixT:ProT}, $T=2  \Pro_{\Fix T} - \Id =\R_{\Fix T}$ is the reflector associated with the closed linear subspace $\Fix T$. 
	
	\enquote{$\Leftarrow$}  By \cref{fact:ProjectorInnerPRod}\cref{fact:ProjectorInnerPRod:Idempotent} and  \cref{MetrProSubs8}\cref{fact:ProjectorInnerPRod:BoundedLine}$\&$\cref{fact:ProjectorInnerPRod:Selfadjoint}, we know that $\Pro^{2}_{U} =\Pro_{U}$, $ \Pro_{U} \in \mathcal{B}(\mathcal{H})$ and $\Pro_{U}=\Pro^{*}_{U}$. Hence, $\R_{U}=2\Pro_{U} -\Id$ satisfies $\R_{U} \in \mathcal{B}(\mathcal{H}) $,  $\R_{U} = \R^{*}_{U}$ and $ \R^{*}_{U}\R_{U} =\Id$. By \cref{fact:examples:normpreserving}\cref{fact:examples:normpreserving:TTStar}, the proof is complete.
\end{proof}	

\begin{fact} {\rm \cite[Proposition~29.6]{BC2017} } \label{fact:UVperp}
	Let $C$ and  $D$ be nonempty closed convex subsets of $\mathcal{H}$ such that $C \perp D$. Then $C+D$ is closed. 
\end{fact}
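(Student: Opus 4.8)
The plan is to verify directly that $C+D$ contains the limit of every convergent sequence drawn from it. If $C=\varnothing$ or $D=\varnothing$ then $C+D=\varnothing$ is closed, so I may assume both are nonempty; let $(z_{n})_{n\in\mathbb{N}}$ be a sequence in $C+D$ with $z_{n}\to x$, and write $z_{n}=c_{n}+d_{n}$ with $c_{n}\in C$ and $d_{n}\in D$. The heart of the argument is that the orthogonality hypothesis $C\perp D$ forces the two sequences $(c_{n})$ and $(d_{n})$ to be \emph{individually} convergent, because they can be recovered from $z_{n}$ by two complementary orthogonal projections.

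To set this up, let $M$ be the closure of the linear span of $D$; it is a closed linear subspace of $\mathcal{H}$, and $M^{\perp}$ is one as well by \cref{MetrProSubs8}\cref{MetrProSubs8:i}. Since $C\perp D$, for each $n$ the bounded functional $\innp{c_{n},\cdot}$ vanishes on $D$, hence on the linear span of $D$ by linearity, hence on $M$ by continuity; thus $c_{n}\in M^{\perp}$. On the other hand $d_{n}\in D\subseteq M$. Because $\mathcal{H}=M\oplus M^{\perp}$ is an orthogonal direct sum, the decomposition $z_{n}=d_{n}+c_{n}$ into a vector of $M$ plus a vector of $M^{\perp}$ is unique, so it must be the one furnished by the projectors; that is, $\Pro_{M}z_{n}=d_{n}$ and, using $\Id=\Pro_{M}+\Pro_{M^{\perp}}$ from \cref{MetrProSubs8}\cref{MetrProSubs8:ii}, $\Pro_{M^{\perp}}z_{n}=c_{n}$.

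Now $\Pro_{M}$ and $\Pro_{M^{\perp}}$ are bounded linear, hence continuous, by \cref{MetrProSubs8}\cref{fact:ProjectorInnerPRod:BoundedLine}, so $z_{n}\to x$ yields $d_{n}=\Pro_{M}z_{n}\to\Pro_{M}x=:d$ and $c_{n}=\Pro_{M^{\perp}}z_{n}\to\Pro_{M^{\perp}}x=:c$. Since $C$ and $D$ are closed and $c_{n}\in C$, $d_{n}\in D$ for every $n$, I conclude $c\in C$ and $d\in D$; and $x=\Pro_{M}x+\Pro_{M^{\perp}}x=d+c\in C+D$, so $C+D$ is closed. I do not anticipate a genuine obstacle here: the only step needing a moment's care is the identity $\Pro_{M}z_{n}=d_{n}$, which rests on checking $c_{n}\in M^{\perp}$ together with uniqueness of the orthogonal decomposition of $\mathcal{H}$ along $M$. (A slightly heavier alternative would note that $C\perp D$ gives $\norm{z_{n}}^{2}=\norm{c_{n}}^{2}+\norm{d_{n}}^{2}$, so $(c_{n})$ and $(d_{n})$ are bounded; one then extracts $c_{n}\weakly c$ and $d_{n}\weakly d$ along a common subsequence, uses weak sequential closedness of closed convex sets to place $c\in C$ and $d\in D$, and passes to the limit in $z_{n}=c_{n}+d_{n}$. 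I would prefer the projection argument, since it stays entirely within the machinery already collected above.)
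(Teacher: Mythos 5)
Your argument is correct. Note, however, that the paper does not prove this statement at all: it is quoted as a Fact with a citation to \cite[Proposition~29.6]{BC2017}, so there is no in-paper proof to compare against. Your projection argument is a sound, self-contained substitute that stays within the toolkit already collected in \cref{sec:AuxiliaryResults}: putting $M:=\overline{\spn}\,D$, the hypothesis $C\perp D$ indeed places every $c_{n}$ in $M^{\perp}$ and every $d_{n}$ in $M$, and the identity $\Pro_{M}z_{n}=d_{n}$ follows from \cref{fact:CharaProjectionLineaSpace} (or from uniqueness of the decomposition $\mathcal{H}=M\oplus M^{\perp}$ in \cref{MetrProSubs8}); continuity of $\Pro_{M}$ and $\Pro_{M^{\perp}}$ then forces componentwise convergence, and closedness of $C$ and $D$ finishes the proof. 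It is worth observing that your route never uses convexity of $C$ or $D$, so it actually establishes a slightly more general statement than the one quoted; your alternative sketch via $\norm{z_{n}}^{2}=\norm{c_{n}}^{2}+\norm{d_{n}}^{2}$, boundedness, and weak sequential closedness is also valid but, unlike the projection argument, genuinely needs convexity (weak closedness of closed convex sets), so the first route is both simpler and stronger.
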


The following result is essentially \cite[Proposition~3.6]{BCNPW2014}, but our proof is different. 
\begin{theorem} \label{theo:LinearSelfAdIsometry:Fix}
	Let $T_{1} : \mathcal{H} \to \mathcal{H}$ and $T_{2}:  \mathcal{H} \to \mathcal{H}$ be linear, self-adjoint and isometric. Then 
	\begin{align*}
	\Fix T_{2}T_{1} = \left( \Fix T_{1} \cap \Fix T_{2} \right)  \oplus \big( (\Fix T_{1})^{\perp} \cap ( \Fix T_{2})^{\perp} \big).
	\end{align*} 
\end{theorem}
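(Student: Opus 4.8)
The plan is to reduce the whole statement to a computation with reflectors. By \cref{theo:TselfAdjoinIsome:ProFixT}\cref{theo:TselfAdjoinIsome:ProFixT:RT}, since $T_{1}$ and $T_{2}$ are linear, self-adjoint and isometric, there are closed linear subspaces $U_{1} := \Fix T_{1}$ and $U_{2} := \Fix T_{2}$ with $T_{i} = \R_{U_{i}} = 2\Pro_{U_{i}} - \Id$ for $i \in \{1,2\}$; in particular (using $T_{i}^{*}T_{i} = \Id$ from \cref{fact:examples:normpreserving}\cref{fact:examples:normpreserving:TTStar} together with self-adjointness) each $T_{i}$ is a linear involution, i.e., $\R_{U_{i}}^{2} = \Id$. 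Thus the identity to be proved is
\begin{align*}
\Fix (\R_{U_{2}}\R_{U_{1}}) = (U_{1} \cap U_{2}) \oplus \big( U_{1}^{\perp} \cap U_{2}^{\perp} \big).
\end{align*}

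The algebraic heart is the observation that, because $\R_{U_{2}}$ is an involution, for every $x \in \mathcal{H}$ one has $\R_{U_{2}}\R_{U_{1}}x = x \Leftrightarrow \R_{U_{1}}x = \R_{U_{2}}x$, and, expanding $\R_{U_{i}} = 2\Pro_{U_{i}} - \Id$, this is in turn equivalent to $\Pro_{U_{1}}x = \Pro_{U_{2}}x$. So $x \in \Fix(\R_{U_{2}}\R_{U_{1}})$ if and only if $\Pro_{U_{1}}x = \Pro_{U_{2}}x$. For the inclusion $\subseteq$, take such an $x$ and set $p := \Pro_{U_{1}}x = \Pro_{U_{2}}x$; then $p \in U_{1} \cap U_{2}$, while by \cref{MetrProSubs8}\cref{MetrProSubs8:ii} we have $x - p = \Pro_{U_{1}^{\perp}}x \in U_{1}^{\perp}$ and also $x - p = \Pro_{U_{2}^{\perp}}x \in U_{2}^{\perp}$, hence $x = p + (x - p)$ lies in the right-hand side. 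For the reverse inclusion, if $x \in U_{1} \cap U_{2}$ then $\R_{U_{1}}x = x = \R_{U_{2}}x$, and if $x \in U_{1}^{\perp} \cap U_{2}^{\perp}$ then $\R_{U_{i}}x = -x$ for each $i$, so $\R_{U_{2}}\R_{U_{1}}x = \R_{U_{2}}(-x) = x$; since $\Fix(\R_{U_{2}}\R_{U_{1}}) = \ker(\Id - \R_{U_{2}}\R_{U_{1}})$ is a linear subspace it contains the sum of these two subspaces.

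Finally, one should record that the sum really is an orthogonal direct sum and is closed, so that the symbol $\oplus$ is justified: since $U_{1} \cap U_{2} \subseteq U_{1}$ and $U_{1}^{\perp} \cap U_{2}^{\perp} \subseteq U_{1}^{\perp}$, the two summands are mutually orthogonal, which makes the sum direct, and closedness follows from \cref{fact:UVperp}. Combining this with the two inclusions above yields the asserted equality.

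I do not anticipate a genuine obstacle here: the only real idea is the reduction $\R_{U_{2}}\R_{U_{1}}x = x \Leftrightarrow \Pro_{U_{1}}x = \Pro_{U_{2}}x$, which relies essentially on $\R_{U_{2}}$ being an involution, a fact supplied by \cref{theo:TselfAdjoinIsome:ProFixT}. The points that merely require care are invoking \cref{theo:TselfAdjoinIsome:ProFixT} to replace $T_{1},T_{2}$ by reflectors at the outset, and checking orthogonality and closedness of the summands so that the decomposition is a legitimate $\oplus$.
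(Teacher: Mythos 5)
Your proposal is correct and follows essentially the same route as the paper: both rest on \cref{theo:TselfAdjoinIsome:ProFixT} to write $T_{i}=2\Pro_{\Fix T_{i}}-\Id$ and on the equivalence $x\in\Fix T_{2}T_{1}\Leftrightarrow T_{1}x=T_{2}x\Leftrightarrow\Pro_{\Fix T_{1}}x=\Pro_{\Fix T_{2}}x$. The only difference is cosmetic: for the inclusion $\Fix T_{2}T_{1}\subseteq\left(\Fix T_{1}\cap\Fix T_{2}\right)\oplus\big((\Fix T_{1})^{\perp}\cap(\Fix T_{2})^{\perp}\big)$ you use the decomposition $x=\Pro_{\Fix T_{1}}x+\Pro_{(\Fix T_{1})^{\perp}}x$ directly, whereas the paper wraps this same decomposition in a separation-based contradiction argument (via \cref{fact:StrongSeparationSubspace} and \cref{fact:UVperp}), so your version is, if anything, slightly more streamlined.
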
	

\begin{proof}
	Clearly, $\left( \Fix T_{1} \cap \Fix T_{2} \right) \cap \left( (\Fix T_{1})^{\perp} \cap ( \Fix T_{2})^{\perp} \right) =\{0\} $. Hence, it remains to prove $\Fix T_{2}T_{1} = \left( \Fix T_{1} \cap \Fix T_{2} \right)  + \left( (\Fix T_{1})^{\perp} \cap ( \Fix T_{2})^{\perp} \right)$.
	First note that
	\begin{subequations}
		\begin{align}
		(\forall x \in \mathcal{H})	\quad 	x \in \Fix T_{2}T_{1} & \Leftrightarrow x = T_{2}T_{1} x \\
		& \Leftrightarrow T_{2} x =T_{1}x \quad (T^{*}_{2}=T_{2} ~\text{and}~T^{*}_{2}T_{2} = \Id )\\
		&\Leftrightarrow 2 \Pro_{\Fix T_{2}} x-x =2 \Pro_{\Fix T_{1}} x -x \quad (\text{by \cref{theo:TselfAdjoinIsome:ProFixT}\cref{theo:TselfAdjoinIsome:ProFixT:ProT}}) \\
		& \Leftrightarrow  \Pro_{\Fix T_{2}} x =\Pro_{\Fix T_{1}} x  \label{eq:FixT1T2P} \\
		& \Leftrightarrow   \Pro_{(\Fix T_{2})^{\perp}} x =\Pro_{(\Fix T_{1})^{\perp}} x. \quad  (\text{by  \cref{MetrProSubs8}\cref{MetrProSubs8:ii}}) \label{eq:FixT1T2PPerp}
		\end{align}
	\end{subequations}
	Clearly, \cref{eq:FixT1T2P} and \cref{eq:FixT1T2PPerp} respectively imply that 
	\begin{align*} 
	\Fix T_{1} \cap \Fix T_{2} \subseteq  \Fix T_{2}T_{1} \quad \text{and} \quad (\Fix T_{1})^{\perp} \cap  ( \Fix T_{2})^{\perp}  \subseteq  \Fix T_{2}T_{1}. 
	\end{align*}
	Since  $\Fix T_{2}T_{1}$ is a closed linear subspace of $\mathcal{H}$, we have
	\begin{align*}
	\left( \Fix T_{1} \cap \Fix T_{2} \right)  + \big( (\Fix T_{1})^{\perp} \cap ( \Fix T_{2})^{\perp} \big) \subseteq \Fix T_{2}T_{1}.
	\end{align*}
	
	It suffices to show that $\Fix T_{2}T_{1} \subseteq  \left( \Fix T_{1} \cap \Fix T_{2} \right)  + \left( (\Fix T_{1})^{\perp} \cap ( \Fix T_{2})^{\perp} \right)  $.
	Let $x \in \Fix T_{2}T_{1}$ but assume to the contrary that $x \notin  \left( \Fix T_{1} \cap \Fix T_{2} \right)  + \left( (\Fix T_{1})^{\perp} \cap ( \Fix T_{2})^{\perp} \right) $.
	By \cref{fact:UVperp}, we know that $\left( \Fix T_{1} \cap \Fix T_{2} \right)  + \left( (\Fix T_{1})^{\perp} \cap ( \Fix T_{2})^{\perp} \right)  $ is a closed linear subspace of $\mathcal{H}$. Then by \cref{fact:StrongSeparationSubspace}, there exists $z \in \mathcal{H}$ with $\norm{z} =1$ such that 
	\begin{align} \label{eq:xlarger0}
	\innp{z,x} >0,
	\end{align}
	and
	\begin{align} \label{eq:yequal0}
	\left(\forall y \in  \left( \Fix T_{1} \cap \Fix T_{2} \right)  + \big( (\Fix T_{1})^{\perp} \cap ( \Fix T_{2})^{\perp} \big)  \right) \quad \innp{z, y} =0.
	\end{align} 
	By \cref{eq:FixT1T2P} and \cref{eq:FixT1T2PPerp}, we have 
	\begin{align} \label{eq:UANDUPERP}
	\Pro_{\Fix T_{1}}x =\Pro_{\Fix T_{2}}x \in   \Fix T_{1} \cap \Fix T_{2} \quad \text{and} \quad  \Pro_{(\Fix T_{1} )^{\perp}}x =\Pro_{(\Fix T_{2} )^{\perp}}x \in (\Fix T_{1} )^{\perp}\cap (\Fix T_{2} )^{\perp}.
	\end{align}
	Combine \cref{MetrProSubs8}\cref{MetrProSubs8:ii} with  \cref{eq:UANDUPERP}  to obtain that
	\begin{align*}
	x = \Pro_{\Fix T_{1}}x + \Pro_{(\Fix T_{1} )^{\perp}}x \in \left( \Fix T_{1} \cap \Fix T_{2} \right)+ \big( (\Fix T_{1} )^{\perp} \cap (\Fix T_{2} )^{\perp} \big).
	\end{align*}
	Hence, by \cref{eq:yequal0}, we have $\innp{z,x} =0$ which contradicts with \cref{eq:xlarger0}.	 
\end{proof}

\begin{corollary} \label{coro:RNT}
	Let $U_{1}, U_{2}$ be  closed linear subspaces of $\mathcal{H}$. Let $T:=\frac{1}{2} (\Id +  \R_{U_{2}}\R_{U_{1}} )$ be the Douglas--Rachford operator. Then $\Fix T =\Fix  \R_{U_{2}}\R_{U_{1}}= \left( U_{1} \cap U_{2}  \right) \oplus \left( U^{\perp}_{1} \cap U^{\perp}_{2}  \right)$.
\end{corollary}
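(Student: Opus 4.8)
The plan is to obtain this as an immediate specialization of \cref{theo:LinearSelfAdIsometry:Fix} to the case of reflectors. First I would record that the two reflectors satisfy the hypotheses of that theorem. By \cref{theo:TselfAdjoinIsome:ProFixT}\cref{theo:TselfAdjoinIsome:ProFixT:RT} (the \enquote{$\Leftarrow$} implication), the reflector $\R_{U}$ associated with a closed linear subspace $U$ belongs to $\mathcal{B}(\mathcal{H},\mathcal{H})$, is self-adjoint, and is isometric; alternatively one may cite \cref{fact:examples:normpreserving}\cref{fact:examples:normpreserving:R} together with \cref{MetrProSubs8}\cref{fact:ProjectorInnerPRod:BoundedLine}$\&$\cref{fact:ProjectorInnerPRod:Selfadjoint}. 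In particular both $\R_{U_{1}}$ and $\R_{U_{2}}$ are linear, self-adjoint, and isometric, and $\Fix \R_{U_{i}} = U_{i}$ for $i \in \{1,2\}$ by \cref{fact:examples:normpreserving}\cref{fact:examples:normpreserving:R}.

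Next I would dispose of the identity $\Fix T = \Fix \R_{U_{2}}\R_{U_{1}}$. This is a triviality coming from the shape of the Douglas--Rachford operator: for every $x \in \mathcal{H}$,
\begin{align*}
x = Tx \iff x = \tfrac{1}{2}x + \tfrac{1}{2}\R_{U_{2}}\R_{U_{1}}x \iff \tfrac{1}{2}x = \tfrac{1}{2}\R_{U_{2}}\R_{U_{1}}x \iff x = \R_{U_{2}}\R_{U_{1}}x ,
\end{align*}
so $\Fix T = \Fix \R_{U_{2}}\R_{U_{1}}$. (If one prefers an abstract argument, $\R_{U_{2}}\R_{U_{1}}$ is quasinonexpansive by \cref{fact:examples:normpreserving}\cref{fact:examples:normpreserving:composition} and \cref{rem:NonexpImplication}, with nonempty fixed point set, so \cref{fact:FixSumInters} applied to $T = \tfrac{1}{2}\Id + \tfrac{1}{2}\R_{U_{2}}\R_{U_{1}}$ gives the same conclusion; but the direct computation is cleanest.)

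Finally I would invoke \cref{theo:LinearSelfAdIsometry:Fix} with $T_{1} := \R_{U_{1}}$ and $T_{2} := \R_{U_{2}}$ to get
\begin{align*}
\Fix \R_{U_{2}}\R_{U_{1}} = \big(\Fix \R_{U_{1}} \cap \Fix \R_{U_{2}}\big) \oplus \big((\Fix \R_{U_{1}})^{\perp} \cap (\Fix \R_{U_{2}})^{\perp}\big) = (U_{1}\cap U_{2}) \oplus (U_{1}^{\perp}\cap U_{2}^{\perp}),
\end{align*}
where in the last step we substituted $\Fix \R_{U_{i}} = U_{i}$. Combining the two displays finishes the proof. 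I do not expect any genuine obstacle: the entire mathematical content sits in \cref{theo:LinearSelfAdIsometry:Fix}, and the only point requiring a moment of care is verifying the \emph{linearity} hypothesis of that theorem, which is precisely why one appeals to \cref{theo:TselfAdjoinIsome:ProFixT}\cref{theo:TselfAdjoinIsome:ProFixT:RT} (equivalently, restricts attention to closed \emph{linear}, not merely affine, subspaces $U_{1}, U_{2}$).
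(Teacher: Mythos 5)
Your proposal is correct and follows essentially the same route as the paper, which also obtains the result by combining \cref{theo:LinearSelfAdIsometry:Fix} with \cref{theo:TselfAdjoinIsome:ProFixT}\cref{theo:TselfAdjoinIsome:ProFixT:RT}; your argument merely spells out the routine verifications (linearity/self-adjointness/isometry of the reflectors, $\Fix \R_{U_{i}} = U_{i}$, and $\Fix T = \Fix \R_{U_{2}}\R_{U_{1}}$) that the paper leaves implicit.
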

\begin{proof}
	The result follows from  \cref{theo:LinearSelfAdIsometry:Fix} and \cref{theo:TselfAdjoinIsome:ProFixT}\cref{theo:TselfAdjoinIsome:ProFixT:RT}.
\end{proof}

The following examples show that it is not clear how to generalize \cref{theo:LinearSelfAdIsometry:Fix} from two to finitely many isometries.
\begin{example}
	 Let $U_{1}, U_{2}$ be linear subspaces of $\mathbb{R}^{n}$ with $U^{\perp}_{1} \cap U^{\perp}_{2}  \neq \{0\}$, and let $U_{3} :=\mathbb{R}^{n}$. Then
	\begin{align*}
	\Fix \R_{U_{3}}\R_{U_{2}}\R_{U_{1}} = \Fix \R_{U_{2}}\R_{U_{1}}= \left( U_{1} \cap U_{2}  \right)+ \big( U^{\perp}_{1} \cap U^{\perp}_{2}  \big)  \neq \left( U_{1} \cap U_{2}  \cap U_{3} \right)+ \left( U^{\perp}_{1} \cap U^{\perp}_{2} \cap U^{\perp}_{3}  \right).
	\end{align*}
\end{example}

\begin{example} \label{exam:RU3U2U1}
	Suppose that $\mathcal{H}=\mathbb{R}^{2}$. Let $U_{1} := \mathbb{R}\cdot(1,0)$, $U_{2} := \mathbb{R}\cdot(1,1)$ and $U_{3} :=\mathbb{R} \cdot (0,1)$. Then
	\begin{align*}  
	\Fix \R_{U_{3}}\R_{U_{2}}\R_{U_{1}} = \mathbb{R}\cdot(1,1).
	\end{align*}
	Consequently, 
	\begin{align*}
	\Fix \R_{U_{3}}\R_{U_{2}}\R_{U_{1}} \neq  U_{1} \cap U_{2} \cap U_{3} \quad \text{and} \quad \Fix \R_{U_{3}}\R_{U_{2}}\R_{U_{1}} \neq \left( U_{1} \cap U_{2}  \cap U_{3} \right)+ \left( U^{\perp}_{1} \cap U^{\perp}_{2} \cap U^{\perp}_{3}  \right).
	\end{align*}
\end{example}

\section{Circumcentered isometry methods} \label{sec:CircumcenteredIsometryMethods}

\subsection*{Circumcenter mappings}
Recall that  $\mathcal{P}(\mathcal{H})$ is the set of all nonempty subsets of $\mathcal{H}$ containing \emph{finitely many} elements.
By  \cite[Proposition~3.3]{BOyW2018},  the following definition is well defined.
\begin{definition}[circumcenter operator]  {\rm \cite[Definition~3.4]{BOyW2018}} \label{defn:Circumcenter}
	The \emph{circumcenter operator} is
	\begin{empheq}[box=\mybluebox]{equation*}
	\CCO{} \colon \mathcal{P}(\mathcal{H}) \to \mathcal{H} \cup \{ \varnothing \} \colon K \mapsto \begin{cases} p, \quad ~\text{if}~p \in \aff (K)~\text{and}~\{\norm{p-y} ~|~y \in K \}~\text{is a singleton};\\
	\varnothing, \quad~ \text{otherwise}.
	\end{cases}
	\end{empheq}
	In particular, when $\CCO(K) \in \mathcal{H}$, that is, $\CCO(K) \neq \varnothing$, we say that the circumcenter of $K$ exists and we call $\CCO(K)$ the \emph{circumcenter of $K$}.
\end{definition}

\begin{fact}[scalar multiples]  {\rm \cite[Proposition~6.1]{BOyW2018}} \label{fact:CircumHomoge}
	Let $K \in \mathcal{P}(\mathcal{H})$ and $\lambda \in \mathbb{R} \smallsetminus \{0\}$.
	Then
	$\CCO(\lambda K)=\lambda \CCO(K)$.
\end{fact}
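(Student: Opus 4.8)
The plan is to reduce the identity $\CCO(\lambda K)=\lambda\CCO(K)$ to two elementary scaling facts: that the affine hull commutes with nonzero dilations, and that Euclidean distances are multiplied by $\abs{\lambda}$ under the map $x\mapsto\lambda x$. Since $\lambda\neq 0$, this dilation is a bijection of $\mathcal{H}$, which is what lets the argument be run symmetrically and, in particular, handle the case $\CCO(K)=\varnothing$.

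First I would record that $\aff(\lambda K)=\lambda\aff(K)$ whenever $\lambda\neq 0$. Indeed, every element of $\aff(K)$ is a finite affine combination $\sum_{i}\alpha_{i}y_{i}$ with $y_{i}\in K$ and $\sum_{i}\alpha_{i}=1$; multiplying through by $\lambda$ gives $\sum_{i}\alpha_{i}(\lambda y_{i})\in\aff(\lambda K)$, and the reverse inclusion follows by applying the same observation to $\lambda^{-1}$ and the set $\lambda K$. Second, for any fixed $p\in\mathcal{H}$ the map $y\mapsto\lambda y$ sends $K$ onto $\lambda K$ and satisfies $\norm{\lambda p-\lambda y}=\abs{\lambda}\,\norm{p-y}$, so that $\{\norm{\lambda p-z}~|~z\in\lambda K\}=\abs{\lambda}\{\norm{p-y}~|~y\in K\}$; because $\abs{\lambda}>0$, the left-hand set is a singleton if and only if $\{\norm{p-y}~|~y\in K\}$ is.

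Combining these two observations yields the key equivalence: $p$ lies in $\aff(K)$ and is equidistant from all points of $K$ if and only if $\lambda p$ lies in $\aff(\lambda K)$ and is equidistant from all points of $\lambda K$. In view of \cref{defn:Circumcenter} together with the uniqueness statement of \cite[Proposition~3.3]{BOyW2018}, this says precisely that $\CCO(K)=p\in\mathcal{H}$ if and only if $\CCO(\lambda K)=\lambda p$. Hence, if $\CCO(K)\neq\varnothing$ then $\CCO(\lambda K)=\lambda\CCO(K)$; and if $\CCO(K)=\varnothing$, then $\CCO(\lambda K)=\varnothing$ as well, since any candidate $q$ for $\CCO(\lambda K)$ would force $\lambda^{-1}q$ to be $\CCO(K)$, a contradiction. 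In either case $\CCO(\lambda K)=\lambda\CCO(K)$.

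I do not anticipate a genuine obstacle here; the proof is a routine verification, and the only point deserving a word of care is to treat the empty-set case on an equal footing with the nonempty one, which the displayed equivalence does automatically once one notes that $\lambda\neq 0$ makes $x\mapsto\lambda x$ invertible on $\mathcal{H}$.
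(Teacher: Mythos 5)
Your argument is correct: the scaling identity $\aff(\lambda K)=\lambda\aff(K)$ together with $\norm{\lambda p-\lambda y}=\abs{\lambda}\norm{p-y}$ and the invertibility of $x\mapsto\lambda x$ gives exactly the equivalence needed by \cref{defn:Circumcenter}, and you treat the case $\CCO(K)=\varnothing$ properly. Note that the paper itself offers no proof here — the statement is imported as a fact from \cite[Proposition~6.1]{BOyW2018} — and your verification is the natural one consistent with that reference.
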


\begin{fact}[translations]  {\rm \cite[Proposition~6.3]{BOyW2018}} \label{fact:CircumSubaddi}
	Let $K \in \mathcal{P}(\mathcal{H})$ and $y \in \mathcal{H}$. Then
	$\CCO(K+y)=\CCO(K)+y$.
\end{fact}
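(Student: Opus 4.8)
The plan is to work directly from the two defining requirements of the circumcenter in \cref{defn:Circumcenter}: membership in the affine hull, and equidistance from all points of the set. The first thing I would record is the elementary translation-equivariance of affine hulls, $\aff(K+y) = \aff(K) + y$. This is immediate from the description of $\aff$ as the smallest affine subspace containing the set, together with the fact that the translation $z \mapsto z + y$ is an affine bijection of $\mathcal{H}$ that carries affine subspaces to affine subspaces (so it maps the smallest affine superset of $K$ to the smallest affine superset of $K+y$).

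Next I would treat the case $\CCO(K) \in \mathcal{H}$. Set $p := \CCO(K)$, so $p \in \aff(K)$ and $\{\norm{p-z} \mid z \in K\}$ is a singleton. I claim that $p+y$ qualifies as the circumcenter of $K+y$: first, $p + y \in \aff(K) + y = \aff(K+y)$ by the previous paragraph; second, for every $w \in K+y$, writing $w = z+y$ with $z \in K$, one has $\norm{(p+y)-w} = \norm{p-z}$, so $\{\norm{(p+y)-w} \mid w \in K+y\}$ equals the singleton $\{\norm{p-z} \mid z \in K\}$. By the uniqueness clause of \cite[Proposition~3.3]{BOyW2018} this forces $\CCO(K+y) = p + y = \CCO(K)+y$.

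Finally I would dispatch the case $\CCO(K) = \varnothing$ by showing $\CCO(K+y) = \varnothing$ as well, arguing by contraposition. If $\CCO(K+y) = q \in \mathcal{H}$, then applying the reasoning of the previous paragraph with $K$ replaced by $K+y$ and $y$ replaced by $-y$ shows that $q - y \in \aff(K)$ and $q-y$ is equidistant from all points of $K$, hence $\CCO(K) = q - y \in \mathcal{H}$, a contradiction. Combining the two cases yields $\CCO(K+y) = \CCO(K)+y$ throughout, with the equality read in $\mathcal{H} \cup \{\varnothing\}$ under the convention $\varnothing + y = \varnothing$.

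I do not expect a genuine obstacle here: the only non-tautological ingredient is the translation-equivariance of the affine hull, and that itself is one line. The one point worth stating carefully is the symmetry in $y$ and $-y$, which is exactly what lets the empty-circumcenter case be reduced to the non-empty one; an alternative packaging would be to observe once and for all that $K \mapsto K+y$ is a bijection of $\mathcal{P}(\mathcal{H})$ under which both pieces of defining data — the affine hull and the family of distances from a candidate point — transform equivariantly, and then simply read the claim off from the characterization of $\CCO$.
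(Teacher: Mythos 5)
Your proof is correct. The paper states this result as a Fact imported from \cite[Proposition~6.3]{BOyW2018} without reproducing an argument, and your reasoning---translation-equivariance of the affine hull, invariance of the distance data under $z \mapsto z+y$, uniqueness of the circumcenter from \cite[Proposition~3.3]{BOyW2018}, and reduction of the empty case to the nonempty one via the translation by $-y$---is exactly the standard argument one expects for this statement, so there is nothing to correct.
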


Throughout this subsection, we assume that  
\begin{empheq}[box=\mybluebox]{equation*}
G_{1}, \ldots, G_{m} ~\text{are operators from}~\mathcal{H} ~\text{to}~\mathcal{H}~\text{with}~\cap^{m}_{j=1} \Fix G_{j} \neq \varnothing,
\end{empheq}
and that
\begin{empheq}[box=\mybluebox]{equation*}
\mathcal{S}=\{ G_{1}, \ldots,  G_{m} \} \quad \text{and} \quad (\forall x \in \mathcal{H}) \quad \mathcal{S}(x)=\{ G_{1}x, \ldots,  G_{m}x\} .
\end{empheq}

\begin{definition}[circumcenter mapping] {\rm \cite[Definition~3.1]{BOyW2018Proper}} \label{def:cir:map}
	The \emph{circumcenter mapping} induced by $\mathcal{S}$ is
	\begin{empheq}[box=\mybluebox]{equation*}
	\CC{\mathcal{S}} \colon \mathcal{H} \to \mathcal{H} \cup \{ \varnothing \} \colon x \mapsto \CCO(\mathcal{S}(x)),
	\end{empheq}
	that is, for every $x \in \mathcal{H}$, if the circumcenter of the set $\mathcal{S}(x)$ defined in \cref{defn:Circumcenter} does not exist, then  $\CC{\mathcal{S}}x= \varnothing $. Otherwise, $\CC{\mathcal{S}}x$ is the \emph{unique} point satisfying the two conditions below:
	\begin{enumerate}
		\item $\CC{\mathcal{S}}x \in \aff(\mathcal{S}(x))=\aff\{G_{1}(x), \ldots,    G_{m}(x)\}$, and
		\item $\left\{ \norm{\CC{\mathcal{S}}x - G_{i}(x)}~\big|~ i \in \{1, \ldots, m\} \right\}$ is a singleton, that is,
		\begin{align*}
		\norm{\CC{\mathcal{S}}x -G_{1}(x)}=\cdots =\norm{\CC{\mathcal{S}}x -G_{m}(x)}.
		\end{align*}
	\end{enumerate}
	In particular, if for every $x \in \mathcal{H}$, $\CC{\mathcal{S}}x \in \mathcal{H}$, then we say the circumcenter mapping $\CC{\mathcal{S}}$ induced by $\mathcal{S}$ is \emph{proper}. Otherwise, we call  $\CC{\mathcal{S}}$ \emph{improper}.
\end{definition}

Assume that $\CC{\mathcal{S}}$ is proper. Recall that the \emph{circumcenter method} induced by $\mathcal{S}$ is
\begin{empheq}[box=\mybluebox]{equation} \label{eq:CIMSequence}
x_{k} :=\CC{\mathcal{S}}(x_{k-1})=\CC{\mathcal{S}}^{k}x_{0}, ~\mbox{where}~x_{0} \in \mathcal{H}~\text{and}~k=1,2,\ldots.
\end{empheq}

\begin{fact}{\rm \cite[Proposition~2.33]{BOyW2019Isometry}} \label{fact:CW:Welldefined:Formula}
	Assume $\CC{\mathcal{S}}$ is proper. Then there exist functions $(\forall i \in \{1,\ldots,m-1\})$ $\alpha_{i}:\mathcal{H} \to \mathbb{R}$ such that 
	\begin{align*}
	(\forall x \in \mathcal{H}) \quad	\CC{\mathcal{S}}x= 
	G_{1}x+ \sum^{m-1}_{i=1} \alpha_{i}(x) (G_{i+1}x-G_{1}x).
	\end{align*}
\end{fact}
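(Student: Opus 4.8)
The plan is to peel off the definition of properness and then reduce everything to the elementary coordinate description of an affine hull spanned by finitely many points. By \cref{def:cir:map}, properness of $\CC{\mathcal{S}}$ means exactly that for every $x \in \mathcal{H}$ the point $\CC{\mathcal{S}}x$ exists and satisfies $\CC{\mathcal{S}}x \in \aff(\mathcal{S}(x)) = \aff\{G_{1}x, \ldots, G_{m}x\}$. Hence the entire content of the statement is a description of the vectors lying in $\aff\{G_{1}x, \ldots, G_{m}x\}$.

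The second step is the standard identity: for any finite family $y_{1}, \ldots, y_{m}$ in $\mathcal{H}$,
\[
\aff\{y_{1}, \ldots, y_{m}\} = \Big\{ \textstyle\sum_{i=1}^{m} \mu_{i} y_{i} ~\big|~ \mu_{i} \in \mathbb{R},~ \textstyle\sum_{i=1}^{m} \mu_{i} = 1 \Big\} = \Big\{ y_{1} + \textstyle\sum_{i=1}^{m-1} \lambda_{i} (y_{i+1} - y_{1}) ~\big|~ \lambda_{i} \in \mathbb{R} \Big\},
\]
where the second equality is obtained by substituting $\mu_{1} = 1 - \sum_{i=2}^{m} \mu_{i}$ and relabelling $\lambda_{i} := \mu_{i+1}$. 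Applying this with $y_{i} = G_{i}x$ shows that for each fixed $x \in \mathcal{H}$ there exists at least one tuple $(\lambda_{1}, \ldots, \lambda_{m-1}) \in \mathbb{R}^{m-1}$ with $\CC{\mathcal{S}}x = G_{1}x + \sum_{i=1}^{m-1} \lambda_{i} (G_{i+1}x - G_{1}x)$.

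The last step --- the only point requiring any care --- is to turn these coefficients into genuine functions of $x$. Since no regularity (continuity, measurability) of the $\alpha_{i}$ is claimed, it suffices to make a pointwise selection: for each $x$ choose one witnessing tuple and define $\alpha_{1}(x), \ldots, \alpha_{m-1}(x)$ to be its components. When the points $G_{1}x, \ldots, G_{m}x$ are affinely dependent the tuple is not unique, which is precisely why a selection (rather than a closed formula) is invoked; in the degenerate case $G_{1}x = \cdots = G_{m}x$ one may simply put $\alpha_{i}(x) = 0$ for all $i$. This yields functions $\alpha_{i} : \mathcal{H} \to \mathbb{R}$ with the asserted identity, and the proof is complete. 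I expect no analytic obstacle: the only thing to watch is that non-uniqueness of these barycentric-type coordinates does not obstruct defining the $\alpha_{i}$, and the selection handles that.
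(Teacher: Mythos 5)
Your argument is correct and is essentially the standard one behind the cited fact: properness gives $\CC{\mathcal{S}}x \in \aff\{G_{1}x,\ldots,G_{m}x\}$ via \cref{def:cir:map}, the affine hull of finitely many points is parametrized as $G_{1}x+\sum_{i=1}^{m-1}\lambda_{i}(G_{i+1}x-G_{1}x)$, and a pointwise selection of coefficients (no regularity being claimed) yields the functions $\alpha_{i}$. No gap; this matches the approach of the original proof in the cited reference.
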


\begin{fact} {\rm \cite[Proposition~3.10]{BOyW2018Proper}} \label{fact:FixCCS:F1Ft:T}
	The following hold:
	\begin{enumerate}
	\item \label{fact:CW:FixPointSet:equa} 
	If $\Fix \CC{\mathcal{S}} \subseteq \cup^{m}_{i=1} \Fix G_{i}$, 
	then $\Fix \CC{\mathcal{S}} = \cap^{m}_{i=1} \Fix G_{i}$.
	\item \label{fact:Id:FixPointSet:equa} If $\Id  \in \mathcal{S}$, 
	then $\Fix \CC{\mathcal{S}} = \cap^{m}_{i=1} \Fix G_{i}$.
\end{enumerate}
\end{fact}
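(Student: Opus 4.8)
The plan is to deduce both parts directly from the definition of the circumcenter mapping in \cref{def:cir:map} together with the triviality that the circumcenter of a one-point set is that point. I begin with an inclusion that needs no hypothesis: if $x\in\cap_{i=1}^{m}\Fix G_{i}$, then $\mathcal{S}(x)=\{G_{1}x,\ldots,G_{m}x\}=\{x\}$, so $\aff(\mathcal{S}(x))=\{x\}$ and $\{\norm{x-y}\mid y\in\mathcal{S}(x)\}=\{0\}$ is a singleton; hence by \cref{defn:Circumcenter} the circumcenter of $\mathcal{S}(x)$ exists and equals $x$, i.e.\ $\CC{\mathcal{S}}x=x$. This shows $\cap_{i=1}^{m}\Fix G_{i}\subseteq\Fix\CC{\mathcal{S}}$ in both cases, so in each part it only remains to prove the reverse inclusion $\Fix\CC{\mathcal{S}}\subseteq\cap_{i=1}^{m}\Fix G_{i}$.

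For the first assertion, I would fix $x\in\Fix\CC{\mathcal{S}}$, which means $\CC{\mathcal{S}}x=x\in\mathcal{H}$, so in particular $\CCO(\mathcal{S}(x))$ exists and equals $x$; by \cref{def:cir:map} this forces $x\in\aff(\mathcal{S}(x))$ and $\norm{x-G_{1}x}=\cdots=\norm{x-G_{m}x}=:r$. The hypothesis $\Fix\CC{\mathcal{S}}\subseteq\cup_{i=1}^{m}\Fix G_{i}$ then yields an index $i_{0}$ with $G_{i_{0}}x=x$, so $r=\norm{x-G_{i_{0}}x}=0$, and therefore $G_{i}x=x$ for every $i$, i.e.\ $x\in\cap_{i=1}^{m}\Fix G_{i}$. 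Combined with the first paragraph this gives $\Fix\CC{\mathcal{S}}=\cap_{i=1}^{m}\Fix G_{i}$. The second assertion reduces to the first: if $\Id\in\mathcal{S}$, then $\mathcal{H}=\Fix\Id$ is one of the sets $\Fix G_{i}$, so $\cup_{i=1}^{m}\Fix G_{i}=\mathcal{H}\supseteq\Fix\CC{\mathcal{S}}$, the hypothesis of the first assertion holds automatically, and we are done.

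I do not expect a real obstacle here; the proof is almost entirely a matter of unwinding definitions. The one place deserving a sentence of care is that $\Fix\CC{\mathcal{S}}$ makes sense, and the equidistance identity is legitimately available for each of its elements, even if $\CC{\mathcal{S}}$ is improper: by \cref{def:cir:map}, $x\in\Fix\CC{\mathcal{S}}$ exactly when the circumcenter of $\mathcal{S}(x)$ exists and coincides with $x$, so conditions~(i) and~(ii) of that definition apply to every such $x$ verbatim.
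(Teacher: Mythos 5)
Your argument is correct: the inclusion $\cap_{i=1}^{m}\Fix G_{i}\subseteq\Fix\CC{\mathcal{S}}$ via $\mathcal{S}(x)=\{x\}$, the reverse inclusion from the equidistance condition plus one coincidence $G_{i_{0}}x=x$ forcing the common radius to vanish, and the reduction of \cref{fact:Id:FixPointSet:equa} to \cref{fact:CW:FixPointSet:equa} are all sound, and you rightly flag that membership in $\Fix\CC{\mathcal{S}}$ already entails properness of the circumcenter at that point. Note that the paper itself offers no proof here — the statement is quoted as a Fact from \cite[Proposition~3.10]{BOyW2018Proper} — so there is nothing internal to compare against; your definitional unwinding is the natural argument and is essentially the one given in that reference.
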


\begin{lemma}
	Let $z \in \mathcal{H}$ and $\alpha \in \mathbb{R}$. Define $(\forall i \in \{1,\ldots, m\})$ $(\forall x \in \mathcal{H})$ $F_{i}x := \alpha G_{i}x- z$, and set  $\widehat{\mathcal{S}} :=\{F_{1}, F_{2}, \ldots, F_{m} \}$. Then  $(\forall x \in \mathcal{S})$ $\CC{\widehat{\mathcal{S}}}x = \alpha \CC{\mathcal{S}} (x)-z$.
\end{lemma}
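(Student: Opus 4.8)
The plan is to reduce everything to the behaviour of the circumcenter \emph{operator} $\CCO{}$ under scalar multiplication and translation, i.e.\ to \cref{fact:CircumHomoge} and \cref{fact:CircumSubaddi}. First I would unwind the definitions: for a fixed $x \in \mathcal{H}$, the set $\widehat{\mathcal{S}}(x) = \{F_{1}x, \ldots, F_{m}x\} = \{\alpha G_{1}x - z, \ldots, \alpha G_{m}x - z\}$ is a nonempty finite subset of $\mathcal{H}$, so it lies in $\mathcal{P}(\mathcal{H})$, and writing $\alpha K - z := \{\alpha w - z ~|~ w \in K\}$ we obtain the elementary set identity $\widehat{\mathcal{S}}(x) = \alpha\,\mathcal{S}(x) - z$. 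Since $\CC{\widehat{\mathcal{S}}}x = \CCO(\widehat{\mathcal{S}}(x))$ and $\CC{\mathcal{S}}x = \CCO(\mathcal{S}(x))$ by \cref{def:cir:map}, the claim is exactly $\CCO(\alpha\,\mathcal{S}(x) - z) = \alpha\,\CCO(\mathcal{S}(x)) - z$.

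Next I would chain the two facts. Applying \cref{fact:CircumSubaddi} with the point set $\alpha\,\mathcal{S}(x)$ and translation vector $-z$ gives $\CCO(\alpha\,\mathcal{S}(x) - z) = \CCO(\alpha\,\mathcal{S}(x)) - z$. Then, assuming $\alpha \neq 0$, applying \cref{fact:CircumHomoge} with $K = \mathcal{S}(x)$ and $\lambda = \alpha$ gives $\CCO(\alpha\,\mathcal{S}(x)) = \alpha\,\CCO(\mathcal{S}(x))$, and combining the two displayed equalities finishes this case. The degenerate case $\alpha = 0$ I would handle directly: then $\widehat{\mathcal{S}}(x) = \{-z\}$ is a singleton, whose circumcenter, by \cref{defn:Circumcenter}, is $-z$, which agrees with $0 \cdot \CC{\mathcal{S}}x - z$.

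There is essentially no obstacle here; the only point that requires care is the bookkeeping with the value $\varnothing$ of $\CCO{}$. The identities in \cref{fact:CircumHomoge} and \cref{fact:CircumSubaddi} are read with the conventions $\lambda\,\varnothing = \varnothing$ and $\varnothing + y = \varnothing$, so the asserted equality holds at every $x \in \mathcal{H}$ — including any $x$ at which the relevant circumcenter does not exist — and in particular the statement does not presuppose that $\CC{\mathcal{S}}$ or $\CC{\widehat{\mathcal{S}}}$ is proper. (If one wishes to stay inside $\mathcal{H}$, one also notes that $\CCO(\mathcal{S}(x))$ exists if and only if $\CCO(\widehat{\mathcal{S}}(x))$ does, by applying the same two facts in both directions.)
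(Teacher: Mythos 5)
Your proposal is correct and follows essentially the same route as the paper: write $\widehat{\mathcal{S}}(x)=\alpha\,\mathcal{S}(x)-z$, apply \cref{fact:CircumSubaddi} and \cref{fact:CircumHomoge} for $\alpha\neq 0$, and dispose of $\alpha=0$ separately (the paper dismisses that case as trivial, which your singleton argument just spells out). Your remark on the $\varnothing$ conventions is a reasonable extra precaution but not something the paper's proof needed to address explicitly.
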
	
\begin{proof}
If $\alpha =0$, the result is trivial. Assume $\alpha \neq 0$.	By 	 \cref{def:cir:map} and by \Cref{fact:CircumHomoge,fact:CircumSubaddi} ,
	\begin{align*}
	(\forall x \in \mathcal{H}) \quad \CC{\widehat{\mathcal{S}}}x & = \CCO{(\widehat{\mathcal{S}}(x) )} \\
	& = \CCO{\left( \{  F_{1}x, F_{2}x, \ldots, F_{m}x  \}  \right) } \\
	& = \CCO{ \left(  \{ \alpha G_{1}x -z , \alpha G_{2}x -z , \ldots, \alpha G_{m}x -z  \} \right)  } \\
	& = \CCO{ \left(  \alpha \{  G_{1}x, G_{2}x, \ldots, G_{m}x \} -z \right)  } \\
	&= \alpha \CCO{ \left( \{  G_{1}x , G_{2}x, \ldots, G_{m}x\} \right) } -z \\
	&=\alpha \CCO{(\mathcal{S}(x))} -z= \alpha \CC{\mathcal{S}} (x)-z.
	\end{align*}
	Therefore, the proof is complete.
\end{proof}	

\begin{lemma} \label{lemma:CCSTiFi}
Let $z \in \mathcal{H}$ and 
set $\left( \forall i \in \{1,\ldots, m\}  \right) $ $(\forall x \in \mathcal{H})$ $F_{i}x := G_{i} (x+z) -z$ as well as $\mathcal{S}_{F} := \{F_{1}, \ldots, F_{m}\}$.
	Then the following statements hold:
	\begin{enumerate}
		\item \label{lemma:CCSTiFi:CCSTF} $(\forall x \in \mathcal{H})$ $ \CC{\mathcal{S}} x =  z+ \CC{\mathcal{S}_{F}}(x-z)$.
		\item \label{lemma:CCSTiFi:CCSTF:k} $(\forall x \in \mathcal{H})$ $(\forall k \in \mathbb{N})$ $ \CC{\mathcal{S}}^{k} x =  z+ \CC{\mathcal{S}_{F}}^{k}(x-z)$.
		\item \label{lemma:CCSTiFi:Fix} $\cap^{m}_{i=1}\Fix F_{i} = \cap^{m}_{i=1}\Fix G_{i} -z$.
		\item   \label{lemma:CCSTiFi:Projec:Fix} $(\forall x \in \mathcal{H})$ $\Pro_{  \cap^{m}_{i=1}\Fix G_{i} }x = z + \Pro_{\cap^{m}_{i=1}\Fix F_{i}}(x-z)$.
	\end{enumerate} 
\end{lemma}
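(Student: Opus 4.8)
The plan is to reduce all four items to three elementary translation identities already available: the behaviour of the circumcenter operator under translations (\cref{fact:CircumSubaddi}), the fixed‑point identity $\Fix F_i=\Fix G_i-z$ (\cref{lemma:TAffineFLinear}\cref{lemma:TAffineFLinear:Fix}), and the translation rule for projectors (\cref{fac:SetChangeProje}). The one point that needs a little care, in contrast with the preceding lemma, is that here each $F_i$ shifts \emph{both} the input and the output of $G_i$; so before quoting the translation rule for $\CCO$ one must first rewrite $F_i$ at a shifted argument.

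First I would prove \cref{lemma:CCSTiFi:CCSTF}. Fix $x\in\mathcal{H}$. For every $i\in\{1,\ldots,m\}$ one computes directly $F_i(x-z)=G_i\big((x-z)+z\big)-z=G_ix-z$, hence
\[
\mathcal{S}_F(x-z)=\{F_1(x-z),\ldots,F_m(x-z)\}=\{G_1x-z,\ldots,G_mx-z\}=\mathcal{S}(x)-z.
\]
Therefore, by \cref{def:cir:map} and \cref{fact:CircumSubaddi} (with the usual convention if a circumcenter fails to exist),
\[
\CC{\mathcal{S}_F}(x-z)=\CCO(\mathcal{S}_F(x-z))=\CCO(\mathcal{S}(x)-z)=\CCO(\mathcal{S}(x))-z=\CC{\mathcal{S}}x-z,
\]
and adding $z$ gives \cref{lemma:CCSTiFi:CCSTF}. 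Then \cref{lemma:CCSTiFi:CCSTF:k} follows by induction on $k$: the case $k=0$ is the trivial identity $x=z+(x-z)$; assuming $\CC{\mathcal{S}}^kx=z+\CC{\mathcal{S}_F}^k(x-z)$, I would apply \cref{lemma:CCSTiFi:CCSTF} at the point $\CC{\mathcal{S}}^kx$ and substitute the induction hypothesis in the form $\CC{\mathcal{S}}^kx-z=\CC{\mathcal{S}_F}^k(x-z)$ to get
\[
\CC{\mathcal{S}}^{k+1}x=\CC{\mathcal{S}}\big(\CC{\mathcal{S}}^kx\big)=z+\CC{\mathcal{S}_F}\big(\CC{\mathcal{S}}^kx-z\big)=z+\CC{\mathcal{S}_F}\big(\CC{\mathcal{S}_F}^k(x-z)\big)=z+\CC{\mathcal{S}_F}^{k+1}(x-z).
\]

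Finally, \cref{lemma:CCSTiFi:Fix} is immediate: applying \cref{lemma:TAffineFLinear}\cref{lemma:TAffineFLinear:Fix} to each pair $(G_i,F_i)$ gives $\Fix F_i=\Fix G_i-z$, and since translation commutes with intersections, $\cap_{i=1}^{m}\Fix F_i=\big(\cap_{i=1}^{m}\Fix G_i\big)-z$. For \cref{lemma:CCSTiFi:Projec:Fix}, set $C:=\cap_{i=1}^{m}\Fix F_i$; by \cref{lemma:CCSTiFi:Fix} we have $z+C=\cap_{i=1}^{m}\Fix G_i$, and $C$ inherits nonemptiness (from the standing hypothesis $\cap_{i=1}^{m}\Fix G_i\neq\varnothing$), closedness and convexity from $\cap_{i=1}^{m}\Fix G_i$, so \cref{fac:SetChangeProje} applied with this $C$ and the translation by $z$ yields $\Pro_{\cap_{i=1}^{m}\Fix G_i}x=z+\Pro_{C}(x-z)=z+\Pro_{\cap_{i=1}^{m}\Fix F_i}(x-z)$. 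I do not expect a genuine obstacle; the only things to monitor are the directions of the shifts (the argument moves by $-z$, the value by $+z$) and, for \cref{lemma:CCSTiFi:Projec:Fix}, that the fixed‑point sets are Chebyshev so that the projectors are defined — which is exactly what the notation in the statement presupposes and what \cref{lemma:CCSTiFi:Fix} transfers across the translation.
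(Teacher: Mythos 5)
Your proposal is correct and follows essentially the same route as the paper: rewrite $F_i(x-z)=G_ix-z$ and invoke the translation rule for $\CCO$ (\cref{fact:CircumSubaddi}) for \cref{lemma:CCSTiFi:CCSTF}, induct for \cref{lemma:CCSTiFi:CCSTF:k}, and derive \cref{lemma:CCSTiFi:Fix} and \cref{lemma:CCSTiFi:Projec:Fix} from \cref{lemma:TAffineFLinear}\cref{lemma:TAffineFLinear:Fix} and \cref{fac:SetChangeProje}, respectively. Your extra remarks on the empty-circumcenter convention and on the Chebyshev property of the fixed-point sets are harmless refinements of the same argument.
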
	

\begin{proof}
	\cref{lemma:CCSTiFi:CCSTF}:  By \cref{def:cir:map} and by \cref{fact:CircumSubaddi}, we obtain that $(\forall x \in \mathcal{H})$,
	$ \CC{\mathcal{S}}x =\CCO{ \left( \left\{ G_{1} x, \ldots, G_{m}x \right\} \right)} = \CCO{ \left( \left\{ z+ F_{1}(x-z), \ldots, z+ F_{m}(x-z) \right\} \right)} =\CCO{\left( \mathcal{S}_{F}(x-z)  \right) }+z =z+ \CC{\mathcal{S}_{F}}(x-z)$. 

	\cref{lemma:CCSTiFi:CCSTF:k}: We prove by induction.  Clearly, the result holds for $k=0$. Assume $(\forall x \in \mathcal{H})$ $\CC{\mathcal{S}}^{k} x =  z+ \CC{\mathcal{S}_{F}}^{k}(x-z)$ holds for some $k \geq 0$. Let $y \in \mathcal{H}$. Now by \cref{lemma:CCSTiFi:CCSTF} above and by  inductive hypothesis,  
	$
	\CC{\mathcal{S}}^{k+1} y=  \CC{\mathcal{S}} ( \CC{\mathcal{S}}^{k} y ) = z +  \CC{\mathcal{S}_{F}} ( \CC{\mathcal{S}}^{k} (y) -z ) = z +  \CC{\mathcal{S}_{F}} (z + \CC{\mathcal{S}_{F}}^{k} (y-z) -z ) =z + \CC{\mathcal{S}_{F}}^{k+1} (y-z). 
	$
	 Hence, we proved \cref{lemma:CCSTiFi:CCSTF:k} by induction.
	 
	\cref{lemma:CCSTiFi:Fix}: This is a direct result from \cref{lemma:TAffineFLinear}\cref{lemma:TAffineFLinear:Fix}.
	
	\cref{lemma:CCSTiFi:Projec:Fix}: This  follows from  \cref{fac:SetChangeProje} and 	\cref{lemma:CCSTiFi:Fix} above.
\end{proof}	

\subsection*{Properties of circumcentered isometry methods}
Recall our global assumptions that
\begin{empheq}[box=\mybluebox]{equation*}
\left(\forall i \in \{1, \ldots, m\} \right) \quad T_{i} : \mathcal{H} \rightarrow \mathcal{H} ~\text{is affine isometry},
\end{empheq}
and 
\begin{empheq}[box=\mybluebox]{equation*}
\mathcal{S} :=\{ T_{1}, \ldots, T_{m} \} \quad \text{with} \quad \cap^{m}_{j=1} \Fix T_{j} \neq \varnothing.
\end{empheq}
From now on, denote by
\begin{empheq}[box=\mybluebox]{equation*}
\Omega ( T_{1}, \ldots, T_{m}) :=  \Big\{ T_{i_{r}}\cdots T_{i_{2}}T_{i_{1}}  ~\Big|~ r \in \mathbb{N}, ~\mbox{and}~ i_{1}, \ldots,  i_{r} \in \{1, \ldots,m\}    \Big\}
\end{empheq}
which is the set consisting of all finite compositions of operators from $\{T_{1}, \ldots, T_{m}\}$.
 We use the empty product convention, so for $r=0$, $T_{i_{0}}\cdots T_{i_{2}}T_{i_{1}} = \Id$.

The following \cref{fact:CCS:proper:NormPres:T}\cref{fact:CCS:proper:NormPres:T:prop} makes the circumcentered method induced by $\mathcal{S}$ defined in \cref{eq:CIMSequence} well-defined. We call the circumcentered method induced by a set of isometries the \emph{circumcentered isometry method} (CIM).

\begin{fact}  \label{fact:CCS:proper:NormPres:T} {\rm \cite[Theorem~3.3, Lemma~3.5 and Proposition~4.2]{BOyW2019Isometry}}
	Let $x \in \mathcal{H}$. Then the following statements hold:
	\begin{enumerate}
		\item \label{fact:CCS:proper:NormPres:T:prop} The circumcenter mapping
		$\CC{\mathcal{S}} : \mathcal{H} \rightarrow \mathcal{H}$ induced by
		$\mathcal{S}$ is proper; moreover, 
		$\CC{\mathcal{S}}x$ is the unique point satisfying the two conditions
		below:
		\begin{enumerate}
			\item  \label{thm:CCS:proper:NormPres:T:i} $\CC{\mathcal{S}}x\in  \aff (\mathcal{S}(x))$, and
			\item  \label{thm:CCS:proper:NormPres:T:ii} $
			\left\{  \norm{\CC{\mathcal{S}}x-Tx } ~|~ T \in \mathcal{S} \right\} $ is a singleton.
		\end{enumerate}
		
		\item \label{fact:CCS:proper:NormPres:T:AllU} Let $W$ be a nonempty closed  convex set of $\cap^{m}_{i=1} \Fix T_{i}$.  
		Then $\CC{\mathcal{S}}x= \Pro_{\aff
			(\mathcal{S}(x))}(\Pro_{\cap^{m}_{j=1} \Fix T_{j}} x) = \Pro_{\aff
			(\mathcal{S}(x))}(\Pro_{W} x)$.
		\item \label{fact:CCS:proper:NormPres:T:F} Let $F: \mathcal{H} \to \mathcal{H}$ satisfy  $(\forall y \in \mathcal{H})$ $F(y) \in \aff (\mathcal{S}(y))$. Then $(\forall z \in \cap^{m}_{i=1} \Fix T_{i}  )$ $\norm{z- \CC{\mathcal{S}}x }^{2} + \norm{\CC{\mathcal{S}}x -Fx}^{2} =\norm{z- Fx}^{2} $.
		\item \label{fact:CCS:proper:NormPres:T:Id} If $\Id \in \aff \mathcal{S}$, then $(\forall z \in \cap^{m}_{i=1} \Fix T_{i})$ $\norm{z- \CC{\mathcal{S}}x }^{2} + \norm{\CC{\mathcal{S}}x -x}^{2} =\norm{z- x}^{2}$.
		\item \label{fact:CCS:proper:NormPres:T:PaffU}   Let $W$ be a nonempty closed  affine subspace of $\cap^{m}_{i=1} \Fix T_{i}$.   Then $(\forall k \in \mathbb{N})$ $\Pro_{W} \CC{\mathcal{S}}^{k}=\Pro_{W} $.
	\end{enumerate}
\end{fact}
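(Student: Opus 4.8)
The plan is to reduce every item to one observation: for every $x\in\mathcal{H}$ and every $z\in\cap_{j=1}^{m}\Fix T_{j}$, the point $z$ is equidistant from $T_{1}x,\dots,T_{m}x$, with common distance $\norm{z-x}$; indeed, since each $T_{i}$ is an isometry with $T_{i}z=z$, we have $\norm{z-T_{i}x}=\norm{T_{i}z-T_{i}x}=\norm{z-x}$. Since $\mathcal{S}(x)=\{T_{1}x,\dots,T_{m}x\}$ is finite, $\aff(\mathcal{S}(x))$ is a finite-dimensional, hence closed, affine subspace, so $p:=\Pro_{\aff(\mathcal{S}(x))}z$ is well defined and lies in $\aff(\mathcal{S}(x))$. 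By the orthogonality characterization of the projection onto an affine subspace (\cref{fact:CharaProjectionLineaSpace} together with \cref{fac:SetChangeProje}) we have $\innp{z-p,\,y-p}=0$ for every $y\in\aff(\mathcal{S}(x))$; taking $y=T_{i}x$ gives $\norm{z-T_{i}x}^{2}=\norm{z-p}^{2}+\norm{p-T_{i}x}^{2}$, whence $\norm{p-T_{i}x}^{2}=\norm{z-x}^{2}-\norm{z-p}^{2}$ is independent of $i$. Thus $p$ satisfies \cref{thm:CCS:proper:NormPres:T:i} and \cref{thm:CCS:proper:NormPres:T:ii}, and by uniqueness of circumcenters (cf.\ \cref{defn:Circumcenter}, \cite[Proposition~3.3]{BOyW2018}) the circumcenter of $\mathcal{S}(x)$ exists and equals $p$. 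This proves \cref{fact:CCS:proper:NormPres:T:prop} together with the key identity
\begin{equation} \label{eq:plan:key}
\big(\forall z\in\cap_{j=1}^{m}\Fix T_{j}\big)\qquad \CC{\mathcal{S}}x=\Pro_{\aff(\mathcal{S}(x))}z.
\end{equation}

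Item \cref{fact:CCS:proper:NormPres:T:AllU} is then immediate: $\cap_{j}\Fix T_{j}$ is a nonempty closed affine subspace (each $T_{j}$ being an affine isometry, by \cref{prop:IsometryAffine}), and $W\subseteq\cap_{j}\Fix T_{j}$, so $\Pro_{\cap_{j}\Fix T_{j}}x$ and $\Pro_{W}x$ both belong to $\cap_{j}\Fix T_{j}$; substituting each for $z$ in \cref{eq:plan:key} yields the two displayed formulas. For \cref{fact:CCS:proper:NormPres:T:F}, fix $z\in\cap_{i}\Fix T_{i}$ and set $c:=\CC{\mathcal{S}}x$, which by \cref{eq:plan:key} equals $\Pro_{\aff(\mathcal{S}(x))}z$; since $Fx\in\aff(\mathcal{S}(x))$ by hypothesis and $c\in\aff(\mathcal{S}(x))$, orthogonality gives $\innp{z-c,\,Fx-c}=0$, so expanding $\norm{z-Fx}^{2}=\norm{(z-c)-(Fx-c)}^{2}$ produces exactly $\norm{z-c}^{2}+\norm{c-Fx}^{2}$. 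Finally \cref{fact:CCS:proper:NormPres:T:Id} is the case $F=\Id$ of \cref{fact:CCS:proper:NormPres:T:F}: if $\Id=\sum_{i}\lambda_{i}T_{i}$ with $\sum_{i}\lambda_{i}=1$, then $y=\sum_{i}\lambda_{i}T_{i}y\in\aff(\mathcal{S}(y))$ for every $y$, so $\Id$ satisfies that hypothesis.

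For \cref{fact:CCS:proper:NormPres:T:PaffU} I would treat $k=1$ first and then induct. Since $\CC{\mathcal{S}}x\in\aff(\mathcal{S}(x))$, write $\CC{\mathcal{S}}x=\sum_{i=1}^{m}\lambda_{i}T_{i}x$ with $\sum_{i}\lambda_{i}=1$ (cf.\ \cref{fact:CW:Welldefined:Formula}); since projection onto a closed affine subspace is affine (\cref{fac:SetChangeProje}), $\Pro_{W}\CC{\mathcal{S}}x=\sum_{i}\lambda_{i}\Pro_{W}(T_{i}x)$, and as $W\subseteq\cap_{j}\Fix T_{j}\subseteq\Fix T_{i}$, \cref{lema:IsometryT} gives $\Pro_{W}T_{i}=\Pro_{W}$; hence $\Pro_{W}\CC{\mathcal{S}}x=\big(\sum_{i}\lambda_{i}\big)\Pro_{W}x=\Pro_{W}x$. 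For general $k$, $\Pro_{W}\CC{\mathcal{S}}^{k+1}x=\Pro_{W}\CC{\mathcal{S}}\big(\CC{\mathcal{S}}^{k}x\big)=\Pro_{W}\big(\CC{\mathcal{S}}^{k}x\big)=\Pro_{W}x$, using the $k=1$ case at the point $\CC{\mathcal{S}}^{k}x$ and then the inductive hypothesis.

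The work is front-loaded onto the equidistance observation and \cref{eq:plan:key}; once those are in hand, each remaining assertion is a one-line computation with the projection orthogonality relation or with affine combinations. The only point requiring genuine care is the appeal to uniqueness of circumcenters — one must verify that the candidate $p$ meets \emph{both} defining conditions of \cref{defn:Circumcenter} before concluding $\CC{\mathcal{S}}x=p$ — together with the (here automatic) observation that $\aff(\mathcal{S}(x))$ is closed because $\mathcal{S}(x)$ is finite.
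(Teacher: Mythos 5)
The paper does not actually prove this statement; it imports it as a fact from \cite{BOyW2019Isometry} (Theorem~3.3, Lemma~3.5 and Proposition~4.2), so there is no in-paper proof to compare against. Your blind argument is correct and self-contained: since each $T_i$ is an isometry fixing every $z\in\cap_{j=1}^{m}\Fix T_j$, such $z$ is equidistant from the points of $\mathcal{S}(x)$, and the Pythagorean identity for the projection onto the closed (finite-dimensional) affine hull together with the uniqueness of circumcenters gives properness and the key identity $\CC{\mathcal{S}}x=\Pro_{\aff(\mathcal{S}(x))}z$ for every such $z$, from which items \cref{fact:CCS:proper:NormPres:T:AllU}--\cref{fact:CCS:proper:NormPres:T:Id} follow exactly as you indicate, and your treatment of \cref{fact:CCS:proper:NormPres:T:PaffU} via the affinity of $\Pro_W$ and $\Pro_W T_i=\Pro_W$ (\cref{lema:IsometryT}) is sound. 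This is essentially the same route taken in the cited source, so nothing further is needed.
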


\begin{fact} {\rm \cite[Proposition~3.7]{BOyW2019Isometry}}   \label{fact:FixCCS:F1Ft:F} 
 Let $F_{1}, \ldots, F_{t}$ be isometries from $\mathcal{H}$ to $\mathcal{H}$. Let $\mathcal{S}$ be a finite subset of $\Omega(F_{1}, \ldots,F_{t})$.
		Let $\{ \Id, F_{1}, F_{2}F_{1}, \ldots, F_{t} F_{t-1} \cdots F_{2}F_{1} \} \subseteq \mathcal{S}$. Then $\Fix \CC{\mathcal{S}} = \cap^{t}_{j=1} \Fix F_{j}  = \cap^{m}_{i=1} \Fix G_{i} $.
\end{fact}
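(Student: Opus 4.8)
The plan is to prove the chain of equalities in two stages: first that $\Fix \CC{\mathcal{S}} = \cap_{i=1}^{m}\Fix G_{i}$, where $G_{1},\ldots,G_{m}$ simply enumerates the finite set $\mathcal{S}$, and then that $\cap_{i=1}^{m}\Fix G_{i} = \cap_{j=1}^{t}\Fix F_{j}$.

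For the first stage, I would note that every member of $\mathcal{S}\subseteq\Omega(F_{1},\ldots,F_{t})$ is a finite composition of the isometries $F_{1},\ldots,F_{t}$, hence is itself an isometry by \cref{fact:examples:normpreserving}\cref{fact:examples:normpreserving:composition}; together with the standing nonemptiness of the common fixed point set, this makes $\CC{\mathcal{S}}$ proper by \cref{fact:CCS:proper:NormPres:T}\cref{fact:CCS:proper:NormPres:T:prop}, so that $\Fix \CC{\mathcal{S}}$ is meaningful. Since $\Id\in\{\Id,F_{1},F_{2}F_{1},\ldots,F_{t}\cdots F_{1}\}\subseteq\mathcal{S}$, the equality $\Fix \CC{\mathcal{S}} = \cap_{i=1}^{m}\Fix G_{i}$ then follows immediately from \cref{fact:FixCCS:F1Ft:T}\cref{fact:Id:FixPointSet:equa}.

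The second stage is the only part with real content, and amounts to $\cap_{G\in\mathcal{S}}\Fix G = \cap_{j=1}^{t}\Fix F_{j}$. The inclusion ``$\supseteq$'' is routine: if $x$ is fixed by every $F_{j}$, then applying the factors of an arbitrary composition $F_{i_{r}}\cdots F_{i_{1}}\in\Omega(F_{1},\ldots,F_{t})$ one at a time shows that $x$ is fixed by it, hence by every $G\in\mathcal{S}$. For ``$\subseteq$'', suppose $x$ is fixed by every member of $\mathcal{S}$; in particular, using the staircase $\Id,F_{1},F_{2}F_{1},\ldots,F_{t}\cdots F_{1}\in\mathcal{S}$, we have $(F_{j}F_{j-1}\cdots F_{1})x = x$ for each $j\in\{1,\ldots,t\}$. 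I would then induct on $j$: the case $j=1$ reads $F_{1}x=x$, and if $F_{j-1}\cdots F_{1}x=x$ then $F_{j}x = F_{j}(F_{j-1}\cdots F_{1}x) = (F_{j}F_{j-1}\cdots F_{1})x = x$; hence $x\in\cap_{j=1}^{t}\Fix F_{j}$.

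The main (indeed only) obstacle is this last inclusion, and its crux is precisely the hypothesis that the full left-nested staircase of partial products $\Id,F_{1},F_{2}F_{1},\ldots,F_{t}\cdots F_{1}$ lies in $\mathcal{S}$; this is what permits peeling off the outermost factor inductively. Without it the inclusion genuinely fails, since a composition in $\Omega(F_{1},\ldots,F_{t})$ can have fixed points not shared by each individual $F_{j}$ (cf.\ \cref{exam:RU3U2U1}). Everything else is bookkeeping that is delegated to the cited facts.
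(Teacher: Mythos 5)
Your argument is correct. Note that the paper itself offers no proof of this statement -- it is quoted as a fact from \cite[Proposition~3.7]{BOyW2019Isometry} -- so there is nothing in-paper to compare against; your two-stage argument (properness of $\CC{\mathcal{S}}$ plus \cref{fact:FixCCS:F1Ft:T}\cref{fact:Id:FixPointSet:equa} for $\Fix \CC{\mathcal{S}} = \cap_{i=1}^{m}\Fix G_{i}$, then the inclusion $\cap_{j=1}^{t}\Fix F_{j} \subseteq \cap_{G\in\mathcal{S}}\Fix G$ by factoring compositions, and the reverse inclusion by peeling the staircase $\Id, F_{1}, F_{2}F_{1},\ldots,F_{t}\cdots F_{1}$ one outermost factor at a time) is exactly the natural route and matches how the cited result is used here.
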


The following result is a generalization of \cite[Proposition~3.8]{BOyW2019Isometry}.
\begin{proposition} \label{prop:CCS:FQNE}
	The following statements hold:
	\begin{enumerate}
		\item \label{prop:CCS:FQNE:general} Assume that $\Id \in \aff \mathcal{S}$ and that $\Fix \CC{\mathcal{S}} \subseteq \cup^{m}_{i=1} \Fix T_{i}$.	Then $\CC{\mathcal{S}}$ is firmly quasinonexpansive.
		\item \label{prop:CCS:FQNE:Id} If $\Id \in \mathcal{S}$, then 	$\CC{\mathcal{S}}$ is firmly quasinonexpansive.
	\end{enumerate}
\end{proposition}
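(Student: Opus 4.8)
The plan is to show that $\CC{\mathcal{S}}$ is firmly quasinonexpansive, i.e., that for every $x \in \mathcal{H}$ and every $z \in \Fix \CC{\mathcal{S}}$,
\[
\norm{\CC{\mathcal{S}}x - z}^{2} + \norm{\CC{\mathcal{S}}x - x}^{2} \leq \norm{x - z}^{2}.
\]
The key is that this is exactly the \emph{equality} furnished by \cref{fact:CCS:proper:NormPres:T}\cref{fact:CCS:proper:NormPres:T:Id} — but that fact gives the identity for $z \in \cap^{m}_{i=1}\Fix T_{i}$, whereas firm quasinonexpansiveness requires it (as an inequality) for every $z \in \Fix \CC{\mathcal{S}}$. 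So the real content is to verify that, under the hypotheses of each part, $\Fix \CC{\mathcal{S}} = \cap^{m}_{i=1}\Fix T_{i}$, after which the desired inequality is in fact an equality and we are done.

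For part \cref{prop:CCS:FQNE:Id}: if $\Id \in \mathcal{S}$, then certainly $\Id \in \aff \mathcal{S}$, and \cref{fact:FixCCS:F1Ft:T}\cref{fact:Id:FixPointSet:equa} gives $\Fix \CC{\mathcal{S}} = \cap^{m}_{i=1}\Fix T_{i}$. Then for any $z \in \Fix \CC{\mathcal{S}} = \cap^{m}_{i=1}\Fix T_{i}$, \cref{fact:CCS:proper:NormPres:T}\cref{fact:CCS:proper:NormPres:T:Id} applies (noting $\Id \in \mathcal{S} \subseteq \aff \mathcal{S}$) and yields the equality version of \cref{EQ:FirmlyQuasiNonex}, hence firm quasinonexpansiveness. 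So part \cref{prop:CCS:FQNE:Id} is essentially immediate from part \cref{prop:CCS:FQNE:general}; alternatively it is the special case where the hypothesis $\Fix \CC{\mathcal{S}} \subseteq \cup^{m}_{i=1}\Fix T_{i}$ is automatic.

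For part \cref{prop:CCS:FQNE:general}: we assume $\Id \in \aff \mathcal{S}$ and $\Fix \CC{\mathcal{S}} \subseteq \cup^{m}_{i=1}\Fix T_{i}$. First I would invoke \cref{fact:FixCCS:F1Ft:T}\cref{fact:CW:FixPointSet:equa}, whose hypothesis is precisely $\Fix \CC{\mathcal{S}} \subseteq \cup^{m}_{i=1}\Fix T_{i}$, to conclude $\Fix \CC{\mathcal{S}} = \cap^{m}_{i=1}\Fix T_{i}$. (One should check that the $G_i$'s in Section~3's setup are being played by the $T_i$'s here, which is the case since $\mathcal{S} = \{T_1,\dots,T_m\}$.) Then, exactly as above, for any $z \in \Fix \CC{\mathcal{S}} = \cap^{m}_{i=1}\Fix T_{i}$, the hypothesis $\Id \in \aff \mathcal{S}$ lets us apply \cref{fact:CCS:proper:NormPres:T}\cref{fact:CCS:proper:NormPres:T:Id}, giving
\[
\norm{z - \CC{\mathcal{S}}x}^{2} + \norm{\CC{\mathcal{S}}x - x}^{2} = \norm{z - x}^{2},
\]
which is stronger than the inequality defining firm quasinonexpansiveness (\cref{EQ:FirmlyQuasiNonex}). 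Since $x \in \mathcal{H}$ and $z \in \Fix \CC{\mathcal{S}}$ were arbitrary, $\CC{\mathcal{S}}$ is firmly quasinonexpansive.

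The only genuine obstacle I anticipate is bookkeeping: making sure the "$\Id \in \aff\mathcal{S}$" hypothesis is what \cref{fact:CCS:proper:NormPres:T}\cref{fact:CCS:proper:NormPres:T:Id} actually needs (it is stated there as "$\Id \in \aff\mathcal{S}$"), and that \cref{fact:FixCCS:F1Ft:T} — stated in the subsection with operators $G_1,\dots,G_m$ — transfers verbatim to our $T_1,\dots,T_m$ since these isometries satisfy $\cap^m_{j=1}\Fix T_j \neq \varnothing$ and $\CC{\mathcal{S}}$ is proper by \cref{fact:CCS:proper:NormPres:T}\cref{fact:CCS:proper:NormPres:T:prop}. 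There is no hard analytic step; all the work has been pushed into the cited facts, and the proof is a short chain of implications.
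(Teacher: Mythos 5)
Your proposal is correct and follows essentially the same route as the paper: both use \cref{fact:FixCCS:F1Ft:T} to identify $\Fix \CC{\mathcal{S}} = \cap^{m}_{i=1}\Fix T_{i}$ and then the Pythagorean identity of \cref{fact:CCS:proper:NormPres:T}\cref{fact:CCS:proper:NormPres:T:Id} to obtain (an equality stronger than) the firmly quasinonexpansive inequality, with part \cref{prop:CCS:FQNE:Id} reduced to part \cref{prop:CCS:FQNE:general} because $\Id \in \mathcal{S}$ makes $\cup^{m}_{i=1}\Fix T_{i} = \mathcal{H}$, so the extra hypothesis is automatic.
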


\begin{proof}
	\cref{prop:CCS:FQNE:general}: By assumptions and by  \cref{fact:FixCCS:F1Ft:T}\cref{fact:CW:FixPointSet:equa} and \cref{fact:CCS:proper:NormPres:T}\cref{fact:CCS:proper:NormPres:T:Id}, we obtain that $\Fix \CC{\mathcal{S}} = \cap^{m}_{i=1} \Fix T_{i}$ and that
	\begin{align*}
		(\forall x \in \mathcal{H})  (\forall y \in \cap^{m}_{i=1} \Fix T_{i}) \quad \norm{y- \CC{\mathcal{S}}x }^{2} + \norm{\CC{\mathcal{S}}x -x}^{2} =\norm{y- x}^{2}.
	\end{align*}
	Hence,
		\begin{align*}
	(\forall x \in \mathcal{H})  (\forall y \in \Fix \CC{\mathcal{S}}) \quad \norm{\CC{\mathcal{S}}x-y}^{2} + \norm{\CC{\mathcal{S}}x-x}^{2} \leq \norm{x-y}^{2},
	\end{align*}
	which, by \cref{defn:Nonexpansive}\cref{FirmlyQuasiNonex}, means that $\CC{\mathcal{S}}$ is firmly quasinonexpansive.
	
	\cref{prop:CCS:FQNE:Id}: Clearly,  $\Id \in \mathcal{S}$ implies that $\Fix \CC{\mathcal{S}} \subseteq \mathcal{H} =\Fix \Id =\cup^{m}_{i=1} \Fix T_{i}$ and that  $\Id \in \aff \mathcal{S}$.
	Hence, \cref{prop:CCS:FQNE:Id} comes from \cref{prop:CCS:FQNE:general}.
\end{proof}

\begin{proposition} \label{prop:separation:CCS}
	Let $p \in \mathbb{N} \smallsetminus \{0\}$. Denote by $\I:= \{1,\ldots, m \}$ and $\J:= \{1,\ldots, p \}$ . Let $(\forall j \in \J)$ $\mathcal{S}_{j} \subseteq  \{T_{1}, \ldots, T_{m} \}$ such that $\Id \in \mathcal{S}_{j}$ and $\cup^{p}_{t=1} \mathcal{S}_{t} = \{T_{1}, \ldots, T_{m} \}$. Then the following hold:
	\begin{enumerate}
		\item \label{prop:separation:CCS:Fix} $\cap^{p}_{j=1} \Fix \CC{\mathcal{S}_{j}} =\cap^{m}_{i=1} \Fix T_{i}$.
		\item \label{prop:separation:CCS:compose} $\CC{\mathcal{S}_{p}} \cdots \CC{\mathcal{S}_{1}} $ is strictly quasinonexpansive and $ \Fix \CC{\mathcal{S}_{p}} \cdots \CC{\mathcal{S}_{1}}  = \cap^{m}_{i=1} \Fix T_{i}$.
		\item \label{prop:separation:CCS:combination} Let $(\omega_{j})_{j\in \J}$ be real numbers in $\left]0,1\right]$ such that $\sum_{j \in \J} \omega_{j} =1$. Then $\sum_{j \in \J} \omega_{j}\CC{\mathcal{S}_{j}} $ is firmly quasinonexpansive. Moreover, $\Fix \sum_{j \in \J} \omega_{j}\CC{\mathcal{S}_{j}} =\cap^{m}_{i=1} \Fix T_{i}$.
	\end{enumerate}
\end{proposition}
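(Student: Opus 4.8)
The plan is to first pin down each individual fixed-point set $\Fix \CC{\mathcal{S}_{j}}$ and then feed the resulting information into the facts on (firmly, strictly, plain) quasinonexpansive operators already collected. For \cref{prop:separation:CCS:Fix}: since $\Id \in \mathcal{S}_{j}$ for every $j \in \J$, \cref{fact:FixCCS:F1Ft:T}\cref{fact:Id:FixPointSet:equa} applies to each $\mathcal{S}_{j}$ (after relabeling its elements) and yields $\Fix \CC{\mathcal{S}_{j}} = \cap_{T \in \mathcal{S}_{j}} \Fix T$. Intersecting over $j \in \J$ and using $\cup^{p}_{t=1} \mathcal{S}_{t} = \{T_{1},\ldots,T_{m}\}$ gives $\cap^{p}_{j=1} \Fix \CC{\mathcal{S}_{j}} = \cap_{T \in \cup^{p}_{t=1}\mathcal{S}_{t}} \Fix T = \cap^{m}_{i=1} \Fix T_{i}$. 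In particular each $\Fix \CC{\mathcal{S}_{j}} \supseteq \cap^{m}_{i=1}\Fix T_{i} \neq \varnothing$, which I reuse below to check the nonemptiness hypotheses of the later facts.

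For \cref{prop:separation:CCS:compose}: because $\Id \in \mathcal{S}_{j}$, \cref{prop:CCS:FQNE}\cref{prop:CCS:FQNE:Id} shows each $\CC{\mathcal{S}_{j}}$ is firmly quasinonexpansive, hence strictly quasinonexpansive (and a fortiori quasinonexpansive) by \cref{rem:NonexpImplication}. I would then induct on $p$: the case $p=1$ is immediate from \cref{prop:separation:CCS:Fix}; for the step, write $\CC{\mathcal{S}_{k+1}}\cdots\CC{\mathcal{S}_{1}} = \CC{\mathcal{S}_{k+1}} \circ \left(\CC{\mathcal{S}_{k}}\cdots\CC{\mathcal{S}_{1}}\right)$ and apply \cref{fact:FixProdInters} with $T_{1} := \CC{\mathcal{S}_{k+1}}$ and $T_{2} := \CC{\mathcal{S}_{k}}\cdots\CC{\mathcal{S}_{1}}$, which are both strictly quasinonexpansive self-maps of $\mathcal{H}$ and whose fixed-point sets both contain $\cap^{m}_{i=1}\Fix T_{i} \neq \varnothing$ (by \cref{prop:separation:CCS:Fix} and the inductive hypothesis). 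This preserves strict quasinonexpansiveness and gives $\Fix\left(\CC{\mathcal{S}_{k+1}}\cdots\CC{\mathcal{S}_{1}}\right) = \cap^{k+1}_{j=1}\Fix \CC{\mathcal{S}_{j}}$; taking $k+1=p$ and invoking \cref{prop:separation:CCS:Fix} rewrites the right-hand side as $\cap^{m}_{i=1}\Fix T_{i}$.

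For \cref{prop:separation:CCS:combination}: each $\CC{\mathcal{S}_{j}}$ is quasinonexpansive and $\cap^{p}_{j=1}\Fix \CC{\mathcal{S}_{j}} = \cap^{m}_{i=1}\Fix T_{i} \neq \varnothing$ by \cref{prop:separation:CCS:Fix}, so \cref{fact:FixSumInters} immediately gives $\Fix \sum_{j\in\J}\omega_{j}\CC{\mathcal{S}_{j}} = \cap^{m}_{i=1}\Fix T_{i}$. For the firm quasinonexpansiveness I would use that \cref{EQ:FirmlyQuasiNonex} for an operator $R$ is equivalent to $\innp{x-Rx,\,x-y} \geq \norm{x-Rx}^{2}$ for all $x \in \mathcal{H}$ and $y \in \Fix R$; setting $T := \sum_{j\in\J}\omega_{j}\CC{\mathcal{S}_{j}}$, fixing $x \in \mathcal{H}$ and $y \in \Fix T = \cap_{j}\Fix\CC{\mathcal{S}_{j}}$, and writing $x-Tx = \sum_{j\in\J}\omega_{j}(x-\CC{\mathcal{S}_{j}}x)$, one combines that characterization for each firmly quasinonexpansive $\CC{\mathcal{S}_{j}}$ with convexity of $\norm{\cdot}^{2}$ to get $\innp{x-Tx,\,x-y} = \sum_{j}\omega_{j}\innp{x-\CC{\mathcal{S}_{j}}x,\,x-y} \geq \sum_{j}\omega_{j}\norm{x-\CC{\mathcal{S}_{j}}x}^{2} \geq \norm{x-Tx}^{2}$, which is exactly \cref{EQ:FirmlyQuasiNonex} for $T$.

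None of these steps is deep. The only place that needs a small stand-alone computation rather than a direct appeal to a collected fact is the last one, namely that a convex combination of firmly quasinonexpansive operators sharing a fixed point is again firmly quasinonexpansive, which I handle via the inner-product reformulation of \cref{EQ:FirmlyQuasiNonex} together with Jensen's inequality for $\norm{\cdot}^{2}$. Apart from that, the main thing to be careful about is verifying the nonemptiness hypotheses in \cref{fact:FixProdInters} and \cref{fact:FixSumInters}, and these are precisely what \cref{prop:separation:CCS:Fix} delivers.
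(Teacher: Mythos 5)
Your proposal is correct and follows essentially the same route as the paper: part (i) via \cref{fact:FixCCS:F1Ft:T}\cref{fact:Id:FixPointSet:equa} together with $\cup^{p}_{t=1}\mathcal{S}_{t}=\{T_{1},\ldots,T_{m}\}$, and parts (ii)--(iii) by first getting firm (hence strict) quasinonexpansiveness of each $\CC{\mathcal{S}_{j}}$ from \cref{prop:CCS:FQNE}\cref{prop:CCS:FQNE:Id} and \cref{rem:NonexpImplication}, then passing to compositions and convex combinations, with the fixed-point identification from \cref{fact:FixSumInters} and part (i). The only difference is that where the paper simply cites \cite[Corollaries~4.50 and 4.48]{BC2017} for the finite composition and for firm quasinonexpansiveness of the convex combination, you prove these pieces directly (induction on $p$ using \cref{fact:FixProdInters}, and the inner-product reformulation of \cref{EQ:FirmlyQuasiNonex} combined with convexity of $\norm{\cdot}^{2}$), both of which are valid.
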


\begin{proof}
	\cref{prop:separation:CCS:Fix}: Because $(\forall j \in \J)$ $\Id \in \mathcal{S}_{j} \subseteq  \{T_{1}, \ldots, T_{m} \}$, by	 \cref{fact:FixCCS:F1Ft:T}\cref{fact:Id:FixPointSet:equa}, $(\forall j \in \J)$  $\cap^{m}_{i=1} \Fix T_{i} \subseteq \cap_{T \in \mathcal{S}_{j}} \Fix T = \Fix \CC{\mathcal{S}_{j}}$. Hence,
	\begin{align}  \label{eq:prop:separation:CCS:Fix:<}
	\cap^{m}_{i=1} \Fix T_{i} \subseteq \cap^{p}_{j=1} \Fix \CC{\mathcal{S}_{j}}.
	\end{align}
	On the other hand, because $\cup^{p}_{t=1} \mathcal{S}_{t} = \{T_{1}, \ldots, T_{m} \}$, for every $i \in \I$  there exists $t_{i} \in \J$ such that $T_{i} \in \mathcal{S}_{t_{i}}$. By the assumption, $\Id \in \mathcal{S}_{t_{i}}$, and by \cref{fact:FixCCS:F1Ft:T}\cref{fact:Id:FixPointSet:equa} again, $\Fix \CC{\mathcal{S}_{t_{i}}} =\cap_{T \in \mathcal{S}_{t_{i}}  } \Fix T \subseteq \Fix T_{i}$, which implies that $\cap^{p}_{j=1} \Fix \CC{\mathcal{S}_{j}} \subseteq \Fix \CC{\mathcal{S}_{t_{i}}} \subseteq \Fix T_{i}$. Moreover, because  the above $i \in \I$ is chosen arbitrarily, we have 
	\begin{align}  \label{eq:prop:separation:CCS:Fix:>}
	 \cap^{p}_{j=1} \Fix \CC{\mathcal{S}_{j}}  \subseteq \cap^{m}_{i=1} \Fix T_{i}.
	\end{align}
	Therefore, \cref{eq:prop:separation:CCS:Fix:<} and \cref{eq:prop:separation:CCS:Fix:>} yield \cref{prop:separation:CCS:Fix}.
	
	\cref{prop:separation:CCS:compose}: Let $j \in \J$. By assumption,  $\Id \in \mathcal{S}_{j}$, and by \cref{prop:CCS:FQNE}\cref{prop:CCS:FQNE:Id},    $\CC{\mathcal{S}_{j}} $ is firmly quasinonexpansive. By \cref{rem:NonexpImplication}, we know that   $\CC{\mathcal{S}_{j}} $ is strictly quasinonexpansive. In addition, the global assumption  $\cap^{m}_{i=1} \Fix T_{i} \neq \varnothing$ and the assumption, $\mathcal{S}_{j} \subseteq  \{T_{1}, \ldots, T_{m} \}$, imply  that  $\cap_{T \in \mathcal{S}_{j}} \Fix T \neq \varnothing$. Hence, by \cite[Corollary~4.50]{BC2017}, $\CC{\mathcal{S}_{p}} \cdots \CC{\mathcal{S}_{1}} $ is strictly quasinonexpansive and $ \Fix \CC{\mathcal{S}_{p}} \cdots \CC{\mathcal{S}_{1}}  = \cap^{p}_{j=1} \Fix \CC{\mathcal{S}_{j}} $. Combine  the identity with the \cref{prop:separation:CCS:Fix} above to deduce $ \Fix \CC{\mathcal{S}_{p}} \cdots \CC{\mathcal{S}_{1}}  = \cap^{m}_{i=1} \Fix T_{i}$.
	
	\cref{prop:separation:CCS:combination}:  Let $j \in \J$. By assumption,  $\Id \in \mathcal{S}_{j}$, and by \cref{prop:CCS:FQNE}\cref{prop:CCS:FQNE:Id},    $\CC{\mathcal{S}_{j}} $ is firmly quasinonexpansive. By \cite[Corollary~4.48]{BC2017},  $\sum_{j \in \J} \omega_{j}\CC{\mathcal{S}_{j}} $ is firmly quasinonexpansive.
	
	In addition, by \cref{rem:NonexpImplication}, for every $j \in \J$, $\CC{\mathcal{S}_{j}} $ is  firmly quasinonexpansive implies that $\CC{\mathcal{S}_{j}} $ is  quasinonexpansive. By  \cite[Proposition~4.47]{BC2017} and the \cref{prop:separation:CCS:Fix} above, we obtain that  $\Fix \sum_{j \in \J} \omega_{j}\CC{\mathcal{S}_{j}} =\cap^{p}_{j=1} \Fix \CC{\mathcal{S}_{j}}=\cap^{m}_{i=1} \Fix T_{i}$.
\end{proof}
\begin{lemma} \label{lem:TiT:CapFixPerp}
	Suppose that $T_{1}, \ldots, T_{m}$ are  linear. Then $(\forall x \in \mathcal{H})$ $\left(  \forall i \in \{1, \ldots, m\} \right)$ $T_{i}x -x \in (\cap^{m}_{j=1} \Fix T_{j})^{\perp}$.
\end{lemma}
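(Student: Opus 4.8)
The plan is to reduce the claim to a one-line computation. First I would note that since $\cap^{m}_{j=1}\Fix T_{j} \subseteq \Fix T_{i}$ for every $i$, we have $(\Fix T_{i})^{\perp} \subseteq (\cap^{m}_{j=1}\Fix T_{j})^{\perp}$, so it suffices to prove the stronger statement $(\forall x \in \mathcal{H})$ $(\forall i)$ $T_{i}x - x \in (\Fix T_{i})^{\perp}$.

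Next I would use the hypothesis that each $T_{i}$ is linear together with the standing assumption that it is an isometry: by \cref{fact:examples:normpreserving}\cref{fact:examples:normpreserving:TTStar}, linearity plus the isometry property gives $T_{i}^{*}T_{i} = \Id$. Then, fixing $x \in \mathcal{H}$, $i \in \{1,\ldots,m\}$ and an arbitrary $z \in \Fix T_{i}$, I would compute
\begin{align*}
\innp{T_{i}x - x, z} = \innp{T_{i}x, z} - \innp{x,z} = \innp{T_{i}x, T_{i}z} - \innp{x,z} = \innp{T_{i}^{*}T_{i}x, z} - \innp{x,z} = \innp{x,z} - \innp{x,z} = 0,
\end{align*}
where the second equality uses $z = T_{i}z$ and the fourth uses $T_{i}^{*}T_{i} = \Id$. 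Since $z \in \Fix T_{i}$ was arbitrary, this shows $T_{i}x - x \in (\Fix T_{i})^{\perp}$, and the reduction in the first step completes the proof.

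As an alternative to the explicit inner-product computation, I could instead invoke \cref{lemma:FixRangeNonexpansive}: a linear isometry is nonexpansive, so $\overline{\Range}\,(\Id - T_{i}) = (\Fix T_{i})^{\perp}$, and $T_{i}x - x = -(\Id - T_{i})x$ lies in $\Range(\Id - T_{i}) \subseteq (\Fix T_{i})^{\perp}$. Either route is routine; there is no real obstacle here — the only point requiring a little care is that the conclusion is phrased with the full intersection $\cap^{m}_{j=1}\Fix T_{j}$, so one must remember to pass through the inclusion $\cap^{m}_{j=1}\Fix T_{j} \subseteq \Fix T_{i}$ at the orthogonal-complement level rather than trying to work with the intersection directly.
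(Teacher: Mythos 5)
Your argument is correct, and your ``alternative'' route is in fact exactly the paper's proof: the paper's argument is the one-line chain $T_{i}x-x \in \Range(T_{i}-\Id) \subseteq \overline{\Range}\,(\Id-T_{i}) = (\Fix T_{i})^{\perp} \subseteq (\cap^{m}_{j=1}\Fix T_{j})^{\perp}$, invoking \cref{lemma:FixRangeNonexpansive} and the same complement-inclusion reduction you describe. Your primary route differs in that it bypasses \cref{lemma:FixRangeNonexpansive} entirely and verifies $T_{i}x-x \perp \Fix T_{i}$ by hand, using only linearity, boundedness and the isometry identity $T_{i}^{*}T_{i}=\Id$ from \cref{fact:examples:normpreserving}\cref{fact:examples:normpreserving:TTStar}; the computation $\innp{T_{i}x,z} = \innp{T_{i}x,T_{i}z} = \innp{T_{i}^{*}T_{i}x,z} = \innp{x,z}$ for $z \in \Fix T_{i}$ is valid. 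The trade-off: your direct computation is self-contained and specific to isometries, while the paper's route reuses \cref{lemma:FixRangeNonexpansive} (which rests on $\Fix T = \Fix T^{*}$ for nonexpansive linear operators) and therefore proves the same conclusion under the weaker hypothesis that each $T_{i}$ is merely linear and nonexpansive, not necessarily isometric. Both are complete proofs of the stated lemma.
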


\begin{proof}	
	Let $x \in \mathcal{H}$ and let $i \in \{1, \ldots, m\}$. 
	By \cref{lemma:FixRangeNonexpansive}, $T_{i}x -x \in \Range (T_{i} - \Id) \subseteq \overline{\Range}\, (\Id -T_{i}   ) =\left( \Fix T_{i} \right)^{\perp} \subseteq (\cap^{m}_{j=1} \Fix T_{j})^{\perp}$.
\end{proof}

\begin{lemma} \label{lem:CCSkCAPperp}
	Suppose that $T_{1}, \ldots, T_{m}$ are linear and that $T_{1}=\Id$.  Then the following statements hold:
	\begin{enumerate}
		\item \label{lem:CCSkCAPperp:CCSxx} $(\forall x \in \mathcal{H})$ $\CC{\mathcal{S}} x -x  \in (\cap^{m}_{i=1} \Fix T_{i})^{\perp}$.
		\item \label{lem:CCSkCAPperp:CCSk} $(\forall x \in  (\cap^{m}_{i=1} \Fix T_{i})^{\perp} )$ $ (\forall k \in \mathbb{N}) $ $\CC{\mathcal{S}}^{k} x \in (\cap^{m}_{i=1} \Fix T_{i})^{\perp}$.
	\end{enumerate}
\end{lemma}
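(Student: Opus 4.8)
The plan is to prove the two statements of \cref{lem:CCSkCAPperp} in order, using \cref{lem:TiT:CapFixPerp} as the workhorse for part \cref{lem:CCSkCAPperp:CCSxx} and then bootstrapping to part \cref{lem:CCSkCAPperp:CCSk} by an easy induction.

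\begin{proof}
\cref{lem:CCSkCAPperp:CCSxx}: Let $x \in \mathcal{H}$. By \cref{fact:CW:Welldefined:Formula} applied to $\mathcal{S}$ (with $G_{i}=T_{i}$), there exist scalars $\alpha_{1}(x), \ldots, \alpha_{m-1}(x) \in \mathbb{R}$ such that
\begin{align*}
\CC{\mathcal{S}}x = T_{1}x + \sum^{m-1}_{i=1} \alpha_{i}(x)(T_{i+1}x - T_{1}x).
\end{align*}
Since $T_{1}=\Id$, this gives $\CC{\mathcal{S}}x - x = \sum^{m-1}_{i=1} \alpha_{i}(x)(T_{i+1}x - x)$. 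By \cref{lem:TiT:CapFixPerp}, each term $T_{i+1}x - x$ lies in the closed linear subspace $(\cap^{m}_{j=1}\Fix T_{j})^{\perp}$, hence so does their linear combination, proving $\CC{\mathcal{S}}x - x \in (\cap^{m}_{j=1}\Fix T_{j})^{\perp}$.

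\cref{lem:CCSkCAPperp:CCSk}: Write $M := \cap^{m}_{i=1}\Fix T_{i}$, a closed linear subspace, so $M^{\perp}$ is a closed linear subspace by \cref{MetrProSubs8}\cref{MetrProSubs8:i}. We argue by induction on $k$ that $x \in M^{\perp}$ implies $\CC{\mathcal{S}}^{k}x \in M^{\perp}$. The case $k=0$ is immediate. Assume the claim holds for some $k \in \mathbb{N}$, and let $x \in M^{\perp}$; then $\CC{\mathcal{S}}^{k}x \in M^{\perp}$ by the inductive hypothesis. Applying part \cref{lem:CCSkCAPperp:CCSxx} with the point $\CC{\mathcal{S}}^{k}x$ in place of $x$ yields $\CC{\mathcal{S}}^{k+1}x - \CC{\mathcal{S}}^{k}x \in M^{\perp}$. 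Since $M^{\perp}$ is a linear subspace and $\CC{\mathcal{S}}^{k}x \in M^{\perp}$, we conclude $\CC{\mathcal{S}}^{k+1}x = (\CC{\mathcal{S}}^{k+1}x - \CC{\mathcal{S}}^{k}x) + \CC{\mathcal{S}}^{k}x \in M^{\perp}$, completing the induction.
\end{proof}

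I do not anticipate a genuine obstacle here: the only slightly delicate point is making sure part \cref{lem:CCSkCAPperp:CCSxx} is available in the exact affine-combination form needed, which is supplied by \cref{fact:CW:Welldefined:Formula} together with the hypothesis $T_{1}=\Id$; everything else is the closedness/linearity of $M^{\perp}$ and a one-line induction.
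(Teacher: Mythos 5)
Your proof is correct and follows essentially the same route as the paper: part (i) via the affine-combination formula of \cref{fact:CW:Welldefined:Formula} together with $T_{1}=\Id$ and \cref{lem:TiT:CapFixPerp}, and part (ii) by the same one-line induction using the linearity of $(\cap^{m}_{i=1}\Fix T_{i})^{\perp}$. No issues to report.
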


\begin{proof}
	\cref{lem:CCSkCAPperp:CCSxx}: Let $x \in \mathcal{H}$. Since $\Id \in \mathcal{S}$, by  \cref{fact:CCS:proper:NormPres:T}\cref{fact:CCS:proper:NormPres:T:prop} and \cref{fact:CW:Welldefined:Formula}, there exist $\alpha_{1}, \ldots,\alpha_{m-1} $ in $\mathbb{R}$
	such that 
	\begin{align*}
	\CC{\mathcal{S}}x -x= \sum^{m-1}_{i=1} \alpha_{i} (T_{i+1}x -x).
	\end{align*}
	On the other hand, by \cref{lem:TiT:CapFixPerp}, we have that $(\forall i \in \{1, \ldots, m-1\})$ $T_{i+1}x -x \in  (\cap^{m}_{j=1} \Fix T_{j})^{\perp}$. Since $(\cap^{m}_{j=1} \Fix T_{j})^{\perp}$ is a linear subspace,   \cref{lem:CCSkCAPperp:CCSxx} is true.
	
	\cref{lem:CCSkCAPperp:CCSk}: For every $x \in  (\cap^{m}_{i=1} \Fix T_{i})^{\perp}$, by \cref{lem:CCSkCAPperp:CCSxx} above, $\CC{\mathcal{S}}x =x + \left(\CC{\mathcal{S}}x -x \right) \in (\cap^{m}_{i=1} \Fix T_{i})^{\perp}$. Therefore, the required result follows from \cref{lem:CCSkCAPperp:CCSxx} inductively.   
\end{proof}	

\begin{remark}
	\begin{enumerate}
		\item  In view of  \cref{fact:FixCCS:F1Ft:F}, we note that \cref{lem:TiT:CapFixPerp} and  \cref{lem:CCSkCAPperp} reduce to \cite[Propositions~5.4 and 5.5]{BOyW2019Isometry} respectively when the related isometries are reflectors.
		\item \cref{lem:CCSkCAPperp}\cref{lem:CCSkCAPperp:CCSk} implies that when we use the CIM, $(\CC{\mathcal{S}}^{k} x)_{k \in \mathbb{N}}$, to  find the best approximation $\Pro_{  \cap^{m}_{i=1}\Fix T_{i} }x $, if we choose our initial point $x$ in the linear subspace $ (\cap^{m}_{i=1} \Fix T_{i})^{\perp} $ and if $\Pro_{  \cap^{m}_{i=1}\Fix T_{i} }x \neq 0$, then it is impossible for us to find the $\Pro_{  \cap^{m}_{i=1}\Fix T_{i} }x $ in finitely many steps.
		This is consistent with \cite[Section~4]{BCS2019} which shows that to satisfy one step convergence of CRM for hyperplane intersection, there are certain requirements for the initial points.
	\end{enumerate}
	
\end{remark}

The following result reduces to \cite[Proposition~5.3]{BOyW2019Isometry} when the related isometries are reflectors.
\begin{theorem} \label{theo:PARALLIsometryLinear}
	Let $z \in \mathcal{H}$.  Set $\left( \forall i \in \{1,\ldots, m\}  \right) $ $(\forall x \in \mathcal{H})$ $F_{i}x := T_{i} (x+z) -z$ and $\mathcal{S}_{F} = \{F_{1}, \ldots, F_{m}\}$. Let $\gamma \in \left[0,1\right[\,$. Then for every $x\in \mathcal{H}$, the following statements are equivalent:
	\begin{enumerate}
		\item \label{item:prop:PARALLIsometryLinear:T} $(\forall k \in \mathbb{N}) \quad \norm{ \CC{\mathcal{S}}^{k}x  - \Pro_{  \cap^{m}_{i=1}\Fix T_{i} }x} \leq \gamma^{k} \norm{x - \Pro_{  \cap^{m}_{i=1}\Fix T_{i} }x}$.
		\item \label{item:prop:PARALLIsometryLinear:F} $(\forall k \in \mathbb{N}) \quad \norm{ \CC{\mathcal{S}_{F}}^{k}(x-z) - \Pro_{\cap^{m}_{i=1}\Fix F_{i}}(x-z)} \leq \gamma^{k} \norm{(x - z) - \Pro_{\cap^{m}_{i=1}\Fix F_{i}}(x-z)}$.
	\end{enumerate}
Consequently, the following assertions hold:
\begin{enumerate}[label=(\alph*)]
	\item \label{item:prop:PARALLIsometryLinear:a} Given $x \in \mathcal{H}$, $(\CC{\mathcal{S}}^{k}x)_{k \in \mathbb{N}}$ converges linearly to $\Pro_{ \cap^{m}_{i=1} T_{i}}x$ with linear rate $\gamma$ if and only if $(\CC{\mathcal{S}_{F}}^{k} (x -z))_{k \in \mathbb{N}}$ converges linearly to $\Pro_{ \cap^{m}_{i=1} F_{i}} (x-z)$ with linear rate $\gamma$. 
	\item \label{item:prop:PARALLIsometryLinear:b}  $(\forall x \in \mathcal{H})$ $(\CC{\mathcal{S}}^{k}x)_{k \in \mathbb{N}}$ converges linearly to $\Pro_{ \cap^{m}_{i=1} T_{i}}x$ with linear rate $\gamma$ if and only if $(\forall y \in \mathcal{H})$ $(\CC{\mathcal{S}_{F}}^{k} y)_{k \in \mathbb{N}}$ converges linearly to $\Pro_{ \cap^{m}_{i=1} F_{i}}y$ with linear rate $\gamma$. 
\end{enumerate}
\end{theorem}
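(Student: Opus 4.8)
The plan is to obtain both the equivalence and its two corollaries directly from the substitution identities of \cref{lemma:CCSTiFi}, read with $G_{i}$ replaced by $T_{i}$; essentially no computation will be needed.

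First I would check that the $F$-side data are legitimate. Since each $T_{i}$ is isometric, \cref{lemma:TAffineFLinear}\cref{lemma:TAffineFLinear:Isometric} shows that each $F_{i}$ is isometric, hence an affine isometry by \cref{prop:IsometryAffine}; and by \cref{lemma:CCSTiFi}\cref{lemma:CCSTiFi:Fix} we have $\cap_{i=1}^{m}\Fix F_{i}=\cap_{i=1}^{m}\Fix T_{i}-z\neq\varnothing$. Consequently \cref{fact:CCS:proper:NormPres:T}\cref{fact:CCS:proper:NormPres:T:prop} applies to $\mathcal{S}_{F}$ as well, so $\CC{\mathcal{S}_{F}}\colon\mathcal{H}\to\mathcal{H}$ is proper and all expressions appearing in \cref{item:prop:PARALLIsometryLinear:F} are meaningful.

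The core of the argument is a single identity. Fix $x\in\mathcal{H}$ and $k\in\mathbb{N}$. Combining $\CC{\mathcal{S}}^{k}x=z+\CC{\mathcal{S}_{F}}^{k}(x-z)$ from \cref{lemma:CCSTiFi}\cref{lemma:CCSTiFi:CCSTF:k} with $\Pro_{\cap_{i=1}^{m}\Fix T_{i}}x=z+\Pro_{\cap_{i=1}^{m}\Fix F_{i}}(x-z)$ from \cref{lemma:CCSTiFi}\cref{lemma:CCSTiFi:Projec:Fix} and subtracting, the vectors $z$ cancel and we obtain
\begin{align} \label{eq:proposal:identity}
\CC{\mathcal{S}}^{k}x-\Pro_{\cap_{i=1}^{m}\Fix T_{i}}x=\CC{\mathcal{S}_{F}}^{k}(x-z)-\Pro_{\cap_{i=1}^{m}\Fix F_{i}}(x-z).
\end{align}
Taking norms in \cref{eq:proposal:identity} shows that the quantity bounded in \cref{item:prop:PARALLIsometryLinear:T} equals the one in \cref{item:prop:PARALLIsometryLinear:F} for every $k$; the case $k=0$ (with $\CC{\mathcal{S}}^{0}=\Id$) additionally shows that the two multiplicative factors $\norm{x-\Pro_{\cap_{i=1}^{m}\Fix T_{i}}x}$ and $\norm{(x-z)-\Pro_{\cap_{i=1}^{m}\Fix F_{i}}(x-z)}$ coincide. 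Hence, for each $k$, the $k$-th inequality of \cref{item:prop:PARALLIsometryLinear:T} holds if and only if the $k$-th inequality of \cref{item:prop:PARALLIsometryLinear:F} holds, which yields the equivalence.

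For \cref{item:prop:PARALLIsometryLinear:a}, I would recall that ``$(\CC{\mathcal{S}}^{k}x)_{k\in\mathbb{N}}$ converges linearly to $\Pro_{\cap_{i=1}^{m}\Fix T_{i}}x$ with rate $\gamma$'' means $\norm{\CC{\mathcal{S}}^{k}x-\Pro_{\cap_{i=1}^{m}\Fix T_{i}}x}\le C\gamma^{k}$ for all $k$ and some $C\ge 0$; by \cref{eq:proposal:identity} the same $C$ works verbatim for $(\CC{\mathcal{S}_{F}}^{k}(x-z))_{k\in\mathbb{N}}$, and symmetrically. For \cref{item:prop:PARALLIsometryLinear:b} I would simply quantify over $x$: since $x\mapsto x-z$ is a bijection of $\mathcal{H}$, the $T$-side ``for every $x$'' statement is equivalent to the $F$-side ``for every $y$'' statement with $y=x-z$, and the claim follows from \cref{item:prop:PARALLIsometryLinear:a}. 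I anticipate no genuine obstacle; the only points needing care are the preliminary verification that $\CC{\mathcal{S}_{F}}$ is well defined and the bookkeeping of which item of \cref{lemma:CCSTiFi} supplies each identity.
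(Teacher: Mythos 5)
Your proposal is correct and follows essentially the same route as the paper: both rest on \cref{lemma:CCSTiFi}\cref{lemma:CCSTiFi:CCSTF:k}$\&$\cref{lemma:CCSTiFi:Projec:Fix} to produce the identity $\CC{\mathcal{S}}^{k}x-\Pro_{\cap^{m}_{i=1}\Fix T_{i}}x=\CC{\mathcal{S}_{F}}^{k}(x-z)-\Pro_{\cap^{m}_{i=1}\Fix F_{i}}(x-z)$ together with the equality of the right-hand factors (which the paper states directly and you recover as the $k=0$ case), after the same preliminary well-definedness check via \cref{lemma:CCSTiFi}\cref{lemma:CCSTiFi:Fix}, \cref{lemma:TAffineFLinear}\cref{lemma:TAffineFLinear:Isometric} and \cref{fact:CCS:proper:NormPres:T}\cref{fact:CCS:proper:NormPres:T:prop}. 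No gaps.
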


\begin{proof}
	By \cref{lemma:CCSTiFi}\cref{lemma:CCSTiFi:Fix}, $\cap^{m}_{i=1} T_{i} \neq \varnothing$ is equivalent to $\cap^{m}_{i=1} F_{i} \neq \varnothing$. Hence,
	\cref{fact:CCS:proper:NormPres:T}\cref{fact:CCS:proper:NormPres:T:prop} and  \cref{lemma:TAffineFLinear}\cref{lemma:TAffineFLinear:Isometric} yield  that for every  $x \in \mathcal{H}$, both $(\CC{\mathcal{S}}^{k}x)_{k \in \mathbb{N}}$ and $(\CC{\mathcal{S}_{F}}^{k} x)_{k \in \mathbb{N}}$ are well-defined.

By \cref{lemma:CCSTiFi}\cref{lemma:CCSTiFi:Projec:Fix}$\&$\cref{lemma:CCSTiFi:CCSTF:k}, for every $x \in \mathcal{H}$, 
\begin{align*}
x - \Pro_{  \cap^{m}_{i=1}\Fix T_{i} }x =  (x - z) - \Pro_{\cap^{m}_{i=1}\Fix F_{i}}(x-z),
\end{align*}
and 
\begin{align*}
  (\forall k \in \mathbb{N}) \quad \CC{\mathcal{S}}^{k}x  - \Pro_{  \cap^{m}_{i=1}\Fix T_{i} }x = \CC{\mathcal{S}_{F}}^{k}(x-z) - \Pro_{\cap^{m}_{i=1}\Fix F_{i}}(x-z).
\end{align*}
Therefore, we obtain that \cref{item:prop:PARALLIsometryLinear:T}  $\Leftrightarrow$ \cref{item:prop:PARALLIsometryLinear:F}.
Moreover, it is clear that both \cref{item:prop:PARALLIsometryLinear:a} and \cref{item:prop:PARALLIsometryLinear:b} follow easily from the equivalence of \cref{item:prop:PARALLIsometryLinear:T} and \cref{item:prop:PARALLIsometryLinear:F}.
The proof is complete.
\end{proof}

\begin{remark}
	\cref{theo:PARALLIsometryLinear}, \cref{prop:IsometryAffine} and \cref{lemma:TAffineFLinear}\cref{lemma:TAffineFLinear:TF} allow us to assume that 
	 all of the associated isometries are linear when we study the linear convergence of CIMs.
\end{remark}

\section{Linear convergence of circumcentered isometry methods} \label{section:LinearConverg}
The linear convergence results in this section hinge on the following two facts. 
\begin{fact}  \label{fact:CWP:line:conv} {\rm  \cite[Theorem~4.14]{BOyW2019Isometry}}
	Recall that $(\forall i \in \{1,\ldots,m\})$ $T_{i} : \mathcal{H} \rightarrow \mathcal{H} $ is an affine isometry  with $ \cap^{m}_{j=1} \Fix T_{j} \neq \varnothing$ and that $\mathcal{S} :=\{ T_{1}, \ldots,  T_{m} \}$. Let $W$ be a nonempty closed   affine subspace of $\cap^{m}_{j=1} \Fix T_{j} $. 	
	Assume that there exist $F: \mathcal{H} \to \mathcal{H}$ and $\gamma \in \left[0,1\right[$ such that $(\forall x \in \mathcal{H})$ $F(x) \in \aff(\mathcal{S}(x) )$ and
	$(\forall x \in \mathcal{H})$ $ \norm{Fx-\Pro_{W} x} \leq \gamma \norm{x -\Pro_{W} x}.
	$
	Then
	\begin{align*}  
	(\forall x \in \mathcal{H}) (\forall k \in \mathbb{N}) \quad \norm{\CC{\mathcal{S}}^{k}x-\Pro_{W} x} \leq \gamma^{k} \norm{x -\Pro_{W} x}.
	\end{align*}
\end{fact}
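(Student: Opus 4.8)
The plan is to split the argument into a one-step contraction and an induction, with all the geometry packaged into \cref{fact:CCS:proper:NormPres:T}.

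\emph{Step 1: a one-step estimate.} First I would fix $x\in\mathcal{H}$ and use the Pythagorean identity in \cref{fact:CCS:proper:NormPres:T}\cref{fact:CCS:proper:NormPres:T:F}. Since $W\subseteq\cap_{j=1}^{m}\Fix T_{j}$, the point $z:=\Pro_{W}x$ lies in $\cap_{j=1}^{m}\Fix T_{j}$ and hence is an admissible choice there, and since $F$ satisfies $F(y)\in\aff(\mathcal{S}(y))$ for every $y$, that fact gives
\begin{align*}
\norm{\Pro_{W}x-\CC{\mathcal{S}}x}^{2}+\norm{\CC{\mathcal{S}}x-Fx}^{2}=\norm{\Pro_{W}x-Fx}^{2}.
\end{align*}
Discarding the nonnegative middle term and invoking the hypothesis $\norm{Fx-\Pro_{W}x}\le\gamma\norm{x-\Pro_{W}x}$ then yields the one-step contraction $\norm{\CC{\mathcal{S}}x-\Pro_{W}x}\le\gamma\norm{x-\Pro_{W}x}$, valid for every $x\in\mathcal{H}$.

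\emph{Step 2: iteration.} Next I would run an induction on $k$, with the base case $k=0$ trivial. For the inductive step, set $y:=\CC{\mathcal{S}}^{k}x$. The key point is that $\Pro_{W}$ is left invariant by $\CC{\mathcal{S}}^{k}$: by \cref{fact:CCS:proper:NormPres:T}\cref{fact:CCS:proper:NormPres:T:PaffU} we have $\Pro_{W}y=\Pro_{W}\CC{\mathcal{S}}^{k}x=\Pro_{W}x$. Applying the Step 1 estimate at the point $y$ and then the inductive hypothesis,
\begin{align*}
\norm{\CC{\mathcal{S}}^{k+1}x-\Pro_{W}x}=\norm{\CC{\mathcal{S}}y-\Pro_{W}y}\le\gamma\norm{y-\Pro_{W}y}=\gamma\norm{\CC{\mathcal{S}}^{k}x-\Pro_{W}x}\le\gamma^{k+1}\norm{x-\Pro_{W}x},
\end{align*}
which closes the induction and proves the claim.

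\emph{On the difficulty.} Given the machinery already assembled, the proof is short — essentially all the real work has been pushed into \cref{fact:CCS:proper:NormPres:T}. The only points requiring care are (i) verifying that $\Pro_{W}x\in\cap_{j=1}^{m}\Fix T_{j}$, so that it may serve as the ``$z$'' in the Pythagorean identity, which is precisely where the hypothesis $W\subseteq\cap_{j=1}^{m}\Fix T_{j}$ is used, and (ii) recognizing that both defining properties of $F$, namely $F(y)\in\aff(\mathcal{S}(y))$ and the $\gamma$-contraction bound, must be applied at every iterate $\CC{\mathcal{S}}^{k}x$ and not only at the initial point $x$ — both of which are immediate from the ``for all $x$'' quantifiers in the hypotheses.
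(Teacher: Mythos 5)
Your argument is correct: the Pythagorean identity of \cref{fact:CCS:proper:NormPres:T}\cref{fact:CCS:proper:NormPres:T:F} with $z=\Pro_{W}x$ gives the one-step contraction, and \cref{fact:CCS:proper:NormPres:T}\cref{fact:CCS:proper:NormPres:T:PaffU} makes the induction go through. The paper itself states this result as a Fact imported from \cite[Theorem~4.14]{BOyW2019Isometry} without reproducing a proof, and your argument is essentially the same one used there, so there is nothing to add.
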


\begin{fact}  {\rm  \cite[Theorem~4.15]{BOyW2019Isometry}}
	\label{fact:CCSLinearConvTSFirmNone}
		Suppose that $\mathcal{H}= \mathbb{R}^{n}$. Recall that $(\forall i \in  \{1,\ldots,m\})$ $T_{i} : \mathcal{H} \rightarrow \mathcal{H} $ is an affine isometry  with $ \cap^{m}_{j=1} \Fix T_{j}  \neq \varnothing$ and that $\mathcal{S} :=\{ T_{1}, \ldots,  T_{m} \}$.  Let $T_{\mathcal{S}} \in \aff \mathcal{S}$  satisfy that $ \Fix T_{\mathcal{S}} \subseteq \cap_{T \in \mathcal{S}} \Fix T$.  Assume that $T_{\mathcal{S}} $ is linear and $\alpha$-averaged with $\alpha \in \left]0,1\right[\,$. Then $\norm{T_{\mathcal{S}} \Pro_{(\cap_{T \in \mathcal{S}} \Fix T)^{\perp} }} \in \left[0,1\right[\,$. Moreover, 
		\begin{align*}
		(\forall x \in \mathcal{H})  (\forall k \in \mathbb{N}) \quad \norm{\CC{\mathcal{S}}^{k}x-\Pro_{\cap_{T \in \mathcal{S}} \Fix T} x} \leq \norm{T_{\mathcal{S}} \Pro_{(\cap_{T \in \mathcal{S}} \Fix T)^{\perp} }}^{k} \norm{x -\Pro_{\cap_{T \in \mathcal{S}} \Fix T } x}.
		\end{align*}
\end{fact}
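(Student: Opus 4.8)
The plan is to reduce the \emph{moreover} part to \cref{fact:CWP:line:conv} by choosing $F := T_{\mathcal{S}}$ and $W := \cap_{T \in \mathcal{S}} \Fix T$, so that the whole argument comes down to establishing $\gamma := \norm{T_{\mathcal{S}} \Pro_{W^{\perp}}} \in \left[0,1\right[$ and verifying the two hypotheses of that fact for this $F$.

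First I would identify $\Fix T_{\mathcal{S}}$. Since $T_{\mathcal{S}} \in \aff \mathcal{S}$, we may write $T_{\mathcal{S}} = \sum^{m}_{i=1} \lambda_{i} T_{i}$ with $\sum^{m}_{i=1} \lambda_{i} = 1$. If $x \in W$, then $T_{\mathcal{S}} x = \sum_{i} \lambda_{i} T_{i} x = \sum_{i} \lambda_{i} x = x$, so $W \subseteq \Fix T_{\mathcal{S}}$; combined with the hypothesis $\Fix T_{\mathcal{S}} \subseteq W$ this yields $\Fix T_{\mathcal{S}} = W$. Note also that $W$, being the intersection of the fixed point sets of finitely many (affine) isometries and nonempty by the global assumption, is a nonempty closed affine subspace, so $\Pro_{W}$ and $\Pro_{W^{\perp}}$ are available.

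Next, since $T_{\mathcal{S}}$ is linear and $\alpha$-averaged on $\mathbb{R}^{n}$, \cref{prop:TFNorm}\cref{prop:TFNorm:TTperp} gives $\norm{T_{\mathcal{S}} \Pro_{(\Fix T_{\mathcal{S}})^{\perp}}} < 1$, that is, $\gamma = \norm{T_{\mathcal{S}} \Pro_{W^{\perp}}} \in \left[0,1\right[$, which is the first assertion. I then check the hypotheses of \cref{fact:CWP:line:conv} with this $F = T_{\mathcal{S}}$: (i) for every $x \in \mathcal{H}$, $T_{\mathcal{S}} x = \sum_{i} \lambda_{i} (T_{i}x)$ is an affine combination of the points of $\mathcal{S}(x) = \{T_{1}x, \ldots, T_{m}x\}$, hence $T_{\mathcal{S}} x \in \aff(\mathcal{S}(x))$; (ii) $T_{\mathcal{S}}$ is linear and nonexpansive (being averaged), so the case $k = 1$ of \cref{fac:LineNonexOperNorm}\cref{fac:LineNonexOperNorm:i} gives $\norm{T_{\mathcal{S}} x - \Pro_{\Fix T_{\mathcal{S}}} x} \leq \norm{T_{\mathcal{S}} \Pro_{(\Fix T_{\mathcal{S}})^{\perp}}} \norm{x - \Pro_{\Fix T_{\mathcal{S}}} x}$, i.e.\ $\norm{T_{\mathcal{S}} x - \Pro_{W} x} \leq \gamma \norm{x - \Pro_{W} x}$ for all $x$. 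With both hypotheses verified, \cref{fact:CWP:line:conv} applied to $W$ yields $\norm{\CC{\mathcal{S}}^{k} x - \Pro_{W} x} \leq \gamma^{k} \norm{x - \Pro_{W} x}$ for all $x \in \mathcal{H}$ and all $k \in \mathbb{N}$, which is the claimed inequality.

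The only genuinely substantive step is the identity $\Fix T_{\mathcal{S}} = W$ — this is what allows $\Pro_{W}$ to replace $\Pro_{\Fix T_{\mathcal{S}}}$ everywhere; the rest is bookkeeping on top of \cref{prop:TFNorm}, \cref{fac:LineNonexOperNorm} and \cref{fact:CWP:line:conv}. I would be slightly careful to confirm that the representation $T_{\mathcal{S}} = \sum_{i} \lambda_{i} T_{i}$ with $\sum_{i} \lambda_{i} = 1$ is exactly what membership in $\aff\mathcal{S}$ means, and that no linearity of the individual $T_{i}$ is required, since only $T_{\mathcal{S}}$ is assumed linear.
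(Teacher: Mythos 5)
Your argument is correct and is essentially the intended one: the paper imports this statement as a fact from \cite[Theorem~4.15]{BOyW2019Isometry} without reproving it, and its proof there is exactly your route, namely the identification $\Fix T_{\mathcal{S}}=\cap_{T\in\mathcal{S}}\Fix T$ (which the paper itself notes in the remark following the fact), the bound $\norm{T_{\mathcal{S}}\Pro_{(\Fix T_{\mathcal{S}})^{\perp}}}<1$ from \cref{prop:TFNorm}\cref{prop:TFNorm:TTperp}, the one-step estimate from \cref{fac:LineNonexOperNorm}\cref{fac:LineNonexOperNorm:i}, and then \cref{fact:CWP:line:conv} with $F=T_{\mathcal{S}}$ and $W=\cap_{T\in\mathcal{S}}\Fix T$. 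No gaps.
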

Note that because $T_{\mathcal{S}} $ is linear, $0 \in \Fix T_{\mathcal{S}} \subseteq \cap_{T \in \mathcal{S}} \Fix T $, which implies that $(\forall T \in \mathcal{S})$,  $T$ must be linear.
In addition, actually, $T_{\mathcal{S}} \in \aff \mathcal{S}$ and $ \Fix T_{\mathcal{S}} \subseteq \cap_{T \in \mathcal{S}} \Fix T$ imply that $\Fix T_{\mathcal{S}}  = \cap_{T \in \mathcal{S}} \Fix T$.

\subsection*{Linear convergence of CIMs in finite-dimensional spaces} \label{sec:LCCIMRn}

\begin{lemma} \label{lemma:Propo:T:SUM}
	Let $t \in \mathbb{N} \smallsetminus \{0\}$ and let $\I := \{1,2,\ldots, t\}$. Let $F_{1}, F_{2},  \ldots, F_{t}$ be nonexpansive and linear on $\mathcal{H}$. Let $(\omega_{i})_{i \in \I}$ be real numbers in $\left]0,1\right]$ such that $\sum_{i \in \I} \omega_{i} =1$ and let $(\alpha_{i})_{i \in \I}$ be real numbers in $\left]0,1\right[\,$.  Denote
	\begin{align*}
	A := \sum_{i \in \I} \omega_{i} A_{i} \quad \text{where} \quad (\forall i \in \I) \quad A_{i} :=
	(1-\alpha_{i}) \Id  + \alpha_{i}   F_{i}.
	\end{align*}
	Then the following assertions hold:
	\begin{enumerate}
		\item  \label{lemma:Propo:T:SUM:T:Averaged} Let $\alpha := \sum_{i \in \I} \omega_{i} \alpha_{i}$.  Then $A$ is $\alpha$-averaged and linear.
		\item  \label{lemma:Propo:T:SUM:FixT}  $\Fix A =\cap_{i \in \I} \Fix F_{i}$.
		\item  \label{lemma:Propo:T:SUM:IncluAffi}  Assume that $\widetilde{\mathcal{S}}$ is a finite set of operators such that $\{\Id, F_{1}, F_{2},  \ldots, F_{t} \}  \subseteq \widetilde{\mathcal{S}}$.  Then $A \in \aff \widetilde{\mathcal{S}}$. 
	\end{enumerate}
\end{lemma}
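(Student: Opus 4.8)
The plan is to dispatch the three assertions in turn, each as a short consequence of the auxiliary results collected in \cref{sec:AuxiliaryResults}.

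For \cref{lemma:Propo:T:SUM:T:Averaged}, I would first note that for each $i \in \I$ the operator $A_{i} = (1-\alpha_{i})\Id + \alpha_{i}F_{i}$ is a convex combination of the nonexpansive operators $\Id$ and $F_{i}$, hence is itself nonexpansive; by \cref{defn:AlphaAverage} it is therefore $\alpha_{i}$-averaged (with witness $F_{i}$), and it is linear since $\Id$ and $F_{i}$ are. Because the weights $(\omega_{i})_{i \in \I}$ lie in $\left]0,1\right]$ with $\sum_{i \in \I}\omega_{i}=1$ and the constants $(\alpha_{i})_{i \in \I}$ lie in $\left]0,1\right[$, \cref{fact:T:Avera:Sum:FixT} applied to the family $(A_{i})_{i \in \I}$ gives that $A = \sum_{i \in \I}\omega_{i}A_{i}$ is $\alpha$-averaged with $\alpha = \sum_{i \in \I}\omega_{i}\alpha_{i}$; linearity of $A$ is then immediate.

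For \cref{lemma:Propo:T:SUM:FixT}, I would first record that $\Fix A_{i} = \Fix F_{i}$ for every $i \in \I$, since $\alpha_{i}\neq 0$ forces
\begin{align*}
A_{i}x = x \iff \alpha_{i}(F_{i}x - x) = 0 \iff F_{i}x = x.
\end{align*}
As each $F_{i}$ is linear, $0 \in \Fix F_{i}$, so $0 \in \cap_{i \in \I}\Fix A_{i}$ and this common fixed point set is nonempty. Each $A_{i}$, being averaged, is nonexpansive by \cref{fact:AverFirmNone}\cref{fact:AverFirmNone:nonexp}, hence quasinonexpansive by \cref{rem:NonexpImplication}. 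Then \cref{fact:FixSumInters} yields $\Fix A = \Fix\sum_{i \in \I}\omega_{i}A_{i} = \cap_{i \in \I}\Fix A_{i} = \cap_{i \in \I}\Fix F_{i}$.

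For \cref{lemma:Propo:T:SUM:IncluAffi}, I would expand
\begin{align*}
A = \sum_{i \in \I}\omega_{i}\big((1-\alpha_{i})\Id + \alpha_{i}F_{i}\big) = \Big(\sum_{i \in \I}\omega_{i}(1-\alpha_{i})\Big)\Id + \sum_{i \in \I}\omega_{i}\alpha_{i}F_{i},
\end{align*}
and observe that the sum of the coefficients is $\sum_{i \in \I}\omega_{i}(1-\alpha_{i}) + \sum_{i \in \I}\omega_{i}\alpha_{i} = \sum_{i \in \I}\omega_{i} = 1$. Hence $A$ is an affine combination of $\Id, F_{1}, \ldots, F_{t}$, all of which lie in $\widetilde{\mathcal{S}}$, so $A \in \aff\{\Id, F_{1}, \ldots, F_{t}\} \subseteq \aff\widetilde{\mathcal{S}}$. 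None of these three steps is a genuine obstacle: the only points requiring care are checking the hypotheses of \cref{fact:T:Avera:Sum:FixT} and \cref{fact:FixSumInters} precisely --- in particular, it is the linearity of the $F_{i}$ that secures the nonempty common fixed point set demanded by \cref{fact:FixSumInters} --- together with the elementary but essential observation that the affine-combination coefficients in \cref{lemma:Propo:T:SUM:IncluAffi} sum to one.
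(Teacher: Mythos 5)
Your proposal is correct and follows essentially the same route as the paper: part (i) via \cref{fact:T:Avera:Sum:FixT} applied to the $\alpha_{i}$-averaged operators $A_{i}$, part (ii) via \cref{fact:FixSumInters}, and part (iii) by observing that $A$ is an affine combination of $\Id, F_{1},\ldots,F_{t}$. You merely make explicit some details the paper leaves implicit (that $\Fix A_{i}=\Fix F_{i}$, that linearity of the $F_{i}$ gives $0$ in the common fixed point set, and that the coefficients in (iii) sum to one), all of which are accurate.
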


\begin{proof}
	\cref{lemma:Propo:T:SUM:T:Averaged}: Because $F_{1}, F_{2},  \ldots, F_{t}$ are linear, $A$ is linear.  Since $F_{1}, F_{2},  \ldots, F_{t}$ are nonexpansive, $(\forall i \in \I)$ $A_{i}$ is $\alpha_{i}$-averaged. Hence, the required result follows from	\cref{fact:T:Avera:Sum:FixT}.
	
	\cref{lemma:Propo:T:SUM:FixT}: The result follows from \cref{fact:FixSumInters}.

	\cref{lemma:Propo:T:SUM:IncluAffi}:  By definition, $(\forall i \in \I)$ $A_{i} \in \aff \{ \Id,  F_{1}, F_{2},  \ldots, F_{t} \}$. Hence, $A \in \aff \{ \Id,  F_{1}, F_{2},  \ldots, F_{t} \}  \subseteq \aff \widetilde{\mathcal{S}}$.
\end{proof}	
The following result reduces to \cite[Proposition~5.15]{BOyW2019Isometry}
when the isometries are reflectors.
\begin{theorem} \label{theo:CCS:LineaConve:F1Fn}
	Suppose that $\mathcal{H} = \mathbb{R}^{n}$. Let $F_{1}, F_{2},  \ldots, F_{t}$ be linear isometries on $\mathcal{H}$. Assume  that $\widetilde{\mathcal{S}}$ is a finite subset of $\Omega(F_{1}, \ldots, F_{t})$, where $\Omega(F_{1}, \ldots, F_{t})$ consists of all 
	finite compositions of operators from $\{ F_{1}, \ldots, F_{t} \}$. Assume that   $\{ \Id,  F_{1}, F_{2},  \ldots, F_{t} \}  \subseteq \widetilde{\mathcal{S}}$.  Let $(\omega_{i})_{i \in \I}$ be real numbers in $\left]0,1\right]$ such that $\sum_{i \in \I} \omega_{i} =1$ and let $(\alpha_{i})_{i \in \I}$ be real numbers in $\left]0,1\right[\,$.  Denote $A := \sum^{t}_{i =1} \omega_{i} A_{i}$ where $(\forall i \in \{1,\ldots,t\})$ $A_{i} :=
	(1-\alpha_{i}) \Id  +  \alpha_{i}  F_{i}$.  Then the following statements hold:
	\begin{enumerate}
		\item \label{prop:CCS:LineaConve:F1Fn:Fix} $\Fix \CC{\widetilde{\mathcal{S}}}= \cap_{T \in \widetilde{\mathcal{S}}} \Fix T=  \cap^{t}_{i=1} \Fix F_{i}  = \Fix A $.
		\item \label{prop:CCS:LineaConve:F1Fn:LineaConv} $\norm{A \Pro_{(\cap^{t}_{i=1} \Fix F_{i})^{\perp}} } <1$. Moreover, 
		\begin{align*}
		(\forall x \in \mathcal{H})  (\forall k \in \mathbb{N}) \quad \norm{\CC{\widetilde{\mathcal{S}}}^{k}x - \Pro_{\cap^{t}_{i=1} \Fix F_{i}}x} \leq \norm{A \Pro_{(\cap^{t}_{i=1} \Fix F_{i})^{\perp}} }^{k} \norm{x-\Pro_{\cap^{t}_{i=1} \Fix F_{i}} x }.
		\end{align*}
		Consequently, $(\CC{\widetilde{\mathcal{S}}}^{k}x)_{k \in \mathbb{N}}$ converges to $\Pro_{\cap^{t}_{i=1} \Fix F_{i}}x $ with a linear rate $\norm{A \Pro_{(\cap^{t}_{i=1} \Fix F_{i})^{\perp}} } $.
	\end{enumerate}
\end{theorem}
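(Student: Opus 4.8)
The plan is to obtain both assertions as a bookkeeping assembly of \cref{lemma:Propo:T:SUM}, \cref{fact:FixCCS:F1Ft:T} and \cref{fact:CCSLinearConvTSFirmNone}, after first recording that $\widetilde{\mathcal{S}}$ is an admissible family. Since $\widetilde{\mathcal{S}} \subseteq \Omega(F_{1}, \ldots, F_{t})$ and each $F_{i}$ is a linear isometry, every element of $\widetilde{\mathcal{S}}$ is a finite composition of linear isometries, hence itself linear and, by \cref{fact:examples:normpreserving}\cref{fact:examples:normpreserving:composition}, isometric; linearity forces $0 \in \Fix T$ for every $T \in \widetilde{\mathcal{S}}$, so $\cap_{T \in \widetilde{\mathcal{S}}} \Fix T \neq \varnothing$, $\CC{\widetilde{\mathcal{S}}}$ is a proper circumcenter mapping by \cref{fact:CCS:proper:NormPres:T}\cref{fact:CCS:proper:NormPres:T:prop}, and the facts on circumcenter mappings apply to $\widetilde{\mathcal{S}}$ after relabelling its elements.

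First I would run \cref{lemma:Propo:T:SUM} on $F_{1}, \ldots, F_{t}$, which are nonexpansive (being isometric) and linear. With $\alpha := \sum_{i=1}^{t} \omega_{i} \alpha_{i} \in \left]0,1\right[$ (a convex combination of the $\alpha_{i}$'s), this yields at once that $A$ is $\alpha$-averaged and linear, that $\Fix A = \cap_{i=1}^{t} \Fix F_{i}$, and — since $\{\Id, F_{1}, \ldots, F_{t}\} \subseteq \widetilde{\mathcal{S}}$ — that $A \in \aff \widetilde{\mathcal{S}}$. For \cref{prop:CCS:LineaConve:F1Fn:Fix}, I then use that $\Id \in \widetilde{\mathcal{S}}$, so \cref{fact:FixCCS:F1Ft:T}\cref{fact:Id:FixPointSet:equa} gives $\Fix \CC{\widetilde{\mathcal{S}}} = \cap_{T \in \widetilde{\mathcal{S}}} \Fix T$; and $\cap_{T \in \widetilde{\mathcal{S}}} \Fix T = \cap_{i=1}^{t} \Fix F_{i}$ by an elementary double inclusion (``$\subseteq$'' since $\{F_{1}, \ldots, F_{t}\} \subseteq \widetilde{\mathcal{S}}$, and ``$\supseteq$'' because any $T \in \widetilde{\mathcal{S}}$ is of the form $F_{i_{r}} \cdots F_{i_{1}}$, which fixes every common fixed point of $F_{1}, \ldots, F_{t}$). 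Combining with $\Fix A = \cap_{i=1}^{t} \Fix F_{i}$ from the previous step closes \cref{prop:CCS:LineaConve:F1Fn:Fix}.

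For \cref{prop:CCS:LineaConve:F1Fn:LineaConv}, I would invoke \cref{fact:CCSLinearConvTSFirmNone} with $\mathcal{S} := \widetilde{\mathcal{S}}$ and $T_{\mathcal{S}} := A$: all of its hypotheses were verified above ($A \in \aff \widetilde{\mathcal{S}}$; $\Fix A = \cap_{T \in \widetilde{\mathcal{S}}} \Fix T$, in particular $\subseteq$; $A$ linear and $\alpha$-averaged). Its conclusion is precisely $\norm{A \Pro_{(\cap_{T \in \widetilde{\mathcal{S}}} \Fix T)^{\perp}}} \in \left[0,1\right[$ together with the estimate $\norm{\CC{\widetilde{\mathcal{S}}}^{k}x - \Pro_{\cap_{T \in \widetilde{\mathcal{S}}} \Fix T}x} \leq \norm{A \Pro_{(\cap_{T \in \widetilde{\mathcal{S}}} \Fix T)^{\perp}}}^{k} \norm{x - \Pro_{\cap_{T \in \widetilde{\mathcal{S}}} \Fix T}x}$ for all $x$ and $k$; substituting $\cap_{T \in \widetilde{\mathcal{S}}} \Fix T = \cap_{i=1}^{t} \Fix F_{i}$ via \cref{prop:CCS:LineaConve:F1Fn:Fix} gives the statement as written, and the ``consequently'' clause is immediate since the rate is strictly below $1$.

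I expect no genuine obstacle in this argument; the only points that require a moment's care are that the facts on $\CC{\mathcal{S}}$, stated for the global family $\{T_{1}, \ldots, T_{m}\}$, transfer verbatim to $\widetilde{\mathcal{S}}$ — legitimate because $\widetilde{\mathcal{S}}$ is a finite set of isometries whose fixed point sets have nonempty common intersection — and the (elementary) observation that a common fixed point of $F_{1}, \ldots, F_{t}$ is preserved under arbitrary finite compositions.
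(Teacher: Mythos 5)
Your proposal is correct and follows essentially the same route as the paper: establish that $\CC{\widetilde{\mathcal{S}}}$ is proper, apply \cref{lemma:Propo:T:SUM} to get that $A$ is linear, $\alpha$-averaged, lies in $\aff \widetilde{\mathcal{S}}$ and has $\Fix A = \cap^{t}_{i=1}\Fix F_{i}$, prove \cref{prop:CCS:LineaConve:F1Fn:Fix} via \cref{fact:FixCCS:F1Ft:T}\cref{fact:Id:FixPointSet:equa} and the same double inclusion, and then conclude \cref{prop:CCS:LineaConve:F1Fn:LineaConv} from \cref{fact:CCSLinearConvTSFirmNone} applied with $T_{\mathcal{S}}=A$. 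Your explicit remark that linearity forces $0\in\Fix T$ for every $T\in\widetilde{\mathcal{S}}$ (so the common fixed point set is nonempty) is a slightly more careful justification of a step the paper leaves implicit, and omitting the paper's extra citation of \cref{prop:TFNorm}\cref{prop:TFNorm:TTperp} is harmless since the strict bound is already part of the conclusion of \cref{fact:CCSLinearConvTSFirmNone}.
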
	

\begin{proof}
	By assumptions and by	\cref{fact:CCS:proper:NormPres:T}\cref{fact:CCS:proper:NormPres:T:prop}, $\CC{\widetilde{\mathcal{S}}}$ is proper. 
	
	\cref{prop:CCS:LineaConve:F1Fn:Fix}: By assumption, $\widetilde{\mathcal{S}} \subseteq \Omega(F_{1}, \ldots, F_{t})$, so every operator in  $\widetilde{\mathcal{S}} $ is a finite composition of operators 
from $\{ F_{1}, \ldots, F_{t} \}$. Hence, $  \cap^{t}_{i=1} \Fix F_{i}  \subseteq \cap_{T \in \widetilde{\mathcal{S}}} \Fix T $.
	Moreover, because $\{ \Id,  F_{1}, F_{2},  \ldots, F_{t} \}  \subseteq \widetilde{\mathcal{S}}$,  $\cap_{T \in \widetilde{\mathcal{S}}} \Fix T \subseteq \cap^{t}_{i=1} \Fix F_{i} $. Hence,  $  \cap^{t}_{i=1} \Fix F_{i}  =\cap_{T \in \widetilde{\mathcal{S}}} \Fix T $.
	By \cref{lemma:Propo:T:SUM}\cref{lemma:Propo:T:SUM:FixT}, $\Fix A = \cap^{t}_{i=1} \Fix F_{i}$.
	By  \cref{fact:FixCCS:F1Ft:T}\cref{fact:Id:FixPointSet:equa},  we have $\Fix \CC{\widetilde{\mathcal{S}}}= \cap_{T \in \widetilde{\mathcal{S}}} \Fix T$. Hence, $\Fix \CC{\widetilde{\mathcal{S}}}= \cap_{T \in \widetilde{\mathcal{S}}} \Fix T=  \cap^{t}_{i=1} \Fix F_{i}  = \Fix A $.
	
	\cref{prop:CCS:LineaConve:F1Fn:LineaConv}: This follows from \cref{prop:CCS:LineaConve:F1Fn:Fix} above,  \cref{lemma:Propo:T:SUM}\cref{lemma:Propo:T:SUM:T:Averaged}$\&$\cref{lemma:Propo:T:SUM:IncluAffi}, \cref{prop:TFNorm}\cref{prop:TFNorm:TTperp},   and \cref{fact:CCSLinearConvTSFirmNone}.
\end{proof}

\begin{lemma} \label{lemma:Propo:T:SUM:Produ}
	Let $t \in \mathbb{N} \smallsetminus \{0\}$, let $\I := \{1, \ldots, t\}$, let $F_{1}, F_{2},  \ldots, F_{t}$ be nonexpansive and linear, let $(\omega_{i})_{i \in \I}$ be real numbers in $\left]0,1\right]$ s.t. $\sum_{i \in \I} \omega_{i} =1$ and let $(\alpha_{i})_{i \in \I}$ and $(\lambda_{i})_{i \in \I}$ be real numbers in $\left]0,1\right[\,$. Denote
	$ A := \sum_{i \in \I} \omega_{i} A_{i}$ where $A_{1}  := (1-\alpha_{1})  \Id  + \alpha_{1} F_{1} $, $(\forall i \in \I \smallsetminus \{1\})$ $A_{i} :=(1-\alpha_{i})  \Id  + \alpha_{i} \left ( (1-\lambda_{i}) \Id +  \lambda_{i}F_{i} \right )F_{i-1}\cdots F_{1}$.
	Then the following assertions hold:
	\begin{enumerate}
		\item  \label{lemma:Propo:T:SUM:T:Averaged:Produ} Let $\alpha := \sum_{i \in \I} \omega_{i} \alpha_{i}$.  Then $A$ is $\alpha$-averaged and linear.
		\item  \label{lemma:Propo:T:SUM:FixT:Produ}  $\Fix A =\cap^{t}_{i=1} \Fix F_{i}$.
		\item  \label{lemma:Propo:T:SUM:IncluAffi:Produ}  Assume that $\widetilde{\mathcal{S}}$ is a finite set of operators such that $\{ \Id,  F_{1}, F_{2}F_{1},  \ldots, F_{t}\cdots F_{2}F_{1} \} \subseteq \widetilde{\mathcal{S}}$.  Then $A \in \aff \widetilde{\mathcal{S}}$. 
	\end{enumerate}
\end{lemma}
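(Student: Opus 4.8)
The plan is to follow the structure of the proof of \cref{lemma:Propo:T:SUM}, the only genuinely new feature being that each averaged building block $A_i$ now carries the composition $F_{i-1}\cdots F_1$ inside it. For bookkeeping, I set $G_1:=F_1$ and, for $i\in\I\smallsetminus\{1\}$, $G_i:=\big((1-\lambda_i)\Id+\lambda_i F_i\big)F_{i-1}\cdots F_1$, so that $A_i=(1-\alpha_i)\Id+\alpha_i G_i$ for every $i\in\I$.

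For \cref{lemma:Propo:T:SUM:T:Averaged:Produ}, first note that every $G_i$ is linear, being obtained from the linear operators $\Id,F_1,\ldots,F_t$ by convex combination and composition; hence each $A_i$ and $A$ are linear. Next, $(1-\lambda_i)\Id+\lambda_i F_i$ is $\lambda_i$-averaged, hence nonexpansive by \cref{fact:AverFirmNone}\cref{fact:AverFirmNone:nonexp}, and $F_{i-1}\cdots F_1$ is a composition of nonexpansive operators; therefore $G_i$ is nonexpansive, and so $A_i=(1-\alpha_i)\Id+\alpha_i G_i$ is $\alpha_i$-averaged by \cref{defn:AlphaAverage}. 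Then \cref{fact:T:Avera:Sum:FixT} yields that $A=\sum_{i\in\I}\omega_i A_i$ is $\alpha$-averaged with $\alpha=\sum_{i\in\I}\omega_i\alpha_i$.

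For \cref{lemma:Propo:T:SUM:FixT:Produ}, each $A_i$ is averaged, hence nonexpansive, hence quasinonexpansive, and $0\in\cap_{i\in\I}\Fix A_i$ since the $A_i$ are linear; so \cref{fact:FixSumInters} gives $\Fix A=\cap_{i\in\I}\Fix A_i$. It then remains to check that $\cap_{i\in\I}\Fix A_i=\cap_{j=1}^t\Fix F_j$. The inclusion \enquote{$\supseteq$} is immediate, for if $F_jx=x$ for all $j$ then $F_{i-1}\cdots F_1x=x$, whence $G_ix=x$ and $A_ix=x$ for every $i$. For \enquote{$\subseteq$} I would peel off the $F_j$ one at a time: from $A_1x=x$ and $\alpha_1>0$ we get $F_1x=x$; and if $x\in\Fix F_1\cap\cdots\cap\Fix F_{i-1}$, then $F_{i-1}\cdots F_1x=x$, so $A_ix=x$ forces $G_ix=x$ (using $\alpha_i>0$), that is $\big((1-\lambda_i)\Id+\lambda_i F_i\big)x=x$, and then $\lambda_i>0$ gives $F_ix=x$; induction closes the argument.

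For \cref{lemma:Propo:T:SUM:IncluAffi:Produ}, a direct computation gives $G_i=(1-\lambda_i)\big(F_{i-1}\cdots F_1\big)+\lambda_i\big(F_iF_{i-1}\cdots F_1\big)$, which is an affine combination of two elements of $\{\Id,F_1,F_2F_1,\ldots,F_t\cdots F_2F_1\}\subseteq\widetilde{\mathcal{S}}$ (and $G_1=F_1\in\widetilde{\mathcal{S}}$), so $G_i\in\aff\widetilde{\mathcal{S}}$; then $A_i=(1-\alpha_i)\Id+\alpha_i G_i$ is an affine combination of $\Id$ and $G_i$, both in $\aff\widetilde{\mathcal{S}}$, hence $A_i\in\aff\widetilde{\mathcal{S}}$; and finally $A=\sum_{i\in\I}\omega_i A_i$ with $\sum_{i\in\I}\omega_i=1$ is an affine combination of the $A_i$, so $A\in\aff\widetilde{\mathcal{S}}$. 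I expect the main point of care to be \cref{lemma:Propo:T:SUM:FixT:Produ}: unlike in \cref{lemma:Propo:T:SUM}, the individual sets $\Fix A_i$ need not coincide with $\Fix F_i$, so the identification of $\Fix A$ cannot be obtained from a one-line citation and genuinely requires the inductive peeling described above.
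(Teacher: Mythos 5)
Your proposal is correct, and parts (i) and (iii) coincide with the paper's proof (same averagedness argument via \cref{fact:T:Avera:Sum:FixT}, same expansion of $A_i$ as an affine combination of $\Id$, $F_{i-1}\cdots F_1$ and $F_iF_{i-1}\cdots F_1$). Where you diverge is part (ii). The paper handles it by citation: it observes that each $(1-\lambda_i)\Id+\lambda_i F_i$ is strictly quasinonexpansive (being averaged with $0$ as a fixed point) and then invokes \cref{fact:FixProdInters} together with \cref{fact:FixSumInters}; concretely, \cref{fact:FixProdInters} gives $\Fix G_i=\Fix F_i\cap\Fix(F_{i-1}\cdots F_1)$, and although $\Fix(F_{i-1}\cdots F_1)$ may be strictly larger than $\cap_{j<i}\Fix F_j$, these extra factors all contain $\cap_{j=1}^t\Fix F_j$ and are absorbed when one intersects over all $i\in\I$, so $\Fix A=\cap_i\Fix G_i=\cap_{j=1}^t\Fix F_j$. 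Your inductive peeling argument, using only $\alpha_i>0$ and $\lambda_i>0$ to strip off one $F_j$ at a time, reaches the same conclusion without appealing to strict quasinonexpansiveness or \cref{fact:FixProdInters}; it is more elementary and self-contained, at the cost of a short induction. Your closing remark slightly overstates the situation: the identification of $\Fix A$ \emph{can} be obtained essentially by citation (this is exactly what the paper does), provided one notes the absorption of the superfluous sets $\Fix(F_{i-1}\cdots F_1)$ in the intersection; but this does not affect the validity of your proof.
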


\begin{proof}
	\cref{lemma:Propo:T:SUM:T:Averaged:Produ}: Since $F_{1}, F_{2},  \ldots, F_{t}$ are linear, so is $A$.  
	Since $F_{1}, F_{2},  \ldots, F_{t}$ are nonexpansive, thus $(\forall i \in \I)$ $A_{i}$ is $\alpha_{i}$-averaged. Hence, the required result follows from	\cref{fact:T:Avera:Sum:FixT}.
	
	\cref{lemma:Propo:T:SUM:FixT:Produ}: 
	Since every averaged operator is strictly quasinonexpansive, we have $(\forall i \in \I) $ $(\forall \lambda \in \left]0,1\right[\,)$ $  (1-\lambda) \Id +  \lambda F_{i}$ is strictly quasinonexpansive. Hence, 
	 the result follows from \Cref{fact:FixSumInters,fact:FixProdInters}, since $\cap^{t}_{i=1} \Fix F_{i} \neq \varnothing$.	
	
	\cref{lemma:Propo:T:SUM:IncluAffi:Produ}: Clearly, $A_{1} : =(1-\alpha_{1})   \Id  +  \alpha_{1} F_{1}  \in \aff \{ \Id,  F_{1}, F_{2}F_{1},  \ldots, F_{m}\cdots F_{2}F_{1} \} \subseteq \aff \widetilde{\mathcal{S}}$. Moreover, for every $ i \in \I \smallsetminus \{1\}$,
	\begin{align*}
	A_{i} & =(1-\alpha_{i})   \Id  +  \alpha_{i}  \left (  (1-\lambda_{i})\Id +   \lambda_{i}F_{i} \right )F_{i-1}\cdots F_{1} \\
	& =  (1-\alpha_{i})   \Id  + \alpha_{i}   \left ( (1-\lambda_{i}) F_{i-1}\cdots F_{1} + \lambda_{i} F_{i}F_{i-1}\cdots F_{1} \right ) \\
	&=(1-\alpha_{i})  \Id + \alpha_{i}  (1-\lambda_{i})  F_{i-1}\cdots F_{1} + \alpha_{i} \lambda_{i} F_{i}F_{i-1}\cdots F_{1}\\
	& \in \aff \{ \Id,  F_{1}, F_{2}F_{1},  \ldots, F_{m}\cdots F_{2}F_{1} \} \subseteq \aff \widetilde{\mathcal{S}}.
	\end{align*}
	Hence, $A= \sum_{i \in \I} \omega_{i} A_{i} \in \aff \widetilde{\mathcal{S}}$.
\end{proof}

 The following results is a generalization of \cite[Theorem~3.3]{BCS2018} and \cite[Proposition~5.10]{BOyW2019Isometry}.
\begin{theorem} \label{theo:CCS:LineaConve:F1t} 
	Suppose that  $\mathcal{H} = \mathbb{R}^{n}$. Let $F_{1}, F_{2},  \ldots, F_{t}$ be linear isometries. Assume  that $\widetilde{\mathcal{S}}$ is a finite subset of $\Omega(F_{1}, \ldots, F_{t})$, where $\Omega(F_{1}, \ldots, F_{t})$ consists of all 
	finite compositions of operators from $\{ F_{1}, \ldots, F_{t} \}$. 
	 Assume that  $ \{ \Id,  F_{1}, F_{2}F_{1},  \ldots, F_{t}\cdots F_{2}F_{1} \} \subseteq \widetilde{\mathcal{S}} $. Let $(\omega_{i})_{i \in \I}$ be real numbers in $\left]0,1\right]$ such that $\sum_{i \in \I} \omega_{i} =1$ and let $(\alpha_{i})_{i \in \I}$ and $(\lambda_{i})_{i \in \I}$ be real numbers in $\left]0,1\right[\,$.  Set
	$ A := \sum_{i \in \I} \omega_{i} A_{i}$
	where $A_{1} : =  (1-\alpha_{1}) \Id  + \alpha_{1} F_{1} $ and  $(\forall i \in \I \smallsetminus \{1\})$ $A_{i} := (1-\alpha_{i}) \Id  + \alpha_{i}  \left ( (1-\lambda_{i}) \Id + \lambda_{i}F_{i} \right )F_{i-1}\cdots F_{1}$. 
	Then the following assertions hold:
	\begin{enumerate}
		\item \label{theo:CCS:LineaConve:F1t:Fix} $\Fix \CC{\widetilde{\mathcal{S}}}= \cap_{T \in \widetilde{\mathcal{S}}} \Fix T=  \cap^{t}_{i=1} \Fix F_{i}  = \Fix A $.
		\item \label{theo:CCS:LineaConve:F1t:LineaConv} $\norm{A \Pro_{(\cap^{t}_{i=1} \Fix F_{i})^{\perp}} } \in \left[0,1\right[\,$. Moreover, 
		\begin{align*}
	(\forall x \in \mathcal{H}) (\forall k \in \mathbb{N}) \quad \norm{\CC{\widetilde{\mathcal{S}}}^{k}x - \Pro_{\cap^{t}_{i=1} \Fix F_{i}}x} \leq \norm{A \Pro_{(\cap^{t}_{i=1} \Fix F_{i})^{\perp}} }^{k} \norm{x-\Pro_{\cap^{t}_{i=1} \Fix F_{i}} x }.
		\end{align*}
		Consequently, $(\CC{\widetilde{\mathcal{S}}}^{k}x)_{k \in \mathbb{N}}$ converges to $\Pro_{\cap^{t}_{i=1} \Fix F_{i}}x $ with a linear rate $\norm{A \Pro_{(\cap^{t}_{i=1} \Fix F_{i})^{\perp}} }$.
	\end{enumerate}
\end{theorem}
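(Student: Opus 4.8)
The plan is to follow the proof of \cref{theo:CCS:LineaConve:F1Fn} almost verbatim, with \cref{lemma:Propo:T:SUM:Produ} now playing the role that \cref{lemma:Propo:T:SUM} played there; indeed, the entire point of \cref{lemma:Propo:T:SUM:Produ} is to package the technical content so that the theorem reduces to a short assembly. First I would record that $\CC{\widetilde{\mathcal{S}}}$ is proper and the iteration $(\CC{\widetilde{\mathcal{S}}}^{k}x)_{k\in\mathbb{N}}$ is well defined: each $T\in\widetilde{\mathcal{S}}\subseteq\Omega(F_{1},\ldots,F_{t})$ is a finite composition of linear isometries, hence an affine isometry by \cref{fact:examples:normpreserving}\cref{fact:examples:normpreserving:composition}, so \cref{fact:CCS:proper:NormPres:T}\cref{fact:CCS:proper:NormPres:T:prop} applies.

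For \cref{theo:CCS:LineaConve:F1t:Fix} I would establish the chain of four equalities in turn. Since $\widetilde{\mathcal{S}}\subseteq\Omega(F_{1},\ldots,F_{t})$, every $T\in\widetilde{\mathcal{S}}$ is a finite composition of the $F_{i}$, hence fixes $\cap^{t}_{i=1}\Fix F_{i}$, giving $\cap^{t}_{i=1}\Fix F_{i}\subseteq\cap_{T\in\widetilde{\mathcal{S}}}\Fix T$; for the reverse inclusion I would walk along the chain $\Id,F_{1},F_{2}F_{1},\ldots,F_{t}\cdots F_{1}\subseteq\widetilde{\mathcal{S}}$ and use the elementary fact that a point fixed by both $F_{i-1}\cdots F_{1}$ and $F_{i}F_{i-1}\cdots F_{1}$ is fixed by $F_{i}$, inducting on $i$. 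Then $\Fix\CC{\widetilde{\mathcal{S}}}=\cap^{t}_{i=1}\Fix F_{i}$ follows from \cref{fact:FixCCS:F1Ft:F} (whose hypothesis is exactly $\{\Id,F_{1},F_{2}F_{1},\ldots,F_{t}\cdots F_{1}\}\subseteq\widetilde{\mathcal{S}}$), and $\Fix A=\cap^{t}_{i=1}\Fix F_{i}$ follows from \cref{lemma:Propo:T:SUM:Produ}\cref{lemma:Propo:T:SUM:FixT:Produ}.

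For \cref{theo:CCS:LineaConve:F1t:LineaConv} I would verify the hypotheses of \cref{fact:CCSLinearConvTSFirmNone} with $T_{\mathcal{S}}:=A$: by \cref{lemma:Propo:T:SUM:Produ}\cref{lemma:Propo:T:SUM:IncluAffi:Produ} we have $A\in\aff\widetilde{\mathcal{S}}$; by \cref{lemma:Propo:T:SUM:Produ}\cref{lemma:Propo:T:SUM:T:Averaged:Produ} the operator $A$ is linear and $\alpha$-averaged with $\alpha:=\sum_{i\in\I}\omega_{i}\alpha_{i}\in\left]0,1\right[$ (here $\omega_{i}\in\left]0,1\right]$ with $\sum_{i\in\I}\omega_{i}=1$ and $\alpha_{i}\in\left]0,1\right[$ force the convex combination into $\left]0,1\right[$); and $\Fix A=\cap_{T\in\widetilde{\mathcal{S}}}\Fix T$ by \cref{theo:CCS:LineaConve:F1t:Fix}. \cref{fact:CCSLinearConvTSFirmNone}, together with \cref{prop:TFNorm}\cref{prop:TFNorm:TTperp} which underlies its strict-contraction claim, then yields $\norm{A\Pro_{(\cap^{t}_{i=1}\Fix F_{i})^{\perp}}}\in\left[0,1\right[$ and the displayed estimate, and the claimed linear convergence to $\Pro_{\cap^{t}_{i=1}\Fix F_{i}}x$ is immediate.

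I do not anticipate a serious obstacle inside the theorem itself: the genuinely delicate step — checking that the relaxed, nested compositions $A_{i}$ aggregate into one $\alpha$-averaged linear operator that simultaneously lies in $\aff\widetilde{\mathcal{S}}$ and has fixed-point set $\cap^{t}_{i=1}\Fix F_{i}$ — has been isolated in \cref{lemma:Propo:T:SUM:Produ}. The only points that need care here are the induction establishing the reverse fixed-point inclusion and the routine verification that $A$ meets the hypotheses of \cref{fact:CCSLinearConvTSFirmNone}; if anything, the subtlety is that the stated hypothesis list of the theorem, namely $\{\Id,F_{1},F_{2}F_{1},\ldots,F_{t}\cdots F_{1}\}\subseteq\widetilde{\mathcal{S}}$, must simultaneously match what \cref{fact:FixCCS:F1Ft:F} and \cref{lemma:Propo:T:SUM:Produ}\cref{lemma:Propo:T:SUM:IncluAffi:Produ} demand, which it does.
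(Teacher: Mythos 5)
Your proposal is correct and follows essentially the same route as the paper's proof: properness via \cref{fact:CCS:proper:NormPres:T}\cref{fact:CCS:proper:NormPres:T:prop}, the fixed-point identities via \cref{fact:FixCCS:F1Ft:F} and \cref{lemma:Propo:T:SUM:Produ}\cref{lemma:Propo:T:SUM:FixT:Produ}, and the linear-rate estimate by feeding $A$ into \cref{fact:CCSLinearConvTSFirmNone} together with \cref{lemma:Propo:T:SUM:Produ}\cref{lemma:Propo:T:SUM:T:Averaged:Produ}$\&$\cref{lemma:Propo:T:SUM:IncluAffi:Produ} and \cref{prop:TFNorm}\cref{prop:TFNorm:TTperp}. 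The only difference is cosmetic: your explicit induction for $\cap_{T\in\widetilde{\mathcal{S}}}\Fix T=\cap^{t}_{i=1}\Fix F_{i}$ is already contained in the statement of \cref{fact:FixCCS:F1Ft:F}, which the paper simply cites.
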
	

\begin{proof}
	By assumptions and by	\cref{fact:CCS:proper:NormPres:T}\cref{fact:CCS:proper:NormPres:T:prop}, $\CC{\widetilde{\mathcal{S}}}$ is proper. 
	
	\cref{theo:CCS:LineaConve:F1t:Fix}:	Because $\widetilde{\mathcal{S}}$ is a finite subset of $\Omega(F_{1}, \ldots, F_{t})$ such that  $ \{ \Id,  F_{1}, F_{2}F_{1},  \ldots, F_{t}\cdots F_{2}F_{1} \} \subseteq \widetilde{\mathcal{S}} $, by  \cref{fact:FixCCS:F1Ft:F},  $\Fix \CC{\widetilde{\mathcal{S}}} =\cap_{T\in \widetilde{\mathcal{S}}} \Fix T  = \cap^{t}_{i=1} \Fix F_{i} $.
	In addition, by  \cref{lemma:Propo:T:SUM:Produ}\cref{lemma:Propo:T:SUM:FixT:Produ},  $ \Fix A  = \cap^{t}_{i=1} \Fix F_{i}$. Hence, 	\cref{theo:CCS:LineaConve:F1t:Fix} is true.
	
	\cref{theo:CCS:LineaConve:F1t:LineaConv}: This follows from \cref{theo:CCS:LineaConve:F1t:Fix} above, \cref{lemma:Propo:T:SUM:Produ}\cref{lemma:Propo:T:SUM:T:Averaged:Produ}$\&$\cref{lemma:Propo:T:SUM:IncluAffi:Produ}, \cref{prop:TFNorm}\cref{prop:TFNorm:TTperp},    and \cref{fact:CCSLinearConvTSFirmNone}.
\end{proof}	

\begin{corollary} \label{cor:n:ccs:lineaconver}
	Suppose that $\mathcal{H} = \mathbb{R}^{n}$ and that $T_{1},T_{2}, \ldots, T_{m}$ are linear  isometries. Set $\mathcal{S}_{1} :=\{ \Id, T_{1}, T_{2}, \ldots, T_{m} \}$ and $\mathcal{S}_{2} :=\{ \Id, T_{1}, T_{2}T_{1}, \ldots, T_{m}\cdots T_{2}T_{1} \}$. Then 
\begin{enumerate}
	\item \label{cor:n:ccs:lineaconver:1} $(\CC{\mathcal{S}_{1}}^{k}x)_{k \in \mathbb{N}}$ converges linearly to $\Pro_{\cap^{m}_{i=1} \Fix T_{i}}x =\Pro_{ \Fix \CC{\mathcal{S}_{1}} }x $.
	\item  \label{cor:n:ccs:lineaconver:2} $(\CC{\mathcal{S}_{2}}^{k}x)_{k \in \mathbb{N}}$ converges linearly to $\Pro_{\cap^{m}_{i=1} \Fix T_{i}}x =\Pro_{ \Fix \CC{\mathcal{S}_{2}} }x$.
\end{enumerate}	
	
\end{corollary}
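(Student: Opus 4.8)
The plan is to read \cref{cor:n:ccs:lineaconver}\cref{cor:n:ccs:lineaconver:1} off \cref{theo:CCS:LineaConve:F1Fn} and \cref{cor:n:ccs:lineaconver:2} off \cref{theo:CCS:LineaConve:F1t}, in both cases taking the family $F_{1},\ldots,F_{t}$ of the theorem to be $T_{1},\ldots,T_{m}$ (so $t=m$). Since $T_{1},\ldots,T_{m}$ are linear, $0$ is a common fixed point, so $\cap^{m}_{i=1}\Fix T_{i}\neq\varnothing$ is automatic and all the auxiliary operators below are well defined.

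For \cref{cor:n:ccs:lineaconver:1} I would put $\widetilde{\mathcal{S}}:=\mathcal{S}_{1}=\{\Id,T_{1},\ldots,T_{m}\}$ and verify the hypotheses of \cref{theo:CCS:LineaConve:F1Fn}: $\mathcal{H}=\mathbb{R}^{n}$ and the linearity and isometry of the $T_{i}$ are given; $\mathcal{S}_{1}$ is a finite subset of $\Omega(T_{1},\ldots,T_{m})$ because $\Id$ is the empty product ($r=0$) and each $T_{i}$ is a one-term composition ($r=1$); and $\{\Id,T_{1},\ldots,T_{m}\}\subseteq\widetilde{\mathcal{S}}$ holds trivially. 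Then I would fix any admissible parameters, say $\omega_{i}:=1/m$ and $\alpha_{i}:=\tfrac12$, so that $A:=\sum_{i=1}^{m}\omega_{i}A_{i}$ with $A_{i}:=\tfrac12\Id+\tfrac12 T_{i}$. Now \cref{theo:CCS:LineaConve:F1Fn}\cref{prop:CCS:LineaConve:F1Fn:Fix} gives $\Fix\CC{\mathcal{S}_{1}}=\cap^{m}_{i=1}\Fix T_{i}$, hence $\Pro_{\cap^{m}_{i=1}\Fix T_{i}}x=\Pro_{\Fix\CC{\mathcal{S}_{1}}}x$, and \cref{theo:CCS:LineaConve:F1Fn}\cref{prop:CCS:LineaConve:F1Fn:LineaConv} gives $\norm{A\Pro_{(\cap^{m}_{i=1}\Fix T_{i})^{\perp}}}<1$ together with the corresponding geometric estimate, which is exactly the asserted linear convergence.

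For \cref{cor:n:ccs:lineaconver:2} I would put $\widetilde{\mathcal{S}}:=\mathcal{S}_{2}=\{\Id,T_{1},T_{2}T_{1},\ldots,T_{m}\cdots T_{2}T_{1}\}$; again this is a finite subset of $\Omega(T_{1},\ldots,T_{m})$ and it contains $\{\Id,T_{1},T_{2}T_{1},\ldots,T_{m}\cdots T_{2}T_{1}\}$ by definition, so the hypotheses of \cref{theo:CCS:LineaConve:F1t} are met. Choosing, e.g., all weights $\omega_{i}:=1/m$ and all averaging constants $\alpha_{i}:=\lambda_{i}:=\tfrac12$ to form $A$ as in that theorem, \cref{theo:CCS:LineaConve:F1t}\cref{theo:CCS:LineaConve:F1t:Fix}$\&$\cref{theo:CCS:LineaConve:F1t:LineaConv} yield $\Fix\CC{\mathcal{S}_{2}}=\cap^{m}_{i=1}\Fix T_{i}$ (so $\Pro_{\cap^{m}_{i=1}\Fix T_{i}}x=\Pro_{\Fix\CC{\mathcal{S}_{2}}}x$) and the geometric estimate with rate $\norm{A\Pro_{(\cap^{m}_{i=1}\Fix T_{i})^{\perp}}}<1$, which is the claim.

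In short, the proof is pure bookkeeping: the only points to check are that $\mathcal{S}_{1}$ and $\mathcal{S}_{2}$ sit inside the relevant $\Omega$-set and contain the prescribed list of compositions, after which \cref{theo:CCS:LineaConve:F1Fn} and \cref{theo:CCS:LineaConve:F1t} apply verbatim. I do not expect any genuine obstacle; the main thing to get right is matching each of the two sequences in the corollary to the correct theorem.
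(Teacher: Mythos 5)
Your proposal is correct and follows essentially the same route as the paper, which also deduces (i) from \cref{theo:CCS:LineaConve:F1Fn} and (ii) from \cref{theo:CCS:LineaConve:F1t}; the only (immaterial) difference is indexing, since the paper takes $t=m+1$ with $F_{1}=\Id$, $F_{2}=T_{1},\ldots,F_{t}=T_{m}$, whereas you take $t=m$ with $F_{i}=T_{i}$, and both choices satisfy the theorems' hypotheses. Your observations that linearity makes $\cap_{i=1}^{m}\Fix T_{i}\neq\varnothing$ automatic and that any admissible weights yield the asserted linear rate are accurate.
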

\begin{proof}
\cref{cor:n:ccs:lineaconver:1}: This is from \cref{theo:CCS:LineaConve:F1Fn}  with $\widetilde{\mathcal{S}}=\{ \Id, T_{1}, T_{2},\ldots, T_{m}\}$ by applying $t=m+1$, and  $F_{1}=\Id, F_{2}=T_{1}, \ldots, F_{t}=T_{m}$.

\cref{cor:n:ccs:lineaconver:2}: This  comes from 	\cref{theo:CCS:LineaConve:F1t}  with $\widetilde{\mathcal{S}}=\{ \Id, T_{1}, T_{2}T_{1}, \ldots, T_{m}\cdots T_{2}T_{1}\}$  by applying $t=m+1$, and  $F_{1}=\Id, F_{2}=T_{1}, F_{3}=T_{2}, \ldots, F_{t}=T_{m}$.
\end{proof}		
\begin{remark}
	\begin{enumerate}
		\item \cref{cor:n:ccs:lineaconver}\cref{cor:n:ccs:lineaconver:1} states  that for every nonempty set $\mathcal{S}$ of linear isometries in $\mathbb{R}^{n}$, if $\Id \in \mathcal{S}$, then for every $x \in \mathbb{R}^{n}$, $(\CC{\mathcal{S}}^{k}x)_{k \in \mathbb{N}}$ converges linearly to $\Pro_{\cap_{T \in \mathcal{S}} \Fix T }x =\Pro_{ \Fix \CC{\mathcal{S}} }x$.
		\item \cref{cor:n:ccs:lineaconver}\cref{cor:n:ccs:lineaconver:1}$\&$\cref{cor:n:ccs:lineaconver:2} illustrate that given arbitrary linear isometries  $T_{1},T_{2}, \ldots, T_{m}$  in $\mathbb{R}^{n}$,  we are able to construct multiple CIMs linearly converging to  $\Pro_{\cap^{m}_{i=1} \Fix T_{i}}x  =\Pro_{ \Fix \CC{\mathcal{S}} }x$ for every $x \in \mathbb{R}^{n}$.
	\end{enumerate}
\end{remark}	
\begin{example}
	Let $U_{1}$ and $U_{2}$ be closed linear subspaces of $\mathbb{R}^{n}$. Set $\mathcal{S}_{1} :=\{\Id, \R_{U_{1}}, \R_{U_{2}} \}$ and $\mathcal{S}_{2} := \{\Id, \R_{U_{1}}, \R_{U_{2}}\R_{U_{1}} \}$. Let $x \in \mathcal{H}$. Then by \cref{cor:n:ccs:lineaconver}, $(\CC{\mathcal{S}_{1}}^{k}x)_{k \in \mathbb{N}}$ and $(\CC{\mathcal{S}_{2}}^{k}x)_{k \in \mathbb{N}}$ both linearly converge to $\Pro_{U_{1} \cap U_{2}}x$.
\end{example}

\subsection*{Linear convergence of CIMs in Hilbert spaces with adjustment of the initial point} \label{sec:LCCIMH}

In view of \cite[Page~3438]{BDHP2003}, in order to better accelerate the symmetric MAP, the accelerated symmetric MAP first applies another operator to the initial point. (Similarly, to accelerate the DRM, the C--DRM  first applies another operator to the initial point as well, see \cite[Theorem~1]{BCS2017}.) 
The following results provide sufficient conditions for the linear convergence of CIMs with first applying an operator $T$ to the initial point. We shall provide applications of the following results later.

\begin{theorem} \label{theo:LineaConvIneq:TF}
	Suppose that $T_{1}, \ldots, T_{m}$ are linear isometries from $\mathcal{H} $ to $\mathcal{H}$ and that $ \mathcal{S} = \{ T_{1}, T_{2},  \ldots, T_{m} \} $ with $T_{1} = \Id$. Let $W$ be a nonempty closed linear subspace of $\cap^{m}_{i=1} \Fix T_{i}$. Let $F: \mathcal{H} \to \mathcal{H} $ satisfy $(\forall x \in \mathcal{H})$ $Fx \in \aff (\mathcal{S}(x))$. Let $T \in \mathcal{B} (\mathcal{H})$ be such that 
	$\Pro_{W}T=\Pro_{W}=T \Pro_{W}$.
	Assume one of the following items 
	holds:
	\begin{enumerate}
		\item \label{theo:LineaConvIneq:TF:W} There exists $\gamma \in \left[0,1\right[$ such that $(\forall x \in \mathcal{H})$  $\norm{Fx -\Pro_{W}x} \leq \gamma \norm{x -\Pro_{W}x}$. 
		\item \label{theo:LineaConvIneq:TF:FFixCap} There exists $\gamma \in \left[0,1\right[$ such that $(\forall x \in \mathcal{H})$  $\norm{Fx -\Pro_{\Fix F}x} \leq \gamma \norm{x -\Pro_{\Fix F}x}$ and that   $(\forall k \in \mathbb{N})$ $\Pro_{\Fix F} \CC{\mathcal{S}}^{k}T = \Pro_{W}$. 
	\end{enumerate}	
	Then 
	\begin{align} \label{eq:prop:LineaConvIneq:TF}
	(\forall x \in \mathcal{H})  (\forall k \in \mathbb{N})\quad \norm{ \CC{\mathcal{S}}^{k}Tx -\Pro_{W}x } \leq \gamma^{k} \norm{T\Pro_{W^{\perp}}} \norm{x - \Pro_{W}x }.
	\end{align}
\end{theorem}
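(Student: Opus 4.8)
The plan is to handle both cases by a single induction on $k$, in which the base case $k=0$ produces the factor $\norm{T\Pro_{W^{\perp}}}$ and every inductive step produces one factor $\gamma$. First, $\CC{\mathcal{S}}$ is proper by \cref{fact:CCS:proper:NormPres:T}\cref{fact:CCS:proper:NormPres:T:prop}, so $(\CC{\mathcal{S}}^{k}Tx)_{k\in\mathbb{N}}$ is well defined. For the base case, since $W$ is a closed linear subspace, \cref{MetrProSubs8}\cref{MetrProSubs8:ii} and \cref{fact:ProjectorInnerPRod}\cref{fact:ProjectorInnerPRod:Idempotent} give $x-\Pro_{W}x=\Pro_{W^{\perp}}x=\Pro_{W^{\perp}}\Pro_{W^{\perp}}x$, and $T\Pro_{W}=\Pro_{W}$ then yields $Tx-\Pro_{W}x=T(x-\Pro_{W}x)=(T\Pro_{W^{\perp}})(\Pro_{W^{\perp}}x)$; hence $\norm{Tx-\Pro_{W}x}\le\norm{T\Pro_{W^{\perp}}}\norm{x-\Pro_{W}x}$, which is \cref{eq:prop:LineaConvIneq:TF} for $k=0$.

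For the inductive step, fix $x\in\mathcal{H}$ and write $y:=\CC{\mathcal{S}}^{k}Tx$. Since $\Pro_{W}x\in W\subseteq\cap^{m}_{i=1}\Fix T_{i}$ and $Fy\in\aff(\mathcal{S}(y))$, the identity \cref{fact:CCS:proper:NormPres:T}\cref{fact:CCS:proper:NormPres:T:F} applied with anchor $z=\Pro_{W}x$ and point $y$ gives
\begin{align*}
\norm{\Pro_{W}x-\CC{\mathcal{S}}y}^{2}+\norm{\CC{\mathcal{S}}y-Fy}^{2}=\norm{\Pro_{W}x-Fy}^{2},
\end{align*}
whence $\norm{\CC{\mathcal{S}}^{k+1}Tx-\Pro_{W}x}=\norm{\CC{\mathcal{S}}y-\Pro_{W}x}\le\norm{Fy-\Pro_{W}x}$. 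It thus suffices to show $\norm{Fy-\Pro_{W}x}\le\gamma\norm{y-\Pro_{W}x}$, for then the inductive hypothesis closes the induction via $\gamma\norm{y-\Pro_{W}x}\le\gamma^{k+1}\norm{T\Pro_{W^{\perp}}}\norm{x-\Pro_{W}x}$.

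Here the two hypotheses enter in parallel. Under \cref{theo:LineaConvIneq:TF:W}, \cref{fact:CCS:proper:NormPres:T}\cref{fact:CCS:proper:NormPres:T:PaffU} gives $\Pro_{W}\CC{\mathcal{S}}^{k}=\Pro_{W}$, so with $\Pro_{W}T=\Pro_{W}$ we get $\Pro_{W}y=\Pro_{W}Tx=\Pro_{W}x$, and therefore $\norm{Fy-\Pro_{W}x}=\norm{Fy-\Pro_{W}y}\le\gamma\norm{y-\Pro_{W}y}=\gamma\norm{y-\Pro_{W}x}$. Under \cref{theo:LineaConvIneq:TF:FFixCap}, the hypothesis $\Pro_{\Fix F}\CC{\mathcal{S}}^{k}T=\Pro_{W}$ gives $\Pro_{\Fix F}y=\Pro_{W}x$ outright, so $\norm{Fy-\Pro_{W}x}=\norm{Fy-\Pro_{\Fix F}y}\le\gamma\norm{y-\Pro_{\Fix F}y}=\gamma\norm{y-\Pro_{W}x}$. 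In either case \cref{eq:prop:LineaConvIneq:TF} follows by induction. (Alternatively, case \cref{theo:LineaConvIneq:TF:W} follows at once from \cref{fact:CWP:line:conv} applied to the point $Tx$, together with $\Pro_{W}Tx=\Pro_{W}x$ and the base-case estimate.)

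The only delicate point — and the step I expect to be the main obstacle — is the choice of anchor in \cref{fact:CCS:proper:NormPres:T}\cref{fact:CCS:proper:NormPres:T:F}: one needs a point of $\cap^{m}_{i=1}\Fix T_{i}$ that simultaneously equals the projection of the current iterate onto the set against which $F$ is contractive. Taking $z=\Pro_{W}x$ works in both regimes precisely because $\Pro_{W}x$ lies in $W\subseteq\cap^{m}_{i=1}\Fix T_{i}$ and because $\Pro_{W}\CC{\mathcal{S}}^{k}Tx=\Pro_{W}x$ under \cref{theo:LineaConvIneq:TF:W} while $\Pro_{\Fix F}\CC{\mathcal{S}}^{k}Tx=\Pro_{W}x$ under \cref{theo:LineaConvIneq:TF:FFixCap}; once these identifications are in place the contraction simply telescopes and no further estimates are needed.
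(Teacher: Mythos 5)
Your proof is correct and takes essentially the same route as the paper's: induction on $k$, with the base case obtained from $T\Pro_{W}=\Pro_{W}$ and the linearity of $T$, and the inductive step obtained from \cref{fact:CCS:proper:NormPres:T}\cref{fact:CCS:proper:NormPres:T:F}$\&$\cref{fact:CCS:proper:NormPres:T:PaffU} together with the contraction hypotheses in the two cases. Anchoring the identity of \cref{fact:CCS:proper:NormPres:T}\cref{fact:CCS:proper:NormPres:T:F} directly at $z=\Pro_{W}x$ instead of at $\Pro_{W}(\CC{\mathcal{S}}^{k}Tx)$ (and the optional shortcut via \cref{fact:CWP:line:conv} in case \cref{theo:LineaConvIneq:TF:W}) is only a cosmetic difference.
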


\begin{proof}
	We prove \cref{eq:prop:LineaConvIneq:TF}  by induction on $k$. 
	
	Because
	\begin{align*}
	(\forall x \in \mathcal{H}) \quad 	\norm{Tx -\Pro_{W}x} &=  \norm{Tx -T\Pro_{W}x} \quad (\text{by } \Pro_{W} =T\Pro_{W} )\\
	& =  \norm{T(x -\Pro_{W}x)}  \quad (\text{$T$ is linear})\\
	&=  \norm{T\Pro_{W^{\perp}}\Pro_{W^{\perp}}x}   \quad (\text{by \cref{MetrProSubs8}\cref{MetrProSubs8:ii} and \cref{fact:ProjectorInnerPRod}\cref{fact:ProjectorInnerPRod:Idempotent}})\\
	&\leq  \norm{T\Pro_{W^{\perp}}} \norm{x - \Pro_{W}x},
	\end{align*} 
\cref{eq:prop:LineaConvIneq:TF} is true for $k=0$.
	
	Suppose that \cref{eq:prop:LineaConvIneq:TF} is true for some $k \in \mathbb{N}$. Let $x \in \mathcal{H}$. First note that
	\begin{align*}
	\norm{ \CC{\mathcal{S}}^{k+1}Tx -\Pro_{W}x } & = \Norm{ \CC{\mathcal{S}} \left( \CC{\mathcal{S}}^{k}Tx \right)  - \Pro_{W}(\CC{\mathcal{S}}^{k}Tx)}  \quad (\text{by  \cref{fact:CCS:proper:NormPres:T}\cref{fact:CCS:proper:NormPres:T:PaffU} and $\Pro_{W}T=\Pro_{W}$}) \\
	&\leq   \Norm{ F\left( \CC{\mathcal{S}}^{k}Tx \right)  - \Pro_{W}(\CC{\mathcal{S}}^{k}Tx)}.  \quad \left(\text{apply \cref{fact:CCS:proper:NormPres:T}\cref{fact:CCS:proper:NormPres:T:F} with $z=\Pro_{W}(\CC{\mathcal{S}}^{k}Tx)$}\right) 
	\end{align*}
	
	Assume first that assumption \cref{theo:LineaConvIneq:TF:W}  holds. Then
	\begin{align*}
	\norm{ F\left( \CC{\mathcal{S}}^{k}Tx \right)  - \Pro_{W}(\CC{\mathcal{S}}^{k}Tx)}  
	& \leq \gamma \Norm{ \CC{\mathcal{S}}^{k}Tx  - \Pro_{W}(\CC{\mathcal{S}}^{k}Tx)} \quad (\text{by assumption  \cref{theo:LineaConvIneq:TF:W}}) \\
	& \leq \gamma  \Norm{  \CC{\mathcal{S}}^{k}Tx  - \Pro_{W}x} \quad (\text{by  \cref{fact:CCS:proper:NormPres:T}\cref{fact:CCS:proper:NormPres:T:PaffU} and $\Pro_{W}T=\Pro_{W}$}) \\
	& \leq \gamma^{k+1} \norm{T\Pro_{W^{\perp}}} \norm{x - \Pro_{W}x }. \quad (\text{by inductive hypothesis})
	\end{align*}	
	
	Now assume that assumption \cref{theo:LineaConvIneq:TF:FFixCap} holds. By assumptions and by \cref{fact:CCS:proper:NormPres:T}\cref{fact:CCS:proper:NormPres:T:PaffU}, we have that
	\begin{align} \label{eq:prop:LineaConvIneq:TF:PWPCAPFixT}
	(\forall t \in \mathbb{N}) \quad \Pro_{\Fix F} \CC{\mathcal{S}}^{t}T = \Pro_{W} = \Pro_{W}T =
	\Pro_{W}\CC{\mathcal{S}}^{t} T.
	\end{align}
	Hence,
	\begin{align*}
	\norm{ F\left( \CC{\mathcal{S}}^{k}Tx \right)  - \Pro_{W}(\CC{\mathcal{S}}^{k}Tx)}  &= 
	\norm{ F\big( \CC{\mathcal{S}}^{k}Tx \big)  - \Pro_{\Fix F}(\CC{\mathcal{S}}^{k}Tx)}  \quad (\text{by  \cref{eq:prop:LineaConvIneq:TF:PWPCAPFixT}})\\
	& \leq \gamma \Norm{ \CC{\mathcal{S}}^{k}Tx   - \Pro_{\Fix F}(\CC{\mathcal{S}}^{k}Tx)} \quad (\text{by assumption  \cref{theo:LineaConvIneq:TF:FFixCap}}) \\
	&=  \gamma \Norm{ \CC{\mathcal{S}}^{k}Tx  - \Pro_{W}x} \quad (\text{by  \cref{eq:prop:LineaConvIneq:TF:PWPCAPFixT}})\\
	&\leq \gamma^{k+1} \norm{T\Pro_{W^{\perp}}} \norm{x - \Pro_{W}x }. \quad (\text{by  inductive hypothesis})
	\end{align*}	
	Altogether, the proof is complete.
\end{proof}

\begin{remark}
	\begin{enumerate}
		\item One application of \cref{theo:LineaConvIneq:TF}\cref{theo:LineaConvIneq:TF:W} is shown in \cref{theo:CCS:Accel:AT:Tx} below.
		\item	Let $L_{1}$ and $L_{2}$ be closed linear subspace in $\mathcal{H}$. Assume that $\widetilde{\mathcal{S}}$ is a finite subset of $\Omega( \R_{L_{1}}, \R_{L_{2}} )$, where $\Omega( \R_{L_{1}}, \R_{L_{2}} )$ consists of all finite compositions of operators from $\{\R_{L_{1}}, \R_{L_{2}} \}$.  Assume that $L_{1} \cap L_{2} \subseteq \cap_{T \in \widetilde{\mathcal{S}} } \Fix T$. Let $K$ be a closed linear subspace of $\mathcal{H}$ such that $L_{1} \cap L_{2} \subseteq  K \subseteq  L_{1} + L_{2} $. Denote by $T_{L_{2},L_{1}}$  the Douglas-Rachford operator associated with $L_{1}$ and $L_{2}$. Assume $T^{d}_{L_{2},L_{1}} \in \aff \widetilde{\mathcal{S}}$ for some $d \in \mathbb{N} \smallsetminus \{0\}$. By \cite[Corollary~5.17]{BOyW2019Isometry}, we know that $\Pro_{L_{1} \cap L_{2}} =\Pro_{\Fix T_{L_{2},L_{1}}}\Pro_{K} = \Pro_{L_{1} \cap L_{2}} \Pro_{K} = \Pro_{L_{1} \cap L_{2}} \CC{\widetilde{\mathcal{S}}}^{k}\Pro_{K} =\Pro_{\Fix T_{L_{2},L_{1}}} \CC{\widetilde{\mathcal{S}} }^{k}\Pro_{K}  $.
		In fact, \cite[Proposition~5.18]{BOyW2019Isometry} is a special case of  \cref{theo:LineaConvIneq:TF}\cref{theo:LineaConvIneq:TF:FFixCap} when 	$W= L_{1} \cap L_{2}$, $F= T^{d}_{L_{2},L_{1}} $ and $T =\Pro_{K}$.  
		Because
		 \cite[Proposition~5.18]{BOyW2019Isometry} is a generalization of \cite[Theorem~1]{BCS2017},  \cite[Theorem~1]{BCS2017} is also a special instance of  \cref{theo:LineaConvIneq:TF}\cref{theo:LineaConvIneq:TF:FFixCap}.
	\end{enumerate}
\end{remark}

\section{Linear convergence of  CRMs in Hilbert spaces}
\label{sec:LinearConverCRMHilert}
Since reflectors associated with affine subspaces are isometries, we deduce from \cref{fact:CCS:proper:NormPres:T}\cref{fact:CCS:proper:NormPres:T:prop} that all of the circumcenter mappings induced by finite sets of reflectors are proper. In particular, we call the circumcenter  method induced by a finite set of reflectors the \emph{circumcentered reflection method} (CRM). 

 In this section, we shall use the linear convergence of \emph{method of alternating projections} (MAP) to deduce sufficient conditions for the linear convergence of CRMs for finding the best approximation onto the intersection of finitely many affine subspaces.

\cref{prop:IsometryAffine}, \cref{lemma:TAffineFLinear}\cref{lemma:TAffineFLinear:TF} and \cref{theo:PARALLIsometryLinear} imply that in order to study the linear convergence of CRMs, we are free to assume that all of the related reflectors are associated with linear subspaces. 

Recall that $m \in \mathbb{N} \smallsetminus \{0\}$.
In this section, we assume that 
\begin{empheq}{equation*}
U_{1}, \ldots, U_{m}~\text{are closed linear subspaces in the real Hilbert space}~\mathcal{H}.
\end{empheq}
Clearly, $\{0\} \subseteq \cap^{m}_{i=1} U_{i} \neq \varnothing$. Set
\begin{empheq}[box=\mybluebox]{equation*} 
\Omega := \Omega(\R_{U_{1}}, \ldots, \R_{U_{m}} ) = \Big\{ \R_{U_{i_{r}}}\cdots \R_{U_{i_{2}}}\R_{U_{i_{1}}}  ~\Big|~ r \in \mathbb{N}, ~\mbox{and}~ i_{1}, \ldots,  i_{r} \in \{1, \ldots,m\}    \Big\},
\end{empheq}
and 
\begin{empheq}[box=\mybluebox]{equation*}
\Psi :=\Big\{ \R_{U_{i_{r}}}\cdots \R_{U_{i_{2}}}\R_{U_{i_{1}}}  ~\Big|~ r, i_{1}, i_{2}, \ldots, i_{r} \in \{0,1, \ldots,m\} ~\mbox{and}~0<i_{1}<\cdots<i_{r} \Big\}.
\end{empheq}
Recall that we use the empty product convention, so for $r=0$, $\R_{U_{i_{0}}}\cdots \R_{U_{i_{2}}}\R_{U_{i_{1}}} = \Id$.

We also assume that
\begin{empheq}[box=\mybluebox]{equation} \label{eq:Defi:S}
\Psi \subseteq \mathcal{S} \subseteq \Omega \quad \text{and} \quad \mathcal{S} ~\text{consists finitely many elements.}
\end{empheq}

Recall that $x \in \mathcal{H}$, 
\begin{empheq}{equation*}
\mathcal{S}(x) := \{Tx ~|~ T \in \mathcal{S}\}.
\end{empheq}

In this section, we will deduce some linear convergence results on CRMs induced by $\mathcal{S}$ satisfying \cref{eq:Defi:S}. We shall show that some CRMs do not have worse convergence rate than the sharp convergence rate of MAP for finding best approximation on $\cap^{m}_{i=1}U_{i}$. Moreover, we shall prove that some CRMs attain the known convergence rate of the accelerated symmetric MAP shown in \cite{BDHP2003}.

\begin{remark} \label{rem:Psi2Pm}
	We claim that there are exactly $2^m$ possible combinations for the indices of the reflectors making up the elements of the set $\Psi$. 
	
	In fact, for every $r \in \I := \{0,1, \ldots, m\}$, the $r$-combination of the set $\I$ is a subset of $r$ distinct items of $\I$.\footnotemark ~In addition, the number of $r$-combinations of $\I$ equals to the binomial coefficient $\binom{m}{r}$. 
	\footnotetext{Recall that we use the empty product convention that $\prod^{0}_{j=1}\R_{U_{i_{j}}} =\Id$, so the $0$-combination of the set $\I$ is the $\Id$.}
	Moreover, by the Binomial Theorem, 
	\begin{align*}
	2^{m} = (1+1)^{m}=\sum^{m}_{r=0} \binom{m}{r}.
	\end{align*} 
	Therefore, the claim is true. 
	
	Actually with consideration of duplication, there are at most $2^m$ pairwise distinct elements in $\Psi$. (For instance, if $U_{1}=U_{2}$, then $\R_{U_{2}}\R_{U_{1}} =\Id$.)
\end{remark}

For example, when $m=1$, $\Psi= \{\Id, \R_{U_{1}}\}$. When $m=2$, 
\begin{align*}
\Psi = \{\Id, \R_{U_{1}}, \R_{U_{2}}, \R_{U_{2}}\R_{U_{1}}\}.
\end{align*}
When $m=3$,
\begin{align*}
\Psi = \{\Id,\R_{U_{1}}, \R_{U_{2}}, \R_{U_{3}}, \R_{U_{2}}\R_{U_{1}}, \R_{U_{3}}\R_{U_{1}}, \R_{U_{3}}\R_{U_{2}}, \R_{U_{3}}\R_{U_{2}}\R_{U_{1}}\}.
\end{align*}

\subsection*{Examples of linear convergent CRMs}
First, let's see two examples where $m=2$ to get some intuition about our upcoming main result \cref{thm:MAP:LC}. Actually, these examples are also corollaries of \cref{thm:MAP:LC} below. 

\begin{example} \label{exam: lc:cw1} Assume that $m=2$, that $\mathcal{S} := \{\Id, \R_{U_{1}}, \R_{U_{2}}, \R_{U_{2}}\R_{U_{1}} \}$, and that $U_{1}+ U_{2}$ is closed. Set $\gamma := \norm{ \Pro_{U_{2}} \Pro_{U_{1}} \Pro_{(U_{1} \cap U_{2})^{\perp}} } $. Then $\gamma \in \left[0,1\right[$ and
	\begin{align*} 
	(\forall x \in \mathcal{H})  (\forall k \in \mathbb{N}) \quad \norm{\CC{\mathcal{S}}^{k}x-\Pro_{U_{1} \cap U_{2}} x} \leq \gamma^{k} \norm{x -\Pro_{U_{1} \cap U_{2}} x}.
	\end{align*}
	Consequently, $(\CC{\mathcal{S}}^{k}x)_{k \in \mathbb{N}}$ converges to $\Pro_{U_{1}\cap U_{2}}x$ with a linear rate $\gamma$.
\end{example}

\begin{proof}
	By assumption and \Cref{fac:PVPUPUcapVperp,fac:cFLess1}, we know $\gamma \in \left[0,1\right[\,$. Set
	$T_{\mathcal{S}} := \Pro_{U_{2}}\Pro_{U_{1}}$.
	Using \cref{fac:PUmU1FixT}, we get that $\Fix T_{\mathcal{S}} = \Fix \Pro_{U_{2}} \Pro_{U_{1}}  = U_{1} \cap U_{2} $.  Hence, apply  \cref{fac:LineNonexOperNorm}\cref{fac:LineNonexOperNorm:i} with $T$ replaced by $T_{\mathcal{S}}$ to obtain that
	\begin{align} \label{exam: lc:cw1:TS}
	(\forall x \in \mathcal{H})  (\forall k \in \mathbb{N}) \quad \norm{T_{\mathcal{S}}^{k}x-\Pro_{U_{1} \cap U_{2}}x} \leq \gamma^{k} \norm{x -\Pro_{U_{1} \cap U_{2}}x}.
	\end{align}
	Moreover, because
	\begin{align*}
	T_{\mathcal{S}} & = \Pro_{U_{2}}\Pro_{U_{1}} 
	= \frac{1}{2} (\R_{U_{2}} +\Id ) \frac{1}{2} (\R_{U_{1}}  +\Id ) = \frac{1}{2^{2}}(\R_{U_{2}}\R_{U_{1}}+\R_{U_{2}} +\R_{U_{1}}+ \Id) \in \aff (\mathcal{S})
	\end{align*}
	and  $U_{1}\cap U_{2}$ is a closed linear subspace of $\cap_{T \in \mathcal{S}} \Fix T$, the desired results are from \cref{fact:CWP:line:conv} and \cref{exam: lc:cw1:TS}.
\end{proof}

\begin{example} \label{exam: lc:cw2} 
	Assume that $m=2$, that $\mathcal{S} :=\{\Id, \R_{U_{1}}, \R_{U_{2}},\R_{U_{1}}\R_{U_{2}}, \R_{U_{2}}\R_{U_{1}}, \R_{U_{1}}\R_{U_{2}}\R_{U_{1}} \}$, and that $U_{1}+ U_{2}$ is closed. Set
$
	\gamma := \norm{ \Pro_{U_{2}} \Pro_{U_{1}} \Pro_{(U_{1} \cap U_{2})^{\perp}} }.
	$
	Then $\gamma \in\left[0,1\right[$ and
	\begin{align*} 
	(\forall x \in \mathcal{H})  (\forall k \in \mathbb{N}) \quad \norm{\CC{\mathcal{S}}^{k}x-\Pro_{U_{1} \cap U_{2}} x} \leq \gamma^{2k} \norm{x -\Pro_{U_{1} \cap U_{2} } x}.
	\end{align*}
	Consequently, $(\CC{\mathcal{S}}^{k}x)_{k \in \mathbb{N}}$ converges to $\Pro_{U_{1}\cap U_{2}} x$ with a linear rate $\gamma^{2}$.
\end{example}

\begin{proof}
	Denote $T_{\mathcal{S}} := \Pro_{U_{1}}\Pro_{U_{2}}\Pro_{U_{1}}$. Then $(\Pro_{U_{2}}\Pro_{U_{1}})^{*}\Pro_{U_{2}}\Pro_{U_{1}}=\Pro_{U_{1}}\Pro_{U_{2}}\Pro_{U_{1}} = T_{\mathcal{S}}$. Similarly with the proof of \cref{exam: lc:cw1}, we have that $\gamma \in \left[0,1\right[$ and  $\Fix T_{\mathcal{S}} = U_{1} \cap U_{2} $. Apply \cref{fac:LineNonexOperNorm}\cref{cor:LinNoneInequaTstarT} with $T$ replaced by $\Pro_{U_{2}}\Pro_{U_{1}}$ to obtain
	\begin{align} \label{eq:exam: lc:cw2:TS} 
	(\forall x \in \mathcal{H})  (\forall k \in \mathbb{N}) \quad \norm{T^{k}_{\mathcal{S}}x- \Pro_{U_{1}\cap U_{2}} x } 
	\leq & \gamma^{2k} \norm{x -\Pro_{U_{1}\cap U_{2}}x}.
	\end{align}
	Because
	\begin{align*}
	& \frac{1}{2^{3}}(\R_{U_{1}}+\Id) (\R_{U_{2}}+\Id)(\R_{U_{1}}+\Id)\\
	=~ & \frac{1}{2^{3}}(\R_{U_{1}}\R_{U_{2}}\R_{U_{1}}+\R_{U_{1}}\R_{U_{2}} +\R_{U_{1}}\R_{U_{1}} +\R_{U_{1}}+\R_{U_{2}}\R_{U_{1}}+ \R_{U_{2}}+\R_{U_{1}}+\Id) \\
	= ~&   \frac{1}{2^{3}}(\R_{U_{1}}\R_{U_{2}}\R_{U_{1}}+\R_{U_{1}}\R_{U_{2}} +\R_{U_{2}}\R_{U_{1}}+ 2\R_{U_{1}}+\R_{U_{2}}+2\Id) \\
	\in ~& \aff \{\Id, \R_{U_{1}}, \R_{U_{2}},\R_{U_{1}}\R_{U_{2}}, \R_{U_{2}}\R_{U_{1}}, \R_{U_{1}}\R_{U_{2}}\R_{U_{1}} \} =\aff \mathcal{S},
	\end{align*}
	we obtain
	\begin{align*}
	T_{\mathcal{S}} = \Pro_{U_{1}}\Pro_{U_{2}}\Pro_{U_{1}} =\frac{1}{2^{3}}(\R_{U_{1}}+\Id) (\R_{U_{2}}+\Id)(\R_{U_{1}}+\Id) \in \aff \mathcal{S}.
	\end{align*}
	Because $U_{1}\cap U_{2}$ is a closed linear subspace of $\cap_{T \in \mathcal{S}} \Fix T$, the results come from  \cref{fact:CWP:line:conv} and \cref{eq:exam: lc:cw2:TS}.
\end{proof}
\begin{remark}
	\begin{enumerate}
		\item From \cite[Theorem~9.31]{D2012} and \cref{fac:PVPUPUcapVperp}, we know that the sharp convergence rate of MAP associated with the two linear subspaces $U_{1}$ and $U_{2}$ 
		is $\norm{ \Pro_{U_{2}} \Pro_{U_{1}} \Pro_{ (U_{1} \cap U_{2} )^{\perp}} }^{2}$.
		 \cref{exam: lc:cw2}  tells us that the linear convergence rate of some CRMs is no worse than $\norm{ \Pro_{U_{2}} \Pro_{U_{1}} \Pro_{ (U_{1} \cap U_{2} )^{\perp}} }^{2}$.
		\item Set $\mathcal{S}_{1} := \{\Id, \R_{U_{1}}, \R_{U_{2}}, \R_{U_{2}}\R_{U_{1}} \}$ and $\mathcal{S}_{2} :=\{\Id, \R_{U_{1}}, \R_{U_{2}},\R_{U_{1}}\R_{U_{2}}, \R_{U_{2}}\R_{U_{1}}, \R_{U_{1}}\R_{U_{2}}\R_{U_{1}} \}$. In \cite[Section~6]{BOyW2019Isometry},  our \emph{numerical} experiments in $\mathbb{R}^{1000}$ showed that  the CRMs induced by  $\mathcal{S}_{1} $ and $\mathcal{S}_{2} $ given above perform better than the   DRM, MAP and the C-DRM introduced in \cite{BCS2017}.  In \cite{BOyW2019Isometry}, we didn't provide any \emph{analytical} expanation for the outstanding performances of the CRMs induced by $\mathcal{S}_{1}$ and $\mathcal{S}_{2}$. Now \Cref{exam: lc:cw1,exam: lc:cw2} present the theoretical support for the impressive performance.
	\end{enumerate}	
\end{remark}

\subsection*{CRMs associated with finitely many linear subspaces}
In order to prove our more general results, we need the following lemma, which is also interesting itself.
\begin{lemma} \label{lem:PR:induc}
	Recall that $m \in \mathbb{N} \smallsetminus \{0\}$, $U_{1}, \ldots, U_{m}$ are closed linear subspaces in $\mathcal{H}$ and $\Psi \subseteq \mathcal{S} \subseteq \Omega$. Then the following statements hold:
	\begin{enumerate}
		\item \label{lem:PR:induc:R}
		\begin{align} \label{eq:lem:PR:induc:R}
		\R_{U_{m}} \R_{U_{m-1}}\cdots \R_{U_{1}}
		= \scalemath{0.9}{2^{m} \Pro_{U_{m}} \Pro_{U_{m-1}}\cdots \Pro_{U_{1}} - \Big(  \Id + \sum^{m-1}_{k=1} \sum _{ \substack{i_{1}, \ldots, i_{k-1} ,i_{k} \in  \{1,2,\ldots,m\} \\ i_{1}<\cdots<i_{k-1} <i_{k} } } \R_{U_{i_{k}}} \R_{U_{i_{k-1}}} \cdots \R_{U_{i_{1}}} \Big)}.
		\end{align}
		\item \label{lem:PR:induc:P}
		\begin{align}   \label{eq:combination:aff}
		\Pro_{U_{m}} \Pro_{U_{m-1}}\cdots \Pro_{U_{1}} 
		= \frac{1}{2^{m}} \Big( \sum^{m}_{k=0} \sum _{ \substack{i_{1}, \ldots, i_{k-1} ,i_{k} \in \{1,2,\ldots,m\} \\ i_{1}<\cdots<i_{k-1} <i_{k} } } \R_{U_{i_{k}}} R_{U_{i_{k-1}}} \cdots \R_{U_{i_{1}}} \Big).
		\end{align}
		\item \label{lem:PR:induc:Inclu} $\Pro_{U_{m}} \Pro_{U_{m-1}}\cdots \Pro_{U_{1}}  \in \conv \Psi \subseteq \aff \mathcal{S}.$
	\end{enumerate}
\end{lemma}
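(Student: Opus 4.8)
The plan is to prove the three parts in the order (ii), (i), (iii), since part~(ii) is the cleanest to establish directly by induction and part~(i) then follows by isolating the top-order term, while part~(iii) is an immediate consequence of~(ii) once we observe that the coefficients in the expansion are nonnegative and sum to~$1$.

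First I would prove~\cref{eq:combination:aff} by induction on $m$. The base case $m=1$ is just $\Pro_{U_{1}} = \frac{1}{2}(\Id + \R_{U_{1}})$, which holds by definition of the reflector $\R_{U_{1}} = 2\Pro_{U_{1}} - \Id$. For the inductive step, assume the identity for $m-1$, apply $\Pro_{U_{m}} = \frac{1}{2}(\Id + \R_{U_{m}})$ on the left, and distribute:
\begin{align*}
\Pro_{U_{m}}\Pro_{U_{m-1}}\cdots\Pro_{U_{1}}
&= \tfrac{1}{2}(\Id + \R_{U_{m}})\cdot \tfrac{1}{2^{m-1}}\sum_{\substack{i_{1}<\cdots<i_{k}\\ i_{1},\ldots,i_{k}\in\{1,\ldots,m-1\}}} \R_{U_{i_{k}}}\cdots\R_{U_{i_{1}}}.
\end{align*}
The $\Id$ summand reproduces all terms whose largest index is $\le m-1$ (i.e.\ all subsets of $\{1,\ldots,m-1\}$), and the $\R_{U_{m}}$ summand, prepended to each such product, produces exactly all terms whose largest index equals $m$. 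Together these enumerate all subsets of $\{1,\ldots,m\}$ with no repetition and ordered indices, each with coefficient $\frac{1}{2^{m}}$, which is precisely the right-hand side of~\cref{eq:combination:aff}. Here I would note that reordering within $\Psi$-type products is legitimate because a product $\R_{U_{i_{k}}}\cdots\R_{U_{i_{1}}}$ with strictly increasing indices is exactly one of the elements enumerated on the right; one must be a little careful that prepending $\R_{U_{m}}$ to a product indexed by a subset of $\{1,\ldots,m-1\}$ gives a product indexed by a subset of $\{1,\ldots,m\}$ of the required increasing form, and that this correspondence is a bijection onto the subsets containing $m$.

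Part~(i), \cref{eq:lem:PR:induc:R}, then follows by extracting from~\cref{eq:combination:aff} the single top-degree term $k=m$, namely $\R_{U_{m}}\R_{U_{m-1}}\cdots\R_{U_{1}}$ (the unique subset of size $m$), and moving everything else to the other side after multiplying through by $2^{m}$: the remaining sum $\sum_{k=0}^{m-1}$ splits off the $k=0$ term $\Id$ and leaves the displayed double sum over $k=1,\ldots,m-1$. Finally, for part~(iii), I would observe that~\cref{eq:combination:aff} writes $\Pro_{U_{m}}\cdots\Pro_{U_{1}}$ as a combination of exactly the $2^{m}$ elements of $\Psi$ (by \cref{rem:Psi2Pm}) with coefficients all equal to $2^{-m} \ge 0$ and summing to $2^{-m}\cdot 2^{m} = 1$; hence $\Pro_{U_{m}}\cdots\Pro_{U_{1}} \in \conv\Psi$, and since $\Psi \subseteq \mathcal{S}$ we get $\conv\Psi \subseteq \aff\Psi \subseteq \aff\mathcal{S}$, giving the claimed inclusion.

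The only genuine obstacle is bookkeeping in the inductive step of part~(ii): making the bijection between subsets of $\{1,\ldots,m-1\}$ (resp.\ subsets containing $m$) and the enumerated products fully precise, and checking that duplicate products arising from coincidences among the $U_{i}$ do not affect the identity (they do not, since the identity is between operators, not between formal sums). I expect this to be routine but slightly tedious to write carefully; everything else is formal manipulation.
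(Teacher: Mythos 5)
Your proposal is correct, and it rests on the same elementary ingredients as the paper's proof: induction on $m$ via the identity $\R_{U}=2\Pro_{U}-\Id$, the observation that \cref{eq:lem:PR:induc:R} and \cref{eq:combination:aff} differ only by isolating the $k=m$ term $\R_{U_{m}}\cdots\R_{U_{1}}$, and, for part (iii), the count from \cref{rem:Psi2Pm} that the $2^{m}$ summands in \cref{eq:combination:aff} are exactly the elements of $\Psi$, so that the coefficients $2^{-m}$ exhibit a convex combination and $\conv\Psi\subseteq\aff\Psi\subseteq\aff\mathcal{S}$. The difference is the direction in which the induction is run: the paper notes the equivalence of (i) and (ii) up front and then proves (i) by induction, left-multiplying by $\R_{U_{m+1}}=2\Pro_{U_{m+1}}-\Id$, which requires invoking the inductive hypothesis a second time to convert $2^{m}\Pro_{U_{m}}\cdots\Pro_{U_{1}}$ back into reflector products before recombining terms; you prove (ii) directly, left-multiplying by $\Pro_{U_{m}}=\tfrac{1}{2}(\Id+\R_{U_{m}})$ and tracking the bijection between products indexed by subsets of $\{1,\ldots,m\}$ that do or do not contain $m$, and then read off (i). Your inductive step uses the hypothesis only once and replaces the paper's algebraic recombination with transparent subset bookkeeping, so it is arguably the cleaner write-up; what it buys is brevity, while the paper's version keeps the statement actually used later, \cref{eq:lem:PR:induc:R}, in the foreground. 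Your parenthetical point that possible coincidences among the $U_{i}$ (hence duplicate operators in $\Psi$) do not affect the operator identity or the membership in $\conv\Psi$ is also consistent with the paper's own caveat in \cref{rem:Psi2Pm}.
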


\begin{proof}
	When $k=m$, the only possibility for   $\R_{U_{i_{k}}} \R_{U_{i_{k-1}}} \cdots \R_{U_{i_{1}}}$ with $i_{1}, \ldots, i_{k} \in \{1,2,\ldots,m\} $ and $i_{1}<\cdots<i_{k}$ is $\R_{U_{m}} \R_{U_{m-1}}\cdots \R_{U_{1}}$. Hence, clearly \cref{lem:PR:induc:R} $\Leftrightarrow$ \cref{lem:PR:induc:P}.  We thus only prove \cref{lem:PR:induc:R} and \cref{lem:PR:induc:Inclu}.

	\cref{lem:PR:induc:R}: We prove this by induction on $m$. If $m=1$, then by definition, $\R_{U_{1}}=2 \Pro_{U_{1}}- \Id$, which means that \cref{eq:lem:PR:induc:R} is true for $m=1$. 
	Now assume \cref{eq:lem:PR:induc:R} is true for some $m \geq 1$, i.e., 
	\begin{align}\label{eq:lem:PR:induc}
	\R_{U_{m}} \R_{U_{m-1}}\cdots \R_{U_{1}}
	= 2^{m} \Pro_{U_{m}} \Pro_{U_{m-1}}\cdots \Pro_{U_{1}} - \Big( \Id +  \sum^{m-1}_{k=1} \sum _{ \substack{i_{1}, \ldots, i_{k} \in \{1,2,\ldots,m\} \\ i_{1}<\cdots<i_{k-1} <i_{k} } } \R_{U_{i_{k}}} \R_{U_{i_{k-1}}} \cdots \R_{U_{i_{1}}} \Big)
	\end{align}
Then
	\begin{align*}
	& \quad \quad \R_{U_{m+1}}\R_{U_{m}} \R_{U_{m-1}}\cdots \R_{U_{1}} \\
	&\stackrel{\text{\cref{eq:lem:PR:induc}}}{=}  \R_{U_{m+1}} \Bigg(  2^{m} \Pro_{U_{m}}\Pro_{U_{m-1}}\cdots \Pro_{U_{1}} - \Big(\Id + \sum^{m-1}_{k=1} \sum _{ \substack{i_{1}, \ldots, i_{k} \in \{1,2,\ldots,m\} \\ i_{1}<\cdots<i_{k-1} <i_{k} } } \R_{U_{i_{k}}} \R_{U_{i_{k-1}}} \cdots \R_{U_{i_{1}}} \Big) \Bigg)\\
	&= 2^{m}   \R_{U_{m+1}}\Pro_{U_{m}} \Pro_{U_{m-1}}\cdots \Pro_{U_{1}} 
	-  \Big( \R_{U_{m+1}} +  \sum^{m-1}_{k=1} \sum _{ \substack{i_{1}, \ldots, i_{k} \in \{1,2,\ldots,m\} \\ i_{1}<\cdots<i_{k-1} <i_{k} } } \R_{U_{m+1}}\R_{U_{i_{k}}} \R_{U_{i_{k-1}}} \cdots \R_{U_{i_{1}}} \Big)\\
	&= 2^{m} (2\Pro_{U_{m+1}}-\Id ) \Pro_{U_{m}} \Pro_{U_{m-1}}\cdots \Pro_{U_{1}} 
	-  \Big( \R_{U_{m+1}} + \sum^{m-1}_{k=1} \sum _{ \substack{i_{1}, \ldots, i_{k} \in \{1,2,\ldots,m\} \\ i_{1}<\cdots<i_{k-1} <i_{k} } } \R_{U_{m+1}}\R_{U_{i_{k}}} \R_{U_{i_{k-1}}} \cdots \R_{U_{i_{1}}} \Big)\\
	&= 2^{m+1} \Pro_{U_{m+1}} \Pro_{U_{m}} \cdots \Pro_{U_{1}} - 2^{m}  \Pro_{U_{m}} \Pro_{U_{m-1}}\cdots \Pro_{U_{1}} 
	-  \Big( \R_{U_{m+1}} +\sum^{m-1}_{k=1} \sum _{ \substack{i_{1}, \ldots, i_{k} \in \{1,2,\ldots,m\} \\ i_{1}<\cdots<i_{k-1} <i_{k} } } \R_{U_{m+1}}\R_{U_{i_{k}}} \R_{U_{i_{k-1}}} \cdots \R_{U_{i_{1}}} \Big)\\
	&\stackrel{\text{\cref{eq:lem:PR:induc}}}{=} 2^{m+1} \Pro_{U_{m+1}} \Pro_{U_{m}} \cdots \Pro_{U_{1}} 
	-  \R_{U_{m}} \R_{U_{m-1}}\cdots \R_{U_{1}} - \Big( \Id + \sum^{m-1}_{k=1} \sum _{ \substack{i_{1}, \ldots, i_{k} \in \{1,2,\ldots,m\} \\ i_{1}<\cdots<i_{k-1} <i_{k} } } \R_{U_{i_{k}}} \R_{U_{i_{k-1}}} \cdots \R_{U_{i_{1}}} \Big)\\
	&\quad -  \Big( R_{U_{m+1}} +\sum^{m-1}_{k=1} \sum _{ \substack{i_{1}, \ldots, i_{k} \in \{1,2,\ldots,m\} \\ i_{1}<\cdots<i_{k-1} <i_{k} } } \R_{U_{m+1}}\R_{U_{i_{k}}} \R_{U_{i_{k-1}}} \cdots \R_{U_{i_{1}}}  \Big)\\
	&= 2^{m+1} \Pro_{U_{m+1}} \Pro_{U_{m}}\cdots \Pro_{U_{1}} -\Big( \Id + \sum^{m}_{k=1} \sum _{ \substack{i_{1}, \ldots, i_{k} \in \{1,2,\ldots,m+1\} \\ i_{1}<\cdots<i_{k-1} <i_{k} } } \R_{U_{i_{k}}} \R_{U_{i_{k-1}}} \cdots \R_{U_{i_{1}}}  \Big),
	\end{align*}
	which is \cref{eq:lem:PR:induc:R} with $m$ being replaced by $m+1$. Therefore, \cref{lem:PR:induc:R} is true.

	\cref{lem:PR:induc:Inclu}: By \cref{rem:Psi2Pm}, we know there are exactly $2^{m}$ items in the big bracket on the right--hand side of \cref{eq:combination:aff}, since these items in the big bracket are exactly all of the items in the set $\Psi$. Hence,
	\begin{align*}
	\Pro_{U_{m}} \Pro_{U_{m-1}}\cdots \Pro_{U_{1}} 
	=  \frac{1}{2^{m}} \Big( \sum^{m}_{k=0} \sum _{ \substack{i_{1}, \ldots, i_{k-1} ,i_{k} \in \{1,2,\ldots,m\} \\ i_{1}<\cdots<i_{k-1} <i_{k} } } \R_{U_{i_{k}}} \R_{U_{i_{k-1}}} \cdots \R_{U_{i_{1}}} \Big)  \in \conv \Psi \subseteq \aff \mathcal{S}.
	\end{align*}
	Therefore, the proof is complete.
\end{proof} 

Now we are ready to use results on the linear convergence of MAPs or symmetric MAPs to prove the linear convergence of CRMs.
\begin{theorem} \label{thm:MAP:LC}
	Recall that $U_{1}, \ldots ,U_{m}$ are closed linear subspaces of $\mathcal{H}$ and that $\Psi \subseteq \mathcal{S} \subseteq \Omega$. Set   $\gamma := \norm{ \Pro_{U_{m}} \Pro_{U_{m-1}} \cdots \Pro_{U_{1}} \Pro_{(\cap^{m}_{i=1} U_{i} )^{\perp}} } $.  Assume that $m \geq 2$ and that $U^{\perp}_{1} + \cdots+ U^{\perp}_{m}$ is closed. Then $\gamma \in \left[0,1\right[$ and
	\begin{align*}
	(\forall x \in \mathcal{H} ) (\forall k \in \mathbb{N}) \quad \norm{\CC{\mathcal{S}}^{k}x- \Pro_{\cap^{m}_{i=1} U_{i}} x} \leq \gamma^{k} \norm{x - \Pro_{\cap^{m}_{i=1} U_{i}} x}.
	\end{align*}
	Consequently, $(\CC{\mathcal{S}}^{k}x)_{k \in \mathbb{N}}$ converges to $\Pro_{\cap^{m}_{i=1} U_{i}} x$ with a linear rate  $\gamma$.
\end{theorem}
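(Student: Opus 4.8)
The plan is to reduce the statement to \cref{fact:CWP:line:conv} by exhibiting the right surrogate operator. Set $W := \cap^{m}_{i=1} U_{i}$ and let $T_{\mathcal{S}} := \Pro_{U_{m}}\Pro_{U_{m-1}}\cdots \Pro_{U_{1}}$ be the MAP operator associated with $U_{1}, \ldots, U_{m}$. Since every element of $\mathcal{S}\subseteq\Omega$ is a finite composition of the reflectors $\R_{U_{1}},\ldots,\R_{U_{m}}$, and each $\R_{U_{i}}$ fixes $U_{i}\supseteq W$ pointwise, every $T\in\mathcal{S}$ fixes $W$; hence $W$ is a nonempty closed linear subspace of $\cap_{T\in\mathcal{S}}\Fix T$, so the hypotheses of \cref{fact:CWP:line:conv} concerning $W$ are satisfied (and, $\mathcal{S}$ being a finite set of isometries, $\CC{\mathcal{S}}$ is proper). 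By \cref{fac:PUmU1FixT}, $T_{\mathcal{S}}$ is linear and nonexpansive with $\Fix T_{\mathcal{S}}=W=\cap^{m}_{i=1}U_{i}$.

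Next I would verify the two conditions of \cref{fact:CWP:line:conv} for $F := T_{\mathcal{S}}$. The inclusion $T_{\mathcal{S}}x\in\aff(\mathcal{S}(x))$ for every $x\in\mathcal{H}$ is where the assumption $\Psi\subseteq\mathcal{S}$ is used: \cref{lem:PR:induc}\cref{lem:PR:induc:Inclu} gives $T_{\mathcal{S}}\in\conv\Psi\subseteq\aff\mathcal{S}$, so writing $T_{\mathcal{S}}=\sum_{j}\mu_{j}S_{j}$ with $S_{j}\in\mathcal{S}$ and $\sum_{j}\mu_{j}=1$ yields $T_{\mathcal{S}}x=\sum_{j}\mu_{j}S_{j}x\in\aff(\mathcal{S}(x))$. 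For the contraction estimate, apply \cref{fac:LineNonexOperNorm}\cref{fac:LineNonexOperNorm:i}, namely inequality \eqref{eq:fac:LineNonexOperNorm:LineConv} with $k=1$, to the linear nonexpansive operator $T_{\mathcal{S}}$, and note that $\norm{T_{\mathcal{S}}\Pro_{(\Fix T_{\mathcal{S}})^{\perp}}}=\norm{\Pro_{U_{m}}\cdots\Pro_{U_{1}}\Pro_{(\cap^{m}_{i=1}U_{i})^{\perp}}}=\gamma$ because $\Fix T_{\mathcal{S}}=\cap^{m}_{i=1}U_{i}$; this gives $\norm{T_{\mathcal{S}}x-\Pro_{W}x}\le\gamma\norm{x-\Pro_{W}x}$ for all $x\in\mathcal{H}$.

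It then remains to record that $\gamma\in\left[0,1\right[$: non\-negativity is automatic, while $\gamma<1$ follows from \cref{cor:AnglN-tupleLess1:P} since $U^{\perp}_{1}+\cdots+U^{\perp}_{m}$ is assumed closed. With $\gamma\in\left[0,1\right[$, the surrogate $F=T_{\mathcal{S}}$, and the two conditions verified, \cref{fact:CWP:line:conv} applied with this $W$ and $F$ yields $\norm{\CC{\mathcal{S}}^{k}x-\Pro_{\cap^{m}_{i=1}U_{i}}x}\le\gamma^{k}\norm{x-\Pro_{\cap^{m}_{i=1}U_{i}}x}$ for all $x$ and $k$, and linear convergence of $(\CC{\mathcal{S}}^{k}x)_{k\in\mathbb{N}}$ to $\Pro_{\cap^{m}_{i=1}U_{i}}x$ is immediate. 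I do not expect a genuine obstacle: the substantive content is carried by \cref{lem:PR:induc} (expressing the MAP operator as an affine combination of the reflector products of $\Psi$) together with \cref{fac:LineNonexOperNorm,fact:CWP:line:conv}; the only points needing care are choosing $F=T_{\mathcal{S}}$ (so that the rate matches the sharp MAP rate) and matching $\norm{T_{\mathcal{S}}\Pro_{(\Fix T_{\mathcal{S}})^{\perp}}}$ with $\gamma$ through the identity $\Fix T_{\mathcal{S}}=\cap^{m}_{i=1}U_{i}$.
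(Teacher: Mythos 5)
Your proposal is correct and follows essentially the same route as the paper: take $T_{\mathcal{S}}=\Pro_{U_{m}}\cdots\Pro_{U_{1}}$, identify $\Fix T_{\mathcal{S}}=\cap^{m}_{i=1}U_{i}$ via \cref{fac:PUmU1FixT}, get $\gamma<1$ from \cref{cor:AnglN-tupleLess1:P}, the contraction from \cref{fac:LineNonexOperNorm}\cref{fac:LineNonexOperNorm:i}, membership in $\aff\mathcal{S}$ from \cref{lem:PR:induc}\cref{lem:PR:induc:Inclu}, and conclude with \cref{fact:CWP:line:conv}. The only cosmetic difference is that you invoke the norm estimate just for $k=1$ (which is all \cref{fact:CWP:line:conv} needs) and spell out the pointwise step $T_{\mathcal{S}}x\in\aff(\mathcal{S}(x))$, which the paper leaves implicit.
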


\begin{proof}
	Set $T_{\mathcal{S}} :=\Pro_{U_{m}} \Pro_{U_{m-1}} \cdots \Pro_{U_{1}}$.  \cref{fac:PUmU1FixT} yields $\Fix T_{\mathcal{S}} = \cap^{m}_{i=1}U_{i} $.  The assumptions and \cref{cor:AnglN-tupleLess1:P} imply
	\begin{align} \label{thm:MAP:LC:LineRate}
	\gamma = \norm{T_{\mathcal{S}} \Pro_{(\Fix T_{\mathcal{S}} )^{\perp}}}  \in \left[0,1\right[\,.
	\end{align}
	Applying  \cref{fac:LineNonexOperNorm}\cref{fac:LineNonexOperNorm:i} with $T$ replaced by $T_{\mathcal{S}}$, we obtain
	\begin{align} \label{thm:MAP:LC:TS}
	(\forall x \in \mathcal{H} )  (\forall k \in \mathbb{N}) \quad \norm{T^{k}_{\mathcal{S}}x-\Pro_{\cap^{m}_{i=1} U_{i} }x} \leq \gamma^{k} \norm{x -\Pro_{\cap^{m}_{i=1} U_{i} }x}.
	\end{align}
	By \cref{lem:PR:induc}\cref{lem:PR:induc:Inclu}, $T_{\mathcal{S}} \in \aff ( \mathcal{S} )$. By the construction of $\Omega$ and by $\Psi \subseteq \mathcal{S} \subseteq \Omega$, we obtain that $(\forall T \in \mathcal{S})$,  $\cap^{m}_{i=1} U_{i} \subseteq \Fix T$, which implies that $\cap^{m}_{i=1} U_{i}$ is a closed linear subspace of $\cap_{T \in  \mathcal{S}} \Fix T$. Hence, \cref{fact:CWP:line:conv}, \cref{thm:MAP:LC:LineRate} and \cref{thm:MAP:LC:TS} yield the required results.
\end{proof}

\begin{corollary} \label{cor:SymMAP:LC}
	Assume that $m=2n-1$ for some $n \in \mathbb{N} \smallsetminus \{0\}$, that $U_{1}, \ldots ,U_{n}$ are closed linear subspaces of $\mathcal{H}$ with $U^{\perp}_{1} + \cdots + U^{\perp}_{n}$ being closed, and $(\forall i \in \{1, \ldots, n-1\})$ $U_{n+i} :=U_{n-i}$.  Recall that $\Psi \subseteq \mathcal{S} \subseteq \Omega$.
	Denote $\gamma := \norm{ \Pro_{U_{n}} \Pro_{U_{n-1}} \cdots \Pro_{U_{1}} \Pro_{(\cap^{n}_{i=1} U_{i})^{\perp}} } $. Then $\gamma \in \left[0,1\right[$ and
	\begin{align*}
	(\forall x \in \mathcal{H})  (\forall k \in \mathbb{N}) \quad \norm{\CC{\mathcal{S}}^{k}x- \Pro_{\cap^{n}_{i=1} U_{i}} x} \leq \gamma^{2k} \norm{x - \Pro_{\cap^{n}_{i=1} U_{i}} x},
	\end{align*}
	that is $(\CC{\mathcal{S}}^{k}x)_{k \in \mathbb{N}}$ converges to $\Pro_{\cap^{n}_{i=1} U_{i}} x$ with a linear rate  $\gamma^{2}$.
\end{corollary}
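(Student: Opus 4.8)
The plan is to run the same argument as in \cref{thm:MAP:LC}, but with the composition $T_{\mathcal{S}} := \Pro_{U_{m}}\Pro_{U_{m-1}}\cdots\Pro_{U_{1}}$ recognised as a positive self-adjoint operator of the form $Q^{*}Q$; this is exactly what upgrades the rate from $\gamma^{k}$ to $\gamma^{2k}$, mirroring \cref{exam: lc:cw2}. First I would set $Q := \Pro_{U_{n}}\Pro_{U_{n-1}}\cdots\Pro_{U_{1}}$. Since $U_{n+i}=U_{n-i}$ for $i\in\{1,\dots,n-1\}$, the list $U_{1},\dots,U_{m}$ only uses the subspaces $U_{1},\dots,U_{n}$, so $\cap^{m}_{i=1}U_{i}=\cap^{n}_{i=1}U_{i}=:W$, and the product $T_{\mathcal{S}}$ reads palindromically as $\Pro_{U_{1}}\cdots\Pro_{U_{n-1}}\Pro_{U_{n}}\Pro_{U_{n-1}}\cdots\Pro_{U_{1}}$. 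Using that each $\Pro_{U_{i}}$ is self-adjoint (\cref{MetrProSubs8}) so that $Q^{*}=\Pro_{U_{1}}\cdots\Pro_{U_{n}}$, and that $\Pro_{U_{n}}$ is idempotent (\cref{fact:ProjectorInnerPRod}\cref{fact:ProjectorInnerPRod:Idempotent}), one gets $Q^{*}Q=\Pro_{U_{1}}\cdots\Pro_{U_{n-1}}\Pro_{U_{n}}\Pro_{U_{n-1}}\cdots\Pro_{U_{1}}=T_{\mathcal{S}}$.

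Next I would pin down the rate. Applying \cref{cor:AnglN-tupleLess1:P} to the $n$-tuple $(U_{1},\dots,U_{n})$, the hypothesis that $U^{\perp}_{1}+\cdots+U^{\perp}_{n}$ is closed yields $\gamma=\norm{Q\Pro_{W^{\perp}}}<1$, i.e.\ $\gamma\in[0,1[$. Then \cref{fac:LineNonexOperNorm}\cref{cor:LinNoneInequaTstarT}, applied to $(U_{1},\dots,U_{n})$ with $T=Q$ (so $T^{*}T=T_{\mathcal{S}}$ and $\cap^{n}_{i=1}U_{i}=W$), gives
\begin{align*}
(\forall x\in\mathcal{H})\ (\forall k\in\mathbb{N})\quad \norm{T_{\mathcal{S}}^{k}x-\Pro_{W}x}\le\gamma^{2k}\norm{x-\Pro_{W}x};
\end{align*}
in particular the case $k=1$ shows $T_{\mathcal{S}}$ contracts toward $W$ with factor $\gamma^{2}\in[0,1[$.

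Finally I would assemble these via \cref{fact:CWP:line:conv}. By \cref{lem:PR:induc}\cref{lem:PR:induc:Inclu} applied to the full list $U_{1},\dots,U_{m}$, we have $T_{\mathcal{S}}\in\conv\Psi\subseteq\aff\mathcal{S}$, hence $T_{\mathcal{S}}x\in\aff(\mathcal{S}(x))$ for every $x$; and since $\mathcal{S}\subseteq\Omega$, every $T\in\mathcal{S}$ is a composition of the $\R_{U_{i}}$ and thus fixes $\cap^{m}_{i=1}U_{i}=W$, so $W$ is a closed linear subspace of $\cap_{T\in\mathcal{S}}\Fix T$. Invoking \cref{fact:CWP:line:conv} with this $W$, with $F:=T_{\mathcal{S}}$, and with $\gamma^{2}\in[0,1[$ in the role of the contraction constant then delivers $\norm{\CC{\mathcal{S}}^{k}x-\Pro_{W}x}\le\gamma^{2k}\norm{x-\Pro_{W}x}$, which is the assertion; $\gamma\in[0,1[$ was already established.

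The only genuinely new ingredient — and the step to handle with care — is the identity $T_{\mathcal{S}}=Q^{*}Q$ coming from the palindromic arrangement $U_{n+i}=U_{n-i}$ together with the idempotency of $\Pro_{U_{n}}$; everything else is a transcription of the reasoning already used in \cref{thm:MAP:LC} and \cref{exam: lc:cw2}. I would also verify the degenerate case $n=1$ (the palindrome clause is then vacuous, $m=1$, $T_{\mathcal{S}}=\Pro_{U_{1}}$ and $\gamma=\norm{\Pro_{U_{1}}\Pro_{U_{1}^{\perp}}}=0$), though it is subsumed by the general argument.
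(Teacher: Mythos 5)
Your proposal is correct and is essentially the paper's own argument in slightly reorganized form: the paper invokes \cref{thm:MAP:LC} for the full palindromic family and then identifies its rate $\rho=\norm{\Pro_{U_{m}}\cdots\Pro_{U_{1}}\Pro_{(\cap^{m}_{i=1}U_{i})^{\perp}}}$ with $\gamma^{2}$ via \cref{fac:PUmU1FixT} and \cref{fac:LineNonexOperNorm}\cref{fac:LineNonexOperNorm:iv}, whereas you inline that reasoning by writing $T_{\mathcal{S}}=Q^{*}Q$ with $Q:=\Pro_{U_{n}}\cdots\Pro_{U_{1}}$ and feeding \cref{fac:LineNonexOperNorm}\cref{cor:LinNoneInequaTstarT} into \cref{fact:CWP:line:conv} directly, exactly as in \cref{exam: lc:cw2}. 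Both routes rest on the same ingredients (the palindromic identity, $\cap^{m}_{i=1}U_{i}=\cap^{n}_{i=1}U_{i}$, \cref{cor:AnglN-tupleLess1:P}, \cref{lem:PR:induc}), so there is nothing to fix.
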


\begin{proof}
	First note that,
	$
	\Pro_{U_{m}} \Pro_{U_{m-1}} \cdots \Pro_{U_{1}} = \Pro_{U_{1}}\cdots \Pro_{U_{n-1}} \Pro_{U_{n}} \Pro_{U_{n-1}} \cdots \Pro_{U_{1}},
$
	and that
$
	U^{\perp}_{1} + \cdots + U^{\perp}_{n-1} + U^{\perp}_{n} +U^{\perp}_{n+1}+\cdots+ U^{\perp}_{m} = U^{\perp}_{1} + \cdots+ U^{\perp}_{n}$. 
	Set $ \rho: = \norm{ \Pro_{U_{m}} \Pro_{U_{m-1}} \cdots \Pro_{U_{1}} \Pro_{(\cap^{m}_{i=1} U_{i} )^{\perp}} }$. Since $U^{\perp}_{1} + \cdots+ U^{\perp}_{n}$ is closed, \cref{thm:MAP:LC} implies
	\begin{align} \label{eq:cor:SymMAP:LC:CCSk}
	(\forall x \in \mathcal{H} ) (\forall k \in \mathbb{N}) \quad \norm{\CC{\mathcal{S}}^{k}x- \Pro_{\cap^{m}_{i=1} U_{i}} x} \leq \rho^{k} \norm{x - \Pro_{\cap^{m}_{i=1} U_{i}} x}.
	\end{align}
Also set $T :=\Pro_{U_{n}} \Pro_{U_{n-1}} \cdots \Pro_{U_{1}}$. Then \cref{fac:PUmU1FixT}, \cref{fac:LineNonexOperNorm}\cref{fac:LineNonexOperNorm:iv} and \cref{cor:AnglN-tupleLess1:P} yield
	\begin{align} \label{eq:cor:SymMAP:LC:gamma}
	\gamma =\norm{T\Pro_{(\cap^{n}_{i=1} U_{i})^{\perp}} }=\norm{T^{*}T\Pro_{(\cap^{n}_{i=1} U_{i})^{\perp}} }^{\frac{1}{2}}   = \rho^{\frac{1}{2}}  \in \left[0,1\right[\, ,
	\end{align}
	because $\cap^{n}_{i=1} U_{i} =\cap^{m}_{i=1} U_{i}$. Hence, \cref{eq:cor:SymMAP:LC:CCSk} and \cref{eq:cor:SymMAP:LC:gamma}  yield
	\begin{align*}
	(\forall x \in \mathcal{H} )  (\forall k \in \mathbb{N}) \quad \norm{\CC{\mathcal{S}}^{k}x- \Pro_{\cap^{n}_{i=1} U_{i}} x} \leq \gamma^{2k} \norm{x - \Pro_{\cap^{n}_{i=1} U_{i}} x},
	\end{align*}
	as claimed.
\end{proof}

\subsection*{Applications of the accelerated symmetric MAP}

In this section, set $T:=\Pro_{U_{m}}\cdots \Pro_{U_{1}}$ and let $A_{T}$  be the accelerated mapping of $T$ defined in \cref{defn:AccelrAT}. In the following two results we take advantage of the linear convergence of iteration sequence from $A_{T}$ as a bridge to show the linear convergence of certain classes of  CRMs.

\begin{theorem} \label{theo:CCS:Accel:AT:x}
	Assume that $m=2n-1$ for some $n \in \mathbb{N} \smallsetminus \{0\}$, $U_{1}, \ldots ,U_{n}$ are closed linear subspaces of $\mathcal{H}$ with $U^{\perp}_{1} + \cdots + U^{\perp}_{n}$ being closed, and $(\forall i \in \{1, \ldots, n-1\})$ $U_{n+i} :=U_{n-i}$.  Recall that $\Psi \subseteq \mathcal{S} \subseteq \Omega$. Let $c_{1}$ and $c_{2}$ be defined as in 
	\cref{eq:fac:AT:c1:c2:c1} and \cref{eq:fac:AT:c1:c2:c2}. 
	Set $T:=\Pro_{U_{m}}\cdots \Pro_{U_{1}} =\Pro_{U_{1}} \cdots \Pro_{U_{n-1}} \Pro_{U_{n}} \Pro_{U_{n-1}} \cdots \Pro_{U_{1}}$, $c(T) :=\norm{T\Pro_{(\Fix T)^{\perp}}}$,  $\gamma := \norm{ \Pro_{U_{n}} \Pro_{U_{n-1}} \cdots \Pro_{U_{1}} \Pro_{(\cap^{m}_{i=1}U_{i})^{\perp}} } $, and $\eta :=\frac{c_{2}-c_{1}}{2-c_{1}-c_{2}}$.
	Then the following  statements hold:
	\begin{enumerate}
		\item \label{theo:CCSAccel:AT:x:C} $0 \leq \eta \leq \frac{c(T)}{2-c(T)} \leq c(T) =\gamma^{2}  <1$.
		\item \label{theo:CCSAccel:AT:x:Ineq}
		$(\forall x \in \mathcal{H}) (\forall k \in \mathbb{N}) \quad \norm{\CC{\mathcal{S}}^{k}(x) - \Pro_{\cap^{n}_{i=1}U_{i}}x } \leq \eta^{k} \norm{x - \Pro_{\cap^{n}_{i=1}U_{i}}x}.
		$
	\end{enumerate}
\end{theorem}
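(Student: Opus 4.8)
The plan is to route everything through the accelerated mapping $A_T$ of the symmetric product $T=\Pro_{U_m}\cdots\Pro_{U_1}$ and then invoke \cref{fact:CWP:line:conv}. First I would record the structural fact that, writing $S:=\Pro_{U_n}\Pro_{U_{n-1}}\cdots\Pro_{U_1}$ and using self-adjointness and idempotence of each $\Pro_{U_i}$ together with $U_{n+i}=U_{n-i}$, one has $T=S^{*}S$; hence $T$ is linear, nonexpansive, self-adjoint and monotone, and by \cref{fac:PUmU1FixT} (applied to $S$ and to $T$) $\Fix T=\Fix(S^{*}S)=\Fix S=\cap^{n}_{i=1}U_i=\cap^{m}_{i=1}U_i=:W$.

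For \cref{theo:CCSAccel:AT:x:C} I would chain the cited results. Since $\Fix(S^{*}S)=\Fix S$, \cref{fac:LineNonexOperNorm}\cref{fac:LineNonexOperNorm:iv} (with $S$ as the operator) gives $c(T)=\norm{T\Pro_{(\Fix T)^{\perp}}}=\norm{S^{*}S\Pro_{(\Fix S^{*}S)^{\perp}}}=\norm{S\Pro_{(\Fix S)^{\perp}}}^{2}=\gamma^{2}$; closedness of $U^{\perp}_{1}+\cdots+U^{\perp}_{n}$ and \cref{cor:AnglN-tupleLess1:P} give $\gamma<1$, so $c(T)=\gamma^{2}\in[0,1[$. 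The inequality $\eta\le\frac{c(T)}{2-c(T)}$ is precisely \cref{fac:AT:c1c2:cT} (whose hypotheses---$T$ linear, nonexpansive, self-adjoint, monotone---are satisfied), while $\frac{c(T)}{2-c(T)}\le c(T)$ is elementary because $0\le c(T)\le 1$, and $\eta\ge 0$ follows from $0\le c_1\le c_2\le c(T)$ (so that $2-c_1-c_2\ge 2(1-c(T))>0$) and $c_2-c_1\ge 0$.

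For \cref{theo:CCSAccel:AT:x:Ineq} I take $F:=A_T$, the accelerated mapping from \cref{defn:AccelrAT}. The crucial---and only genuinely non-routine---observation is that, even though $A_T$ is nonlinear, for each fixed $x$ the point $A_T x=t_x\,Tx+(1-t_x)\,x$ is an affine combination of $Tx$ and $x$: by \cref{lem:PR:induc}\cref{lem:PR:induc:Inclu} we have $T\in\conv\Psi$, so $Tx$ is a convex combination of points of $\mathcal{S}(x)$; and $\Id\in\Psi\subseteq\mathcal{S}$, so $x\in\mathcal{S}(x)$; hence $A_T x\in\aff(\mathcal{S}(x))$ for all $x$. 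Applying \cref{fac:AT:Norm:MPerp} with $k=1$ (its hypotheses---$T$ linear, nonexpansive, self-adjoint---hold) gives $\norm{F x-\Pro_{W}x}\le\eta\,\norm{x-\Pro_{W}x}$, with $\eta\in[0,1[$ by \cref{theo:CCSAccel:AT:x:C}. Since $W=\cap^{m}_{i=1}U_i$ is a nonempty closed linear subspace of $\bigcap_{T'\in\mathcal{S}}\Fix T'$, \cref{fact:CWP:line:conv}---applied with this $F$, this $W$, and $\eta$ in place of $\gamma$---yields $\norm{\CC{\mathcal{S}}^{k}x-\Pro_{W}x}\le\eta^{k}\norm{x-\Pro_{W}x}$ for every $x$ and every $k$, which is \cref{theo:CCSAccel:AT:x:Ineq} since $W=\cap^{n}_{i=1}U_i$.

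Thus the whole argument is essentially the assembly of known facts; the single point requiring thought is the affine-hull membership of the nonlinear iterate $A_T x$, and a minor care point is verifying $2-c_1-c_2>0$ so that $\eta$ is well defined, which is subsumed in \cref{theo:CCSAccel:AT:x:C}.
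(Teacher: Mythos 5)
Your proposal is correct and follows essentially the same route as the paper: verify that $T$ is linear, nonexpansive, self-adjoint and monotone, get $c(T)=\gamma^{2}<1$ from \cref{fac:PUmU1FixT}, \cref{fac:LineNonexOperNorm}\cref{fac:LineNonexOperNorm:iv} and \cref{cor:AnglN-tupleLess1:P} together with \cref{fac:AT:c1c2:cT} for (i), and then observe $A_{T}x\in\aff\{x,Tx\}\subseteq\aff(\mathcal{S}(x))$ via \cref{lem:PR:induc}, apply \cref{fac:AT:Norm:MPerp}, and conclude with \cref{fact:CWP:line:conv} taking $F=A_{T}$ and $W=\cap^{m}_{i=1}U_{i}$ for (ii). The only differences are presentational (you spell out the elementary inequalities $\eta\ge 0$ and $\tfrac{c(T)}{2-c(T)}\le c(T)$, and deduce monotonicity directly from $T=S^{*}S$), so no gap remains.
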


\begin{proof}
	By \Cref{fact:ProjectorInnerPRod,MetrProSubs8}, 
	we know that $T$ is a linear, nonexpansive and  self-adjoint. Because $T=\Pro_{U_{1}} \cdots \Pro_{U_{n-1}} \Pro_{U_{n}} \Pro_{U_{n-1}} \cdots \Pro_{U_{1}} =(\Pro_{U_{n}} \Pro_{U_{n-1}} \cdots \Pro_{U_{1}} )^{*}\Pro_{U_{n}} \Pro_{U_{n-1}} \cdots \Pro_{U_{1}} $, by \cite[Example~20.16(ii)]{BC2017}, $T$ is monotone.
	
	\cref{theo:CCSAccel:AT:x:C}: By \cref{fac:PUmU1FixT}, $ \Fix T = \cap^{n}_{i=1}U_{i} \neq \varnothing$. By \cref{fac:LineNonexOperNorm}\cref{fac:LineNonexOperNorm:iv} and \cref{cor:AnglN-tupleLess1:P}, we know that $c(T) =\gamma^{2}  <1$. Hence, 
	the inequalities follow from \cref{fac:AT:c1c2:cT}.
	
	\cref{theo:CCSAccel:AT:x:Ineq}:
	Let $A_{T}$  be the accelerated mapping defined in \cref{defn:AccelrAT} of $T$. For every $x \in \mathcal{H}$, since $x \in \aff \mathcal{S}(x)$, and since by  \cref{lem:PR:induc}, $Tx \in \aff \mathcal{S}(x)$, thus $A_{T}x \in \aff \{x, Tx\} \subseteq \aff \mathcal{S}(x)$.
	Since $ \Fix T = \cap^{m}_{i=1}U_{i} $, using  \cref{fac:AT:Norm:MPerp}, we obtain
	\begin{align*}  
	(\forall x \in \mathcal{H}) \quad \norm{A_{T} x - \Pro_{\cap^{m}_{i=1}U_{i}} x}   \leq \eta \norm{ x - \Pro_{\cap^{m}_{i=1}U_{i}} x}.
	\end{align*}
	As we proved in \cref{thm:MAP:LC}, the assumption  $\Psi \subseteq \mathcal{S} \subseteq \Omega$  implies that $\cap^{m}_{i=1} U_{i}$ is a closed linear subspace of $\cap_{G \in  \mathcal{S}} \Fix G$.
	Hence, apply \cref{fact:CWP:line:conv} with $F=A_{T} $ and $ W =\cap^{m}_{i=1}U_{i}$
to obtain \cref{theo:CCSAccel:AT:x:Ineq}.
\end{proof}

\begin{example} {\rm \cite[page 3438]{BDHP2003}} \label{exam:counterT}
	Let $T$ be the product of two orthogonal projections onto two $1$--dimensional (nonorthogonal) subspaces in the Euclidean plane. Then the accelerated algorithm, $(A_{T}^{k}(Tx))_{k \in \mathbb{N}}$, converges in two steps, that is, $A_{T}(Tx)=\Pro_{\Fix T}x$ for any starting point. However, for any choice of $x$ which is not in the range of $T$, none of the terms of the sequence $(A_{T}^{k}(x))_{k \in \mathbb{N}}$ is equal to  $\Pro_{\Fix T}x$, which means that $(A_{T}^{k}(x))_{k \in \mathbb{N}}$ does not converge to $\Pro_{\Fix T}x$ in a finite number of steps.
\end{example}

Inspired by \cref{exam:counterT}, \cref{fac:AT:Norm:MPerp}  and \cref{theo:LineaConvIneq:TF}\cref{theo:LineaConvIneq:TF:W}, we show the following result, where we consider the special initial point
 $x_{0} =\Pro_{U_{m}}\cdots \Pro_{U_{1}}x$. 
 
\begin{theorem} \label{theo:CCS:Accel:AT:Tx}
	Assume $m=2n-1$ for some $n \in \mathbb{N} \smallsetminus \{0\}$, $U_{1}, \ldots ,U_{n}$ are closed linear subspaces of $\mathcal{H}$ with $U^{\perp}_{1} + \cdots + U^{\perp}_{n}$ being closed, $(\forall i \in \{1, \ldots, n-1\})$ $U_{n+i} :=U_{n-i}$. Recall that $\Psi \subseteq \mathcal{S} \subseteq \Omega$.
	Let $c_{1}$ and $c_{2}$ be defined as in 
	\cref{eq:fac:AT:c1:c2:c1} and \cref{eq:fac:AT:c1:c2:c2}.  Set $T:=\Pro_{U_{m}}\cdots \Pro_{U_{1}}=\Pro_{U_{1}} \cdots \Pro_{U_{n-1}}\Pro_{U_{n}} \Pro_{U_{n-1}} \cdots \Pro_{U_{1}}$,  $c(T) :=\norm{T\Pro_{(\Fix T)^{\perp}}}$,  $\gamma := \norm{ \Pro_{U_{n}} \Pro_{U_{n-1}} \cdots \Pro_{U_{1}} \Pro_{(\cap^{m}_{i=1}U_{i})^{\perp}} } $, and  $\eta :=\frac{c_{2}-c_{1}}{2-c_{1}-c_{2}} $.
	Then $(\forall k \in \mathbb{N})$  $\eta^{k}c(T) \leq \gamma^{2(k+1)}$, and 
	\begin{align*} 
	(\forall x \in \mathcal{H})  (\forall k \in \mathbb{N}) \quad \norm{\CC{\mathcal{S}}^{k}(Tx) - \Pro_{\cap^{n}_{i=1}U_{i}}x } \leq \eta^{k} c(T) \norm{x - \Pro_{\cap^{n}_{i=1}U_{i}}x}.
	\end{align*}
\end{theorem}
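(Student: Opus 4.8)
The plan is to obtain the estimate as a single application of \cref{theo:LineaConvIneq:TF}\cref{theo:LineaConvIneq:TF:W}, taking the operator that is applied to the initial point to be $T=\Pro_{U_{m}}\cdots\Pro_{U_{1}}$ itself and taking the auxiliary operator there to be the accelerated mapping $A_{T}$. First I would set up the structural facts. Every reflector $\R_{U_{i}}$ and every finite composition of such reflectors is a linear isometry (\cref{fact:examples:normpreserving}), so $\mathcal{S}$ is a finite set of linear isometries containing $\Id\in\Psi$, and hence $\CC{\mathcal{S}}$ is proper by \cref{fact:CCS:proper:NormPres:T}\cref{fact:CCS:proper:NormPres:T:prop}. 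From $\{\R_{U_{1}},\dots,\R_{U_{m}}\}\subseteq\Psi\subseteq\mathcal{S}\subseteq\Omega$ one gets $\cap_{G\in\mathcal{S}}\Fix G=\cap^{m}_{i=1}U_{i}$, which by the symmetry $U_{n+i}=U_{n-i}$ equals $W:=\cap^{n}_{i=1}U_{i}$, a closed linear subspace. Writing $R:=\Pro_{U_{n}}\cdots\Pro_{U_{1}}$, that same symmetry gives $T=\Pro_{U_{1}}\cdots\Pro_{U_{n-1}}\Pro_{U_{n}}\Pro_{U_{n-1}}\cdots\Pro_{U_{1}}=R^{*}R$, so $T$ is linear, nonexpansive, self-adjoint and monotone (via \Cref{fact:ProjectorInnerPRod,MetrProSubs8} and \cite[Example~20.16(ii)]{BC2017}), and $\Fix T=W$ by \cref{fac:PUmU1FixT}; in particular $c(T)=\norm{T\Pro_{W^{\perp}}}$.

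The numerical inequality $\eta^{k}c(T)\le\gamma^{2(k+1)}$ I would read directly off \cref{theo:CCS:Accel:AT:x}\cref{theo:CCSAccel:AT:x:C}, which already supplies $0\le\eta\le c(T)=\gamma^{2}<1$; hence $\eta^{k}c(T)\le(\gamma^{2})^{k}\gamma^{2}=\gamma^{2(k+1)}$ for every $k\in\mathbb{N}$.

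For the convergence estimate I would verify the three hypotheses of \cref{theo:LineaConvIneq:TF}\cref{theo:LineaConvIneq:TF:W} for this data. The compatibility relation $\Pro_{W}T=\Pro_{W}=T\Pro_{W}$ holds because $W\subseteq U_{i}$ for every $i$, so \cref{fac:proj:commu} gives $\Pro_{U_{i}}\Pro_{W}=\Pro_{W}=\Pro_{W}\Pro_{U_{i}}$, and chaining these through the factorizations $T=\Pro_{U_{m}}\cdots\Pro_{U_{1}}$ yields $T\Pro_{W}=\Pro_{W}$ and $\Pro_{W}T=\Pro_{W}$. The inclusion $(\forall x\in\mathcal{H})\ A_{T}x\in\aff(\mathcal{S}(x))$ holds since $x\in\aff(\mathcal{S}(x))$ (as $\Id\in\mathcal{S}$) and $Tx\in\aff(\mathcal{S}(x))$ (as $T\in\aff\mathcal{S}$ by \cref{lem:PR:induc}\cref{lem:PR:induc:Inclu}), so $A_{T}x$ lies on the line through $x$ and $Tx$. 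The contraction bound $\norm{A_{T}x-\Pro_{W}x}\le\eta\norm{x-\Pro_{W}x}$ is \cref{fac:AT:Norm:MPerp} with $k=1$ applied to the linear, nonexpansive, self-adjoint operator $T$ (with $\Fix T=W$), and $\eta\in\left[0,1\right[$ by the previous paragraph. Then \cref{theo:LineaConvIneq:TF}\cref{theo:LineaConvIneq:TF:W} (its ``$\gamma$'' being our $\eta$, its initial-point operator being our $T$, and $F=A_{T}$) yields $\norm{\CC{\mathcal{S}}^{k}Tx-\Pro_{W}x}\le\eta^{k}\norm{T\Pro_{W^{\perp}}}\norm{x-\Pro_{W}x}=\eta^{k}c(T)\norm{x-\Pro_{W}x}$, and since $W=\cap^{n}_{i=1}U_{i}$ this is exactly the asserted bound.

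Since all the ingredients are already available, I do not anticipate a genuine obstacle; the only points requiring care are the notational collision between the symbol $T$ of this statement and the generic initial-point operator in \cref{theo:LineaConvIneq:TF}, and keeping track of the role of the symmetry $U_{n+i}=U_{n-i}$ — it is precisely what forces $\Fix T=\cap^{m}_{i=1}U_{i}=W$, so that $A_{T}$ contracts toward the correct subspace and the compatibility $\Pro_{W}T=\Pro_{W}$ is available.
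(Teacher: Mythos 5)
Your proposal is correct and follows essentially the same route as the paper: both derive $\eta^{k}c(T)\le\gamma^{2(k+1)}$ from \cref{theo:CCS:Accel:AT:x}\cref{theo:CCSAccel:AT:x:C}, check $A_{T}x\in\aff(\mathcal{S}(x))$ via \cref{lem:PR:induc} and $\Id\in\mathcal{S}$, obtain $\Pro_{W}T=\Pro_{W}=T\Pro_{W}$ from \cref{fac:proj:commu}, use \cref{fac:AT:Norm:MPerp} for the contraction of $A_{T}$, and conclude by applying \cref{theo:LineaConvIneq:TF}\cref{theo:LineaConvIneq:TF:W} with $W=\cap^{n}_{i=1}U_{i}$, $F=A_{T}$ and the initial-point operator $T$. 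The only (immaterial) difference is that you bound $\eta^{k}c(T)$ using $\eta\le c(T)$ directly, while the paper routes through the slightly sharper $\eta\le c(T)/(2-c(T))$.
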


\begin{proof}
	By \cref{theo:CCS:Accel:AT:x}\cref{theo:CCSAccel:AT:x:C}, we see that $0 \leq \eta \leq \frac{c(T)}{2-c(T)} \leq c(T) =\gamma^{2}   <1$. Hence, $(\forall k \in \mathbb{N})$  $\eta^{k}c(T) \leq \frac{c(T)^{k+1}}{(2 -c(T))^{k}} \leq c(T)^{k+1} =\gamma^{2(k+1)}$.
	
	By the assumption, $\Psi \subseteq \mathcal{S} \subseteq \Omega$, $\Fix T = \cap^{m}_{i=1} U_{i}$ is a closed linear subspace of $\cap_{G \in  \mathcal{S}} \Fix G$.
	Because by \cref{lem:PR:induc}, $T \in \aff \mathcal{S}$, and   $\Id \in \mathcal{S}$, it follows from \cref{defn:AccelrAT} that
	\begin{align*}
	(\forall x \in \mathcal{H}) \quad  A_{T}x \in \aff \{x, Tx\} \subseteq \aff \mathcal{S}(x).
	\end{align*} 
	Using \cref{fac:proj:commu}, we get 
	\begin{align*}
	T \Pro_{\cap^{n}_{i=1}U_{i}} =\Pro_{\cap^{n}_{i=1}U_{i}} T=\Pro_{\cap^{n}_{i=1}U_{i}}.
	\end{align*}
As we proved in \cref{theo:CCS:Accel:AT:x}, $T$ is a linear, nonexpansive and self-adjoint operator on $\mathcal{H}$. Hence, by \cref{fac:AT:Norm:MPerp}, we obtain that
	\begin{align*} 
	(\forall x \in \mathcal{H})  (\forall k \in \mathbb{N}) \quad  \bignorm{ A^{k}_{T} x - \Pro_{\cap^{n}_{i=1}U_{i}}x } 
	\leq \eta^{k} \norm{x - \Pro_{\cap^{n}_{i=1}U_{i}}x }. 
	\end{align*}
	
Therefore, the required result is obtained by applying \cref{theo:LineaConvIneq:TF}\cref{theo:LineaConvIneq:TF:W} with $W= \cap^{n}_{i=1}U_{i}$, $F=A_{T}$ and $T=\Pro_{U_{1}} \cdots \Pro_{U_{n-1}}\Pro_{U_{n}} \Pro_{U_{n-1}} \cdots \Pro_{U_{1}}$.
\end{proof}

\begin{remark} \label{rem:AT}
	Recall that in the whole section, $U_{1}, \ldots, U_{m}$  are closed linear subspaces of $\mathcal{H}$  and that the finite set $\mathcal{S}$ satisfies that $\Psi \subseteq \mathcal{S} \subseteq \Omega$. 
	Set $T :=\Pro_{U_{m}}\cdots \Pro_{U_{1}}$. By \cite[Theorem~3.7]{BDHP2003}, we know 
	\begin{align*}
	(\forall x \in \mathcal{H}) \quad A_{T}(x) = \Pro_{\aff \{x, Tx\}}(\Pro_{\cap^{m}_{i=1}U_{i} }x).
	\end{align*}
	By \cref{lem:PR:induc}, we obtain that $(\forall x \in \mathcal{H})$, $\aff \{x, Tx\} \subseteq \aff (\mathcal{S}(x))$.
	
	Moreover, $\Psi \subseteq \mathcal{S} \subseteq \Omega$ implies that $\cap^{m}_{i=1}U_{i}$ is a closed linear subspace of $\cap_{T \in \mathcal{S}} \Fix T$, so using \cref{fact:CCS:proper:NormPres:T}\cref{fact:CCS:proper:NormPres:T:AllU}, we get that
	\begin{align*}
	(\forall x \in \mathcal{H}) \quad \CC{\mathcal{S}}x= \Pro_{\aff (\mathcal{S}(x))}(\Pro_{\cap^{m}_{i=1} U_{i}} x).
	\end{align*}
	
	Hence, in some sense the $\CC{\mathcal{S}}$ can be viewed as more aggressive than the $A_{T}$ to converge to the point $\Pro_{\cap^{n}_{i=1}U_{i} }x$. Therefore, it is not surprised that the CRMs attain the linear convergence rate of the accelerated symmetric MAP in \cref{theo:CCS:Accel:AT:x,theo:CCS:Accel:AT:Tx}. 
\end{remark}

\section{Conclusion and future work}
In order to study the linear convergence of CIMs for finding the best approximation onto the intersection of fixed point sets of finitely many isometries,  we first collected and proved some properties of isometries.  
Then, we showed the linear convergence of CIMs in finite-dimensional Hilbert spaces.
Moreover, motivated by the accelerated symmetric MAP and the C-DRM, we presented two results on the linear convergence of CIMs in Hilbert spaces with first applying another operator to the initial point. 
In addition, we deduced sufficient conditions for the linear convergence of  CRMs by using the linear convergence of (symmetric) MAP and accelerated symmetric MAP. In particular, we  proved that the convergence rate of some CRMs is no worse than the sharp convergence rate of MAP
and that some CRMs attain the known linear convergence rate of the accelerated symmetric MAP. 

Let us comment on  the relation between this paper and the related literature next.

We didn't consider properties of surjective or self-adjoint isometries before.  In our previous paper \cite{BOyW2019Isometry}, we proved the circumcenter mapping induced by finite set of isometries is proper, which deduces that the CIM is well-defined and is fundamental for our study on the linear convergence of CIMs in this paper.  The linear convergence of CIMs in finite-dimensional Hilbert space are generalizations of the linear convergence of CRMs shown in \cite[Propositions~5.10 and 5.15]{BOyW2019Isometry} and \cite[Theorem~3.3]{BCS2018}.
The linear convergence of CIMs in Hilbert spaces shown in \cref{theo:LineaConvIneq:TF}\cref{theo:LineaConvIneq:TF:FFixCap} is a generalization of \cite[Theorem~1]{BCS2017} and \cite[Proposition~5.18]{BOyW2019Isometry} from reflectors to isometries, while \cref{theo:LineaConvIneq:TF}\cref{theo:LineaConvIneq:TF:W} is inspired by \cite[page~3438]{BDHP2003}. 
Note that we proved that given a linear isometry $T$, $T$ is a reflector associated with an affine subspace if and only if $T$ is self-adjoint and that generally a linear isometry is not self-adjoint, our generalizations are indeed more flexible. 
The proof of linear convergence of CRMs in Hilbert spaces by using the linear convergence of (symmetric) MAP and accelerated symmetric MAP is new.  In fact, 
compared with MAP and  DRM, some instances of those CRMs showed outstanding performance numerically but not analytically in \cite[Section~6]{BOyW2019Isometry}. Now \Cref{thm:MAP:LC,theo:CCS:Accel:AT:x} provide theoretical support for the  numerical experiments presented in \cite[Section~6]{BOyW2019Isometry}.


Let $x \in \mathcal{H}$. Let $\mathcal{S}$ be a set of finitely many isometries.  In \Cref{theo:CCS:LineaConve:F1Fn,theo:CCS:LineaConve:F1t}, we constructed operators $T_{\mathcal{S}}$ (the operators named as $A$ in \Cref{theo:CCS:LineaConve:F1Fn,theo:CCS:LineaConve:F1t}) by using the elements of $\mathcal{S}$ and proved the linear convergence of the sequence $(T_{\mathcal{S}}^{k}x)_{k \in \mathbb{N}}$ for finding $\Pro_{\cap_{T \in \mathcal{S}} \Fix T}x$ when $\mathcal{H} =\mathbb{R}^{n}$. 
Then we took advantage of the linear convergence of the sequence $(T_{\mathcal{S}}^{k}x)_{k \in \mathbb{N}}$ to prove the linear convergence of the CIM induced by the $\mathcal{S}$  in $\mathbb{R}^{n}$.  An interesting question is: \emph{can we similarly construct a $T_{\mathcal{S}}$ such that the linear convergence of $(T_{\mathcal{S}}^{k}x)_{k \in \mathbb{N}}$ implies the linear convergence of  the CIM induced by the $\mathcal{S}$  in infinite-dimensional Hilbert spaces?}
In fact, in \cref{sec:LinearConverCRMHilert}, we constructed some special sets $\mathcal{S}$ of reflectors  such that the linear convergence of (symmetric) MAP or accelerated symmetric MAP implies the linear convergence of CRMs induced by those $\mathcal{S}$. If we can answer the question above, we might be able to obtain better results than those  in \cref{sec:LinearConverCRMHilert}.

\section*{Acknowledgements}
The authors thank the anonymous referees and the editors for their
valuable comments  and suggestions.  HHB and XW were partially supported by NSERC Discovery Grants.

\addcontentsline{toc}{section}{References}

\bibliographystyle{abbrv}

\begin{thebibliography}{19}
	
	
	\bibitem{BCNPW2014}
	{\sc H.~H.~Bauschke, J.~Y.~Bello~Cruz, T.~T.~A.~Nghia, H.~M.~Phan, and
		X.~Wang}: {\em The rate of linear convergence of the {D}ouglas-{R}achford
		algorithm for subspaces is the cosine of the {F}riedrichs angle},
Journal of Approximation Theory~185, pp.~63--79, 2014.
	
	

	\bibitem{BBL1997}
	{\sc H.~H.~Bauschke, J.~M.~Borwein, and A.~S.~Lewis}: {\em The method of cyclic projections for closed convex sets in {H}ilbert space}, 
	Recent developments in optimization theory and nonlinear analysis (Jerusalem 1995), Contemporary Mathematics~204, pp.~1--38, 1997.
	
	\bibitem{BC2017}
	{\sc H.~H.~Bauschke and P.~L.~Combettes}: {\em Convex Analysis and Monotone
		Operator Theory in Hilbert Spaces}, second edition, Springer, 2017.
	
	\bibitem{BDHP2003}
	{\sc H.~H.~Bauschke, F.~Deutsch, H.~Hundal and S.~H.~Park}:
	{\em Accelerating the convergence of the method of alternating
		projections},
	Transactions of the American Mathematical Society~355, pp.~3433--3461, 2003.
	
	\bibitem{BOyW2018}
	{\sc H.~H.~Bauschke, H.~Ouyang, and X.~Wang}: {\em On circumcenters of finite
		sets in Hilbert spaces}, Linear and Nonlinear Analysis~4, pp.~271--295, 2018.
	
	\bibitem{BOyW2018Proper}
	{\sc H.~H.~Bauschke, H.~Ouyang, and X.~Wang}: 
	{\em On circumcenter mappings induced by nonexpansive operators}, 
	Pure and Applied Functional Analysis, in press. 
	
	
	\bibitem{BOyW2019Isometry}
	{\sc H.~H.~Bauschke, H.~Ouyang, and X.~Wang}: 
	{\em Circumcentered methods induced by isometries}, 
	to appear in Vietnam Journal of Mathematics,
	arXiv preprint \url{https://arxiv.org/abs/1908.11576}, 2019.

	\bibitem{BCS2017}
	{\sc R.~Behling, J.~Y.~Bello~Cruz, and L.-R.~Santos}: {\em Circumcentering the
		Douglas--Rachford method}, Numerical Algorithms~78, pp.~759--776, 2018.
	
	\bibitem{BCS2018}
	{\sc R.~Behling, J.~Y.~Bello~Cruz, and L.-R.~Santos}:  {\em On the linear
		convergence of the circumcentered-reflection method},
	Operations Research Letters~46, pp.~159--162, 2018.
	
	\bibitem{BCS2019}
	{\sc R.~Behling, J.~Y.~Bello~Cruz, and L.-R.~Santos}:  {\em The Block-wise Circumcentered-Reflection Method},
	Computational Optimization and Applications, pp.~1--25, 2019.
	
	\bibitem{BCS2020ConvexFeasibility}
	{\sc R.~Behling, J.~Y.~Bello~Cruz, and L.-R.~Santos}:
	{\em On the circumcentered-reflection method for the convex feasibility problem}, 
	arXiv preprint \url{https://arxiv.org/abs/2001.01773}, 2020. 
	
	\bibitem{D2012}
	{\sc F.~Deutsch}: {\em Best Approximation in Inner Product Spaces},
	Springer, 2012.
	
	\bibitem{DHL2019}
	{\sc N.~Dizon, J.~Hogan, and S.~B.~Lindstrom}:  {\em Circumcentering reflection methods for nonconvex feasibility problems},
	arXiv preprint \url{https://arxiv.org/abs/1910.04384},  2019.
	
	\bibitem{GK1987}
	{\sc W.~B.~Gearhart and M.~Koshy}: {\em Acceleration schemes for the method of alternating projections}, Journal of Computational and Applied Mathematics~26,
	pp.~235--249, 1989.
	
	
	\bibitem{GPR1967}
	{\sc L.~G.~Gubin , B.~T.~Polyak and E.~V.~Raik}: {\em The method of projections for finding the common point of convex sets}, USSR Computational Mathematics and Mathematical Physics~7, pp.~1--24, 1967.
	
	\bibitem{Kreyszig1989}
	{\sc E.~Kreyszig}: {\em Introductory Functional Analysis with Applications}, John Wiley \& Sons, 1989.
	
	
	\bibitem{SBLL2020}
	{\sc S.~B.~Lindstrom}:  {\em Computable centering methods for spiraling algorithms and their duals, with motivations from the theory of Lyapunov functions},
	arXiv preprint \url{https://arxiv.org/abs/2001.10784},  2020.
	
	
	\bibitem{MC2000}
	{\sc C.~Meyer}: {\em Matrix Analysis and Applied Linear Algebra}, Society for
	Industrial and Applied Mathematics, 2000.
	
\bibitem{Zarantonello1971}
{\sc E.~H.~Zarantonello}: {\em Projections on convex sets in Hilbert space and spectral
	theory. I. Projections on convex sets}, in Contributions to nonlinear functional analysis, Academic Press, pp.~237--424, 1971. 

\end{thebibliography}

\end{document}